\documentclass[aos]{imsart}
\usepackage{amsmath,amssymb,amsthm,mathtools,bm}
\usepackage{enumitem}
\usepackage{booktabs,array}
\usepackage{graphicx}
\usepackage{natbib}
\usepackage{comment}
\usepackage{url}

\usepackage{hyperref}
\hypersetup{hidelinks}

\usepackage{lineno}
\usepackage{tikz}
\usetikzlibrary{arrows.meta,calc,patterns,positioning}

\allowdisplaybreaks
\numberwithin{equation}{section}

\newtheorem{theorem}{Theorem}[section]
\newtheorem{proposition}[theorem]{Proposition}
\newtheorem{lemma}[theorem]{Lemma}
\newtheorem{corollary}[theorem]{Corollary}
\newtheorem{assumption}[theorem]{Assumption}
\newtheorem{remark}[theorem]{Remark}
\newtheorem{example}[theorem]{Example}

\newcommand{\Q}{\mathbb{Q}}
\newcommand{\R}{\mathbb{R}}
\newcommand{\N}{\mathbb{N}}
\newcommand{\E}{\mathbb{E}}
\newcommand{\Prob}{\mathbb{P}}
\newcommand{\1}{\mathbf{1}}
\newcommand{\calF}{\mathcal{F}}
\newcommand{\calN}{\mathcal{N}}
\newcommand{\calA}{\mathcal{A}}

\newcommand{\calC}{\mathcal{C}}

\newcommand{\calH}{\mathcal{H}}

\newcommand{\calL}{\mathcal{L}}

\newcommand{\calX}{\mathcal{X}}
\newcommand{\Thetao}{\Theta^{\circ}}
\newcommand{\norm}[1]{\left\lVert #1 \right\rVert}
\newcommand{\abs}[1]{\left\lvert #1 \right\rvert}

\newcommand{\op}{\mathrm{op}}
\newcommand{\TV}{\mathrm{TV}}
\newcommand{\KL}{D_{\mathrm{KL}}}

\newcommand{\diff}{\,\mathrm{d}}
\newcommand{\dd}{\mathrm{d}}

\DeclareMathOperator{\Var}{Var}

\DeclareMathOperator{\dist}{dist}

\DeclareMathOperator{\diag}{diag}

\begin{document}
\title{Optimal Estimating Equations for Compact-Memory Hawkes Processes}
\runtitle{Estimating Equations for Hawkes Processes}
\begin{aug}
\author{\fnms{Louis}~\snm{Davis}\thanksref{t1}\ead[label=e1]{ldavis2@stanford.edu}}
\and
\author{\fnms{Conor}~\snm{Kresin}\thanksref{t1}\ead[label=e2]{conor.kresin@otago.ac.nz}}
\thankstext{t1}{Both authors contributed equally to this work.}
\address{Department of Statistics, Stanford University\printead[presep={,\ }]{e1}}
\address{Department of Mathematics and Statistics, University of Otago\printead[presep={,\ }]{e2}}
\end{aug}

\begin{abstract}
Likelihood is standard for Hawkes-process inference, while less computationally demanding methods have largely developed separately. We show that least squares, Tak\'acs--Fiksel, and related moment-based estimators form a single class of compensator-based estimating equations, with the likelihood score as the efficient benchmark. For fixed-dimensional multivariate Hawkes processes with compact memory, nonlinear positive links, and signed kernels allowing inhibition, every suitably regular predictable functional of a fixed lag window yields an unbiased estimating equation when integrated against \(\dd N-\lambda\,\dd t\). Under common regularity, identification, and rank conditions, estimators based on every admissible finite library achieve uniform high-probability and pointwise almost-sure \(\mathcal O(\sqrt{\log(T)/T})\) rates, asymptotic normality with Godambe covariance, and admit feasible two-step optimal weighting. A projection identity quantifies each library's exact efficiency loss as the score information outside its predictable span; a two-point bound shows the root-\(T\) scale cannot be improved uniformly. Although compact memory localizes the intensity rather than the stationary law, exponential forgetting yields Bernstein-type concentration and transfers the theory to nonstationary starts after a logarithmic burn-in. Within this scope, the compensator class is exhaustive for finite-library comparisons: it contains the score, gives admissible libraries common guarantees, and quantifies their efficiency gaps exactly.
\end{abstract}

\maketitle

\noindent\textbf{Keywords:} Hawkes process; compensator; Godambe information; generalized method of moments; Poisson embedding.\par
\smallskip
\noindent\textbf{MSC 2020:} Primary 62M09; secondary 60G55, 62F12, 62F35.

\section{Introduction}\label{sec:introduction}
The martingale compensator identity 
\begin{equation}\label{eqn:compensator-identity}
    \E_{\theta^\star}\{\dd N(t)-\lambda(t;\theta^\star)\,\dd t\mid\calF_{t-}\}=0 
\end{equation}
is the natural moment identity for a point process $N$ characterised by conditional intensity $\lambda(t;\theta^\star)$ at the true parameter $\theta^\star$ \citep{daley2008}. When integrated against a predictable weight $H(t;\theta)$, Equation~\eqref{eqn:compensator-identity} defines a broad family of unbiased martingale estimating equations of the form 
\begin{equation}\label{eqn:martingale-EE}
    \Psi_T^H(\theta)=\int_0^T H(t;\theta)\{\dd N(t)-\lambda(t;\theta)\,\dd t\}.
\end{equation}
Members of this family of estimators include least squares, maximum likelihood (MLE), Tak\'acs--Fiksel and generalized method of moments (GMM). The family \eqref{eqn:martingale-EE} is the point process analogue to Godambe's estimating-function theory \citep{godambe1991}.

Such estimating equations have long been employed for spatial point processes: the spatial analogue to Equation~\eqref{eqn:compensator-identity} is referred to as the Campbell--Mecke  identity in the Poisson case \citep{daley2008}, and the Georgii--Nguyen--Zessin (GNZ) formula more generally \citep{georgii1976canonical,xanh1979integral}. In the case of spatial Gibbs processes, the likelihood has an intractable normalising constant, and Tak\'acs--Fiksel estimation \citep{fiksel1984estimation} (of which the Stoyan--Grabarnik statistic is most well known \citep{stoyangrabarnik1991,kresinschoenberg2023}) is often applied alongside Godambe-optimal quasi-likelihood and various GMM approaches \citep{coeurjolly2016optimal,guan2015quasi}. For Hawkes processes \citep{hawkes1971,bremau1996}, the situation is inverted: the likelihood is available in closed form, and has dominated both theory and practical application \citep{ogata1978,clinet2017,kwan2025}. Perhaps for this reason, a theory of predictable compensator moments, containing the score as one member and pricing every other member against it, has remained undeveloped.

There are good reasons to develop one. Likelihood is asymptotically efficient but computationally expensive: each evaluation is quadratic in the event count for non-Markovian Hawkes processes, and a $D$-variate model has $\mathcal{O}(D^2)$ parameters corresponding to the full interaction kernel. Because of this, dense multivariate settings are often assumed away, or sparsity-inducing estimation strategies are used to make estimation tractable \citep{hansen2015}. Recent work has accordingly turned to penalized least squares \citep{reynaud2010adaptive,hansen2015,bacry2020sparse}, spectral estimation \citep{cheysson2022spectral, tang2026spectral}, and nonparametric Bayesian modelling \citep{sulem2024bayesian, donnetRousseau}. Likelihood-based approaches have made recent headway for nonlinear Hawkes processes \citep{bonnet2023inference}. But each of these approaches provides a remedy for a relatively narrow gap. 

At the cost of assuming compact memory (\textit{\`a la} \cite{hansen2015} and \cite{donnetRousseau}), we develop a unifying fixed-dimensional theory for a large class of Hawkes processes and the complete family of compensator moment-based estimators.  Our theory has another dividend: its standardized moments are precisely the summary statistics that make simulation-based inference \citep{mcfadden1989,cranmer2020} work (see Remark~\ref{rem:sbi-gmm}). In the compact-memory setting the conditional intensity is a predictable functional of a realisation on a fixed lag window $[t-A,t)$. We assume the parameter dimension $p=\dim(\Theta)$ and $D$ the number of components are fixed, i.e. do not grow with $T$. We further assume that $q$, the number of scalar moment conditions generated by the weight $H$ (equivalently the number of rows of $H$) is fixed (although $q>p$ is permitted without necessitating regularisation).

A row of the estimating weight $H$ can be any smooth predictable feature of the realisation viewed on $[t-A,t)$, controlled by a polynomial local-count envelope. This means that binned counts, lagged summaries, inter-arrival times, filtered convolutions, and generally any smooth transformation are all admissible members of the same finite feature library. In particular, as noted above, this includes the likelihood score and least squares (see Example~\ref{ex:weight-families-body}). The model class our results apply to allows for nonlinear positive link functions \citep{bremau1996} as well as signed interaction kernels (allowing for inhibition, cf. \citep{bonnet2023inference}). Further, our theory accommodates nonstationary initial conditions, allowing inference without assuming equilibrium at the observation origin.

Under one single set of mild, checkable assumptions, we prove uniform high probability and almost sure $\mathcal O(\sqrt{\log(T)/T})$ rates for compact window estimators of the form of \eqref{eqn:martingale-EE}. Further, we prove asymptotic normality with sandwich covariance, a feasible two-step optimal weighting, and a Godambe projection theorem that says that no admissible finite moment library carries more information than the likelihood score, and that the cost for not using it is exactly the variance of the score component orthogonal to the library's predictable span. As an immediate corollary, the same results apply to the stationary MLE, and a two-point Le Cam bound shows that the root-$T$ scale cannot be uniformly improved.

The conclusion is clean: even for dependent data, with appropriate concentration, i.i.d. behavior reappears. Rates, sandwich covariance and feasible optimal weighting, and most importantly optimal likelihood efficiency present just as they do in the i.i.d. case, where the Cram\'er Rao lower bound is only attained by the optimally weighted GMM estimator when the moment library spans the score \citep{chamberlain1987asymptotic,carrasco2014asymptotic}. The technical inputs to achieve this i.i.d.-like behavior rely on the Poisson embedding and exponential forgetting arguments of \cite{bremau1996} (which provide a way around the core challenge of our setting, namely that compact support localizes the intensity but not the stationary law) and the compact window functional coupling argument of \cite{hansen2015}. We note the behavior of the MLE for Hawkes processes is well understood in the existing literature (for instance in the univariate case \citep{ogata1978}, the multivariate Markovian case \citep{clinet2017}). What we provide is a comprehensive and unified treatment, in the compact-memory setting, of a larger estimator class containing the MLE.

Section~\ref{sec:main} details the model, estimator class, standing assumptions and main results. Section~\ref{sec:scope} then contextualizes the scope of our results, and the costs of the assumptions stated in the previous section, verifying them on two model families. Section~\ref{sec:concentration} discusses the probabilistic tools needed to obtain concentration, and a simulation study and discussion are provided in Sections~\ref{sec:simulation} and \ref{sec:discussion}. All proofs are contained in the supplement.

\section{Model, estimators, and main results}\label{sec:main}
In this section we describe the model class, estimator family, and then standing assumptions and main statistical results. The implicit scope, limitations and verification that such assumptions are reasonable is deferred to Section~\ref{sec:scope}, and the necessary technical probabilistic tools are detailed in Section~\ref{sec:concentration}.

\subsection{Compact-memory Hawkes processes}\label{sec:model}
Let $N=(N_1,\ldots,N_D)$ be a simple stationary point process on $\R$ with natural filtration $\mathcal{F}_t$.  For $\theta\in\Theta\subset\R^p$, the $i^\text{th}$ component intensity is
\begin{equation}\label{eq:model}
\lambda_i(t;\theta) = \phi_{\theta,i}\left( \nu_{\theta,i} + \sum_{j=1}^D\int_{[t-A,t)}h_{\theta,ij}(t-s)\,\dd N_j(s) \right),
\qquad i=1,\ldots,D ,
\end{equation}
where $\nu_{\theta,i}$ is the baseline rate, and the interaction kernels $h_{\theta,ij}(u)=\tilde h_{\theta,ij}(u)\1\{0\le u\le A\}$ are compactly supported and can take either sign (positive values represent excitation, negative values inhibition). Note that $A$ is fixed and known. The smooth positive link function $\phi_{\theta,i}$ ensures that the intensity is positive; in the canonical linear case \citep{hawkes1971}, $\phi_{\theta,i}(x)=x$. Under the spectral radius condition Assumption~\ref{ass:stability}, a multivariate Hawkes process characterized by \eqref{eq:model} exists and is stationary \citep{bremau1996}. Compact support means only that \(\lambda_i(t;\theta)\) is a finite-memory functional of \(N|_{[t-A,t)}\).  It does not imply stationarity of the law, nor Poisson-type independence of disjoint time windows.

Next, we introduce some notation. The true parameter $\theta^\star$ lies in a compact set \(\Theta_0\Subset\Theta^\circ\).  The conditional log-likelihood on \([0,T]\) is
\begin{equation}\label{eq:likelihood}
\ell_T(\theta) = \sum_{i=1}^D \left( \int_0^T\log\lambda_i(t;\theta) \dd N_i(t) - \int_0^T\lambda_i(t;\theta)\dd t \right).
\end{equation}
For a configuration \(N\), let \(G_tN\) denote \(N|_{[t-A,t)}\) recentred on \([-A,0)\), and write \(C(x)=x([-A,0))\) for a local configuration on \(x\); the map \(G_t\) standardizes the time-\(t\) window to the reference window \([-A,0)\), so that intensities, weights, and their stationary expectations can all be written as fixed functionals applied to \(G_tN\). We denote the local count 
\begin{equation*}
    \mathfrak C_t=N([t-A,t))=\sum_{j=1}^D N_j([t-A,t))
\end{equation*}
noting that the half-open convention is used for predictability.

\subsection{Compensator moments}\label{sec:estimators}
Let \(H(t;\theta)\in\R^{q\times D}\) be predictable with fixed \(q\ge p\).  The estimating map (Equation \eqref{eqn:martingale-EE}) is restated here: 
\begin{equation*}
\Psi_T^H(\theta)
=
\sum_{i=1}^D\int_0^T H_{\cdot i}(t;\theta)
\{\dd N_i(t)-\lambda_i(t;\theta)\,\dd t\}\in\R^q .
\end{equation*}
Write \(M_i^{\theta^\star}(t)=N_i(t)-\int_0^t\lambda_i(s;\theta^\star)\,\dd s\) for the compensated components under \(\Prob_{\theta^\star}\).  The estimating map splits into a martingale term and a drift,
\begin{equation}\label{eq:generic-decomposition}
\Psi_T^H(\theta)
=
\sum_{i=1}^D\int_0^T H_{\cdot i}(t;\theta)\,\dd M_i^{\theta^\star}(t)
+
\int_0^T H(t;\theta)
\{\lambda(t;\theta^\star)-\lambda(t;\theta)\}\,\dd t.
\end{equation}
Under stationarity the population drift is
\begin{equation}\label{eq:gH-def}
g_H(\theta,\theta^\star)
=
\E_{\theta^\star}
\left[H(0;\theta)\{\lambda(0;\theta^\star)-\lambda(0;\theta)\}\right].
\end{equation}
At the truth, \(\Psi_T^H(\theta^\star)\) is a martingale.  For a fixed deterministic positive-definite matrix \(W\), the GMM criterion and estimator are
\begin{equation}\label{eq:gmm-estimator}
Q_{T,W}^H(\theta)=
\left\|T^{-1}\Psi_T^H(\theta)\right\|_W^2,
\qquad
\|x\|_W^2=x^\top Wx ,
\end{equation}
with \(\hat\theta_H=\hat\theta_{H,W}\) any measurable minimizer over \(\Theta\) (Supplement Proposition~\ref{prop:gmm-measurable-selection}).  The likelihood score is the specific weight
\begin{equation}\label{eq:likelihoodweights}
    H_{\rm score}(t;\theta)
    =D_\theta(t;\theta)^\top\Lambda(t;\theta)^{-1},
\end{equation}
where here and throughout
\begin{equation}\label{eq:D-Lambda-def}
D_\theta(t;\theta)=\partial_\theta\lambda(t;\theta),
\qquad
\Lambda(t;\theta)=\diag\{\lambda_1(t;\theta),\ldots,\lambda_D(t;\theta)\},
\end{equation}
so that \(\Psi_T^{H_{\rm score}}(\theta)=\nabla\ell_T(\theta)\): the score is not outside the moment theory but rather it is the efficient member (Proposition~\ref{prop:godambe-optimality}).

\begin{example}[Four canonical weight families]\label{ex:weight-families-body}
The class contains the standard constructions. \emph{Score weights} \(H=D_\theta^\top\Lambda^{-1}\) give maximum likelihood \citep{ogata1978}.  \emph{Derivative weights} \(H=D_\theta^\top\) give the first-order equation for least squares: the point-process least-squares contrast
\begin{equation}\label{eq:ls-contrast}
\gamma_T(\theta)
=\sum_{i=1}^D\left\{\frac1T\int_0^T\lambda_i(t;\theta)^2\,\dd t-\frac2T\int_0^T\lambda_i(t;\theta)\,\dd N_i(t)\right\}
\end{equation}
of \cite{reynaud2010adaptive} (see also \citealp{hansen2015,bacry2020sparse}) satisfies \(\nabla_\theta\gamma_T(\theta)=-(2/T)\,\Psi_T^{D_\theta^\top}(\theta)\), so its first-order equation is the derivative-weight member of the class. The estimator defined by global minimization of \(\gamma_T\) is treated in Supplement Proposition~\ref{prop:supp-ls-contrast-estimator}; this avoids imposing a global no-spurious-critical-point condition on the derivative moment. \emph{Inverse-intensity weights} \(H=Z\Lambda^{-1}\), for compact-window features \(Z\), give temporal Stoyan--Grabarnik identities \citep{stoyangrabarnik1991,cronievanlieshout2018,kresinschoenberg2023}, matching \(\sum_i\int Z_{\cdot i}\lambda_i^{-1}\dd N_i\) to \(\int\sum_iZ_{\cdot i}\,\dd t\).  \emph{Direct weights} \(H=Z\) give Campbell--Mecke/Tak\'acs--Fiksel equations that match event sums to compensator integrals \citep{takacs1986,coeurjolly2016optimal,lastpenrose2017}.  Each moment family is admissible under the standing assumptions when its weight satisfies Assumption~\ref{ass:weights} and the corresponding identification, rank, and covariance conditions hold; the envelopes are automatic for the score and derivative weights, where they follow from \ref{ass:parameter}--\ref{ass:kernels}, and otherwise reduce to a polynomial-envelope condition on the feature \(Z\).
\end{example}

The four families above are illustrative rather than exhaustive. The admissible-weight formulation also accommodates smooth contrast criteria through their first-order equations.

\begin{remark}[Separable Bregman contrasts]\label{rem:bregman-contrasts}
The score and least-squares weights are also special cases of a separable
Bregman-contrast construction \citep{bregman1967relaxation,gneiting2007strictly}.  Let
\(b:(0,\infty)\to\R\) be \(C^3\) and strictly convex, set
\(g_b(x)=xb'(x)-b(x)\), and define
\[
\Gamma_{b,T}(\theta)=\sum_{i=1}^D\left\{T^{-1}\int_0^T g_b(\lambda_i(t;\theta))\,\dd t-T^{-1}\int_0^T b'(\lambda_i(t;\theta))\,\dd N_i(t)\right\}.
\]
Writing \(\Phi_b(t;\theta)=\diag\{b''(\lambda_1(t;\theta)),\ldots,b''(\lambda_D(t;\theta))\}\) and \(H_b(t;\theta)=D_\theta(t;\theta)^\top\Phi_b(t;\theta)\), suppose that \(H_b\) is admissible in the sense of Assumption~\ref{ass:weights}. Then
\[
\nabla_\theta\Gamma_{b,T}(\theta)=-T^{-1}\Psi_T^{H_b}(\theta).
\]
Thus the first-order condition for \(\Gamma_{b,T}\) is exactly the compensator estimating equation with weight \(H_b\). The choices \(b(x)=x^2\) and \(b(x)=x\log x-x\) make \(\Gamma_{b,T}\), respectively, the least-squares contrast \eqref{eq:ls-contrast} and exactly \(-T^{-1}\ell_T(\theta)\). In the latter case, \(H_b=H_{\mathrm{score}}\), so \(\Psi_T^{H_b}(\theta)=\nabla_\theta \ell_T(\theta)\).
\end{remark}

We now proceed to define various functionals necessary for the analysis of the asymptotic geometry of our setting.
\begin{equation}\label{eq:AH-def}
A_H(\theta)=\E_\theta\{H(0;\theta)D_\theta(0;\theta)\},
\end{equation}
\begin{equation}\label{eq:OmegaH-def}
\Omega_H(\theta)=
\E_\theta\{H(0;\theta)\Lambda(0;\theta)H(0;\theta)^\top\},
\end{equation}
their plug-in version
\begin{equation}\label{eq:OmegaHhat-def}
\hat\Omega_{H,T}(\theta)
=
\frac1T\int_0^T
H(t;\theta)\Lambda(t;\theta)H(t;\theta)^\top\,\dd t ,
\end{equation}
and, for local expansions, the two-parameter drift Jacobian
\begin{equation}\label{eq:BH-two-parameter}
B_H(\theta,\theta^\star)
=-\partial_\theta g_H(\theta,\theta^\star),
\qquad
B_H(\theta,\theta)=A_H(\theta).
\end{equation}
Here \(A_H\) records how the moments respond, on average, to local parameter changes, and \(\Omega_H\) records their intrinsic martingale variability per unit time.  Both \(A_H\) and \(\Omega_H\) are time-free by stationarity, \(\hat\Omega_{H,T}\) is the natural plug-in, and \(B_H\) controls expansions of the drift away from the diagonal.

\subsection{Standing assumptions}\label{sec:assumptions}
All results below are stated under the following block of standing assumptions. Parts (a)-(d) concern the model; parts (e)-(f) are relative to a given weight \(H\).  Section~\ref{sec:scope} discusses what each part means and how it is verified for specific examples. The GMM results of Section~\ref{sec:results} run on (a)-(c) and (e)-(f); part (d) enters through the likelihood benchmark, the least-squares contrast result in the supplement, and the efficiency comparison.

\begin{assumption}[Standing assumptions]\label{ass:standing}
Throughout the remainder of the paper, the dimensions \(p\), \(D\), and \(q\) are fixed and do not depend on \(T\), and we assume
\begin{enumerate}[label={\rm(\alph*)},ref=\theassumption(\alph*)]
    \item \label{ass:parameter} \emph{(Smooth positive links.)} The parameter set \(\Theta\subset\R^p\) is compact, with \(\Theta_0\Subset\Theta^\circ\), and is contained in a compact convex set \(K_\Theta\Subset\Theta^+\), where \(\Theta^+\) is the natural parameter domain.  For each \(i\), let \(\calX_i(K_\Theta)\) denote the set of linear-predictor values in \eqref{eq:model} attainable with \(\theta\in K_\Theta\).  The maps \(\theta\mapsto\nu_{\theta,i}\) and \((\theta,x)\mapsto\phi_{\theta,i}(x)\) are \(C^3\) on neighborhoods of \(K_\Theta\) and \(K_\Theta\times\calX_i(K_\Theta)\), respectively. Uniformly in \(i\), \(\phi_{\theta,i}\) is bounded away from zero on this set, \(\partial_x\phi_{\theta,i}\) is bounded, and all mixed \((\theta,x)\)-derivatives of \(\phi_{\theta,i}\) up to order three have at most polynomial growth in \(x\).  The derivatives of \(\nu_{\theta,i}\) up to order three are uniformly bounded on \(K_\Theta\).  Structural-zero coordinates are removed before applying the theory.
    
    \item \label{ass:kernels} \emph{(Smooth compact kernels.)} Each $\tilde h_{\theta,ij}$ and its active parameter derivatives up to order three are uniformly bounded and continuous on $[0,A]\times K_\Theta$, with second derivatives uniformly Lipschitz in $\theta$. Explicitly, $A$ is assumed to be fixed and known.
    
    \item \label{ass:stability} \emph{(Uniform absolute-kernel stability.)} With $L_{\theta,i}$ a Lipschitz constant for the $i^\text{th}$ link and $G^{\mathrm{abs}}(\theta)_{ij}=L_{\theta,i}\int_0^A|h_{\theta,ij}(u)|\,\dd u$, there is $\varepsilon>0$ such that $\sup_{\theta\in\Theta}\rho\{G^{\mathrm{abs}}(\theta)\}\le1-\varepsilon$, where $\rho(\cdot)$ denotes the spectral radius.
    
    \item \label{ass:identifiability} \emph{(Identifiability and Fisher information.)} For all \(\theta,\theta'\in\Theta\), if $\lambda_i(t;\theta)=\lambda_i(t;\theta')$ for every \(i\) and Lebesgue-a.e.\ \(t\), \(\Prob_\theta\)-a.s., then \(\theta=\theta'\)---equivalently, under \ref{ass:stability}, \(\theta\mapsto\Prob_\theta\) is injective. The per-time Fisher information
    \begin{equation}\label{eq:fisher}
        I(\theta)= \E_\theta\left[ \sum_{i=1}^D \frac{\nabla_\theta\lambda_i(0;\theta) \nabla_\theta\lambda_i(0;\theta)^\top}{\lambda_i(0;\theta)} \right]
    \end{equation}
    is well defined and uniformly nonsingular on $\Theta_0$.
    
    \item \label{ass:weights} \emph{(Admissible compact-window weights.)} \(H\) is shift-covariant and compact-window: for \(r=0,1,2\) there are measurable maps \(\mathsf H_\theta^{(r)}\) on local configurations on \([-A,0)\), continuous in \(\theta\), with
    \begin{equation}\label{eq:H-shift-covariant}
        \nabla_\theta^rH(t;\theta)=\mathsf H_\theta^{(r)}(G_tN),
        \qquad r=0,1,2 ,
    \end{equation}
    satisfying the polynomial local-count and Lipschitz envelopes
    \begin{equation}\label{eq:H-envelope}
        \|\nabla_\theta^rH(t;\theta)\|_{\op}
        \leq B_r\{1+\mathfrak C_t^{m_r}\},
        \qquad r=0,1,2,
    \end{equation}
    \begin{equation}\label{eq:H-lipschitz-envelope}
        \|\nabla_\theta^rH(t;\theta)-\nabla_\theta^rH(t;\theta')\|_{\op}
        \le B'_r\{1+\mathfrak C_t^{m'_r}\}\|\theta-\theta'\|_2,
        \qquad r=0,1,2 .
    \end{equation}

    \item \label{ass:moment-identification} \emph{(Moment identification and rank.)} For every \(\theta^\star\in\Theta_0\), \(g_H(\theta,\theta^\star)=0\) if and only if \(\theta=\theta^\star\); moreover, for every \(\epsilon>0\), 
    \begin{equation}\label{eq:H-global-separation}
        \inf_{\theta^\star\in\Theta_0}
        \inf_{\|\theta-\theta^\star\|_2\ge\epsilon}
        \|g_H(\theta,\theta^\star)\|_2>0,
    \end{equation}
    \begin{equation}\label{eq:H-rank}
        \inf_{\theta\in\Theta_0}
        \lambda_{\min}\{A_H(\theta)^\top A_H(\theta)\}>0 ,
    \end{equation}
    and \(\inf_{\theta\in\Theta_0}\lambda_{\min}\{\Omega_H(\theta)\}=\omega_0>0\).
\end{enumerate}
\end{assumption}

Throughout Section~\ref{sec:results}, the constants \(C,K\) may be taken to depend only on the constants named in (a)-(f), the dimensions \(D,p,q\), the memory length \(A\), and the exponent \(c\). Of primary importance, they carry no hidden dependence on \(\theta^\star\in\Theta_0\).

\subsection{Rates, limits, and efficiency}\label{sec:results}

We now state the main results of the paper. These results are the compact-window martingale counterparts of finite-dimensional GMM consistency, sandwich normality, and two-step optimal weighting \citep{hansen1982large,pakespollard1989,vandervaart1998}.

\begin{theorem}[GMM high-probability rate]\label{thm:generic-rate}
Let \(\hat\theta_H\) be a measurable global minimizer of \eqref{eq:gmm-estimator} with fixed deterministic positive-definite \(W\).  Under Assumptions~\ref{ass:parameter}--\ref{ass:stability} and~\ref{ass:weights}--\ref{ass:moment-identification}, for every \(c>0\) there are \(C,K<\infty\) such that, for all sufficiently large \(T\),
\begin{equation}\label{eq:generic-rate}
\sup_{\theta^\star\in\Theta_0}\Prob_{\theta^\star}\left(\norm{\hat\theta_H-\theta^\star}_2>K\sqrt{\frac{\log T}{T}}\right)\le CT^{-c} .
\end{equation}
\end{theorem}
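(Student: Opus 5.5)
The plan is the standard two-stage GMM argument: uniform concentration of the normalized estimating map around its population drift gives consistency, and a local linearization then upgrades consistency to the \(\sqrt{\log T/T}\) rate. Split \(T^{-1}\Psi_T^H(\theta)\) via \eqref{eq:generic-decomposition} into the martingale part \(T^{-1}\sum_i\int_0^T H_{\cdot i}(t;\theta)\,\dd M_i^{\theta^\star}\) and the empirical drift \(T^{-1}\int_0^T H(t;\theta)\{\lambda(t;\theta^\star)-\lambda(t;\theta)\}\,\dd t\). The goal is one uniform deviation bound, holding with probability at least \(1-CT^{-c}\) uniformly in \(\theta^\star\in\Theta_0\):
\[
\sup_{\theta\in\Theta}\bigl\|T^{-1}\Psi_T^H(\theta)-g_H(\theta,\theta^\star)\bigr\|_2\le K_1\sqrt{\log T/T}.
\]

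\textbf{Uniform deviation bound.} First treat a fixed \(\theta\), handling the two parts separately.
\begin{itemize}
\item \emph{Drift part.} The integrand \(\mathsf F_\theta(G_tN)\) is a compact-window functional with polynomial envelope in \(\mathfrak C_t\). This holds by \eqref{eq:H-envelope} together with the bounds on \(\lambda\) implied by \ref{ass:parameter}--\ref{ass:kernels}. The Bernstein-type concentration for such functionals from Section~\ref{sec:concentration} (Poisson embedding plus exponential forgetting under \ref{ass:stability}) gives deviations of order \(\sqrt{\log T/T}\) with probability \(1-T^{-c'}\) for any \(c'\).
\item \emph{Martingale part.} Use a martingale Bernstein/Freedman inequality, with the envelope truncated on the event \(\{\max_{t\le T}\mathfrak C_t\le \kappa\log T\}\). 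That event has probability \(1-T^{-c'}\) because local counts have exponential moments, uniformly over \(\Theta\), by the stability margin \(\varepsilon\). On this event the quadratic variation \(T^{-1}\int H\Lambda H^\top\,\dd t\) is \(O(1)\) and the jumps are \(O((\log T)^{m_0})\), so Freedman gives \(\sqrt{\log T/T}\) up to a lower-order term.
\end{itemize}
Uniformity in \(\theta\) then comes from a \(T^{-1}\)-net of the compact \(\Theta\), which has polynomially many points, so a union bound costs only a constant in the exponent. Between net points, control the fluctuation with the Lipschitz envelopes \eqref{eq:H-lipschitz-envelope}, the Lipschitz property of \(\lambda\) in \(\theta\), and the truncation event, which together give oscillation \(O(T^{-1}\mathrm{polylog}\,T)\). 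The constants depend only on the assumption constants, which yields the required uniformity over \(\theta^\star\).

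\textbf{Consistency.} On the good event, \(Q_{T,W}^H(\hat\theta_H)\le Q_{T,W}^H(\theta^\star)\lesssim \log T/T\). Hence \(\|g_H(\hat\theta_H,\theta^\star)\|_2\lesssim\sqrt{\log T/T}\). The separation condition \eqref{eq:H-global-separation} then forces \(\|\hat\theta_H-\theta^\star\|_2\le\epsilon\) for any fixed \(\epsilon\) once \(T\) is large.

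\textbf{Rate.} Fix \(\epsilon\) small enough that \(B_H(\theta,\theta^\star)\) stays uniformly of full column rank on \(\|\theta-\theta^\star\|\le\epsilon\). This is possible by \eqref{eq:H-rank}, \(B_H(\theta^\star,\theta^\star)=A_H(\theta^\star)\), and equicontinuity of \(B_H\), which follows from \(C^1\) dominated differentiation using the \(r=1\) envelopes and moment bounds on \(\mathfrak C_0\). A mean-value bound then gives \(\|g_H(\theta,\theta^\star)\|_2\ge c_0\|\theta-\theta^\star\|_2\) on this ball. Combining this with \(\|g_H(\hat\theta_H,\theta^\star)\|_2\lesssim\sqrt{\log T/T}\) yields \eqref{eq:generic-rate}, using that \(W\) is positive definite so that the \(W\)-norm and the Euclidean norm are equivalent.

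\textbf{Main obstacle.} The main difficulty is the uniform concentration of the drift term for a dependent, non-Markov process whose law is not localized by compact memory. The first thing to try is the Section~\ref{sec:concentration} coupling: approximate \(G_tN\) by blocks built from the Poisson embedding restricted to windows of length \(\asymp\log T\). Exponential forgetting makes the coupling error \(T^{-c}\), so the blocks become nearly independent and Bernstein applies, with the polynomial envelope handled by the same truncation.
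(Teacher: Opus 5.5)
\textbf{Gap: the rate step relies on an unproved drift bound.} Your architecture is sound: a uniform law for consistency, then local curvature for the rate. The problem is that the rate step uses a bound you have not established. You assert that the drift average \(T^{-1}\int_0^T H(t;\theta)\{\lambda(t;\theta^\star)-\lambda(t;\theta)\}\,\dd t\) concentrates around \(g_H(\theta,\theta^\star)\) at order \(\sqrt{\log T/T}\), uniformly in \(\theta\), citing Section~\ref{sec:concentration}.

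That is not what Theorem~\ref{thm:local-concentration} gives. It gives \(K\log^{q_0}(T)/\sqrt T\) with \(q_0=\lceil m+1\rceil\ge1\). The blocking scheme you sketch under ``Main obstacle'' produces exactly this polylogarithmic loss: blocks of length \(\asymp\log T\), the polynomial envelope truncated at counts \(\asymp\log T\), and a per-block bound \(\asymp\log^{m+1}T\) that also serves as the variance proxy.

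Your rate step passes the global sup-deviation directly into \(\|g_H(\hat\theta_H,\theta^\star)\|_2\). So what your argument actually yields is \(\|\hat\theta_H-\theta^\star\|_2=O(\log^{q_0}(T)/\sqrt T)\), which is strictly slower than the claimed \(K\sqrt{\log T/T}\). The martingale part has a similar slip. On the truncation event \(\{\max_t\mathfrak C_t\le\kappa\log T\}\) alone, the bracket is only \(O(T\,\mathrm{polylog}\,T)\), not \(O(T)\). To get \(O(T)\) you need concentration of the bracket integrand itself, which Theorem~\ref{thm:local-concentration} does supply, because only \(o(1)\) relative accuracy is needed there.

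\textbf{Missing idea: the empirical drift vanishes identically at \(\theta^\star\).} Because of this, the sharp rate only has to be paid for by the pure martingale \(T^{-1}\Psi_T^H(\theta^\star)\). Its bracket is \(O(T)\) and, after predictable truncation, its jumps are polylogarithmic, so Freedman / Dzhaparidze--van Zanten gives \(\sqrt{\log T/T}\) with no loss.

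Away from \(\theta^\star\), control increments rather than levels. Write \(T^{-1}\Psi_T^H(\theta^\star+h)=T^{-1}\Psi_T^H(\theta^\star)+g_H(\theta^\star+h,\theta^\star)+R_T(h)\), with \(R_T(h)=\int_0^1\{\partial_\vartheta T^{-1}\Psi_T^H(\theta^\star+sh)+B_H(\theta^\star+sh,\theta^\star)\}h\,\dd s\). Then prove uniform concentration of the random Jacobian around \(-B_H\) on a fixed local ball. This is where the \(r=1,2\) envelopes of Assumption~\ref{ass:weights} enter: for the differentiated drift, for the derivative martingale, and for the net extension. You never use these envelopes for the random process. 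The result is \(\|R_T(h)\|\le K\log^{q_0}(T)T^{-1/2}\|h\|_2\), which is eventually absorbed into your curvature bound \(\|g_H(\theta^\star+h,\theta^\star)\|\ge b\|h\|_2\), so the polylog loss drops out.

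With this structure, the global uniform law is needed only at \(o(1)\) accuracy, for consistency. An alternative would be a sharper Bernstein inequality for the drift with an \(O(T)\) variance proxy, obtained from covariance decay via the coupling. That is extra work your proposal neither states nor carries out.

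\textbf{Minor point on the curvature step.} Requiring \(B_H(\theta,\theta^\star)\) to be of full column rank on the ball is not enough for your mean-value lower bound, since an average of full-rank matrices can be rank-deficient. You need \(\sup_{\|\theta-\theta^\star\|\le\epsilon}\|B_H(\theta,\theta^\star)-A_H(\theta^\star)\|_{\op}\) to be small relative to \(\sigma_{\min}(A_H(\theta^\star))\). The same continuity argument you give delivers this.
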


\begin{theorem}[GMM asymptotic normality]\label{thm:generic-an}
Fix \(\theta^\star\in\Theta_0\).  Under the assumptions of Theorem~\ref{thm:generic-rate},
\begin{equation}\label{eq:generic-sandwich}
\sqrt T(\hat\theta_H-\theta^\star)
\Rightarrow
N_p\{0,V_H(W;\theta^\star)\},
\end{equation}
with
\begin{equation*}
V_H(W;\theta^\star)
=
(A_H^\top W A_H)^{-1}A_H^\top W\Omega_HW A_H(A_H^\top W A_H)^{-1},
\end{equation*}
which for \(W=\Omega_H(\theta^\star)^{-1}\) reduces to
\begin{equation*}
V_H^{\rm opt}(\theta^\star)
=
\{A_H(\theta^\star)^\top\Omega_H(\theta^\star)^{-1}A_H(\theta^\star)\}^{-1}.
\end{equation*}
\end{theorem}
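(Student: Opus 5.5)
The plan is the standard GMM route: linearize the first-order condition around \(\theta^\star\), apply a martingale central limit theorem to \(T^{-1/2}\Psi_T^H(\theta^\star)\), and use ergodic laws of large numbers for the Jacobian. By Theorem~\ref{thm:generic-rate}, \(\hat\theta_H\to\theta^\star\) in probability (at rate \(\sqrt{\log T/T}\)). Since \(\theta^\star\in\Theta_0\Subset\Theta^\circ\), with probability tending to one \(\hat\theta_H\) is interior, so the first-order condition holds:
\[
\nabla_\theta\Psi_T^H(\hat\theta_H)^\top W\,\Psi_T^H(\hat\theta_H)=0 .
\]
Here
\[
\nabla_\theta\Psi_T^H(\theta)=\sum_i\int_0^T\nabla_\theta H_{\cdot i}\{\dd N_i-\lambda_i\dd t\}-\int_0^T H D_\theta\,\dd t .
\]
A row-wise mean-value expansion gives
\[
\Psi_T^H(\hat\theta_H)=\Psi_T^H(\theta^\star)+J_T(\hat\theta_H-\theta^\star),
\]
where each row of \(J_T\) is evaluated at an intermediate point on the segment, which lies in the convex set \(K_\Theta\). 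Solving,
\[
\sqrt T(\hat\theta_H-\theta^\star)=-\{(T^{-1}\nabla\Psi_T(\hat\theta))^\top W (T^{-1}J_T)\}^{-1}(T^{-1}\nabla\Psi_T(\hat\theta))^\top W\,T^{-1/2}\Psi_T^H(\theta^\star).
\]

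\emph{Jacobian.} The claim is \(\sup_{\|\theta-\theta^\star\|\le\delta_T}\|T^{-1}\nabla_\theta\Psi_T^H(\theta)+A_H(\theta^\star)\|\to0\) in probability for any \(\delta_T\to0\). Split \(T^{-1}\nabla\Psi_T\) into three pieces.
\begin{enumerate}[label=(\roman*)]
\item \(T^{-1}\int\nabla H\,\dd M^{\theta^\star}\). This is \(O_P(T^{-1/2})\): its quadratic variation is \(\int\|\nabla H\|^2\lambda\,\dd t\), which has order \(T\) because the moments of \(\mathfrak C_t\) are uniformly bounded under \ref{ass:stability} (via Poisson embedding and exponential moments of local counts).
\item \(T^{-1}\int\nabla H(\lambda^\star-\lambda_\theta)\dd t\). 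This is \(O_P(\delta_T)\), by the polynomial envelopes of \ref{ass:weights}, the derivative bounds of \ref{ass:parameter}--\ref{ass:kernels} (which bound \(\|D_\theta\|\) by polynomials in \(\mathfrak C_t\)), and bounded moments of \(\mathfrak C_t\).
\item \(-T^{-1}\int H(t;\theta)D_\theta(t;\theta)\dd t\). The Lipschitz envelope \eqref{eq:H-lipschitz-envelope} reduces this to \(\theta=\theta^\star\), and an ergodic theorem for the stationary shift-covariant functional \(\mathsf H^{(0)}_{\theta^\star}(G_tN)\,\partial_\theta\lambda(G_tN)\) gives convergence to \(A_H(\theta^\star)\). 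Ergodicity follows from stationarity and uniqueness of the stationary law under \ref{ass:stability} (Brémaud--Massoulié); the concentration tools of Section~\ref{sec:concentration} would give it directly.
\end{enumerate}
The same argument applies to \(T^{-1}J_T\). By the rank condition \eqref{eq:H-rank} and positive definiteness of \(W\), \(A_H^\top WA_H\) is invertible, so the bracketed matrix inverse converges to \((A_H^\top W A_H)^{-1}\).

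\emph{Martingale CLT.} \(T^{-1/2}\Psi_T^H(\theta^\star)=T^{-1/2}\sum_i\int_0^T H_{\cdot i}(t;\theta^\star)\dd M_i^{\theta^\star}(t)\) is a vector martingale with predictable quadratic variation \(T^{-1}\int_0^T H\Lambda H^\top\dd t=\hat\Omega_{H,T}(\theta^\star)\). The same ergodic argument gives \(\hat\Omega_{H,T}(\theta^\star)\to\Omega_H(\theta^\star)\) in probability, with the limit nonsingular by \ref{ass:moment-identification}. Jumps of \(N\) are simultaneous across components with probability zero, so the jumps of the normalized martingale are \(T^{-1/2}\|H(t;\theta^\star)\|\). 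A Lindeberg condition follows from \(\E\,T^{-1}\int\|H\|^{2+\eta}\lambda\,\dd t<\infty\), again a polynomial moment of \(\mathfrak C_t\). Rebolledo's CLT (or the Cramér--Wold device with the scalar martingale CLT) then yields \(T^{-1/2}\Psi_T^H(\theta^\star)\Rightarrow N_q(0,\Omega_H)\). Slutsky's lemma gives \(N_p(0,V_H)\) with the stated sandwich covariance, and substituting \(W=\Omega_H^{-1}\) collapses \(V_H\) to \((A_H^\top\Omega_H^{-1}A_H)^{-1}\).

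\emph{Main obstacle.} The delicate step is the ergodic law of large numbers and the uniform moment control of \(\mathfrak C_t\) for nonlinear signed-kernel processes started from the conditional likelihood origin. Intensities on \([0,A)\) depend on pre-sample history; since \(N\) is stationary on \(\R\), this is harmless, and ergodicity can be obtained from the Poisson-embedding coupling. I would cite the exponential forgetting and Bernstein-type concentration results of Section~\ref{sec:concentration}, which were presumably already used for Theorem~\ref{thm:generic-rate}, to get the convergence of time averages of polynomially enveloped compact-window functionals; these imply the ergodic limits needed here. The remaining uniformity over the shrinking neighbourhood is routine given the Lipschitz envelopes.
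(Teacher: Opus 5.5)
Your proposal is correct and follows essentially the same route as the paper. Both arguments use the interior first-order condition, a linearization whose normalized Jacobian tends to \(-A_H(\theta^\star)\), the Rebolledo martingale CLT for \(T^{-1/2}\Psi_T^H(\theta^\star)\) (bracket \(\hat\Omega_{H,T}(\theta^\star)\to\Omega_H(\theta^\star)\), Lindeberg from polynomial count moments), and then Slutsky.

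The only difference is bookkeeping. You solve an exact mean-value form, which needs only consistency plus Jacobian convergence on shrinking balls obtained from the Lipschitz envelopes. The paper instead uses the expansion of Lemma~\ref{lem:generic-stochastic-differentiability}, itself built on Proposition~\ref{prop:generic-controls} and continuity of \(B_H\). It combines this with the optimality bound \(T^{-1}\Psi_T^H(\hat\theta_H)=O_{\Prob}(T^{-1/2})\) to absorb the \(o_{\Prob}(1)\) Jacobian error.
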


Efficient weighting balances sensitivity against variability, exactly as in classical two-step GMM \citep{hansen1982large}; the Gaussian limit is a martingale central limit theorem for stochastic integrals \citep{rebolledo1980,jacodshiryaev2003}.

\begin{corollary}[Feasible optimal GMM]\label{cor:two-step-gmm}
Let \(\tilde\theta_H\) satisfy \eqref{eq:generic-rate}, set \(\hat W_T=\hat\Omega_{H,T}(\tilde\theta_H)^{-1}\), as per equation \eqref{eq:OmegaHhat-def}, on the event that this inverse exists and \(\hat W_T=I_q\) otherwise, and let \(\hat\theta_H^{(2)}\) be a measurable minimizer of \(\vartheta\mapsto m_T(\vartheta)^\top\hat W_Tm_T(\vartheta)\) with \(m_T(\vartheta)=T^{-1}\Psi_T^H(\vartheta)\).  Under the assumptions of Theorem~\ref{thm:generic-rate}, for every \(c>0\) there are \(C,K<\infty\) such that, for all sufficiently large \(T\),
\begin{equation}\label{eq:pluginHPrate}
   \sup_{\theta^\star\in\Theta_0}
\Prob_{\theta^\star}\left(
\norm{\hat\theta_H^{(2)}-\theta^\star}_2
>
K\sqrt{\frac{\log(T)}{T}}
\right)
\le CT^{-c} ,
\end{equation}
and, for each fixed \(\theta^\star\in\Theta_0\),
\begin{equation}\label{eq:plugin-CLT}
\sqrt T\left(\hat\theta_H^{(2)}-\theta^\star\right)
\Rightarrow
N_p\!\left(0,
\{A_H(\theta^\star)^\top\Omega_H(\theta^\star)^{-1}A_H(\theta^\star)\}^{-1}
\right).
\end{equation}
\end{corollary}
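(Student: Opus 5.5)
The plan is to derive the corollary from Theorems~\ref{thm:generic-rate} and~\ref{thm:generic-an}, re-running their arguments with a random weight matrix that is uniformly well conditioned with high probability. The first step is to show that \(\hat\Omega_{H,T}(\tilde\theta_H)\) concentrates around \(\Omega_H(\theta^\star)\) at rate \(\sqrt{\log T/T}\), uniformly over \(\theta^\star\in\Theta_0\), with failure probability \(CT^{-c}\).

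Write
\[
\hat\Omega_{H,T}(\tilde\theta_H)-\Omega_H(\theta^\star)=\{\hat\Omega_{H,T}(\tilde\theta_H)-\hat\Omega_{H,T}(\theta^\star)\}+\{\hat\Omega_{H,T}(\theta^\star)-\Omega_H(\theta^\star)\}.
\]
The second bracket is a time average of a stationary compact-window functional \(\mathsf H_\theta(G_tN)\Lambda\mathsf H_\theta^\top\) with polynomial local-count envelope. The Bernstein-type concentration from exponential forgetting (the tool already used for Theorem~\ref{thm:generic-rate}) bounds it by \(K\sqrt{\log T/T}\) except on an event of probability \(CT^{-c}\). The first bracket I would handle with the Lipschitz envelope \eqref{eq:H-lipschitz-envelope} together with the \(C^1\) dependence of \(\lambda\) on \(\theta\) from (a)--(b). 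This gives a bound of order \(\|\tilde\theta_H-\theta^\star\|_2\cdot T^{-1}\int_0^T(1+\mathfrak C_t^{m})\,\dd t\). The average of local-count powers is bounded by a constant with overwhelming probability, again by concentration plus the finite exponential moments of \(\mathfrak C_t\) under (c). Since \(\lambda_{\min}\Omega_H\ge\omega_0\) uniformly, Weyl's inequality then shows the inverse exists on the good event. On that event \(\|\hat W_T-\Omega_H(\theta^\star)^{-1}\|_{\op}\le K\sqrt{\log T/T}\), and the eigenvalues of \(\hat W_T\) lie in a fixed interval \([w_-,w_+]\).

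For the rate \eqref{eq:pluginHPrate}, I would revisit the proof of Theorem~\ref{thm:generic-rate}. It uses \(W\) only through the bounds \(w_-\|x\|^2\le\|x\|_W^2\le w_+\|x\|^2\), so it applies on the good event. The ingredients are the uniform deviation \(\sup_\theta\|T^{-1}\Psi_T^H(\theta)-g_H(\theta,\theta^\star)\|\lesssim\sqrt{\log T/T}\), the separation \eqref{eq:H-global-separation}, and the local rank \eqref{eq:H-rank}. These give \(\|g_H(\hat\theta^{(2)}_H,\theta^\star)\|\le(w_+/w_-)^{1/2}\cdot 2\sup_\theta\|m_T-g_H\|\), hence the rate. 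The argument has to be phrased uniformly over all \(W\) with spectrum in \([w_-,w_+]\), which is needed because \(\hat W_T\) is data-dependent. The complement of the good event contributes \(CT^{-c}\).

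For \eqref{eq:plugin-CLT}, I would expand the first-order condition \(\nabla m_T(\hat\theta^{(2)}_H)^\top\hat W_Tm_T(\hat\theta^{(2)}_H)=0\); the minimizer is interior with high probability by the rate and \(\Theta_0\Subset\Theta^\circ\). The expansion uses \(\nabla m_T(\theta)\to -A_H(\theta^\star)\) uniformly on shrinking neighborhoods, from the \(r=1,2\) envelopes and concentration, as in Theorem~\ref{thm:generic-an}. It also uses \(\hat W_T\to\Omega_H^{-1}\) in probability and the martingale CLT \(T^{-1/2}\Psi_T^H(\theta^\star)\Rightarrow N(0,\Omega_H)\). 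Slutsky then gives the limit with \(W=\Omega_H^{-1}\), whose sandwich collapses to \(\{A_H^\top\Omega_H^{-1}A_H\}^{-1}\). The main obstacle is the first step's uniform control of \(T^{-1}\int(1+\mathfrak C_t^m)\dd t\) with polynomial tail probability, since \(\mathfrak C_t\) is unbounded. I would first try a Bernstein bound on the truncated count plus an exponential-moment (Markov) bound on \(\sup_{t\le T}\mathfrak C_t\), which is at most logarithmic with probability \(1-CT^{-c}\).
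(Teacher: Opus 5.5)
Your overall architecture matches the paper's. You show that $\hat\Omega_{H,T}(\tilde\theta_H)$ is well conditioned with probability $1-CT^{-c}$ (the paper's Lemma~\ref{lem:plugin-inverse-polynomial}). You then rerun the rate proof with $\hat W_T$, using only its eigenvalue bounds. Finally you obtain the CLT from the first-order condition, Lemma~\ref{lem:generic-stochastic-differentiability}, $\hat W_T\to\Omega_H(\theta^\star)^{-1}$ in probability, and Slutsky.

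Your first step uses a different split from the paper's:
\begin{itemize}
\item You take pathwise Lipschitz control of the empirical $\hat\Omega_{H,T}$ between $\tilde\theta_H$ and $\theta^\star$, plus concentration at the single deterministic point $\theta^\star$.
\item The paper takes concentration of $\hat\Omega_{H,T}(\vartheta)-\bar\Omega_H(\vartheta,\theta^\star)$ uniformly in $\vartheta$, plus a population Lipschitz bound for $\bar\Omega_H(\cdot,\theta^\star)$.
\end{itemize}
Your version is valid and slightly more economical: the random plug-in is handled pathwise, so no supremum over $\vartheta$ is needed. Also, on the event of Lemma~\ref{lem:count-truncation} the Lipschitz term is at most $C\log^m(T)\sqrt{\log T/T}\to0$, so your ``main obstacle'' is not really one. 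Note that Theorem~\ref{thm:local-concentration} gives $K\log^{q_0}(T)/\sqrt T$, not $K\sqrt{\log T/T}$. Your overstated rate for $\hat W_T$ is harmless, since only invertibility and consistency of $\hat W_T$ are used.

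The genuine gap is in the rate step. You bound $\|g_H(\hat\theta_H^{(2)},\theta^\star)\|$ by a constant times $\sup_\vartheta\|m_T(\vartheta)-g_H(\vartheta,\theta^\star)\|$ and then invoke local rank. This makes the estimation error the same order as the uniform deviation. You assert that deviation is $\lesssim\sqrt{\log T/T}$, but the available uniform bound \eqref{eq:generic-ulln} is only $K\log^{q_0}(T)/\sqrt T$ with $q_0\ge1$. The blocking/truncation and net arguments lose polylogarithmic factors there. With that bound your argument yields only $\|\hat\theta_H^{(2)}-\theta^\star\|_2\lesssim\log^{q_0}(T)/\sqrt T$, which is weaker than \eqref{eq:pluginHPrate}.

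This is also not how the proof of Theorem~\ref{thm:generic-rate} works. There the uniform bound is used only for consistency, via global separation at a fixed $\epsilon$. The sharp rate comes from a two-scale local argument:
\begin{itemize}
\item At the truth, $m_T(\theta^\star)$ is a pure martingale with no drift and no supremum. The Freedman-type bound \eqref{eq:generic-truth-bound} therefore gives $\|m_T(\theta^\star)\|\le K_0\sqrt{\log T/T}$.
\item The Jacobian control \eqref{eq:generic-jacobian-control} gives $m_T(\theta^\star+h)=m_T(\theta^\star)+g_H(\theta^\star+h,\theta^\star)+R_T(h)$ with $\|R_T(h)\|\le K_1\log^{q_0}(T)T^{-1/2}\|h\|_2$.
\end{itemize}
The polylogarithmic factor thus only multiplies $\|h\|_2$, and it is absorbed by the curvature constant $b_H$ of Lemma~\ref{lem:generic-local-curvature}. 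That leaves $\|h\|_2\lesssim\sqrt{\log T/T}$.

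With $\hat W_T$ you must run exactly this comparison of $\|m_T(\theta^\star+h)\|_{\hat W_T}$ against $\|m_T(\theta^\star)\|_{\hat W_T}$. On the good event you use $w_-\|x\|_2^2\le\|x\|_{\hat W_T}^2\le w_+\|x\|_2^2$. This local expansion, not the one-line uniform-deviation bound, is what delivers the stated $\sqrt{\log(T)/T}$ rate.
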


\begin{corollary}[Almost-sure logarithmic rates]\label{cor:almost-sure-rate}
Fix \(\theta^\star\in\Theta_0\) and write \(\hat\theta_{H,T}\), \(\hat\theta_{H,T}^{(2)}\) when the horizon must be explicit.  Under the assumptions of Theorem~\ref{thm:generic-rate},
\begin{equation}\label{eq:as-rate-gmm}
\Prob_{\theta^\star}\left(
\limsup_{T\to\infty}
\sqrt{\frac{T}{\log(T)}}
\norm{\hat\theta_{H,T}-\theta^\star}_2<\infty
\right)=1 ,
\end{equation}
and, when the first step is taken to be \(\tilde\theta_H=\hat\theta_{H,T}\), under the assumptions of Corollary~\ref{cor:two-step-gmm} likewise
\begin{equation}\label{eq:as-rate-two-step}
\Prob_{\theta^\star}\left(
\limsup_{T\to\infty}
\sqrt{\frac{T}{\log(T)}}
\norm{\hat\theta_{H,T}^{(2)}-\theta^\star}_2<\infty
\right)=1 .
\end{equation}
\end{corollary}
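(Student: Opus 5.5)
The plan is a Borel--Cantelli argument along a geometric (or polynomial) grid of horizons, followed by interpolation between grid points. Theorem~\ref{thm:generic-rate} gives the bound only at each fixed \(T\), and the horizon is continuous, so the bound cannot be summed directly over \(T\).

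First, I would take \(T_n=n\) (integers). By Theorem~\ref{thm:generic-rate} with \(c=2\),
\[
\Prob_{\theta^\star}\bigl(\|\hat\theta_{H,n}-\theta^\star\|_2>K\sqrt{\log n/n}\bigr)\le Cn^{-2},
\]
which is summable. Borel--Cantelli then yields \(\limsup_n\sqrt{n/\log n}\,\|\hat\theta_{H,n}-\theta^\star\|_2\le K\) almost surely.

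To pass to continuous \(T\in[n,n+1]\), I would not interpolate the estimator, which need not be continuous in \(T\). Instead I would control the ingredients of the proof of Theorem~\ref{thm:generic-rate} uniformly over the interval. That proof presumably rests on an event \(E_T\) of probability at least \(1-CT^{-c}\) on which two bounds hold:
\[
\sup_{\theta}\|T^{-1}\Psi_T^H(\theta)-g_H(\theta,\theta^\star)\|\lesssim\sqrt{\log T/T},
\]
together with the local curvature bound coming from \eqref{eq:H-rank}. On that event the deterministic argument (separation \eqref{eq:H-global-separation}, then the local quadratic lower bound) gives the rate. So I would show that the event
\[
\tilde E_n=\Bigl\{\sup_{T\in[n,n+1]}\sup_\theta\|T^{-1}\Psi_T^H(\theta)-g_H(\theta,\theta^\star)\|\le K'\sqrt{\log n/n}\Bigr\}
\]
has complement probability \(\mathcal O(n^{-2})\). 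The argument is the following. For \(T\in[n,n+1]\),
\[
\Psi_T^H-\Psi_n^H=\int_n^T H\{\dd N-\lambda\,\dd t\},
\]
whose size is bounded by \((1+\mathfrak C^{m_0}_{\max})\{N([n-A,n+1])+\int_n^{n+1}\lambda\}\). This in turn is polynomial in the local count over a window of length \(A+1\). Exponential moments of local counts under the stability condition \ref{ass:stability} (the Poisson-embedding tools of Section~\ref{sec:concentration}) make this increment \(\mathcal O(\log n)\) with probability \(1-\mathcal O(n^{-2})\), and \(\log n/n=o(\sqrt{\log n/n})\). In addition, \(|T^{-1}-n^{-1}|\,\|\Psi_n^H\|=\mathcal O(n^{-1})\) on the high-probability event. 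The same treatment applies to the Jacobian terms with \(r=1,2\) in \eqref{eq:H-envelope}. Borel--Cantelli over \(n\) then places \(\omega\) in \(\tilde E_n\) for all large \(n\), and the deterministic step of the proof of Theorem~\ref{thm:generic-rate} gives \eqref{eq:as-rate-gmm}.

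For \eqref{eq:as-rate-two-step} I would repeat the argument with the random weight \(\hat W_T=\hat\Omega_{H,T}(\hat\theta_{H,T})^{-1}\). Two facts are needed on the same a.s. eventual events. First, \(\hat\Omega_{H,T}(\theta)\to\Omega_H(\theta)\) uniformly in \(\theta\), with the same grid-plus-increment control. Second, \(\hat\theta_{H,T}\to\theta^\star\) by the first part. Together with the lower bound \(\lambda_{\min}(\Omega_H)\ge\omega_0\) from \ref{ass:moment-identification} and continuity of \(\Omega_H\), these give eventually \(\hat W_T\) invertible with eigenvalues in a fixed compact subset of \((0,\infty)\). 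The deterministic rate argument only uses such eigenvalue bounds on \(W\), so it applies as before.

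I expect the main obstacle to be the uniform-in-\(T\in[n,n+1]\) control, specifically bounding the supremum over the interval of the local-count envelope \(\mathfrak C_t^{m}\) with polynomial tail. For this I would first try the union bound over \(t\in[n,n+1]\) via monotonicity in the window count together with the exponential-moment bound on \(N([0,A+1])\) under stationarity. If the proof of Theorem~\ref{thm:generic-rate} already gives the supremum over \(T'\in[T/2,T]\), this step is immediate.
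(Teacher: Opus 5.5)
Your proposal is correct and follows the paper's proof essentially step for step: Borel--Cantelli on integer horizons for the truth, uniform-law and Jacobian controls; a pathwise interpolation over \(T\in[n,n+1]\) with error \(\mathcal O(\log^{q}(n)/n)\), on the event \(\{\sup_{0\le t\le n+1}\mathfrak C_t\le L\log n\}\), which holds eventually almost surely by the sliding-window truncation lemma and Borel--Cantelli; a rerun of the deterministic localization argument; and eventual eigenvalue bounds on \(\hat W_T\) for the two-step estimator. Two small corrections: the paper's uniform law only holds at rate \(\log^{q_0}(T)/\sqrt T\), so the sharp \(\sqrt{\log T/T}\) rate comes from the truth bound together with the Jacobian control (which you do interpolate), not from the uniform law plus population curvature alone; and under \(\Prob_{\theta^\star}\) the plug-in \(\hat\Omega_{H,T}(\vartheta)\) converges to \(\bar\Omega_H(\vartheta,\theta^\star)\) rather than \(\Omega_H(\vartheta)\), which still yields \(\Omega_H(\theta^\star)\) at \(\vartheta=\hat\theta_{H,T}\) by continuity of \(\bar\Omega_H(\cdot,\theta^\star)\) at \(\theta^\star\).
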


The efficiency geometry of the class is a projection statement.  It is the classical GMM and optimal-estimating-function inequality \citep{godambe1960,godambeheyde1987,hansen1982large,chamberlain1987asymptotic,newey1990efficient,carrasco2014asymptotic}, written in the predictable-martingale inner product
\[
    \langle f,g\rangle_\theta
    =\E_\theta\{f(0)\Lambda(0;\theta)g(0)^\top\}
\]
that replaces the ordinary \(L^2(P_\theta)\) pairing.  For the score weight, \(A_{H_{\rm score}}(\theta)=\Omega_{H_{\rm score}}(\theta)=I(\theta)\): the score is the compensator moment whose sensitivity and covariance coincide with Fisher information.

\begin{proposition}[Godambe information bound]\label{prop:godambe-optimality}
Fix \(\theta\in\Theta_0\).  Let \(H\) be admissible in the sense of Assumption~\ref{ass:weights} with \(\Omega_H(\theta)\) nonsingular, and set \(J_H(\theta)=A_H(\theta)^\top\Omega_H(\theta)^{-1}A_H(\theta)\).  Then
\begin{equation}\label{eq:godambe-cov-bound}
    J_H(\theta) \preceq I(\theta),
    \qquad\text{hence}\qquad
    V_H^{\rm opt}(\theta)=J_H(\theta)^{-1}\succeq I(\theta)^{-1}
\end{equation}
whenever both sides are nonsingular.  Equality holds if and only if each row of \(H_{\rm score}\) lies in the closed linear span, under \(\langle\cdot,\cdot\rangle_\theta\), of the rows generated by \(H\).
\end{proposition}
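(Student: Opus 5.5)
The plan is to turn the statement into a Cauchy--Schwarz (projection) inequality in the Hilbert space of predictable, compact-window row functionals equipped with \(\langle f,g\rangle_\theta=\E_\theta\{f(0)\Lambda(0;\theta)g(0)^\top\}\).

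\textbf{Step 1 (sensitivity as a covariance).} Since \(\Lambda(0;\theta)\) is diagonal and a.s.\ positive by Assumption~\ref{ass:parameter}, we can write
\[
D_\theta(0;\theta)=\Lambda(0;\theta)\Lambda(0;\theta)^{-1}D_\theta(0;\theta)=\Lambda(0;\theta)H_{\rm score}(0;\theta)^\top .
\]
Substituting into \eqref{eq:AH-def} gives \(A_H(\theta)=\E_\theta\{H\Lambda H_{\rm score}^\top\}=\langle H,H_{\rm score}\rangle_\theta\), a \(q\times p\) matrix. Likewise \(\Omega_H=\langle H,H\rangle_\theta\), and \(I(\theta)=\langle H_{\rm score},H_{\rm score}\rangle_\theta\) by \eqref{eq:fisher}. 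All these expectations are finite: the polynomial envelope \eqref{eq:H-envelope}, together with the analogous envelopes for \(D_\theta\) and \(\Lambda^{-1}\) implied by (a)--(b), reduces finiteness to moments of \(\mathfrak C_0\) of all orders. I will take those moments as available from the stability assumption \ref{ass:stability} via the Poisson-embedding/exponential-moment bounds that the paper develops (Section~\ref{sec:concentration}).

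\textbf{Step 2 (the matrix inequality).} Form the \((q+p)\times(q+p)\) Gram matrix of the stacked rows \((H;H_{\rm score})\):
\[
\begin{pmatrix}\Omega_H & A_H\\ A_H^\top & I\end{pmatrix}\succeq 0,
\]
which holds because it equals \(\E_\theta\{V\Lambda V^\top\}\) with \(V=(H;H_{\rm score})\). Since \(\Omega_H\) is nonsingular, the Schur complement gives \(I-A_H^\top\Omega_H^{-1}A_H\succeq0\), i.e.\ \(J_H\preceq I\). When both matrices are nonsingular, inversion is order-reversing on positive-definite matrices, so \(V_H^{\rm opt}=J_H^{-1}\succeq I^{-1}\).

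\textbf{Step 3 (equality case).} Let \(\Pi\) denote the orthogonal projection onto \(\mathrm{span}\{H_{1\cdot},\ldots,H_{q\cdot}\}\) under \(\langle\cdot,\cdot\rangle_\theta\). Because the span is finite-dimensional, it is automatically closed. The coefficient matrix of the projection of \(H_{\rm score}\) is \(A_H^\top\Omega_H^{-1}\), so \(\Pi H_{\rm score}=A_H^\top\Omega_H^{-1}H\). A direct computation then shows that
\[
I-J_H=\langle H_{\rm score}-\Pi H_{\rm score},\,H_{\rm score}-\Pi H_{\rm score}\rangle_\theta,
\]
which is exactly the "information outside the span" identity. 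This matrix is zero if and only if each residual row has zero norm, i.e.\ each row of \(H_{\rm score}\) lies in the span (up to \(\langle\cdot,\cdot\rangle_\theta\)-null functionals).

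\textbf{Main obstacle.} The difficulty is not the algebra but making the inner-product space legitimate. \(\langle\cdot,\cdot\rangle_\theta\) is only a semi-inner product, so "span" must be read modulo null elements. That is harmless here, since null rows contribute nothing to either \(A_H\) or \(\Omega_H\), and the nonsingularity of \(\Omega_H\) already rules out degeneracy among the rows of \(H\). The remaining work is the integrability in Step 1: I will verify it by bounding \(\|H_{\rm score}\|\) by \(C(1+\mathfrak C_0^{m})\) using \(\phi\ge c>0\) and the polynomial growth of the derivatives, and then invoking the finiteness of all moments of \(\mathfrak C_0\) under \ref{ass:stability}.
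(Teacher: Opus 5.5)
Your proposal is correct and follows the paper's proof: the paper also forms the positive semidefinite Gram matrix of the stacked rows of \(H\) and \(H_{\rm score}\) under \(\langle\cdot,\cdot\rangle_\theta\), and takes the Schur complement of \(\Omega_H(\theta)\) to get the bound. For the equality case it likewise identifies that Schur complement with the Gram matrix of the projected score residual. Your Step~1 (writing \(D_\theta=\Lambda H_{\rm score}^\top\), so that \(A_H=\langle H,H_{\rm score}\rangle_\theta\)) and your remarks on integrability and the semi-inner product make explicit details that the paper leaves implicit.
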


\begin{proof}
Let \(S=H_{\rm score}\).  The Gram matrix of the row families of \(H\) and \(S\) under \(\langle\cdot,\cdot\rangle_\theta\) is positive semidefinite:
\[
    \begin{pmatrix}
    \Omega_H(\theta) & A_H(\theta)\\
    A_H(\theta)^\top & I(\theta)
    \end{pmatrix}
    \succeq0 .
\]
Taking the Schur complement of \(\Omega_H(\theta)\) gives \eqref{eq:godambe-cov-bound}.  The Schur complement is the Gram matrix of the score residual after projection onto the closed span of the rows of \(H\); it vanishes exactly when that residual is zero.  The covariance comparison follows by inversion.
\end{proof}

If \(\Omega_H(\theta)\) is singular, the same statement holds on the quotient by zero-variance moment directions, or with the Moore--Penrose inverse after removing redundant rows.  The bound holds for every admissible library \(H\), hence pointwise across the entire admissible class; what is not claimed is an attainability or local-asymptotic-minimax statement over all inference procedures.  Writing \(\mathcal V_H(\theta)\) for the closed row span generated by \(H\) and \(P_H\) for the orthogonal projection onto it, the Schur-complement residual is exactly the score energy the library cannot see:
\begin{equation*}
    I(\theta)-J_H(\theta)
    =
    \left(
    \left\langle S_a-P_HS_a,\,
    S_b-P_HS_b\right\rangle_\theta
    \right)_{a,b=1}^p
    \succeq0 .
\end{equation*}
A nested sequence of finite admissible libraries can improve efficiency only by shrinking this residual, and likelihood efficiency is reached precisely when it vanishes; Figure~\ref{fig:godambe-projection} illustrates, for a single score row, how the projection climbs and the residual contracts as the library grows.  Section~\ref{sec:simulation} evaluates exactly this gap numerically, for a just-identified and an overidentified derivative library, against the Fisher benchmark.  The theory here is pointwise in the chosen finite library; letting \(q=q_T\to\infty\) would require many-moment concentration and regularized weighting, as for efficient instruments in i.i.d. GMM \citep{newey1990efficient,donald2009choosing,newey2009generalized}.

\begin{figure}[!ht]
\centering
\begin{tikzpicture}[
    x=1cm,y=1cm,
    >=Latex,
    every node/.style={font=\small},
    plane/.style={draw, thick, fill=gray!15},
    span/.style={gray!60, very thick},
    score/.style={->, very thick},
    component/.style={->, very thick},
    residualold/.style={thick, dashed, gray!70},
    residualnew/.style={very thick, dashed},
    marker/.style={thin}
]
\coordinate (A) at (0.70,0.85);
\coordinate (B) at (6.70,0.41);
\coordinate (C) at (8.89,2.31);
\coordinate (D) at (2.89,2.75);
\filldraw[plane] (A)--(B)--(C)--(D)--cycle;
\coordinate (L1) at (1.42,1.48);
\coordinate (L2) at (7.42,1.04);
\draw[span] (L1) -- (L2);
\coordinate (O)  at (1.97,1.44);   
\coordinate (P)  at (5.06,1.21);   
\coordinate (Pp) at (6.23,2.23);   
\coordinate (S)  at (6.23,4.43);   
\draw[score]
    (O) -- node[pos=.56, left=5pt] {$S_a$} (S);
\draw[component]
    (O) -- node[pos=.55, below=5pt] {$P_HS_a$} (P);
\draw[component]
    (P) -- (Pp);
\draw[residualold] (P) -- (S);
\draw[residualnew]
    (Pp) -- node[pos=.60, right=4pt] {$S_a-P_{H'}S_a$} (S);
\coordinate (M1) at (4.801,1.229);
\coordinate (M2) at (4.997,1.399);
\coordinate (M3) at (5.256,1.380);
\draw[marker] (M1) -- (M2) -- (M3);
\coordinate (N1) at (6.049,2.073);
\coordinate (N2) at (6.049,2.313);
\coordinate (N3) at (6.230,2.470);
\draw[marker] (N1) -- (N2) -- (N3);
\node at (7.80,0.97) {$\mathcal V_H$};
\node at (7.85,1.85) {$\mathcal V_{H'}$};
\fill (O) circle (1.3pt);
\fill (P) circle (2.4pt);
\fill (Pp) circle (2.4pt);
\node[right=5pt of Pp] {$P_{H'}S_a$};
\end{tikzpicture}
\caption{Projection geometry for the Godambe bound, for a single score row \(S_a\).  A library \(H\) generates the predictable row span \(\mathcal V_H\) (line); enlarging it to \(H'\supseteq H\) enlarges the span to \(\mathcal V_{H'}\) (plane).  The projection climbs from \(P_HS_a\) to \(P_{H'}S_a\), recovering the in-span component \(P_{H'}S_a-P_HS_a\), and the residual shrinks from the grey to the black dashed segment: in the norm induced by \(\langle\cdot,\cdot\rangle_\theta\), \(\|S_a-P_HS_a\|^2=\|S_a-P_{H'}S_a\|^2+\|P_{H'}S_a-P_HS_a\|^2\).  Likelihood efficiency is reached when the span contains the full score span.  Section~\ref{sec:simulation} measures exactly this effect, with the just-identified library \(H_J\) enlarged to \(H_O\supseteq H_J\).}
\label{fig:godambe-projection}
\end{figure}

\begin{remark}[Simulation-based reading]\label{rem:sbi-gmm}
The residual summary \[Z_T^H(\vartheta;N)=T^{-1/2}\Psi_T^H(\vartheta;N)\] is the natural compact summary statistic for simulation-based inference on this model class \citep{mcfadden1989,pakespollard1989,gourierouxmonfortrenault1993,gallanttauchen1996}.  If paths are simulated from \(\Prob_\vartheta\) and the law of \(Z_T^H(\vartheta;N)\) is approximated by its martingale Gaussian limit, the resulting synthetic-likelihood quadratic \citep{price2018bayesian} is, up to Monte Carlo error and log-determinant terms, the oracle optimally weighted GMM criterion \citep{cranmer2020}.  Compact-window GMM is in this sense the analytic version of simulation-based inference built from compensated residual summaries.
\end{remark}

Specializing to \(H_{\rm score}\) makes the estimating equation the likelihood score equation, so the generic theory covers local score roots; for the score, the envelopes of Assumption~\ref{ass:weights} follow from \ref{ass:parameter}--\ref{ass:kernels}, and identification follows from \ref{ass:identifiability} through Kullback--Leibler separation, so the likelihood results below assume only (a)--(d).  The one likelihood-specific step is global localization: a global maximizer of \(\ell_T\) must be shown to lie, with high probability, in the local ball where the score expansion
\[
\sqrt T(\hat\theta_T-\theta^\star)
=
I(\theta^\star)^{-1}T^{-1/2}\nabla\ell_T(\theta^\star)
+o_{\Prob_{\theta^\star}}(1)
\]
applies.  This is done through a uniform likelihood law and population Kullback--Leibler separation in Supplement Section~\ref{app:mle-upper}.

\begin{theorem}[MLE high-probability rate]\label{thm:main-rate}
Let \(\hat\theta_T\) be any measurable global maximizer of \eqref{eq:likelihood}.  Under Assumptions~\ref{ass:parameter}--\ref{ass:identifiability}, for every \(c>0\) there are \(C,K<\infty\) such that, for all sufficiently large \(T\),
\begin{equation}\label{eq:main-rate}
\sup_{\theta^\star\in\Theta_0}
\Prob_{\theta^\star}\left(
\norm{\hat\theta_T-\theta^\star}_2
>
K\sqrt{\frac{\log T}{T}}
\right)
\le CT^{-c} .
\end{equation}
\end{theorem}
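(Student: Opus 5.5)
The plan has two stages: global localization of the maximizer, followed by a local argument that reduces to the generic theory with \(H=H_{\rm score}\). For the global step, I would normalize the likelihood ratio as
\[
L_T(\theta)=T^{-1}\{\ell_T(\theta)-\ell_T(\theta^\star)\}=T^{-1}\sum_i\int_0^T\log\frac{\lambda_i(t;\theta)}{\lambda_i(t;\theta^\star)}\,\dd M_i^{\theta^\star}(t)-\mathcal K_T(\theta,\theta^\star),
\]
where
\[
\mathcal K_T(\theta,\theta^\star)=T^{-1}\int_0^T\sum_i\bigl\{\lambda_i(t;\theta^\star)\log(\lambda_i^\star/\lambda_i)-\lambda_i^\star+\lambda_i\bigr\}\,\dd t\ge0
\]
is the empirical Kullback--Leibler rate. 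The weight \(\log(\lambda_i(\cdot;\theta)/\lambda_i(\cdot;\theta^\star))\) is a compact-window functional. By \ref{ass:parameter}--\ref{ass:kernels} (link bounded below, polynomial growth), it and its \(\theta\)-derivative obey polynomial local-count envelopes. Consequently, the Bernstein-type concentration from the exponential forgetting and Poisson embedding tools (Section~\ref{sec:concentration}) applies to both the martingale term and the centred drift. A chaining/net argument over compact \(\Theta\), using the Lipschitz envelope and a polynomially fine grid, then gives, with probability at least \(1-CT^{-c}\) uniformly in \(\theta^\star\in\Theta_0\),
\[
\sup_\theta\bigl|L_T(\theta)+\mathcal K(\theta,\theta^\star)\bigr|\le K\sqrt{\log T/T}.
\]
Here \(\mathcal K\) is the stationary KL rate.

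Next, separation. By \ref{ass:identifiability}, \(\mathcal K(\theta,\theta^\star)=0\) iff \(\theta=\theta^\star\). Continuity of \(\mathcal K\) in \((\theta,\theta^\star)\) on the compact set \(\Theta\times\Theta_0\), which follows from dominated convergence under the envelopes, yields \(\inf_{\|\theta-\theta^\star\|\ge\epsilon}\mathcal K\ge\kappa(\epsilon)>0\) uniformly. Near the diagonal, a second-order expansion gives \(\mathcal K(\theta,\theta^\star)\ge \tfrac14(\theta-\theta^\star)^\top I(\theta^\star)(\theta-\theta^\star)\) for \(\|\theta-\theta^\star\|\le\epsilon_0\), using uniform nonsingularity of \(I\) on \(\Theta_0\) and third-derivative bounds. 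Since \(L_T(\hat\theta_T)\ge0\), the global maximizer lies within distance \(\epsilon_0\) of \(\theta^\star\) on the good event.

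For the local rate I would not rely on the quadratic lower bound alone, since combined with the uniform \(\sqrt{\log T/T}\) deviation it gives only a \((\log T/T)^{1/4}\) rate. Instead, on the \(\epsilon_0\)-ball I would note that \(\hat\theta_T\) is an interior root of \(\nabla\ell_T=\Psi_T^{H_{\rm score}}\); interiority holds because \(\Theta_0\Subset\Theta^\circ\) and \(\epsilon_0\) can be taken small. Then I would run the local part of the proof of Theorem~\ref{thm:generic-rate} with \(H=H_{\rm score}\), \(A_H=\Omega_H=I\). Concretely:
\begin{itemize}
\item \(\|T^{-1}\nabla\ell_T(\theta^\star)\|\le K\sqrt{\log T/T}\) with probability \(1-CT^{-c}\), by martingale Bernstein with predictable quadratic variation \(T\hat\Omega\) concentrated.
\item \(\sup_{\|\theta-\theta^\star\|\le\epsilon_0}\|T^{-1}\nabla^2\ell_T(\theta)+I(\theta^\star)\|\le \tfrac12\lambda_{\min}(I)\) after shrinking \(\epsilon_0\), by the same uniform concentration applied to second derivatives together with continuity.
\item The mean value theorem then gives \(\|\hat\theta_T-\theta^\star\|\le 2\lambda_{\min}(I)^{-1}K\sqrt{\log T/T}\).
\end{itemize}
The envelopes required in Assumption~\ref{ass:weights} for \(H_{\rm score}\) follow from \ref{ass:parameter}--\ref{ass:kernels}, as noted before the statement.

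The main obstacle is the uniform likelihood law. The log-ratio weight is unbounded through the local count, and the stationary law is not localized by compact memory. I would therefore first establish exponential moments of \(\mathfrak C_t\) via Poisson-embedding domination by a linear Hawkes process with kernel \(|h|\) under \ref{ass:stability}. I would then truncate at \(\mathfrak C_t\le K\log T\), incurring polynomially small error, and apply the blockwise coupling with exponential forgetting to the truncated bounded functionals, keeping all constants uniform over \(\theta^\star\in\Theta_0\).
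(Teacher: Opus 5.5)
Your proposal matches the paper's proof: the global maximizer is first localized by a uniform likelihood law plus Kullback--Leibler separation (Propositions~\ref{prop:uniform-likelihood-law} and~\ref{prop:mle-localization-inputs}(iii)), and the $\sqrt{\log T/T}$ rate then follows from the score bound at $\theta^\star$ and uniform negative-definiteness of $T^{-1}\nabla^2\ell_T$ on a fixed ball, both taken from the generic theory with $H_{\rm score}$. The differences are minor and harmless. The paper finishes with a second-order Taylor comparison of likelihood values, which avoids the first-order condition, whereas you use interiority and the mean-value form of the score equation. You also state the uniform law at rate $\sqrt{\log T/T}$, although the available concentration only gives $\log^{q_0}(T)/\sqrt T$; this does not matter, since localization needs only an $o(1)$ deviation.
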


\begin{corollary}[MLE asymptotic normality]\label{cor:mle-asymptotic-normality}
Fix \(\theta^\star\in\Theta_0\).  Under Assumptions~\ref{ass:parameter}--\ref{ass:identifiability},
\begin{equation*}
\sqrt T\left(\hat\theta_T-\theta^\star\right)
\Rightarrow
N_p\left\{0,I(\theta^\star)^{-1}\right\} .
\end{equation*}
\end{corollary}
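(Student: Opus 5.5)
The plan is to treat the MLE as a compensator estimator with weight \(H_{\rm score}\), and to split the argument into localization and a local expansion.

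\emph{Localization.} By Theorem~\ref{thm:main-rate}, for any fixed \(\delta>0\) we have \(\Prob_{\theta^\star}(\|\hat\theta_T-\theta^\star\|_2>\delta)\to0\). Since \(\theta^\star\in\Theta_0\Subset\Theta^\circ\), the maximizer is therefore interior with probability tending to one. On that event the first-order condition \(\nabla\ell_T(\hat\theta_T)=\Psi_T^{H_{\rm score}}(\hat\theta_T)=0\) holds.

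\emph{Local expansion.} On this event we Taylor expand the score around \(\theta^\star\), with a mean-value Hessian evaluated row by row:
\[
0=T^{-1/2}\nabla\ell_T(\theta^\star)+\Big\{T^{-1}\nabla^2\ell_T(\bar\theta_T)\Big\}\sqrt T(\hat\theta_T-\theta^\star).
\]
The rest of the argument establishes two facts.

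\emph{First fact: the Hessian converges.} We need \(-T^{-1}\nabla^2\ell_T(\bar\theta_T)\to I(\theta^\star)\) in probability. The Hessian splits as
\[
-\nabla^2\ell_T(\theta)=\int_0^T\sum_i\frac{\nabla\lambda_i\nabla\lambda_i^\top}{\lambda_i^2}\,\dd N_i-\int_0^T\sum_i\nabla^2\lambda_i\Big\{\frac{\dd N_i}{\lambda_i}-\dd t\Big\}.
\]
We handle it in three steps.
\begin{enumerate}
\item At \(\theta^\star\), compensating \(\dd N_i\) turns the first term into the time average of a compact-window functional with mean \(I(\theta^\star)\), plus a martingale.
\item The second term is a martingale at \(\theta^\star\).
\item We move from \(\theta^\star\) to \(\bar\theta_T\) using the Lipschitz-in-\(\theta\) envelopes. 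For the score these follow from Assumptions~\ref{ass:parameter}--\ref{ass:kernels}, since \(\phi\) is bounded below and its derivatives grow polynomially, so third derivatives of \(\lambda\) are bounded by \(1+\mathfrak C_t^{m}\).
\end{enumerate}
Both the time averages and the martingale normalized by \(T\) are controlled by the uniform concentration and ergodic laws for compact-window functionals with polynomial local-count envelopes. These are the same tools used in Theorems~\ref{thm:generic-rate} and~\ref{thm:generic-an}. Equivalently, this step is exactly the Jacobian step of Theorem~\ref{thm:generic-an} applied with \(H=H_{\rm score}\): there \(B_H(\theta^\star,\theta^\star)=A_{H_{\rm score}}(\theta^\star)=I(\theta^\star)\), and the extra term \(\int\nabla H\{\dd N-\lambda\,\dd t\}\) is \(O_\Prob(\sqrt T)\), hence negligible after dividing by \(T\).

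\emph{Second fact: the score satisfies a CLT.} We need \(T^{-1/2}\nabla\ell_T(\theta^\star)\Rightarrow N_p\{0,I(\theta^\star)\}\). This score is the martingale \(T^{-1/2}\sum_i\int_0^T H_{{\rm score},\cdot i}\,\dd M_i^{\theta^\star}\). Its predictable quadratic variation is \(T^{-1}\int_0^T H\Lambda H^\top\dd t=\hat\Omega_{H_{\rm score},T}(\theta^\star)\), which converges to \(\Omega_{H_{\rm score}}(\theta^\star)=I(\theta^\star)\) by the ergodic law above. The jumps are of size \(T^{-1/2}\|H\|\le T^{-1/2}B_0(1+\mathfrak C_t^{m_0})\). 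By moment bounds on \(\mathfrak C_t\) from the Poisson embedding, a Lindeberg condition holds, so Rebolledo's martingale CLT applies.

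\emph{Conclusion.} Combining the two facts with Slutsky's lemma gives the limit \(I^{-1}N(0,I)=N_p\{0,I(\theta^\star)^{-1}\}\). Equivalently, the statement is Theorem~\ref{thm:generic-an} with \(H=H_{\rm score}\), \(q=p\) and \(W=I(\theta^\star)^{-1}\), since then \(V_H=I^{-1}II^{-1}\).

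The main obstacle is justifying that the score is admissible without assuming (f), and that global maximization rather than a GMM minimizer is covered. Theorem~\ref{thm:main-rate} handles both: it supplies the localization, and within the local ball only the rank condition \(A_{H_{\rm score}}=I(\theta^\star)\), which is nonsingular by Assumption~\ref{ass:identifiability}, and \(\Omega_{H_{\rm score}}=I(\theta^\star)\succ0\) are needed. So I would verify the envelopes in Assumption~\ref{ass:weights} for \(D_\theta^\top\Lambda^{-1}\) and its first two \(\theta\)-derivatives explicitly, and then invoke only the local part of the proof of Theorem~\ref{thm:generic-an}.
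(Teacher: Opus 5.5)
Your proposal is correct and follows essentially the same route as the paper: localization and the interior first-order condition from Theorem~\ref{thm:main-rate}, the score CLT via Proposition~\ref{prop:generic-map-clt}, and Hessian convergence from the score-weight Jacobian control of Proposition~\ref{prop:generic-controls} plus continuity of $J(\vartheta,\theta^\star)$ with $J(\theta^\star,\theta^\star)=I(\theta^\star)$, all combined by Slutsky. The only cosmetic difference is that the paper expands with the integrated Hessian $\int_0^1 T^{-1}\nabla^2\ell_T\{\theta^\star+s(\hat\theta_T-\theta^\star)\}\,\dd s$ rather than row-wise mean-value points, which avoids any measurability question about the intermediate points.
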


Finally, using Le Cam's two point testing reduction \citep[e.g.][Chapter 15]{wainwright2019} we are able to demonstrate that the $T^{-1/2}$ scale cannot be uniformly improved; as one may expect from classical i.i.d.\ theory.

\begin{theorem}[Root-\(T\) scale lower bound]\label{thm:lower-bound}
Suppose Assumptions~\ref{ass:parameter}--\ref{ass:identifiability} hold and \(\Theta_0\) contains a Euclidean ball \(B(\theta_0,r)\subset\Theta^\circ\).  Then there exist \(a_0>0\), \(\epsilon_0>0\), and \(T_0<\infty\) such that, for every \(0<a\le a_0\) and every \(T\ge T_0\),
\begin{equation}\label{eq:lower-bound}
\inf_{\tilde\theta_T}
\sup_{\theta\in\Theta_0}
\Prob_\theta\left(
\norm{\tilde\theta_T-\theta}_2\ge\frac{a}{\sqrt T}
\right)
\ge\epsilon_0 ,
\end{equation}
where the infimum is over all estimators measurable with respect to \(\sigma\{N|_{[-A,T]}\}\).
\end{theorem}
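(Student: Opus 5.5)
The plan is Le Cam's two-point reduction. Fix \(\theta_0\) and a unit vector \(v\), and set \(\theta_1=\theta_0+2a v/\sqrt T\). For \(a\le a_0\) with \(2a_0\le r\sqrt{T_0}\), both points lie in \(B(\theta_0,r)\subset\Theta_0\). The parameters are \(2a/\sqrt T\) apart, so any estimator within \(a/\sqrt T\) of \(\theta_0\) is at distance at least \(a/\sqrt T\) from \(\theta_1\), and conversely. The standard reduction (\citealp[Chapter 15]{wainwright2019}) then gives
\[
\inf_{\tilde\theta_T}\max_{k=0,1}\Prob_{\theta_k}\bigl(\|\tilde\theta_T-\theta_k\|_2\ge a/\sqrt T\bigr)\ge \tfrac12\bigl\{1-\TV(\Prob_{\theta_0}^{T},\Prob_{\theta_1}^{T})\bigr\},
\]
where \(\Prob_\theta^T\) is the law of \(N|_{[-A,T]}\). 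It therefore suffices to bound this total variation away from one, uniformly in \(T\ge T_0\) and \(a\le a_0\). I would do this through Pinsker's inequality, \(\TV\le\sqrt{\KL/2}\).

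Next, compute the Kullback--Leibler divergence of the path laws. Condition on the initial segment \(N|_{[-A,0)}\); this is the stationary marginal under each parameter. By the chain rule, the divergence equals \(\KL\) of the two initial-segment laws plus the expected conditional divergence on \([0,T]\). For the conditional part, the likelihood ratio of the point-process laws given the initial segment is \(\exp\{\ell_T(\theta_0)-\ell_T(\theta_1)\}\) from \eqref{eq:likelihood}. Its expectation under \(\Prob_{\theta_0}\), after using the compensator identity \eqref{eqn:compensator-identity} to remove the martingale part, is
\[
\E_{\theta_0}\int_0^T\sum_{i=1}^D \lambda_i(t;\theta_0)\,k\!\left(\frac{\lambda_i(t;\theta_1)}{\lambda_i(t;\theta_0)}\right)\dd t,\qquad k(x)=x-1-\log x .
\]
Stationarity turns this into \(T\) times a single-time expectation. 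Since \(k(x)\le C(x-1)^2\) when \(x\) is bounded away from zero, and \(\phi\) is bounded below by (a), the integrand is at most
\[
C\,\|\theta_1-\theta_0\|^2\sup_{\theta}\|D_\theta(0;\theta)\|^2/\inf\lambda .
\]
Under (a)--(b), \(\|D_\theta\|\) has a polynomial envelope in the local count \(\mathfrak C_0\), which has finite moments of all orders under (c) by the Poisson-embedding domination (Section~\ref{sec:concentration}). The conditional divergence is therefore at most \(C'T\cdot 4a^2/T=4C'a^2\). Its leading term is \(2a^2v^\top I(\theta_0)v\), consistent with (d).

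The initial-segment term is the main obstacle. The stationary law of \(N|_{[-A,0)}\) depends on \(\theta\), and its divergence is not given by a simple compensator formula, because the window is not a Markov state. I would handle it in one of two ways. The first is to note that by the chain rule this term is bounded by the divergence of the stationary path laws on \([-L,0)\) for any \(L\), or of the processes started from empty history at time \(-L\). Exponential forgetting (Section~\ref{sec:concentration}) lets me couple the stationary law with such a started process at TV cost \(Ce^{-\kappa L}\), and the started process has divergence at most \(C'La^2/T\) by the same compensator computation. Choosing \(L=T\) then makes both contributions \(O(a^2)+O(e^{-\kappa T})\). The second, simpler alternative is to bound the TV of the initial-window laws directly by the TV of the stationary laws on \([-L,0]\), obtained via the Poisson embedding with a common driving measure. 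In either case, combining the pieces gives \(\TV\le C''a+Ce^{-\kappa T}\). Choosing \(a_0\) small and \(T_0\) large makes this at most \(1/2\), which yields \eqref{eq:lower-bound} with \(\epsilon_0=1/4\).
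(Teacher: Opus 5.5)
Your argument is correct, but it takes a different route from the paper. The paper never compares the stationary initial-window laws. It restricts to the atom $E_0=\{N|_{[-A,0)}=\emptyset\}$ and shows $\inf_{\theta\in K}\Prob_\theta(E_0)\ge p_0>0$, using $\Prob_\theta(E_0)=\E_\theta e^{-\Lambda^0_\theta}\ge e^{-\E_\theta\Lambda^0_\theta}$. Because KL is nonnegative, Lemma~\ref{lem:conditional-kl} then gives $p_0\,\KL(Q_0\|Q_1)\le C_KT\norm{\theta_1-\theta_0}_2^2$ for the conditional future laws $Q_\ell$ given the empty history. Finally the paper runs the two-point test on $Q_0,Q_1$, since on $E_0$ any estimator is a function of the future path alone; the price is $\epsilon_0=p_0/4$.

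You instead bound the total variation of the full laws of $N|_{[-A,T]}$. You handle the initial segment by exponential forgetting (Lemma~\ref{lem:nonstationary-return-to-stationarity} with an empty start at $-L$) together with a compensator KL bound for the started processes. This gives the cleaner constant $\epsilon_0=1/4$ and avoids the empty-window lower bound. It does need the coupling lemma and a little more bookkeeping; the paper's atom trick is shorter and uses only the conditional KL lemma.

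When you write it up, tighten three points.

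(i) You cannot add a TV bound for the initial segment to a KL chain rule. Work in TV throughout, using $\TV(\mu_0K_0,\mu_1K_1)\le\TV(\mu_0,\mu_1)+\E_{\mu_0}\TV\{K_0(h),K_1(h)\}$, then Pinsker with Jensen on the conditional term. Split $\TV(\mu_0,\mu_1)$ by the triangle inequality through the empty-started laws, and apply Pinsker only to the middle term.

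(ii) The ratio $x=\lambda_i(t;\theta_1)/\lambda_i(t;\theta_0)$ is not bounded away from zero, since $\lambda_i(t;\theta_0)$ is unbounded. Use instead the global bound $\lambda_0k(\lambda_1/\lambda_0)\le(\lambda_0-\lambda_1)^2/\lambda_1\le\underline\lambda^{-1}(\lambda_0-\lambda_1)^2$, which yields exactly your stated envelope.

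(iii) The KL bound for the empty-started process requires local-count moments that are uniform in time under that nonstationary law. These follow from the domination by the stationary linear envelope used in the proof of Lemma~\ref{lem:fixed-window-count-mgf}.
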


Together, Theorems~\ref{thm:generic-rate}, \ref{thm:main-rate}, and~\ref{thm:lower-bound} and Corollaries~\ref{cor:almost-sure-rate} and~\ref{cor:mle-asymptotic-normality} identify the statistical scale: GMM estimators and the MLE attain a uniform high-probability \(\mathcal O(\sqrt{\log(T)/T})\) upper rate, while the \(T^{-1/2}\) scale cannot be improved uniformly.  

\section{Assumptions, examples, and scope}\label{sec:scope}
The assumptions are intentionally stronger than the weakest possible: they isolate the dependence and efficiency questions without boundary, sparsity, or high-dimensional pathologies, which require different tools.  This section explains what each part asks of a model and of a moment library, verifies the conditions on two kernel families and on a genuinely non-score library, and records that the theory requires neither sign restrictions on the kernels nor stationary initialization.

Assumption~\ref{ass:standing} serves three purposes. Parts \ref{ass:parameter} and \ref{ass:kernels} are regularity conditions. The link \(\phi_{\theta,i}\) acts on the inner linear predictor, and~\ref{ass:parameter} asks for smoothness and positivity only on the predictor values the model can actually reach (its input domain).  With excitation alone these values never fall below the baseline, so a positive set suffices and the identity link \(\phi(x)=x\) is admissible.  Inhibition removes that floor: negative kernels can drive the predictor below zero and, as recent events accumulate, across an unbounded part of \(\R\).  A signed model whose predictor ranges this widely therefore needs a link that stays positive and smooth on all of \(\R\), while a positive linear subfamily is already covered on its positive domain.

The stability assumption \ref{ass:stability} is the same subcriticality assumption used by Br\'emaud--Massouli\'e for their Poisson-embedding theory \citep{bremau1996}, and its absolute values are not cosmetic: a disagreement coupling must dominate propagation without relying on cancellation, so it is the absolute kernels, not the signed ones, that must be uniformly subcritical.  Under \ref{ass:parameter}--\ref{ass:stability} the embedding admits a stationary solution for every \(\theta\in\Theta\), stochastically dominated, under a common Poisson embedding, by the linear Hawkes process with baselines \(\phi_{\theta,i}(\nu_{\theta,i})\) and nonnegative kernels \(L_{\theta,i}|h_{\theta,ij}|\).  That dominating process is subcritical uniformly in \(\theta\), so its fixed-window counts have uniformly bounded exponential moments \citep{leblanc2024}; the domination passes the bound to \(N\), giving, for every fixed \(L<\infty\) and some \(\xi_L,C_L>0\),
\[
    \sup_{\theta\in\Theta}\sup_{s\in\R}\E_\theta\bigl[\exp\{\xi_L N([s,s+L])\}\bigr]\le C_L .
\]
This uniform control of fixed-window counts (Supplement Lemma~\ref{lem:fixed-window-count-mgf}) is the moment input the concentration and envelope arguments rely on throughout.

The remaining conditions concern identification, at two levels: the model, and the finite moment library an estimator actually uses.  Part~\ref{ass:identifiability} is the model-level condition. It ensures that distinct parameters produce distinct intensity paths, and the Fisher information \(I(\theta)\) must be nonsingular. When this condition is met, \(\theta\) is recoverable.  The other two parts govern an estimator that replaces the whole path with a handful of summaries.  Part~\ref{ass:weights} keeps that library well-behaved (finitely many compact-window, shift-covariant features with polynomial count envelopes) so the limit functionals \(A_H\) and \(\Omega_H\) exist and are time-free.  Part~\ref{ass:moment-identification} then asks that the summaries still determine \(\theta\): the population moment \(g_H(\theta,\theta^\star)\) vanishes only at the truth, with rank and covariance margins.  This does not follow from~\ref{ass:identifiability}. A model identified from its full path can be underdetermined by a small library, exactly as a well-identified GMM model can still carry irrelevant or rank-deficient instruments \citep{hansen1982large,newey1990efficient}.  

The derivative library \(H=D_\theta^\top\) of Example~\ref{ex:weight-families-body} is instructive. Its drift is a gradient,
\[
    g_{D_\theta^\top}(\theta,\theta^\star)
    =-\tfrac12\nabla_\theta\,
    \E_{\theta^\star}\bigl\|\lambda(0;\theta)-\lambda(0;\theta^\star)\bigr\|_2^2 ,
\]
so zeros of the drift are critical points of the population least-squares contrast. Model identifiability implies that this contrast has the unique global minimizer \(\theta^\star\), but it does not by itself exclude other critical points of the derivative drift. Thus the derivative-moment GMM theorem applies to \(H=D_\theta^\top\) once \eqref{eq:H-global-separation} is verified, while the globally minimized least-squares contrast is covered separately in Supplement Proposition~\ref{prop:supp-ls-contrast-estimator}. The local rank and covariance margins for the derivative expansion are automatic, since pointwise \(D_\theta^\top D_\theta\succeq\underline\lambda\,D_\theta^\top\Lambda^{-1}D_\theta\) and \(D_\theta^\top\Lambda D_\theta\succeq\underline\lambda\,D_\theta^\top D_\theta\), so that \(A_{D_\theta^\top}(\theta)\succeq\underline\lambda\,I(\theta)\) and \(\Omega_{D_\theta^\top}(\theta)\succeq\underline\lambda\,A_{D_\theta^\top}(\theta)\) are uniformly nonsingular on \(\Theta_0\).

Generally, the pattern for verifying a GMM library is to compute the drift \(g_H(\theta,\theta^\star)\), show its zero set is \(\{\theta^\star\}\) with a separation margin, check that \(A_H(\theta)\) has full column rank on the active set, and remove redundant rows so that \(\Omega_H(\theta)\) is nonsingular. For least squares, the global step is instead contrast separation, supplied by model identifiability; the derivative moment is used locally through the first-order condition. Worked verifications for the truncated-exponential, smooth-rectifier, and one-point-age libraries are collected in Supplement Section~\ref{app:trunc-exp-info}.

\subsection{Examples}\label{sec:examples}

Examples~\ref{ex:trunc-exp} and~\ref{ex:nl-smooth-relu-trunc-exp} verify the model parts \ref{ass:parameter}--\ref{ass:identifiability} on a linear and a genuinely nonlinear kernel family. Example~\ref{ex:one-point-age-body} verifies the weight parts \ref{ass:weights}--\ref{ass:moment-identification} for a library containing no score ingredient.  We note that while the assumptions permit signed kernels, verification of global identifiability is model-specific.

\begin{example}[Truncated exponential kernels]\label{ex:trunc-exp}
Consider a linear Hawkes submodel with positive baselines and positive active kernels, bounded away from zero on their active coordinate intervals,
\[
    h_{\theta,ij}(u)
    =
    \alpha_{ij}c_{ij}(\beta_{ij})e^{-\beta_{ij}u}\1\{0\le u\le A\},
    \qquad
    \beta_{ij}\in[\underline\beta_{ij},\overline\beta_{ij}]\Subset(0,\infty),
\]
with \(c_{ij}\in C^3\) strictly positive on the compact \(\beta\)-interval, covering both unnormalized and normalized truncated exponentials.  On compact positive active-amplitude and baseline regions satisfying the spectral-radius restriction, parts \ref{ass:parameter}--\ref{ass:stability} follow from smoothness and the usual linear Hawkes subcriticality \citep{bremau1996}.  Identification is concrete: a source event entering the memory window produces the endpoint signature \(\alpha_{ij}c_{ij}(\beta_{ij})\), and its exit at age \(A\) removes \(\alpha_{ij}c_{ij}(\beta_{ij})e^{-\beta_{ij}A}\); for positive active amplitudes the ratio of the two signatures determines \(\beta_{ij}\) and the entry signature then determines \(\alpha_{ij}\).  Detailed identifiability and Fisher-information checks are in Supplement Section~\ref{app:trunc-exp-info}.
\end{example}

\begin{example}[Smooth-ReLU nonlinear Hawkes with truncated-exponential filters]\label{ex:nl-smooth-relu-trunc-exp}
For a genuinely nonlinear example, take known smooth rectifier links
\[
    \Phi_i(x)
    =
    \epsilon_i+\frac{a_i}{b_i}\log\{1+\exp(b_i(x-c_i))\},
    \qquad
    \epsilon_i,a_i,b_i>0,\quad c_i\in\R,
\]
and \(\lambda_i(t;\theta)=\Phi_i\{X_i(t;\theta)\}\) with normalized truncated-exponential filters
\[
    X_i(t;\theta)=\nu_i+
    \sum_{j=1}^D\int_{[t-A,t)}
    \gamma_{ij}\frac{\beta_{ij}e^{-\beta_{ij}(t-s)}}{1-e^{-\beta_{ij}A}}\,\dd N_j(s),
\]
where \(\nu_i\) and the active \(\gamma_{ij},\beta_{ij}\) lie in compact interior intervals.  The smooth rectifier keeps the link positive and \(C^4\) while preserving the threshold/saturation shape; since the filters are normalized and \(\sup_x\Phi_i'(x)=a_i\), the stability matrix is simply \(G^{\mathrm{abs}}(\theta)_{ij}=a_i|\gamma_{ij}|\), and \ref{ass:stability} is the spectral-radius bound on this matrix.  For positive active amplitudes, \(X_i\ge\underline\nu_i\) and \(\Phi_i'\) is bounded away from zero there; because the links are known and strictly increasing, equality of intensities is equality of the latent inputs, and the endpoint argument above identifies amplitudes, decays, and baselines on the active set.  Signed amplitudes leave the dependence theory untouched, since it uses \(|\gamma_{ij}|\), but the identification check must still rule out active-set degeneracies; Supplement Section~\ref{app:trunc-exp-info} gives the formal verification in the positive-active case.
\end{example}

\begin{example}[A non-score identifying library]\label{ex:one-point-age-body}
The estimator class for which \ref{ass:moment-identification} holds is not vacuous. In the univariate truncated-exponential submodel with \(\theta=(\mu,\alpha,\beta)\), the three compact-window features
\[
    H(t)=\bigl(\1\{\mathfrak C_t=0\},\ \1\{\mathfrak C_t=1\},\ U_t\1\{\mathfrak C_t=1\}\bigr)^\top,
    \qquad
    U_t=\int_{[t-A,t)}(t-s)\,\dd N(s),
\]
form a genuine non-score Campbell--Mecke/Tak\'acs--Fiksel library; there are no inverse-intensity factors or intensity derivatives present.  The empty-window moment identifies the baseline, and the one-point age moments identify amplitude and decay.  The global-identification, rank, and covariance checks are in Supplement Section~\ref{app:one-point-age-gmm}.
\end{example}

\subsection{Inhibition and nonstationary starts}\label{sec:robustness}

Two modelling idealisations are ubiquitous in the current literature: the first is that interactions are excitatory, and the second is that the process is observed in equilibrium (i.e. that it is observed with a stationary start). The results in Section~\ref{sec:results} assume neither of these (often artificial) modelling conveniences. 

\begin{remark}[Signed and inhibitory compact-memory interactions]\label{rem:signed-inhibitory}
Under Assumptions~\ref{ass:parameter}--\ref{ass:stability}, with stability measured by the absolute-kernel matrix \(G^{\mathrm{abs}}\), the replacement and concentration theory of Section~\ref{sec:concentration} (Theorem~\ref{thm:local-concentration} together with the block-replacement and truncation lemmas of Supplement Section~\ref{app:poisson-embedding}) remains valid when the kernels \(h_{\theta,ij}\) have either sign, positivity of the intensities being maintained by the links.  This is a dependence statement: identifiability and Fisher nonsingularity remain model-specific and must be checked for the active subfamily.
\end{remark}

\begin{proposition}[Logarithmic burn-in from nonstationary initial histories]\label{prop:log-burn-in}
Suppose Assumptions~\ref{ass:parameter}--\ref{ass:stability} hold.  Let \(N^\calH\) be started from an initial history \(\calH\) at time \(0\) with
\[
    \sup_{\theta^\star\in\Theta_0}
    \E_{\theta^\star}^{\calH}\left[\sum_{j=1}^D\calH_j([-A,0])\right]<\infty .
\]
Let \(b_T=A+C_0\log T\).  For every \(M<\infty\), \(C_0\) can be chosen so that
\begin{equation*}
\sup_{\theta^\star\in\Theta_0}
 d_{\TV}\!\left(
\calL_{\theta^\star}^\calH(N^\calH|_{[b_T-A,T]}),
\calL_{\theta^\star}(N^{\rm stat}|_{[b_T-A,T]})
\right)
\le C T^{-M} .
\end{equation*}
Consequently every stationary high-probability bound for a compact-window statistic computed over an interval of length \(T-b_T\) transfers to the post-burn-in statistic with an additional \(O(T^{-M})\) error, and stationary fixed-parameter central limit theorems transfer as well: the conclusions of Theorems~\ref{thm:generic-rate}--\ref{thm:generic-an}, Corollaries~\ref{cor:two-step-gmm}--\ref{cor:almost-sure-rate}, and Theorem~\ref{thm:main-rate} are unchanged when the estimating equations and likelihood are computed over \([b_T,T]\), with normalization by \(T-b_T\).  For a fixed deterministic locally finite history the bounds are conditional on that history, with constants depending on \(1+\sum_j\calH_j([-A,0])\); uniformity over a class of deterministic starts requires a uniform bound on these local counts.
\end{proposition}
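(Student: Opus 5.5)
We couple \(N^\calH\) and \(N^{\rm stat}\) on a common Poisson embedding in the sense of \cite{bremau1996}. Let \(\Pi=(\Pi_1,\ldots,\Pi_D)\) be independent unit-rate Poisson processes on \(\R\times\R_+\). The stationary process keeps the atoms \((t,z)\) of \(\Pi_i\) with \(z\le\lambda_i^{\rm stat}(t)\). The started process uses the same \(\Pi\) on \((0,\infty)\), but its intensity is built from \(\calH\) on \([-A,0]\) and from its own points afterwards. Total variation is bounded by the coupling probability
\[
d_{\TV}\le\Prob\bigl(N^\calH|_{[b_T-A,T]}\neq N^{\rm stat}|_{[b_T-A,T]}\bigr),
\]
so it suffices to bound the probability that the two processes disagree anywhere on \([b_T-A,T]\) by \(CT^{-M}\), uniformly in \(\theta^\star\in\Theta_0\).

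\textbf{Step 1 (disagreement recursion).} Let \(\Delta\) be the symmetric-difference point process of the two outputs on \((0,\infty)\). Set \(\delta_i(t)=\E|\lambda_i^\calH(t)-\lambda_i^{\rm stat}(t)|\), which is the intensity of \(\Delta_i\). Because the links are \(L_{\theta,i}\)-Lipschitz, each disagreement point at time \(s\) changes the linear predictor of \(\lambda_i\) at time \(t\) by at most \(|h_{\theta,ij}(t-s)|\). For \(t\ge A\), this gives
\[
\delta_i(t)\le\sum_j L_{\theta,i}\int_{[t-A,t)}|h_{\theta,ij}(t-s)|\,\delta_j(s)\,\dd s .
\]
For \(t\in[0,A)\) there is an additional forcing term. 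It is driven by the initial discrepancy \(\calH\) versus \(N^{\rm stat}|_{[-A,0]}\) and is bounded by a constant times \(\E\sum_j\calH_j([-A,0])+\E N^{\rm stat}([-A,0])\). The first quantity is bounded uniformly by hypothesis. The second is bounded uniformly by the fixed-window count bound (Supplement Lemma~\ref{lem:fixed-window-count-mgf}).

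\textbf{Step 2 (geometric decay).} Split time into blocks of length \(A\) and let \(m_k\) be the vector of expected disagreement counts in block \(k\). The recursion gives \(m_{k+1}\le G^{\mathrm{abs}}(\theta)(m_k+m_{k+1})\) componentwise, up to a split of the convolution across the two neighbouring blocks. To handle this cleanly, I would use a continuous-time renewal inequality and Laplace-transform the kernel instead of working with blocks. Assumption~\ref{ass:stability} gives \(\rho(G^{\mathrm{abs}})\le1-\varepsilon\) uniformly in \(\theta\). Since the kernels have compact support, for small \(\eta>0\) the exponentially tilted matrix \(\int e^{\eta u}L|h(u)|\,\dd u\) still has spectral radius below \(1-\varepsilon/2\), uniformly on \(\Theta\). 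This yields
\[
\delta(t)\le C e^{-\eta t}\Bigl(1+\E\textstyle\sum_j\calH_j([-A,0])\Bigr).
\]
The main obstacle is keeping \(\eta\) and \(C\) uniform in \(\theta\) and handling the signed kernels. Both are settled by working only with absolute kernels and using continuity of the tilted spectral radius on the compact \(\Theta\).

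\textbf{Step 3 (conclusion).} By Markov's inequality, the probability of any disagreement on \([b_T-A,T]\) is at most
\[
\int_{b_T-A}^T\sum_i\delta_i(t)\,\dd t\le C' e^{-\eta(b_T-A)}=C'T^{-\eta C_0}.
\]
Choosing \(C_0\ge M/\eta\) gives the stated \(CT^{-M}\) bound. For a deterministic history the same argument runs conditionally, with the constant \(1+\sum_j\calH_j([-A,0])\).

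\textbf{Step 4 (transfer).} Any event defined by the post-burn-in path has probabilities under the two laws that differ by at most the total-variation bound. The high-probability bounds of Theorems~\ref{thm:generic-rate} and \ref{thm:main-rate} and Corollary~\ref{cor:two-step-gmm} therefore transfer with an added \(O(T^{-M})\) error, applied on the shifted stationary interval of length \(T-b_T\sim T\). The almost-sure statements follow by Borel--Cantelli along a geometric grid of \(T\), as in Corollary~\ref{cor:almost-sure-rate}, since \(T^{-M}\) is summable. The CLTs of Theorem~\ref{thm:generic-an} and Corollary~\ref{cor:two-step-gmm} transfer because the total-variation distance tends to \(0\). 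Finally, normalizing by \(T-b_T\) instead of \(T\) changes nothing asymptotically, because \(b_T/T\to0\).
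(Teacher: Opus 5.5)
Your proposal is correct and follows the paper's route. The paper proves the total-variation bound through Supplement Lemma~\ref{lem:nonstationary-return-to-stationarity}: a common Poisson embedding, a disagreement measure with compensator $|\lambda^\calH_i-\lambda'_i|\,\dd t$, the Lipschitz absolute-kernel bound, an exponential tilt with $\int_0^A e^{\alpha u}K_{\theta,ij}(u)\,\dd u\le e^{\alpha A}G_{\theta,ij}$ for uniform decay, then Markov's inequality and the coupling characterization; it then transfers this bound by the data-processing argument of Lemma~\ref{lem:tv-transfer}, exactly as you do. The one slip is in Step 4: for the almost-sure statements you need Borel--Cantelli on integer horizons plus the burn-in-aware unit-interval interpolation of Lemma~\ref{lem:pathwise-burnin-interpolation}, not a geometric grid, because nothing controls the estimator for $T$ strictly between $T_n$ and $2T_n$.
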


Stationary initialization is the standard idealization of applied Hawkes inference: consistency and asymptotic normality are established for the equilibrium process, as though the data had been generated by a system running since the infinite past, whereas a real record begins at a finite and generally out-of-equilibrium time.  The proposition shows the idealization is inessential: any start with finite expected local count is forgotten after a logarithmic burn-in, and the stationary rates and limit laws then govern the estimator computed from the finite, nonstationary record actually observed.  The transfer device itself is elementary: the variational characterization of total variation, applied after mapping the path to the statistic (Supplement Lemma~\ref{lem:tv-transfer}); the proof is in Supplement Section~\ref{app:poisson-embedding}.  Initial histories generated by running the dynamics from any finite-expected-count configuration at an arbitrary earlier time satisfy the displayed window-count condition automatically as demonstrated in Lemma \ref{lem:finite-expected-local-history-nonstationary} in the supplement.

\section{The probabilistic mechanism}\label{sec:concentration}

Compact support in \eqref{eq:model} makes \(\lambda(t;\theta)\) depend only on the window \([t-A,t)\). But crucially, this locality is pathwise, and does not extend to the entire stationary law because two faraway windows can contain points that share an ancestor.  For nonnegative linear models, this dependence is managed via the Poisson-cluster characterisation \citep{hawkesoakes1974,reynaudbouretroy2007,hansen2015}, with convergence-to-equilibrium rates going back to \citet{bremau1996} and \citet{bremaud2002rate}. In the complementary bounded-intensity, bounded-memory setting, \citet{bremau1996} proved exponentially fast convergence in total variation from arbitrary initial conditions.  Our contribution in this section is the extraction of the finite-window consequences that facilitate inference: a total-variation replacement bound for local windows with an explicit initial-window count, and concentration for sliding compact-window functionals with polynomial envelopes.  Compared with quoting a generic mixing coefficient, this formulation keeps track of exactly the window an estimating equation sees and feeds directly into the stochastic-integral brackets used in the proofs; it is close in spirit to the nonasymptotic arguments of \citet{hansen2015}, with the Poisson embedding replacing the cluster genealogy.

In the remainder of this section, we outline the proof strategy and probabilistic mechanism underpinning the main results presented in Section~\ref{sec:results}. The full proof splits into two halves. The first demonstrates a uniform analogue of Hansen et al.'s (2015) Proposition 3, which gives a Bernstein type inequality for functionals of the shifted counting process. Our theorem extends Hansen et al.'s so that the same result holds uniformly over a compact (finite-dimensional) parameter set, for linear \emph{and} non-linear Hawkes processes, with the latter (more general) case not being covered by their proposition.

\begin{theorem}[Local-window concentration]\label{thm:local-concentration}
Suppose Assumptions~\ref{ass:parameter}--\ref{ass:stability} hold.  Let \(\Xi\subset\R^d\) be compact with fixed \(d\), and let \(\{Z_\xi:\xi\in\Xi\}\) be real-valued functionals of local configurations on \([-A,0)\).  Assume that, for constants \(B<\infty\) and \(m\ge0\),
\begin{equation}\label{eq:local-envelope}
    \sup_{\xi\in\Xi}|Z_\xi(x)|\le B\{1+C(x)^m\},
\end{equation}
and
\begin{equation}\label{eq:local-lipschitz-envelope}
    |Z_\xi(x)-Z_{\xi'}(x)|\le B\{1+C(x)^m\}\norm{\xi-\xi'}_2 .
\end{equation}
Then, for every \(c>0\), there exist \(C,K<\infty\) and an integer \(q_0\ge1\) such that, for all sufficiently large \(T\),
\begin{equation*}
\sup_{\theta^\star\in\Theta_0}\Prob_{\theta^\star}\left(\sup_{\xi\in\Xi}\left|\frac1T\int_0^T\{Z_\xi\circ G_t-\E_{\theta^\star}Z_\xi\}\,\dd t\right|>K\frac{\log^{q_0}(T)}{\sqrt T}\right)\le CT^{-c} .
\end{equation*}
\end{theorem}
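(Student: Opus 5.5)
The plan follows the coupling–blocking route of Hansen et al.'s Proposition~3, with the cluster genealogy replaced by the Poisson embedding of Br\'emaud--Massouli\'e so that nonlinear links and signed kernels are covered. Under Assumptions~\ref{ass:parameter}--\ref{ass:stability}, realise \(N\) for every \(\theta^\star\) as the stationary solution driven by a common Poisson random measure \(\Pi\) on \(\R\times\R_+\times\{1,\dots,D\}\). The key probabilistic input is a \emph{local replacement bound}. For an interval \(I=[s,s+L]\), build a copy \(\tilde N^I\) driven by \(\Pi\) on \(I\) and by an independent Poisson measure before \(s-b\), with empty history at \(s-b\). Then
\[
\Prob_{\theta^\star}\bigl(N|_{I}\ne\tilde N^I|_{I}\bigr)\le C(1+L)e^{-\kappa b},
\]
uniformly in \(\theta^\star\). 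To prove it, bound the disagreement intensity on the coupled construction by the linear process with kernel \(L_{\theta,i}|h_{\theta,ij}|\). Uniform subcriticality \(\rho(G^{\rm abs})\le1-\varepsilon\) then gives geometric decay of the expected disagreement count across successive memory windows of length \(A\), seeded by the window count at \(s-b\), whose exponential moments are controlled by Lemma~\ref{lem:fixed-window-count-mgf}. Only absolute kernels appear here, so inhibition is harmless.

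Next, blocking. Fix \(\xi\) and split \([0,T]\) into alternating big blocks of length \(\ell\asymp\log^{a}T\) and gaps of length \(b=C_1\log T\). Replace each big-block path by its coupled copy, using \(\Pi\) on the block and fresh independent noise before the gap, so that the copies on the odd (respectively even) blocks are mutually independent. By the replacement bound and a union bound over \(O(T)\) blocks, the replacement fails with probability at most \(CT^{1-\kappa C_1}\le T^{-c}\) once \(C_1\) is large.

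The unboundedness of \(Z_\xi\) is handled by truncation. On the event that every window \([t-A,t)\), \(t\in[0,T]\), has \(\mathfrak C_t\le c_2\log T\) (probability \(\ge1-CT^{-c}\) by the exponential moments and a union bound over a grid), the envelope gives \(|Z_\xi\circ G_t|\le B(1+(c_2\log T)^m)\). Each block integral is then a bounded independent summand of size \(O(\ell\log^mT)\). The truncation bias \(\E|Z_\xi|\1\{C>c_2\log T\}\) is \(O(T^{-c})\) by Cauchy--Schwarz against the exponential moment. The gaps contribute \(O(b\log^mT)\) per block, for a total of \(O(T b/\ell\cdot\log^m T)\) after division by \(T\); choosing \(\ell\) a larger power of \(\log T\) than \(b\) keeps this below \(\log^{q_0}T/\sqrt T\) after division by \(T\). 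Hoeffding--Bernstein over the \(\asymp T/\ell\) independent blocks gives deviation \(\sqrt{(T/\ell)\log T}\,\ell\log^m T/T\lesssim \log^{q_0}T/\sqrt T\) with probability \(1-T^{-c}\). The stationary mean of the copies equals \(\E_{\theta^\star}Z_\xi\) up to \(O(T^{-c})\), since the copies have the stationary window law up to the same total-variation error.

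Uniformity over \(\xi\) comes from a \(T^{-1}\)-net of \(\Xi\) of cardinality \(T^{d}\), absorbed by enlarging \(c\). The Lipschitz envelope \eqref{eq:local-lipschitz-envelope} on the good count event controls the discretisation error, which is \(O(T^{-1}\log^m T)\). Uniformity over \(\theta^\star\) holds because every constant (the coupling rate \(\kappa\), the moment bounds, \(c_2\)) depends only on \(\varepsilon\), the uniform link and kernel bounds, and \(A\).

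The main obstacle is the replacement bound for nonlinear signed intensities. The crucial step is to show that the discrepancy between two thinnings of the same \(\Pi\) is dominated by a subcritical linear cascade, using \(|\phi(x)-\phi(y)|\le L|x-y|\). One must also track that the initial disagreement is seeded only by the count in one memory window.
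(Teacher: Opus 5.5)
\textbf{The gap segments cannot be bounded pathwise.} Your overall route matches the paper's: Poisson-embedding replacement with the disagreement dominated by the absolute-kernel linear cascade, logarithmic count truncation, Bernstein over independent block copies, and a net over $\Xi$. The blocking step, however, has a genuine gap.

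On the good count event each gap integral is at most $Cb\log^m T$. There are $\asymp T/\ell$ gaps, so after dividing by $T$ the gap contribution is of order $b\log^m T/\ell = C_1\log^{1+m-a}T$. This decays at best like a power of $\log T$, never like $T^{-1/2}$. The fraction of time spent in gaps is $b/(b+\ell)$, which is only polylogarithmically small, and a crude envelope bound over a set of that size ignores all cancellation. For example, with $m=0$, $\ell=\log^2T$ and $b=\log T$ you only get $O(1/\log T)$.

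Taking $\ell$ polynomial in $T$ does not rescue this. Your Bernstein step uses $\ell\log^mT$ as the per-block scale; your deviation is $\sqrt{(T/\ell)\log T}\,\ell\log^mT/T\asymp\sqrt{\ell/T}\,\mathrm{polylog}(T)$. That forces $\ell$ to be polylogarithmic. A polynomial $\ell$ would need a per-block variance of order $\ell$, i.e.\ covariance-decay estimates you have not established.

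\textbf{The fix: concentrate the gaps as well.} The gap segments form a second family whose members are separated by the big blocks of length $\ell\ge b$. The same replacement bound therefore gives independent copies for them. Bernstein then yields a normalized deviation of order $b\log^mT\sqrt{\log T/(T\ell)}$, which is acceptable.

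The paper does this in its simplest form. It uses equal blocks of length $x_T=L_0\log T$, split into an even and an odd family, each separated by gaps of length at least $x_T-A$. Both families are handled by the block-coupling lemma and Bernstein, with a union bound over parity. Only one terminal remainder piece of length $O(\log T)$ is bounded pathwise, contributing $O(\log^{m+1}T/T)$.

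Your remark about odd and even blocks suggests this was the intended device. Note, though, that in your construction the big-block copies are already all mutually independent; it is the gaps that need to be treated as the second family.

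\textbf{A smaller point: centering the copies.} Your copies start from an empty history, so they are not exactly stationary. Matching their mean to $\E_{\theta^\star}Z_\xi$ for unbounded $Z_\xi$ therefore needs three things: truncation, the total-variation bound, and exponential count moments for the copies. The last is available via domination by the linear envelope. The paper sidesteps this step by giving each copy an independent stationary history.
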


We note that Theorem \ref{thm:local-concentration} may be of independent interest since it is a general Bernstein-type inequality for a large class of functionals of the underlying non-linear multivariate Hawkes process. The proof of Theorem \ref{thm:local-concentration} mostly follows that of \cite{hansen2015}, which uses a standard blocking, coupling, and truncation argument to apply Bernstein's inequality for i.i.d.\ random variables. The key insight is to use the Poisson embedding techniques of \cite{bremau1996} to construct the coupling law for non-linear Hawkes processes when no cluster representation exists. In doing so, we prove that conditional on a finite pre-history in $[s-A,s)$, the law of the non-stationary process converges exponentially fast in total variation distance, uniformly over $\Theta$, to its stationary distribution; see the Supplement section \ref{app:poisson-embedding} for the formal statement.

The remainder of the proof for the main results follows a more standard $M$-estimation route. We prove that the empirical risk, and its Jacobian, concentrate uniformly around their expectation and then apply standard Taylor expansion arguments. In more detail, the  concentration arguments we obtain for our class of estimators, follows from repeated applications of Theorem \ref{thm:local-concentration} and a continuous time Freedman type inequality from \cite{dzhaparidze2001}. We are able to exploit such martingale concentration inequalities via the Doob-Meyer decomposition \citep[e.g.][Corollary 14.I.V]{daley2008} which ensures that the estimation equations admit a martingale and drift decomposition. Pointwise in $\theta$, the drift terms, by assumption, satisfy the necessary conditions of Theorem \ref{thm:local-concentration}. Moreover, pointwise in $\theta$, the martingale terms concentrate around $0$ as their predictable variations concentrate around their expectation (which is linear in $T$); also by Theorem \ref{thm:local-concentration}. Thus, we may apply the Freedman type inequality to obtain fluctuations around the mean that are $\mathcal O\left(\log^{q_0}(T)/\sqrt{T}\right)$ with high-probability. A standard stochastic equicontinuity, and $\epsilon$-net, argument ensures that the empirical risk and its Hessian uniformly concentrate around their expectations at these aforementioned radii. We then conclude the proof of Theorem~\ref{thm:generic-rate} via a standard pathwise Taylor expansion argument. Using the preceding techniques, the two step estimation results and central limit theorems easily follow from Taylor expansions and the martingale central limit theorem.

As one may expect, the almost sure rate results directly extend from the previous nonasymptotic results. Such rates are obtained from the same high probability bounds by applying the first Borel--Cantelli lemma on integer horizons and then extending from integer times to all positive real \(T\) by a pathwise interpolation lemma.  On the event \(\sup_{0\le t\le n+1}\mathfrak C_t\le L\log n\), the normalized estimating equations, their local Jacobians, and the plug-in covariance change by at most \(\mathcal O\left(\log^{q_0}(n)/n\right)\) as \(T\) ranges over \([n,n+1]\). Since this event holds eventually almost surely, the integer-horizon pathwise Taylor expansion arguments extend to all real horizons. A similar argument, after a suitable burn-in period and an application of Proposition \ref{prop:log-burn-in}, yields the same almost sure convergence rates under a non-stationary start.

\section{Numerical experiment}\label{sec:simulation}

We use a bivariate linear Hawkes experiment \citep{hawkes1971,hawkesoakes1974} as a transparent setting for the Godambe projection comparison with our aim being to demonstrate the efficiency geometry. This submodel lets maximum likelihood, least squares, and an optimally weighted two-step overidentified enlargement be compared against explicit population Godambe targets.  The data-generating intensity is
\[
    \lambda_i(t;\theta)=\mu_i+
    \sum_{j=1}^2\alpha_{ij}X_j(t;\beta),
    \qquad
    X_j(t;\beta)=\int_{[t-A,t)}k_\beta(t-s)\,\dd N_j(s),
\]
with
\[
    k_\beta(u)=
    \frac{\beta e^{-\beta u}}{1-e^{-\beta A}}\mathbf 1\{0\le u\le A\},
\]
parameter \(\theta=(\mu_1,\mu_2,\alpha_{11},\alpha_{12},\alpha_{21},\alpha_{22},\beta)\), and design
\[
    A=3,
    \qquad
    \theta_0=(0.22,0.18,0.34,0.10,0.24,0.30,1.25).
\]
The spectral radius of \(\alpha_0\) is approximately \(0.476\) and the stationary mean intensity approximately \((0.393,0.392)\), so \(T=1000\) corresponds to about \(784\) expected events and \(T=16000\) to about \(12552\).

We compare three estimators: the conditional MLE; the global continuous-time least-squares contrast minimizer \(\hat\theta_{\rm LS}\), reported as \(J\) because its local first-order equation uses the derivative weight \(H_J=D_\theta^\top\); and a feasible two-step overidentified GMM estimator with
\[
    H_O(t;\theta)=
    \begin{pmatrix}
        D_\theta(t;\theta)^\top\\
        D_\theta(t;\theta)^\top R_\tau(t;\theta)
    \end{pmatrix},
    \qquad
    R_\tau(t;\theta)=
    \diag\left\{
    \frac{\tau_i}{\tau_i+\lambda_i(t;\theta)}:i=1,2
    \right\},
\]
and \(\tau_1=\tau_2=0.4\), so that \(H_J\) has \(q=p=7\) moments and \(H_O\) has \(q=14\). The factor \(R_\tau\) is a bounded inverse-intensity-like rescaling, not the likelihood inverse \(\Lambda^{-1}\); we use it to enlarge the derivative row span toward the score direction suggested by Proposition~\ref{prop:godambe-optimality}, without claiming an algebraic non-containment theorem for the score span. This is overidentification, not model overspecification: the Hawkes model is unchanged and only the moment library grows. The reported \(J\) estimator is the global least-squares contrast minimizer covered by Supplement Proposition~\ref{prop:supp-ls-contrast-estimator}. Its local sandwich target is
\[
    V_J
    =
    A_{H_J}(\theta_0)^{-1}
    \Omega_{H_J}(\theta_0)
    A_{H_J}(\theta_0)^{-1}.
\]
For \(H_O\), the local rank condition \eqref{eq:H-rank} is automatic because \(A_{H_O}(\theta)\) stacks additional rows on \(A_{H_J}(\theta)\); we do not separately verify the global separation \eqref{eq:H-global-separation} or the covariance margin \(\Omega_{H_O}(\theta)\succ0\), so \(H_O\) is reported as a numerical efficiency benchmark for the population Godambe projections rather than as a theorem-verified estimator.

The Monte Carlo design uses \(T\in\{1000,2000,4000,8000,16000\}\) with \(2000\) replications per horizon; implementation details and additional diagnostics are in Supplement Sections~\ref{app:simulation-implementation} and~\ref{app:simulation-diagnostics}. Let \(V_M=I(\theta_0)^{-1}\), let
\[
    V_J=A_{H_J}(\theta_0)^{-1}\Omega_{H_J}(\theta_0)A_{H_J}(\theta_0)^{-1}
\]
be the local least-squares sandwich covariance, and let \(V_O\) be the optimally weighted overidentified Godambe covariance. Figure~\ref{fig:bivar-overall-efficiency}(a) reports, for \(m\in\{M,J,O\}\),
\[
    R_m(T)=
    \left[
    \frac1p\sum_{k=1}^p
    \left\{
    \frac{\sqrt T\operatorname{RMSE}(\hat\theta_{m,k})}
    {\sqrt{(V_M)_{kk}}}
    \right\}^2
    \right]^{1/2}.
\]
The MLE sits close to one, the Fisher benchmark; the least-squares contrast estimator settles near its larger derivative-weight sandwich target, while the overidentified estimator lies much closer to the MLE. The comparison reads as maximum likelihood versus least squares versus an optimally weighted enlargement, with the population Godambe targets pricing the least-squares efficiency loss exactly. This is the projection mechanism of Proposition~\ref{prop:godambe-optimality} at work (Figure~\ref{fig:godambe-projection}): enlarging \(\mathcal V_H(\theta_0)\) from the derivative span to the overidentified span enlarges the span toward the score and shrinks the residual \(S_a-P_HS_a\) coordinate by coordinate.

\begin{figure}[!ht]
\centering
\includegraphics[width=0.88\textwidth]{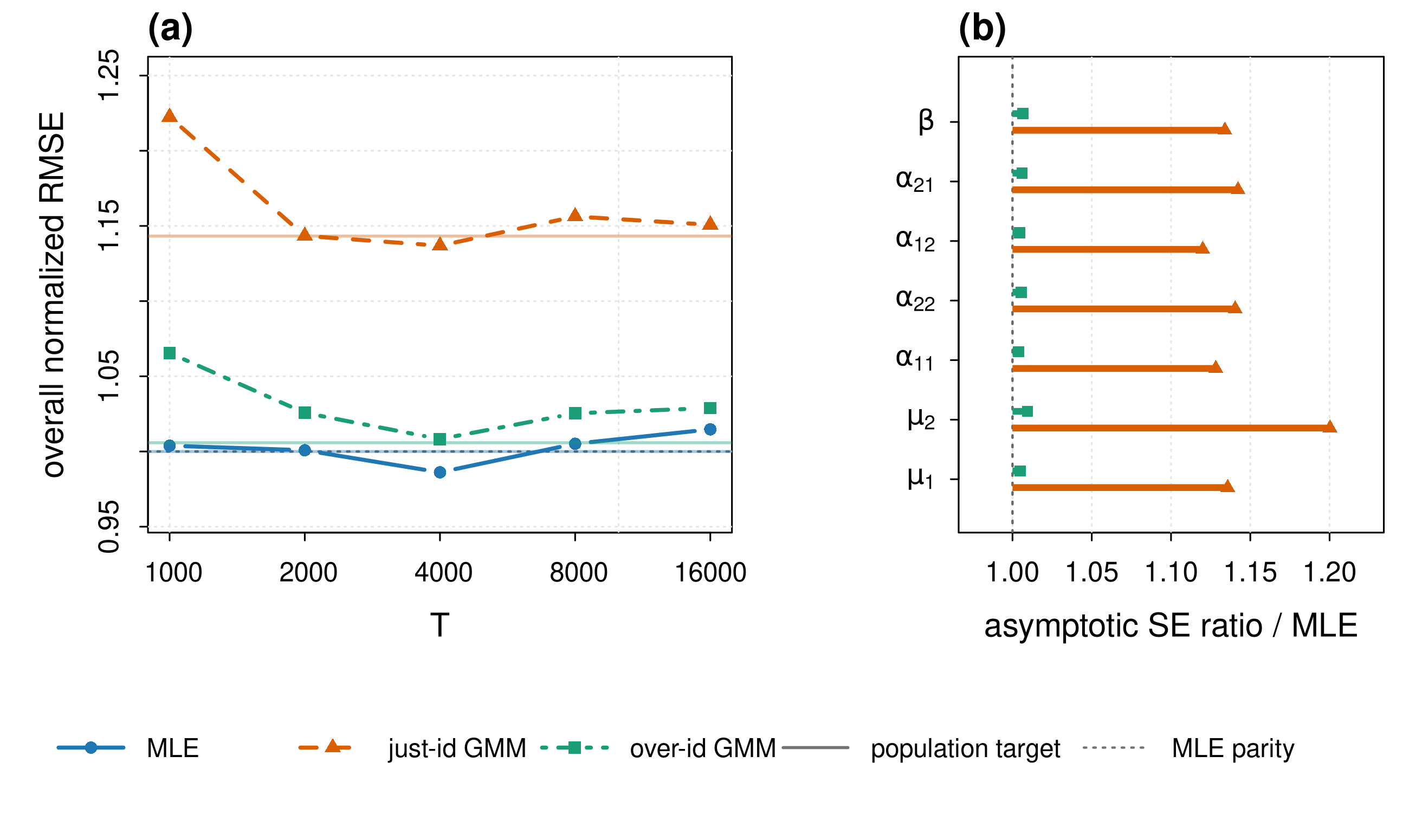}
\caption{(a): Overall normalized RMSE in the bivariate linear Hawkes experiment. The dotted line is the MLE parity benchmark; method-specific horizontal lines are population Godambe targets. (b): Population asymptotic standard-error inflation by parameter, relative to the MLE benchmark. The dotted vertical line is the MLE value.}
\label{fig:bivar-overall-efficiency}
\end{figure}

Figure~\ref{fig:bivar-overall-efficiency}(b) gives the population standard-error inflation by coordinate, \(\{(V_m)_{kk}/(V_M)_{kk}\}^{1/2}\) for \(m\in\{J,O\}\). The least-squares estimator pays a visible penalty across coordinates, while the overidentified estimator is nearly indistinguishable from the MLE benchmark in this design. The message for practitioners is that a stable library whose span better approximates the score recovers substantial efficiency.

\section{Discussion}\label{sec:discussion}
This paper unifies a broad family of estimators used for Hawkes models inside a single inferential geometry and equips admissible finite moment libraries with common guarantees. An admissible moment is any smooth predictable feature of the recent configuration controlled by a polynomial local-count envelope, so the likelihood score, the derivative equation behind the least-squares contrast, and non-score Tak\'acs--Fiksel and Stoyan--Grabarnik-type moments all belong to one family rather than to separate methods. For each admissible GMM member we establish uniform high-probability and almost-sure \(\mathcal O(\sqrt{\log(T)/T})\) rates, asymptotic normality with explicit sandwich covariance, and feasible two-step optimal weighting; the least-squares contrast minimizer has the corresponding high-probability rate and local derivative-weight sandwich limit by Supplement Proposition~\ref{prop:supp-ls-contrast-estimator}. A projection geometry then prices efficiency: the score is the efficient member, and any other library loses exactly the part of the score lying outside its predictable span. The scope is wide on purpose. Signed and inhibitory interactions are handled by the same dependence theory through absolute-kernel stability; nonstationary starts are absorbed by a logarithmic burn-in; and a two-point bound shows the root-\(T\) scale cannot be improved uniformly. For the modeller, then, the choice between likelihood, least squares, and richer libraries is not a jump between unrelated methods but a single, quantified trade-off of computational cost against efficiency.

The limitations made implicit by our assumptions indicate where new tools are required. Allowing the library to grow, \(q=q_T\to\infty\), affects two fixed-dimensional steps: the fixed-dimensional nets behind Theorem~\ref{thm:local-concentration} and the unregularized inversion of \(\hat\Omega_{H,T}\). A growing library would therefore require dimension-explicit martingale concentration, entropy control for the library, and regularized weighting in the spirit of efficient-instrument GMM \citep{newey1990efficient,donald2009choosing,newey2009generalized}. This is also the regime in which the penalized least-squares literature for Hawkes already operates \citep{reynaud2010adaptive,hansen2015,bacry2020sparse}. 

Noncompact memory removes the deterministic window on which the coupling bookkeeping of Section~\ref{sec:concentration} rests, so the local memory count would have to be replaced by a tail-weighted residual-influence functional.   Growing dimension, \(D=D_T\to\infty\), creates a different obstruction.  The stability condition  \(\rho\{G^{\mathrm{abs}}(\theta)\}\le 1-\varepsilon\) is formally dimension-free, but the present constants in the envelopes, count moments, nets, union bounds, and matrix inversions are not tracked as functions of \(D\).  A high-dimensional theory would have to make these dependences explicit, and would likely require sparsity or other structural restrictions, as in \citet{hansen2015}.  Boundary and sparse regimes degenerate the rank condition \eqref{eq:H-rank} rather than the dependence theory. Within the fixed-dimensional compact-window setting studied here, however, the 
compensator class is exhaustive for finite-library comparisons: it contains the 
efficient score-based choice, accommodates any other admissible finite library, and quantifies the resulting efficiency gap.

\acks During the preparation of this work the authors used ChatGPT 5.5 Pro, an AI large language model, for code generation and mathematical proof checking. The authors take full responsibility for all content. This work was supported by the Royal Society of New Zealand Marsden Fund under grant MFP-UOO2518. There were no competing interests to declare which arose during the preparation or publication process of this article. The code used to replicate the results in Section~\ref{sec:simulation} is freely available at \url{https://github.com/ckres213/optimalEstEQ}.

\renewcommand{\doi}{doi: \begingroup \urlstyle{rm}\Url}
\bibliographystyle{abbrvnat}
\bibliography{martingalebib}

\clearpage
\setcounter{page}{1}                            
\renewcommand{\thepage}{S\arabic{page}}         

\begin{center}
  {\Large\bfseries Supplement to ``Optimal Estimating Equations for Compact-Memory Hawkes Processes''}\\[1ex]
  Louis Davis \quad Conor Kresin
\end{center}
\medskip

\setcounter{section}{0}\setcounter{equation}{0}\setcounter{theorem}{0}
\renewcommand{\thesection}{S\arabic{section}}
\renewcommand{\thesubsection}{S\arabic{section}.\arabic{subsection}}
\renewcommand{\theequation}{S\arabic{section}.\arabic{equation}}
\renewcommand{\thetheorem}{S\arabic{section}.\arabic{theorem}}

\section{Concentration via Poisson embedding}\label{app:poisson-embedding}

This section details the proof of Theorem~\ref{thm:local-concentration} and related lemmata. Theorem \ref{thm:local-concentration} is obtained by a coupling argument followed by logarithmic blocking, Bernstein's inequality, and a fixed-dimensional net. However, in order to effectively couple and truncate, we must first prove a few technical results. In particular, we demonstrate that the moment generating function of the counting process over any window of finite length exists in a neighbourhood of 0 (Lemma \ref{lem:fixed-window-count-mgf}), and two coupling Lemmata \ref{lem:nonstationary-return-to-stationarity} and \ref{lem:nonlinearcoupling} driven by the Poisson embedding theory of \cite{bremau1996}. 

For notational brevity, set
\[
    K_{\theta,ij}(u)=L_{\theta,i}|h_{\theta,ij}(u)|\mathbf 1\{0\le u\le A\},
    \qquad
    G_{\theta,ij}=\int_0^A K_{\theta,ij}(u)\,\dd u .
\]
By Assumption~\ref{ass:stability}, $\sup_{\theta\in\Theta}\rho(G_\theta)<1$.  Compactness and continuity then give
\begin{equation}\label{eq:tilted-resolvent-bound}
    \sup_{\theta\in\Theta}
    \left\| (I-G_{\theta})^{-1}\right\|_{\op}<\infty .
\end{equation}

Throughout this section alone do we explicitly state the dependence of $\theta$ on the counting process $N_\theta$, this is to make clear which law we are working under. Elsewhere, we suppress this argument for clarity.

We first prove that the counting process law uniformly returns to the stationary distribution, in total variation distance, under any finite start via an embedding argument.

\begin{lemma}[Exponential return to stationarity from a nonstationary history]
\label{lem:nonstationary-return-to-stationarity}
Suppose Assumptions~\ref{ass:parameter}--\ref{ass:stability} hold and fix
$s\in\R$. Let \(\calH\) be a \(D\)-variate locally finite history on
\((-\infty,s]\), measurable with respect to the filtration at time $s$ but independent of the future Poisson embedding noise. Define the local memory count
\[Z_\calH(s)\vcentcolon= \sum_{j=1}^D \calH_j([s-A,s]).\]
Let $N_\theta^\calH$ be a $D$ dimensional-nonlinear Hawkes process with parameter
\(\theta\), started from history \(\calH\) at time \(s\). Let
\(N_\theta^{\rm stat}\) be a stationary $D$-dimensional nonlinear Hawkes process with the
same parameter, in particular its infinite history is independent of $\calH$. Then there exist constants \(C,c>0\), independent of
\(\theta\), \(s\), \(r\), and \(\calH\), such that, for every \(r\ge0\),
\begin{align}\label{eq:condTV}
d_{\TV}\left(
    \mathcal L_\theta\{N_\theta^\calH|_{(s+r,\infty)}\mid \calH\},
    \mathcal L_\theta\{N_\theta^{\rm stat}|_{(s+r,\infty)}\}
\right)
\le
C e^{-cr}\{1+Z_\calH(s)\}.
\end{align}

Consequently, if
\[
    \sup_{\theta\in\Theta}\E_\theta\left[Z_\calH(s)\right]<\infty,
\]
then
\begin{align}\label{eq:totTv}
\sup_{\theta\in\Theta}
d_{\TV}\left(
    \mathcal L_\theta\{N_\theta^\calH|_{(s+r,\infty)}\},
    \mathcal L_\theta\{N_\theta^{\rm stat}|_{(s+r,\infty)}\}
\right)
\le
C e^{-cr}
\left\{
1+\sup_{\theta\in\Theta}\E_\theta\left[Z_\calH(s)\right]
\right\}.
\end{align}
In particular, for the empty, or stationary, start $\sup_{\theta \in \Theta}\E_\theta\left[Z_\calH(s)\right]<\infty$.
\end{lemma}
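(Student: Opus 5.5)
I would prove this by a Poisson-embedding coupling in the style of \cite{bremau1996}. Take a Poisson random measure \(\bar N\) on \(\R\times\R_+\times\{1,\dots,D\}\) with unit intensity, independent of \(\calH\). Drive both processes from the same \(\bar N\) on \((s,\infty)\). For the stationary process \(N_\theta^{\rm stat}\), take the stationary solution of the embedding equation. It is driven by \(\bar N\) on all of \(\R\), so its past on \((-\infty,s]\) is independent of \(\calH\). For \(N_\theta^\calH\), use the history \(\calH\) on \((-\infty,s]\) and the same atoms of \(\bar N\) after time \(s\): a point of component \(i\) occurs at \(t>s\) when \(\bar N\) has an atom \((t,z,i)\) with \(z\le\lambda_i^\calH(t)\). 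Both constructions are legitimate couplings, because each marginal has the correct law given its own past. The total variation distance of the restrictions to \((s+r,\infty)\) is then bounded by the probability that the two paths disagree somewhere on \((s+r,\infty)\).

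\textbf{Step 1: a dominating disagreement process.} Let \(\delta_i(t)=|\lambda_i^\calH(t)-\lambda_i^{\rm stat}(t)|\). By the Lipschitz property of \(\phi_{\theta,i}\) and compact support of the kernels,
\[
\delta_i(t)\le \sum_j\int_{[t-A,t)}K_{\theta,ij}(t-u)\,\{\dd\calH_j(u)\1\{u\le s\}+\dd N^{\rm stat}_j(u)\1\{u\le s\}+\dd\Delta_j(u)\1\{u>s\}\}.
\]
Here \(\Delta_j\) counts the disagreement points, namely the atoms of \(\bar N\) lying between the two intensities. Conditionally, \(\Delta\) is a linear Hawkes process with kernel \(K_\theta\), seeded by the initial mass. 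That initial mass is generated by the at most \(Z_\calH(s)+N^{\rm stat}([s-A,s])\) points in \([s-A,s]\), and it acts only on \((s,s+A]\). Taking expectations gives a renewal inequality for \(m_i(t)=\E[\delta_i(t)\mid\calH]\):
\[
m(t)\le \kappa(t)+\int_s^t K_\theta(t-u)m(u)\,\dd u,\qquad \kappa(t)\le C\{1+Z_\calH(s)\}\1\{s<t\le s+A\}.
\]
To get the constant term I use \(\E N^{\rm stat}([s-A,s])\le C\), which follows from Lemma~\ref{lem:fixed-window-count-mgf}-type count bounds under \ref{ass:stability}, or directly from the dominating linear process.

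\textbf{Step 2: exponential decay.} This is the main obstacle. Since \(\sup_\theta\rho(G_\theta)\le1-\varepsilon\) and the kernels are supported on \([0,A]\), choose \(\eta>0\) so that the tilted matrix \(G_\theta^{(\eta)}=\int_0^A e^{\eta u}K_\theta(u)\,\dd u\) still has spectral radius at most \(1-\varepsilon/2\), uniformly in \(\theta\). This works because \(G^{(\eta)}_\theta\le e^{\eta A}G_\theta\) entrywise. Multiply the inequality by \(e^{\eta(t-s)}\) and iterate, or bound the resolvent \(\sum_n (K_\theta^{*n})\). Then \(\int_s^\infty e^{\eta(t-s)}m(t)\,\dd t\le \|(I-G^{(\eta)}_\theta)^{-1}\|\,C\{1+Z_\calH(s)\}\), and this constant is uniform by the analogue of \eqref{eq:tilted-resolvent-bound}. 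Feeding this back into the renewal inequality once more gives the pointwise bound \(m(t)\le C e^{-\eta(t-s)}\{1+Z_\calH(s)\}\) for \(t>s+A\); the compact kernel support turns the integral bound into a pointwise one. Since disagreements after \(s+r\) occur with conditional intensity at most \(\delta\),
\[
\Prob(\text{disagreement on }(s+r,\infty)\mid\calH)\le\int_{s+r}^\infty \textstyle\sum_i m_i(t)\,\dd t\le Ce^{-cr}\{1+Z_\calH(s)\},
\]
which is \eqref{eq:condTV}. For \(r\le A\) the bound is trivial after enlarging \(C\). A caveat is that disagreement points before \(s+r\) have already been absorbed into \(\Delta\) through the renewal structure, so the estimate controls all later discrepancies.

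\textbf{Step 3: unconditioning.} For \eqref{eq:totTv}, use joint convexity of total variation, so that the unconditional distance is at most the expectation of the conditional bound, and then take \(\sup_\theta\). For the final claim: the empty start has \(Z_\calH=0\). A stationary start has \(\E_\theta Z_\calH(s)=\E_\theta N([s-A,s])\le C_A\), uniformly in \(\theta\), by the uniform fixed-window exponential moment bound, since exponential moments dominate first moments.
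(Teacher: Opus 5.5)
Your proposal is correct and is essentially the paper's own argument. It uses the same common Poisson-embedding coupling, the same Lipschitz and compact-support domination of the intensity gap by a linear renewal inequality, the same exponential tilt justified by $\int_0^A e^{\eta u}K_\theta(u)\,\dd u\le e^{\eta A}G_\theta$, the uniform resolvent bound, and Markov's inequality combined with the coupling characterization of total variation. The paper works with the tilted expected disagreement count $x(T)$, which equals your $\int_s^T e^{\eta(t-s)}m(t)\,\dd t$, and it skips your pointwise step by bounding the tail count directly by $e^{-\alpha r}x(\infty)$.

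One point needs fixing. The uniform bound on the stationary mean intensity, which enters your $\kappa$ and the stationary-start claim, must not come from Lemma~\ref{lem:fixed-window-count-mgf}, because the paper proves that lemma using \eqref{eq:totTv}. Use instead your ``dominating linear process'' alternative, as the paper does: $m_\theta\le(I-G_\theta)^{-1}\phi_\theta(\nu_\theta)$, with pointwise finiteness taken from Br\'emaud--Massouli\'e, together with the compensator identity for $\E_\theta N([s-A,s])$.
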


\begin{proof}

Following previous notation
\[ K_{\theta,ij}(u)=L_{\theta,i}|h_{\theta,ij}(u)|\mathbf 1\{0\le u\le A\},\qquad G_{\theta,ij}=\int_0^A K_{\theta,ij}(u)\,\dd u .\]

Then Lipschitzness of the link function gives, e.g. using \eqref{eq:LipschitzNonLintoLin}, for any locally finite configuration $N$,
\begin{align*}
m_{\theta,i}&\leq \phi_{\theta,i}(\nu_{\theta,i})+\sum_{j=1}^DG_{\theta,ij}m_{\theta,j},
\end{align*}
where $m_{\theta,i}=\E_\theta\left[\lambda_i(0;\theta)\right]$, under the stationary law. By Theorem 7 of \cite{bremau1996} pointwise in $\theta$, $\max_{i=1}^Dm_{\theta,i}<\infty$. Therefore, by entry-wise non-negativity and that $\rho(G_\theta)<1$ in vector form
\begin{equation}\label{eq:m_starBound}
m_\theta\leq (I-G_\theta)^{-1}\phi_\theta(\nu_\theta)\implies m_*=\sup_{\theta \in \Theta}\sum_{i=1}^D m_{\theta,i}<\infty
\end{equation}
where the finiteness follows from the continuity in $\theta$, compactness of $\Theta$ and the uniform spectral margin, e.g.\ equation \eqref{eq:tilted-resolvent-bound}.

Next we consider an exponential tilting, to do so choose $\alpha>0$ so small that the exponentially tilted matrix
\[ H_{\theta,\alpha}= (H_{\theta,\alpha,ij})_{i,j=1}^D,\qquad H_{\theta,\alpha,ij} =\int_0^\infty e^{\alpha u}K_{\theta,ij}(u)\,\dd u\]
satisfies
\[\sup_{\theta\in\Theta}\rho(H_{\theta,\alpha})<1.\]
This is possible since \(K_{\theta,ij}\) is supported on \([0,A]\), so that $H_{\theta,\alpha,ij}\leq e^{\alpha A}G_{\theta,ij}.$

Next, define the constant
\[R_\alpha \vcentcolon= \sup_{\theta\in\Theta}\mathbf 1^\top (I-H_{\theta,\alpha})^{-1}\mathbf 1<\infty \]
where $\mathbf 1$ is the $D$-1s vector.

Now fix \(\theta\in\Theta\) and condition on the initial history \(\calH\).  Let
\(\xi_\theta\) be an independent stationary history on \((-\infty,s]\) with
law $\mathcal L_\theta\left\{N_\theta^{\rm stat}|_{(-\infty,s]}\right\}.$

Let \(\Pi=(\Pi_1,\ldots,\Pi_D)\) be independent Poisson random measures on
\((s,\infty)\times\mathbb R_+\) with common intensity \(\dd t\,\dd z\). We now aim to construct a
coupling law \(\mathbb Q_\theta^{s,\calH}\), for two non-linear Hawkes processes with history $\calH$ and $\xi_\theta$ respectively. We construct \(N_\theta^\calH\) from the
history \(\calH\), \(N_\theta'\) from  the history \(\xi_\theta\) by
driving both processes after time \(s\) with the same Poisson random measures as done by \cite{bremau1996} (see for example \citep[][Theorem 3]{bremau1996}).
\[N_{\theta,i}^\calH(\dd t)=\int_0^\infty\1\left\{z\le \lambda_{\theta,i}^\calH(t;\theta)\right\}\Pi_i(\dd t,\dd z),\qquad t>s,\]
and
\[N_{\theta,i}'(\dd t)=\int_0^\infty\1\left\{z\le \lambda_{\theta,i}'(t;\theta)\right\}\Pi_i(\dd t,\dd z),\qquad t>s.\]
Conditional on \(\calH\), the first marginal is the nonstationary law started
from \(\calH\), and the second marginal is the stationary law
\(\Prob_\theta^{\rm nl}\).

For \(t>s\), define the disagreement measure
\[
    D_{\theta,i}(\dd t)
    =
    |N_{\theta,i}^\calH-N_{\theta,i}'|(\dd t).
\]
Equivalently,
\[
D_{\theta,i}(\dd t)
=
\int_0^\infty
\left|
\mathbf 1\{z\le \lambda_{\theta,i}^\calH(t;\theta)\}
-
\mathbf 1\{z\le \lambda_{\theta,i}'(t;\theta)\}
\right|
\Pi_i(\dd t,\dd z).
\]
The two indicators differ exactly when the shared mark \(z\) lies between the
two intensities.  Therefore,
\begin{align}\label{eq:compensatorCouple}
    \E_{\mathbb Q_\theta^{s,\calH}}
    \left[
    D_{\theta,i}(\dd t)\mid \mathcal F_{t-}
    \right]
    =
    |\lambda_{\theta,i}^\calH(t;\theta)
      -\lambda_{\theta,i}'(t;\theta)|\,\dd t .
\end{align}

By the Lipschitz property of \(\phi_{\theta,i}\),
\begin{align}\label{eq:lipschitzinequality}
|\lambda_{\theta,i}^\calH(t;\theta)
-\lambda_{\theta,i}'(t;\theta)|
&\le
\sum_{j=1}^D
\int_{(s,t)}
K_{\theta,ij}(t-u)D_{\theta,j}(\dd u)  \\
&\quad+
\sum_{j=1}^D
\int_{(-\infty,s]}
K_{\theta,ij}(t-u)
\,d|\calH_j-\xi_{\theta,j}|(u).
\end{align}

 Conditional on \(\calH\), the triangle inequality and the stationarity of \(\xi_\theta\) implies
\begin{align}\label{eq:condexp}
& \E_{\mathbb Q_\theta^{s,\calH}}
\left[
\sum_{j=1}^D
\int_{(-\infty,s]}
K_{\theta,ij}(t-u)
\,d|\calH_j-\xi_{\theta,j}|(u)
\,\middle|\,\calH
\right]  \\
&\qquad\le
\sum_{j=1}^D
\int_{[s-A,s]}
K_{\theta,ij}(t-u)\,d\calH_j(u)
+
\sum_{j=1}^D
m_{\theta,j}
\int_{-\infty}^{s}
K_{\theta,ij}(t-u)\,\dd u .
\end{align}

For \(v\ge0\), define
\[ b_{\theta,i}^{s,\calH}(v):=\sum_{j=1}^D \int_{[s-A,s]} K_{\theta,ij}(s+v-u)\,d\calH_j(u) + \sum_{j=1}^D m_{\theta,j}\int_{v}^{\infty}K_{\theta,ij}(w)\,dw .\]
Then the conditional expectation in equation \eqref{eq:condexp} is bounded by
\(b_{\theta,i}^{s,\calH}(t-s)\).  Since \(K_{\theta,ij}\) is supported on
\([0,A]\), we also have $ b_{\theta,i}^{s,\calH}(v)=0$ when $v>A$.

For any $T>s$, define
\[x_{\theta,i}^{s,\calH}(T) \vcentcolon=\E_{\mathbb Q_\theta^{s,\calH}}\left[\int_s^T e^{\alpha(t-s)}D_{\theta,i}(\dd t)\,\middle|\,\calH\right],\]
Using the compensator identity \eqref{eq:compensatorCouple}, the Lipschitz bound \eqref{eq:lipschitzinequality}, and the definition of
\(b_{\theta,i}^{s,\calH}\),
\begin{align}\nonumber
x_{\theta,i}^{s,\calH}(T)&\le\int_s^T e^{\alpha(t-s)}b_{\theta,i}^{s,\calH}(t-s)\,\dd t \\\label{eq:xidef2}
&\quad+\E_{\mathbb Q_\theta^{s,\calH}}\left[\int_s^T e^{\alpha(t-s)}\left(\sum_{j=1}^D\int_s^t K_{\theta,ij}(t-u)D_{\theta,j}(\dd u)\right)\dd t\,\middle|\,\calH\right].
\end{align}

If we define
\begin{multline}
    \beta_{\theta,i}^{s,\calH}\vcentcolon=
    \int_0^\infty e^{\alpha v}b_{\theta,i}^{s,\calH}(v)\,dv \\
    =  \int_0^\infty e^{\alpha v}\sum_{j=1}^D\int_{[s-A,s]}K_{\theta,ij}(s+v-u)d\calH_j(u)\,dv + \int_0^\infty e^{\alpha v}\sum_{j=1}^D m_{\theta,j}\int_v^\infty K_{\theta,ij}(w)\,dw\,dv\\
    =\vcentcolon I_i+II_i
\end{multline}
Then it is clear that
\[
    \int_s^T e^{\alpha(t-s)}
    b_{\theta,i}^{s,\calH}(t-s)\,\dd t
    \le
    \beta_{\theta,i}^{s,\calH}.
\]

We now bound \(\sum_{i=1}^D\beta_{\theta,i}^{s,\calH}\).  Its first term, $I_i$, satisfies,
\[
\begin{aligned}
\sum_{i=1}^DI_i&=\sum_{i=1}^D\sum_{j=1}^D\int_0^\infty e^{\alpha v}\int_{[s-A,s]}K_{\theta,ij}(s+v-u)\,d\calH_j(u)\,dv  \\
&=\sum_{i=1}^D\sum_{j=1}^D\int_{[s-A,s]}\left[\int_0^\infty e^{\alpha v}K_{\theta,ij}(s+v-u)\,dv\right]d\calH_j(u)  \\
&\leq e^{\alpha A}\sum_{i=1}^D\sum_{j=1}^DG_{\theta,ij}\calH_j([s-A,s])  \\
&\le C Z_\calH(s).
\end{aligned}
\]
The second term, $II_i$, satisfies
\[
\begin{aligned}
\sum_{i=1}^D II_i&=\sum_{i=1}^D\sum_{j=1}^D m_{\theta,j}\int_0^\infty e^{\alpha v}\int_v^\infty K_{\theta,ij}(w)\,dw\,dv  \\
&=\sum_{i=1}^D\sum_{j=1}^D m_{\theta,j}\int_0^\infty K_{\theta,ij}(w)\int_0^w e^{\alpha v}\,dv\,dw  \\
&\le A e^{\alpha A}\sum_{i=1}^D\sum_{j=1}^D m_{\theta,j}G_{\theta,ij}\le C.
\end{aligned}
\]
Here, \(C<\infty\) is uniform over \(\theta\in\Theta\), using the uniform bound
on \(\sum_{j=1}^D m_{\theta,j}\) and the compactness of the parameter space $\Theta$.  Therefore,
\[\sum_{i=1}^D \beta_{\theta,i}^{s,\calH} \le C\{1+Z_\calH(s)\}.\]

It remains to treat the term in equation \eqref{eq:xidef2}.
\[
\begin{aligned}
& \E_{\mathbb Q_\theta^{s,\calH}} \left[\int_s^T e^{\alpha(t-s)}\left(\sum_{j=1}^D\int_s^t K_{\theta,ij}(t-u)D_{\theta,j}(\dd u)\right)\dd t\,\middle|\,\calH\right] \\
&\qquad=\sum_{j=1}^D \E_{\mathbb Q_\theta^{s,\calH}}\left[\int_{(s,T]}\left(\int_u^T e^{\alpha(t-s)}K_{\theta,ij}(t-u)\,\dd t\right)D_{\theta,j}(\dd u)\,\middle|\,\calH\right] \\
&\qquad\le\sum_{j=1}^DH_{\theta,\alpha,ij}\E_{\mathbb Q_\theta^{s,\calH}}\left[\int_{(s,T]} e^{\alpha(u-s)}D_{\theta,j}(\dd u)\,\middle|\,\calH\right].
\end{aligned}
\]
Thus, in vector notation,
\[
    x_\theta^{s,\calH}(T)
    \le
    \beta_\theta^{s,\calH}
    +
    H_{\theta,\alpha}x_\theta^{s,\calH}(T).
\]
Since \(H_{\theta,\alpha}\) is nonnegative and has spectral radius strictly
less than one,
\[
    x_\theta^{s,\calH}(T)
    \le
    (I-H_{\theta,\alpha})^{-1}\beta_\theta^{s,\calH}.
\]
Hence,
\[\sum_{i=1}^D x_{\theta,i}^{s,\calH}(T)\le\mathbf 1^\top(I-H_{\theta,\alpha})^{-1}\beta_\theta^{s,\calH} \le R_\alpha\sum_{i=1}^D \beta_{\theta,i}^{s,\calH} \le
C\{1+Z_\calH(s)\}.\]
Letting \(T\to\infty\) and applying the monotone convergence theorem yields
\[\E_{\mathbb Q_\theta^{s,\calH}}\left[\sum_{i=1}^D\int_{(s,\infty)}e^{\alpha(t-s)}D_{\theta,i}(\dd t)\,\middle|\,\calH\right]\le C\{1+Z_\calH(s)\}.\]

Next, fix \(r\ge0\) so that
\[
\begin{aligned}
\E_{\mathbb Q_\theta^{s,\calH}}
\left[
\sum_{i=1}^D
D_{\theta,i}((s+r,\infty))
\,\middle|\,\calH
\right]
&\le
e^{-\alpha r} \E_{\mathbb Q_\theta^{s,\calH}}
\left[
\sum_{i=1}^D
\int_{(s,\infty)}e^{\alpha(t-s)}D_{\theta,i}(\dd t)
\,\middle|\,\calH
\right]  \\
&\le
C e^{-\alpha r}\{1+Z_\calH(s)\}.
\end{aligned}
\]
Therefore, by Markov's inequality,
\[
\mathbb Q_\theta^{s,\calH}
\left(
N_\theta^\calH|_{(s+r,\infty)}
\ne
N_\theta'|_{(s+r,\infty)}
\,\middle|\,\calH
\right)
\le
C e^{-\alpha r}\{1+Z_\calH(s)\}.
\]
The coupling characterization of the total variation distance yields
\[
d_{\TV}\left(
    \mathcal L_\theta\{N_\theta^\calH|_{(s+r,\infty)}\mid \calH\},
    \mathcal L_\theta\{N_\theta^{\rm stat}|_{(s+r,\infty)}\}
\right)
\le
C e^{-\alpha r}\{1+Z_\calH(s)\}.
\]
Renaming \(\alpha\) as \(c\) proves the equation \eqref{eq:condTV}.

Equation \eqref{eq:totTv} follows from the convexity of the total variation distance, equation \eqref{eq:condTV} and then finally taking the supremum over \(\theta\in\Theta\).

Finally, if the history is empty then $Z_\calH(s)=0$ almost surely, so that the conditional expectation bound is obvious, and under a stationary equation \ref{eq:m_starBound}, and the compensator identity \[\E_\theta\left[N_\theta\left([-A,0)\right)\right]=A\sum_{i=1}^D\E_\theta \left[\lambda_i(0;\theta)\right]\]
give the required uniform finite first moment.

\end{proof}

We now use the previous coupling lemma to demonstrate that the moment generating function of the window count of a stationary non-linear multivariate Hawkes process exists. This follows from a Poisson embedding and domination argument, concluded by Theorem 1 of \cite{leblanc2024}.

\begin{lemma}[Uniform fixed-window exponential moments]\label{lem:fixed-window-count-mgf}
Suppose Assumptions~\ref{ass:parameter}--\ref{ass:stability} hold. Then, for every fixed $0\le L<\infty$ there exist constants $\xi_L,C_L>0$ such that
\begin{equation}\label{eq:count-mgf}
    \sup_{\theta\in\Theta}\sup_{s\in\mathbb R}
    \E_\theta\left[\exp\{\xi_L N_\theta([s,s+L])\}\right]
    \le C_L.
\end{equation}
\end{lemma}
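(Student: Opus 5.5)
The plan is to dominate \(N_\theta\) pathwise by a linear Hawkes process with nonnegative kernels through the Poisson embedding. Then we quote the exponential-moment result for subcritical linear Hawkes processes (Theorem~1 of \cite{leblanc2024}) and make its constants uniform in \(\theta\) using compactness and the uniform spectral margin.

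\textbf{Step 1 (domination).} Fix \(\theta\). By the Lipschitz property of \(\phi_{\theta,i}\),
\[
\lambda_i(t;\theta)\le \phi_{\theta,i}(\nu_{\theta,i})+\sum_j\int_{[t-A,t)}K_{\theta,ij}(t-u)\,\dd N_j(u).
\]
Let \(\bar N_\theta\) be the linear Hawkes process with baseline \(\bar\mu_{\theta,i}=\phi_{\theta,i}(\nu_{\theta,i})\) and kernels \(K_{\theta,ij}\ge0\), driven by the same Poisson random measures \(\Pi_i\) as \(N_\theta\) in the thinning construction of \cite{bremau1996}. Both processes are started from empty history at a time \(s_0\to-\infty\) (or built by Picard iteration). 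An induction over successive atoms of \(\Pi\), or the standard monotonicity argument for thinning with monotone intensity maps, gives \(N_\theta\le \bar N_\theta\) as measures.

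\textbf{Step 2 (passage to stationarity).} The stationary \(N_\theta\) is the limit of these processes as \(s_0\to-\infty\). This follows from Lemma~\ref{lem:nonstationary-return-to-stationarity} with the empty start, since the TV distance on any fixed window \([s,s+L]\) is at most \(Ce^{-c(s-s_0)}\). Because the domination holds pathwise for every \(s_0\), we get
\[
\E_\theta[e^{\xi N_\theta([s,s+L])}\wedge M]\le \E[e^{\xi \bar N_\theta([s,s+L])}]+C e^{-c(s-s_0)}M .
\]
We let \(s_0\to-\infty\) and then \(M\to\infty\). The same limit applied to \(\bar N_\theta\) gives convergence to its stationary version. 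By stationarity the bound is then independent of \(s\).

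\textbf{Step 3 (uniform linear bound).} It remains to find \(\xi_L\) and \(C_L\) with
\[
\sup_\theta \E[e^{\xi_L \bar N^{\rm stat}_\theta([0,L])}]\le C_L .
\]
The dominating process has a cluster representation: immigrants form a Poisson process of rate \(\bar\mu_\theta\), and offspring are Galton--Watson with mean matrix \(G_\theta\), where \(\rho(G_\theta)\le1-\varepsilon\). The total progeny \(S\) of a cluster has an exponential moment \(\E e^{\eta S}<\infty\) for small \(\eta\). This is the Leblanc input, and it can be made explicit by solving the fixed-point equation \(u=\eta+\log\E e^{u\cdot\text{offspring}}\) with Poisson offspring counts; the solution exists uniformly when \(\eta\) is small relative to the spectral gap. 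Every cluster has length at most \(A\) times its number of generations. The clusters that touch \([0,L]\) are rooted at Poisson immigrant times, so \(\bar N([0,L])\) is bounded by a compound Poisson sum \(\sum_k S_k \1\{\text{cluster }k\text{ reaches }[0,L]\}\). Campbell's formula then gives
\[
\log\E e^{\xi\bar N([0,L])}\le \sum_i\bar\mu_{\theta,i}\int_{-\infty}^{L}\E_i[(e^{\xi S}-1)\1\{\text{cluster from }u\text{ reaches }[0,L]\}]\,\dd u .
\]
The integrand over \(u<0\) is bounded by \(\E_i[(e^{\xi S}-1)\1\{A\cdot S\ge |u|\}]\). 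Integrating in \(u\) gives at most \(\E_i[(e^{\xi S}-1)(L+AS)]\), which is finite for \(\xi<\eta\).

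\textbf{Main obstacle.} The delicate point is uniformity of \(\eta\) in \(\theta\), which requires the fixed-point argument to depend only on \(\sup_\theta\rho(G_\theta)\le 1-\varepsilon\) and on bounds for \(G_\theta\) and \(\bar\mu_\theta\). I would handle this through the resolvent bound \eqref{eq:tilted-resolvent-bound}. For small \(u\), the multitype Poisson generating map satisfies \(\log\E e^{u\cdot X}\approx G_\theta u\), so a vector \(u=(I-(1+\delta)G_\theta)^{-1}\eta\mathbf 1\) is a supersolution, uniformly over the compact \(\Theta\). Continuity of \(\theta\mapsto(\bar\mu_\theta,G_\theta)\) together with compactness then makes \(C_L\) uniform.
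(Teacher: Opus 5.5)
Your proposal is correct and follows the paper's route. You dominate the empty-start nonlinear process by the linear envelope with baselines $\phi_{\theta,i}(\nu_{\theta,i})$ and kernels $K_{\theta,ij}$ under a common Poisson embedding, pass to the stationary law through Lemma~\ref{lem:nonstationary-return-to-stationarity} applied to truncated exponentials, and then bound the exponential moments of the subcritical linear envelope uniformly in $\theta$.

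The one difference is in that last step. The paper simply invokes Theorem~1 of \cite{leblanc2024} with the uniform pair $(r,K)$ obtained from $\|G_\theta^n\|_\infty\le Kr^n$. You instead derive this input yourself via the cluster representation, Campbell's formula and a uniform supersolution of the progeny fixed-point equation. To complete that derivation, add the one-line monotone iteration over generation-truncated progeny, which shows that the supersolution dominates the true log-moment generating function of the total progeny.
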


\begin{proof}
Write
\[ K_{\theta,ij}(u)=L_{\theta,i}|h_{\theta,ij}(u)|\1\{0\le u\le A\}.\]
Let $\bar N_\theta$ be the linear Hawkes process with immigrant arrival rate $\phi_{\theta,i}(\nu_{\theta,i})$ and nonnegative kernels $K_{\theta,ij}$, realised with a stationary history on $(-\infty,-m]$ and independent driving Poisson random measure $\Pi$ for time $t>-m$. Next, let $N_\theta^{[-m]}|_{(s+r,\infty)}$ be the nonlinear multivariate Hawkes process started at time $-m$ with an empty history, with componentwise conditional intensity $\lambda_i^{\rm nl}(t;\theta)$ and for $t>-m$ drive it with the same Poisson random measure $\Pi$.  Therefore, the common Poisson embedding with thinning \citep[e.g.][Lemma 4.6]{Patrichi2018} says that both processes may be realized from the same family of driving Poisson random measures. Therefore, pathwise
\begin{align}\label{eq:LipschitzNonLintoLin}
\lambda_i^{\rm nl}(t;\theta)
&\vcentcolon=
\phi_{\theta,i}\!\left(\nu_{\theta,i}+\sum_{j=1}^D\int_{[t-A,t)}h_{\theta,ij}(t-u)\,\dd N_{j,\theta}(u)\right) \\
&\le
\phi_{\theta,i}(\nu_{\theta,i})+
\sum_{j=1}^D\int_{[t-A,t)}K_{\theta,ij}(t-u)\,\dd \bar N_{j,\theta}(u)=\vcentcolon \bar \lambda_i(t;\theta).
\end{align}
Thus, the histories of $\bar N_\theta$ and $N^{[-m]}_\theta$ are ordered, prior to time $-m$, so that the acceptance region $ (0,\lambda_i^{\rm nl}(t;\theta)] \subset (0,\bar \lambda_i(t;\theta)]$ for every $t>-m$. Hence, the common-thinning construction preserves order over all future times, \citet[Lemma 3]{bremau1996} and \citet[Proposition 4.16]{Patrichi2018}. Therefore, $N_\theta^{[-m]}([-m,\infty))\le\bar N_\theta([-m,\infty))$ almost surely.

Next, let $N_\theta^{\rm stat}|_{(s+r,\infty)}$ be a stationary nonlinear multivariate Hawkes process, with the same parametric conditional intensity function as $N_\theta^{[-m]}$. Pick $s=-m$ and $r=m-1$ so that for any fixed window $[0,L]$ equation \eqref{eq:totTv} gives
\begin{equation}\label{eq:mstartHistTV}
\sup_{\theta\in\Theta}
d_{\TV}\left(
    \mathcal L_\theta\{N_\theta^{[-m]}|_{[0,L]}\},
    \mathcal L_\theta\{N_\theta^{\rm stat}|_{[0,L]}\}
\right)
\le
C e^{-c(m-1)}.
\end{equation}
Such an inequality follows since the restriction from $(-1,\infty)$ to $[0,L]$ is measurable so that equation \eqref{eq:mstartHistTV} holds via a data-processing inequality.

The stochastic ordering under the embedding implies that
\begin{equation}\label{eq:mstartMGFbound}
\sup_{\theta\in\Theta}\E_\theta\left[\exp\{\xi_L N_\theta^{[-m]}([0,L])\}\right]\leq     \sup_{\theta\in\Theta}\E_\theta\left[\exp\{\xi_L \bar N_\theta([0,L])\}\right]
\end{equation}

To ensure the right hand side of equation  \eqref{eq:mstartMGFbound} is finite, consider the following argument. The productivity matrix of $\bar N_\theta$ is $G_\theta=\left(\int_0^A K_{\theta,ij}(u)\,\dd u\right)_{i,j=1}^D$, and by Assumption~\ref{ass:stability}, there exists some fixed $\delta>0$ such that $\sup_{\theta\in\Theta}\rho(G_\theta)=1-\delta$. Set $r=1-\delta/2$. Since $G_\theta$ is nonnegative with $\rho(r^{-1}G_\theta)\le(1-\delta)/(1-\delta/2)<1$, every power is dominated entrywise by the Neumann series, 
\[\left(\frac{G_\theta}{r}\right)^n\le\sum_{m\ge0}\left(\frac{G_\theta}{r}\right)^m=(I-r^{-1}G_\theta)^{-1}\implies \norm{G_\theta^n}_\infty\le K r^n\]
for all $n\ge1$ with $K:=\sup_{\theta\in\Theta}\norm{(I-r^{-1}G_\theta)^{-1}}_\infty$, which is finite by the continuity of $\theta\mapsto G_\theta$. Such a result should be clear by the stated assumptions, and compactness of $\Theta$, exactly as in \eqref{eq:tilted-resolvent-bound}. Therefore, we may apply Theorem 1 of \cite{leblanc2024} with this pair $(r,K)$. Together with the uniform upper bound of the baseline intensities the resulting upper bound of the exponential-moment is uniform over $\Theta$. Thus, the right hand side of equation  \eqref{eq:mstartMGFbound} is finite for sufficiently small $\xi_L$, additionally, call this upper bound $C_L$.

Therefore, for every finite $m$
\[ \sup_{\theta\in\Theta}\E_\theta\left[\exp\{\xi_L N_\theta^{[-m]}([0,L])\}\right]\leq C_L<\infty.\]
We now pass to the stationary limit and to do so, define $g_R(k)=\min\left\{e^{\xi_Lk},R\right\}$. Then by equation \eqref{eq:mstartHistTV} 
\begin{align*}
    \E_\theta\left[g_R\left(N_\theta^{\rm stat}([0,L])\right)\right]=\lim_{m\to \infty} \E_\theta\left[g_R\left(N_\theta^{[-m]}([0,L])\right)\right]\leq C_L.
\end{align*}
The monotone convergence theorem, taking $R\to \infty$, yields
\[  \E_\theta\left[\exp\left\{\xi_LN_\theta^{\rm stat}([0,L])\right\}\right]\leq C_L.\]
Since $C_L$ is independent of $\theta$, stationarity, and taking a supremum over $\theta$ yields equation \eqref{eq:count-mgf}.

Polynomial moments follow by the existence of the exponential moment in a neighbourhood of $0$.
\end{proof}

The previous lemma, demonstrating that the random variable denoting the number of events of the process, in any fixed window, has a moment generating function, allows us to show that the counting process is well-behaved, formalized in the following lemma.

\begin{lemma}[Sliding-window truncation]\label{lem:count-truncation}
Under Assumptions~\ref{ass:parameter}--\ref{ass:stability}, for every $c>0$ there exist constants $C,K<\infty$ such that, for all sufficiently large $T$,
\begin{equation}\label{eq:count-truncation}
\sup_{\theta^\star\in\Theta_0}
\Prob_{\theta^\star}\left(
\sup_{0\le t\le T}\mathfrak C_t>K\log(T)
\right)
\le C T^{-c} .
\end{equation}
On the same event, after enlarging $K$,
\begin{equation}\label{eq:total-count-truncation}
    \frac{1}{T}N([0,T])\le K\log(T) .
\end{equation}
\end{lemma}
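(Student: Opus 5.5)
The plan is to discretize, apply a union bound, and use the uniform exponential moment bound of Lemma~\ref{lem:fixed-window-count-mgf}.

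\emph{Discretization.} Cover $[0,T]$ by the $\lceil T\rceil+1$ unit-spaced grid points $k=0,1,\ldots,\lceil T\rceil$. For every $t\in[0,T]$, the window $[t-A,t)$ lies inside $[k-A-1,k+1]$ for $k=\lfloor t\rfloor$, an interval of fixed length $L=A+2$. Hence
\[
\sup_{0\le t\le T}\mathfrak C_t\le \max_{0\le k\le \lceil T\rceil} N([k-A-1,k+1]).
\]

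\emph{Union bound and Chernoff.} Apply Lemma~\ref{lem:fixed-window-count-mgf} with $L=A+2$. It gives $\xi_L,C_L$ such that, uniformly in $\theta^\star\in\Theta_0\subset\Theta$ and in the window location,
\[
\Prob_{\theta^\star}\{N([s,s+L])>K\log T\}\le C_L\,T^{-\xi_L K}.
\]
A union bound over the $O(T)$ grid windows then bounds the probability in \eqref{eq:count-truncation} by $C\,C_L\,T^{1-\xi_LK}$. Choosing $K\ge(1+c)/\xi_L$ gives $CT^{-c}$. The constants do not depend on $\theta^\star$ because the lemma's bound is a supremum over $\Theta$ and over $s$.

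\emph{Total count.} For \eqref{eq:total-count-truncation}, tile $[0,T]$ by at most $\lceil T/A\rceil+1$ windows of length $A$ of the form $[t-A,t)$, with $t\le T+A$, and handle the endpoint. Extending the sup in \eqref{eq:count-truncation} to $[0,T+A]$ costs only constants. On the event of \eqref{eq:count-truncation} this gives
\[
N([0,T])\le (T/A+2)\,K\log T,
\]
so that $T^{-1}N([0,T])\le K'\log T$ for large $T$ after enlarging $K$.

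\emph{Main obstacle.} The only delicate point is uniformity. The supremum over the continuum of $t$ is reduced to finitely many fixed-length windows, so no path regularity beyond monotonicity of counts is needed. The uniform exponential moment over $\theta$ and over the location $s$ is exactly what Lemma~\ref{lem:fixed-window-count-mgf} supplies. I expect no further obstruction.
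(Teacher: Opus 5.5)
Your argument is correct and is essentially the paper's. The paper also reduces the sliding supremum to $O(T)$ fixed-length windows, using a mesh-$A/2$ grid on $[-A,T]$ with each window $[t-A,t)$ meeting at most three cells where you use overlapping length-$(A+2)$ windows; it then combines the uniform exponential moment of Lemma~\ref{lem:fixed-window-count-mgf} with a union bound, and tiles $[0,T]$ by length-$A$ pieces for the total count.

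One small simplification: you do not need to extend the supremum to $[0,T+A]$. The windows $[t-A,t)$ with $0\le t\le T$ already cover $[0,T)$, and the deterministic time $T$ almost surely carries no point, so \eqref{eq:total-count-truncation} holds literally on the event in \eqref{eq:count-truncation}.
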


\begin{proof}
Place a deterministic grid of mesh length $A/2$ on $[-A,T]$ there are at most $C_A(1+T)$ of such grid intervals. Every window $[t-A,t)$, $0\le t\le T$, intersects at most three grid intervals, so a window count exceeding $u$ implies that one grid interval contains more than $u/3$ points.  Lemma~\ref{lem:fixed-window-count-mgf} and a union bound with stationarity give
\begin{align*}\Prob_{\theta^\star}\left(\sup_{0\le t\le T}\mathfrak C_t>u\right)\leq C_A(1+T)e^{-c_0u},\end{align*}
uniformly over $\Theta_0$.  Taking $u=K\log(T)$ proves \eqref{eq:count-truncation}.  Partitioning $[0,T]$ into intervals of length at most $A$ proves \eqref{eq:total-count-truncation} on the same event.
\end{proof}

Now using Lemma \ref{lem:nonstationary-return-to-stationarity} we are able to construct $n$ independent stationary nonlinear multivariate Hawkes processes on disjoint separated blocks that are close in total variation distance to any indivual stationary nonlinear multivariate Hawkes process restricted to the same blocks.

\begin{lemma}[Block coupling for nonlinear compact-memory Hawkes processes]
\label{lem:nonlinearcoupling}
Suppose Assumptions~\ref{ass:parameter}--\ref{ass:stability} hold.  Let
\(N_\theta\sim \Prob_\theta\) be the stationary nonlinear Hawkes process with intensity
\[
    \lambda_i(t;\theta)
    =
    \phi_{\theta,i}\!\left(
        \nu_{\theta,i}
        +
        \sum_{j=1}^D
        \int_{[t-A,t)}
        h_{\theta,ij}(t-s)\,\dd N_{j,\theta}(s)
    \right),
    \qquad i=1,\ldots,D .
\]
Let \(I_q=[a_q,b_q]\), \(q=1,\ldots,n\), be deterministic intervals ordered
from left to right and separated by gaps at least \(g>0\), i.e.
\(a_{q+1}-b_q\ge g\).  Then there exist constants \(C,c>0\), independent of
\(\theta\), \(n\), \(g\), and the intervals, such that
\begin{equation}\label{eq:nonlinBlockCouple}
\sup_{\theta\in\Theta}
d_{\TV}\left(
    \calL_\theta\{N|_{I_1},\ldots,N|_{I_n}\},
    \bigotimes_{q=1}^n \calL_\theta\{N|_{I_q}\}
\right)
\le Cne^{-cg}.
\end{equation}
\end{lemma}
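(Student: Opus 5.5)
We prove \eqref{eq:nonlinBlockCouple} by induction on \(n\), peeling blocks off from the right. We replace each block with a fresh stationary copy driven by independent Poisson noise, and Lemma~\ref{lem:nonstationary-return-to-stationarity} controls the cost of each replacement.

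Write \(\mu_k=\calL_\theta\{N|_{I_1},\ldots,N|_{I_k}\}\) and \(\pi_k=\bigotimes_{q=1}^k\calL_\theta\{N|_{I_q}\}\). The triangle inequality gives
\[
d_{\TV}(\mu_n,\pi_n)\le d_{\TV}\bigl(\mu_n,\mu_{n-1}\otimes\calL_\theta\{N|_{I_n}\}\bigr)+d_{\TV}\bigl(\mu_{n-1}\otimes\calL_\theta\{N|_{I_n}\},\pi_{n-1}\otimes\calL_\theta\{N|_{I_n}\}\bigr).
\]
The second term equals \(d_{\TV}(\mu_{n-1},\pi_{n-1})\), because tensoring with a common marginal does not increase total variation. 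Iterating, it therefore suffices to show that, uniformly in \(\theta\in\Theta\), \(k\) and the intervals,
\[
d_{\TV}\bigl(\mu_k,\mu_{k-1}\otimes\calL_\theta\{N|_{I_k}\}\bigr)\le Ce^{-cg}.
\]

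To bound this one-step term, set \(s=b_{k-1}\) and let \(\calH=N|_{(-\infty,s]}\). The first \(k-1\) block restrictions are \(\calH\)-measurable. Using the Poisson embedding, the stationary process after time \(s\) is the process \(N^\calH_\theta\) started from \(\calH\), driven by Poisson measures \(\Pi\) on \((s,\infty)\times\R_+\) that are independent of \(\calH\). This is exactly the setting of Lemma~\ref{lem:nonstationary-return-to-stationarity}. Let \(N'\) be an independent stationary copy.

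Conditionally on \(\calH\), the coupling constructed in that lemma, applied with \(r=a_k-s\ge g\), gives
\[
d_{\TV}\bigl(\calL\{N|_{I_k}\mid\calH\},\calL\{N'|_{I_k}\}\bigr)\le Ce^{-cg}\{1+Z_\calH(s)\},
\]
after restricting from \((s+r,\infty)\) to \(I_k\) via data processing. The total variation between the joint laws \(\mu_k\) and \(\mu_{k-1}\otimes\calL\{N'|_{I_k}\}\) is bounded by the \(\calH\)-average of these conditional distances. This follows from convexity, or equivalently from gluing the conditional couplings measurably in \(\calH\). Averaging gives the bound
\[
Ce^{-cg}\bigl\{1+\E_\theta Z_\calH(s)\bigr\}.
\]
By stationarity and \eqref{eq:m_starBound} together with the compensator identity,
\[
\E_\theta Z_\calH(s)=\E_\theta N([s-A,s])\le Am_*,
\]
uniformly over \(\Theta\) and independently of \(s\). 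Absorbing this into \(C\) yields the one-step bound. Summing the \(n\) steps (the first block contributes nothing) gives \(Cne^{-cg}\).

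The main obstacle is making the conditional step rigorous. The constants in Lemma~\ref{lem:nonstationary-return-to-stationarity} must not depend on \(s\) or on \(\calH\), and the lemma only requires \(\calH\) to be independent of the future embedding noise. We must therefore check that the stationary process does admit this representation: its restriction to \((s,\infty)\), given its past, is the embedded process driven by fresh noise. This holds by the strong uniqueness of the thinning construction in \citet[Theorem 3]{bremau1996}.

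We must also check that the conditional couplings can be chosen measurably in \(\calH\). This can be done by building all of them on a single probability space with \(\Pi\) and \(\xi_\theta\) independent of \(\calH\), as in the lemma's own proof. If that is done, averaging the disagreement probability is immediate, and the remaining steps are routine.
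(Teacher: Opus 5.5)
Your proof is correct, but it is organized differently from the paper's.

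The paper builds one coupling for all blocks at once. For each $q$ it sets $s_q=a_q-g/3$ and gives an auxiliary process $\tilde N^{(q)}$ an independent stationary history before $s_q$. That process is driven by the same noise $\Pi$, but only on the enlarged window $J_q=(s_q,b_q]$. Because the $J_q$ are disjoint and the histories are independent, the restrictions $\tilde N^{(q)}|_{I_q}$ are jointly independent. A union bound over the $n$ disagreement events then gives $Cne^{-cg/3}$, each event being controlled by Lemma~\ref{lem:nonstationary-return-to-stationarity} with $r=g/3$.

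You instead run a hybrid (telescoping) argument. You condition on the entire true past $N|_{(-\infty,b_{k-1}]}$, replace one block at a time, and use the fact that tensoring with a common factor does not increase total variation.

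Your route never needs the auxiliary copies to be mutually independent. The single fresh copy is automatically independent of the conditioning past, since it is built from $\xi_\theta$ and the noise after $b_{k-1}$. So you do not have to split the gap, and the whole gap $r\ge g$ serves as burn-in.

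The paper's route exhibits an explicit joint coupling of $N$ with $n$ independent stationary block copies on one space. However, the blocking step in the proof of Theorem~\ref{thm:local-concentration} only uses the resulting total-variation bound $\delta_T$, and your argument delivers that equally well.

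Both arguments rest on the same averaged form of Lemma~\ref{lem:nonstationary-return-to-stationarity}, with $\E_\theta Z_\calH(s)\le Am_*$ uniformly in $\theta$ and $s$. Your single-space construction, with $\Pi$ and $\xi_\theta$ independent of $\calH$, settles measurability of the conditional coupling in the same way the lemma's own proof does.

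One cosmetic point: $I_k=[a_k,b_k]$ contains $a_k=s+r$, which is not in $(s+r,\infty)$. This is harmless, because deterministic times carry no points almost surely; alternatively, take $r$ slightly smaller than $a_k-b_{k-1}$.
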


\begin{proof}
The proof of this lemma exploits the result of Lemma \ref{lem:nonstationary-return-to-stationarity} by considering Hawkes processes with two independent stationary histories. In particular, the coupling constructed in Lemma \ref{lem:nonstationary-return-to-stationarity} implies that if $I=[a,b]$ and $a\geq s+r$
\begin{equation}\label{eq:IntTVbound}
\Q_\theta^s\left(N_\theta|_{I}\neq N_\theta'|_{I}\right)\leq C_0e^{-c r}
\end{equation}
where $C_0$ absorbs the integrable finite local stationary history $Z_\calH(s)$.

We now construct the independent block copies. Consider the deterministic intervals $I_q=[a_q,b_q]$ for $q=1,\dots,n$ with gaps $g$ so that $a_{q+1}-b_q\geq g$, let $r=g/3$ and define $s_q=a_q-r$. Consider the enlarged windows $J_q=(s_q,b_q]$, each of which is disjoint since $s_{q+1}=a_{q+1}-r\geq b_q+g-r>b_q$.

We construct the coupling law $\Q_\theta$ as follows. Sample one stationary nonlinear Hawkes process $N_\theta \sim \Prob_\theta^{\text{nl}}$, and represent it through the Poisson embedding with $\Pi$, as was done in the proof of Lemma \ref{lem:nonstationary-return-to-stationarity}. For each block $q$, construct an auxiliary process, $\tilde N_\theta^{(q)}$ in the following manner. 
\begin{enumerate}
    \item Prior to time $s_q$, give $\tilde N_\theta^{(q)}$ an independent stationary nonlinear history with law $\Prob_\theta^{\text{nl}}$ restricted to $(-\infty,s_q]$.
    \item On the window $J_q=(s_q,b_q]$ drive $\tilde N_\theta^{(q)}$ using the same Poisson embedding $\Pi$ as for $N_\theta$.
\end{enumerate}

Specifically, for $t\in J_q$,
\[\tilde N_{\theta,i}^{(q)}(dt)=\int_0^\infty \1\left\{z\leq \tilde\lambda_{\theta,i}^{\text{nl},(q)}(t;\theta)\right\}\Pi_i(\dd t,\dd z)\]
where 
\[\tilde\lambda_{\theta,i}^{\text{nl},(q)}(t;\theta)=\phi_{\theta,i}\left(\nu_{\theta,i}+\sum_{j=1}^D\int_{[t-A,t)}h_{\theta,ij}(t-u)d\tilde N_{\theta,j}^{(q)}(u)\right)\]
Since the history of each $\tilde N_\theta^{(q)}$ is independent, and the Poisson embedding noises are independent across $q$ as the windows $J_q$ are disjoint (Poisson random measures have independent increments) the family $\{\tilde N_\theta^{(q)}|_{I_q}\}_{q=1}^n$ are independent under $\Q_\theta$ so that 
\[\calL_{\Q_\theta}\left(\{\tilde N_\theta^{(q)}|_{I_q}\}_{q=1}^n\right)=\bigotimes_{q=1}^n\calL_{\Q_\theta}\left(\tilde N_\theta^{(q)}|_{I_q}\right).\]
Now for a fixed pair $(N_\theta,\tilde N_\theta^{(q)})$ on the interval $(s_q,b_q]$ has exactly the same distribution as the two-process coupling under $\Q_\theta^{s_q}$. As $r=a_q-s_q=g/3$, equation \eqref{eq:IntTVbound} gives 
\[\Q_\theta\left(N_\theta|_{I_q}\neq \tilde N_\theta^{(q)}|_{I_q}\right)\leq C_0e^{-\alpha g/3}.\]

Via the coupling characterisation of the total variation distance, and a union bound,
\begin{align*}d_{\TV}\left(\calL_{\Prob_\theta^{\text{nl}}}(N_\theta|_{I_1},\ldots,N_\theta|_{I_n}),
\bigotimes_{q=1}^n\calL_{\Prob_\theta^{\text{nl}}}(\tilde N_\theta^{(q)}|_{I_q})
\right)&\leq \Q_\theta\left(\exists q \ \mid \ N_\theta|_{I_{q}}\neq \tilde N_\theta^{(q)} |_{I_q}\right)\leq C_0ne^{-c_0g}.
\end{align*}
As $C_0,c_0$ are independent of $\theta$ taking the supremum yields equation \eqref{eq:nonlinBlockCouple} so that the proof concludes.
\end{proof}

Just prior to proving Theorem \ref{thm:local-concentration} we consider a practical scenario which allows for the local initial history to be integrable, where the history is non-empty but stochastic and finite. In particular, this is the case in which the history starts off empty at some prior finite time, and then propagates forward via the non-explosive Hawkes process. As a concrete example suppose that the process truly started at time $T_{\rm int}$ with an empty history. Then if data collection starts at some future time, $T_{\rm data}>T_{\rm int}$, even though the process itself is not stationary for $t>T_{\rm data}$ the local history in $[T_{\rm data}-A,T_{\rm data}]$ has a first moment, so that the law generating the data converges exponentially fast in total variation distance to the stationary law.
\begin{lemma}
\label{lem:finite-expected-local-history-nonstationary}
Suppose Assumptions~\ref{ass:parameter}--\ref{ass:stability} hold. Fix a
finite starting time \(\tau\in\mathbb R\). Let \(N^{\calH}_\theta\) be the
Hawkes process with parameter \(\theta\), constructed from an initial history
\(\calH\) on \((-\infty,\tau]\).  Assume that
\[
    Z_\calH(\tau)
    :=
    \sum_{j=1}^D\calH_j([\tau-A,\tau])
\]
has finite expectation uniformly over the parameter set:
\[
    \sup_{\theta\in\Theta}\E_\theta \left[Z_\calH(\tau)\right]<\infty .
\]
Then, for every finite \(B<\infty\),
\[
    \sup_{\theta\in\Theta}
    \sup_{\tau\le s\le \tau+B}
    \E_\theta
    \left[\sum_{j=1}^D
    N_{j,\theta}^{\calH}([s-A,s])\right]
    <\infty .
\]
In particular, if the process is started from the empty history at time
\(\tau\), then the conclusion holds for every finite \(B\).
\end{lemma}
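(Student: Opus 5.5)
The plan is to dominate the nonlinear process started from \(\calH\) by a linear Hawkes process with nonnegative kernels, and then bound the expected counts of that linear process with a renewal-type inequality that is uniform in \(\theta\).

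\emph{Step 1: domination.} Drive \(N_\theta^\calH\) after \(\tau\) by Poisson random measures \(\Pi\), exactly as in the proof of Lemma~\ref{lem:nonstationary-return-to-stationarity}. The Lipschitz bound used in \eqref{eq:LipschitzNonLintoLin} gives, for \(t>\tau\),
\[
\lambda_i^\calH(t;\theta)\le \phi_{\theta,i}(\nu_{\theta,i})+\sum_{j=1}^D\int_{[t-A,\tau]}K_{\theta,ij}(t-u)\,\dd\calH_j(u)+\sum_{j=1}^D\int_{(\tau,t)}K_{\theta,ij}(t-u)\,\dd N^\calH_{j,\theta}(u).
\]
This is the key inequality. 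The constant \(\bar\phi:=\sup_{\theta,i}\phi_{\theta,i}(\nu_{\theta,i})\) is finite by continuity on the compact set in Assumption~\ref{ass:parameter}. Each \(K_{\theta,ij}\) is bounded by \(\bar K<\infty\) uniformly in \(\theta\), by Assumption~\ref{ass:kernels} and boundedness of the Lipschitz constants.

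\emph{Step 2: a renewal inequality.} Let \(m_i(t)=\E_\theta[\lambda_i^\calH(t;\theta)]\) for \(t>\tau\). Taking expectations in the display and using the compensator identity \(\E_\theta N^\calH_{j,\theta}(\dd u)=m_j(u)\,\dd u\) gives
\[
m(t)\le \bar\phi\mathbf 1+\bar K\,\E_\theta[Z_\calH(\tau)]\mathbf 1+\int_\tau^t K_\theta(t-u)m(u)\,\dd u .
\]
The middle term holds because \(K_{\theta,ij}(t-u)\le\bar K\) and only history points in \([\tau-A,\tau]\) contribute. The main obstacle is that \(m\) is not known a priori to be finite, so the inequality cannot simply be rearranged. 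To get around this, I would first run the argument for the process stopped when its count exceeds \(n\), or equivalently work with \(\min\{m_i,n\}\)-type truncations via localization. This makes every quantity finite, and monotone convergence then removes the truncation at the end.

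\emph{Step 3: solving the inequality.} Set \(M(t)=\sup_{\tau<u\le t}\max_i m_i(u)\), taking the vector supremum componentwise. The inequality yields \(\bar m(t)\le c_0\mathbf 1+G_\theta \bar m(t)\), where \(\bar m(t)\) is the vector of componentwise suprema over \((\tau,t]\) and \(c_0=\bar\phi+\bar K\sup_\theta\E_\theta Z_\calH(\tau)\). Then \(\bar m(t)\le (I-G_\theta)^{-1}c_0\mathbf 1\), which is bounded uniformly in \(\theta\) and \(t\) by \eqref{eq:tilted-resolvent-bound}. This bound in fact holds for all \(t\), not only on \([\tau,\tau+B]\).

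\emph{Step 4: conclusion.} For \(\tau\le s\le\tau+B\), split the window count into its part before \(\tau\) and its part after \(\tau\):
\[
\E_\theta\sum_j N^\calH_{j,\theta}([s-A,s])\le \E_\theta Z_\calH(\tau)+\int_{(\tau,s]}\sum_j m_j(u)\,\dd u\le \E_\theta Z_\calH(\tau)+A\,D\,\sup_\theta\|(I-G_\theta)^{-1}\|\,c_0 .
\]
The part before \(\tau\) is at most \(\calH([\tau-A,\tau])\), since \(s-A\ge\tau-A\). The right-hand side is finite uniformly in \(\theta\) and \(s\). For the empty history, \(Z_\calH(\tau)=0\), so the hypothesis holds trivially.
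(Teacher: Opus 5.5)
Your argument is correct, but it takes a different route from the paper's. The paper first dominates $N_\theta^\calH$ pathwise by the linear envelope $\bar N_\theta^\calH$, using the common-thinning ordering. It then writes the renewal inequality for the envelope's mean intensity and bounds the kernels crudely by a constant, which gives
\[
\sum_i\bar m_i(t)\le C\{1+\E_\theta Z_\calH(\tau)\}+C\int_\tau^t\sum_i\bar m_i(u)\,\dd u .
\]
It closes with Gronwall on $[\tau,\tau+B]$, and it simply asserts that $t\mapsto\sum_i\bar m_i(t)$ is locally finite.

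Your route differs in three ways. First, you apply the Lipschitz bound directly to the nonlinear intensity in terms of its own past, so no ordering or coupling lemma is needed. Second, you keep the kernel structure and invert with $(I-G_\theta)^{-1}$. The paper's Gronwall step never uses the spectral margin of Assumption~\ref{ass:stability}, so it would work on any finite horizon without it, but its constant $C_B$ grows exponentially in $B$. Your route spends the stability assumption and in return gives a bound uniform over all $s\ge\tau$, which is stronger than the statement. Third, your localization supplies the a priori finiteness that the paper takes for granted and that Gronwall also needs.

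Three details to tighten.

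\emph{Stopping, not truncating $m$.} Stopping at $\sigma_n$, the $n$th point after $\tau$, is the right device.
\begin{itemize}
\item The indicator $\1\{u\le\sigma_n\}$ is predictable, so $m_i^n(u)=\E_\theta[\lambda_i^\calH(u;\theta)\1\{u\le\sigma_n\}]$ satisfies the same renewal inequality.
\item It is finite: $m_i^n(u)\le\bar\phi+\bar K\{\E_\theta Z_\calH(\tau)+n\}$.
\end{itemize}
Truncating $m_i$ itself at level $n$ is not equivalent and does not obviously preserve the inequality, so drop that alternative.

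\emph{Removing the stopping needs non-explosion.} Monotone convergence only removes the stopping if $\sigma_n\uparrow\infty$ almost surely. This follows from your own uniform bound: $\Prob_\theta(\sigma_n\le t)\le n^{-1}\int_\tau^t\sum_i m_i^n(u)\,\dd u\to0$.

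\emph{Integration domain in Step 4.} The post-$\tau$ integral should run over $(\tau,s]\cap[s-A,s]$ for the factor $A$ to be right. Over all of $(\tau,s]$ you get a factor $B$ instead, which is still finite.
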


\begin{proof}
Following the notation of Lemma \ref{lem:fixed-window-count-mgf}, and by equation \eqref{eq:LipschitzNonLintoLin}, under the common Poisson embedding \citep[e.g.][]{bremau1996}, \(N^{\calH}_\theta\) is dominated by
the nonnegative linear Hawkes envelope \(\bar N^{\calH}_\theta\) with baseline
\(\phi_{\theta,i}(\nu_{\theta,i})\), kernels \(K_{\theta,ij}\), and the same initial history
\(\calH\).  It is therefore enough to prove the claim for
\(\bar N^{\calH}_\theta\).

Let
\[\bar m_i(t)= \E_\theta\left[ \bar\lambda_i(t;\theta)\right], \qquad t\ge \tau,
\]
where \(\bar\lambda_i\) is the intensity of the linear envelope.  Since the
kernels are compactly supported,
\[\bar m_i(t)\le
    \sup_{\theta\in\Theta} \phi_{\theta,i}(\nu_{\theta,i})+\E_\theta\left[\sum_{j=1}^D\int_{[\tau-A,\tau]} K_{\theta,ij}(t-u)\,d\calH_j(u)\right]+\sum_{j=1}^D\int_\tau^tK_{\theta,ij}(t-u)\bar m_j(u)\,\dd u \]
The first term is uniformly bounded by compactness. For the second summand, compact support and boundedness of the kernels ensures that
\[\E_\theta\left[\sum_{j=1}^D\int_{[\tau-A,\tau]}K_{\theta,ij}(t-u)\,d\calH_j(u)\right]\le C \E_\theta \left[Z_\calH(\tau)\right],\]
uniformly in \(t\ge\tau\), \(i\), and \(\theta\), in addition to the fact that $t\mapsto \sum_{i=1}^D\bar m_i(t)$ is locally finite on compact intervals due to the finite initial history.  Hence, 
\[\sum_{i=1}^D \bar m_i(t)\le C\{1+\E_\theta Z_\calH(\tau)\} + C\int_\tau^t \sum_{i=1}^D \bar m_i(u)\,\dd u,\qquad t\ge \tau,\]
where \(C<\infty\) is deterministic and uniform over \(\theta\in\Theta\). By
Gronwall's inequality for locally finite measures \citep[e.g.][]{HORVATH1996183}, for every finite \(B<\infty\),
\[\sup_{\theta\in\Theta}\sup_{\tau\le t\le \tau+B}\sum_{i=1}^D \bar m_i(t)\leq C_B\left\{1+\sup_{\theta\in\Theta}\E_\theta \left[Z_\calH(\tau)\right]\right\} <\infty .\]
Therefore,
\[\sup_{\theta\in\Theta}\E_\theta\left[ N^{\calH,\theta}([\tau,\tau+B])\right]\le \sup_{\theta\in\Theta}\E_\theta \left[\bar N^{\calH}_\theta([\tau,\tau+B]) \right]=\sup_{\theta\in\Theta}\int_\tau^{\tau+B} \sum_{i=1}^D \bar m_i(t)\,\dd t<\infty.\]

Finally, if \(s\in[\tau,\tau+B]\), then clearly $[s-A,s]\subset[\tau-A,\tau+B]$ so that
\[\sup_{\theta\in\Theta}\sup_{\tau\le s\le \tau+B} \E_\theta\left[ \sum_{j=1}^D N_{j,\theta}^{\calH}([s-A,s])\right]<\infty\]
and the proof concludes.
\end{proof}

We next prove a lemma that uses the total variation bound \eqref{eq:totTv}, to transfer the statistical guarantees of Section \ref{sec:results} to any start which has an integrable prehistory count. This is facilitated by finite sample total variation bound introduced in Lemma \ref{lem:nonstationary-return-to-stationarity}.

\begin{lemma}[Total-variation transfer]\label{lem:tv-transfer}
Suppose that the same assumptions as Lemma \ref{lem:nonstationary-return-to-stationarity} hold. Let $N_\theta^\calH|_{(b_T,\infty)}$ be a sample path of the non-linear Hawkes with history $\calH$ on $(-\infty,0]$, similarly, let $N_\theta^{\rm stat}|_{(b_T,\infty)}$ be a sample path with a stationary start at $0$. 

(i) Let $b_T=A+L\log(T)$. Then, for every $c_0>0$ there exists a $C>0$ uniform over $\Theta_0$ and $L>0$ so that for every measurable event $E_T$, and any measurable map $\psi_T$
\begin{equation*}
    \sup_{\theta \in \Theta_0}\Prob_\theta\left(\psi_T\left(N_\theta^\calH|_{(b_T,\infty)}\right)\in E_T\right)\leq \sup_{\theta \in \Theta_0}\Prob_\theta\left(\psi_T\left(N_\theta^{\rm stat}|_{(b_T,\infty)}\right)\in E_T\right)+CT^{-c_0}.
\end{equation*}

(ii) Moreover, for any bounded and measurable $f$
\begin{equation}\label{eq:BurninDistributionalLimit}
\left|\E_\theta\left[f\left(\psi_T\left(N_\theta^\calH|_{(b_T,\infty)}\right)\right)\right]-\E_\theta \left[f\left(\psi_T\left(N_\theta^{\rm stat}|_{(b_T,\infty)}\right)\right)\right]\right|\leq 2\norm{f}_\infty CT^{-c_0} 
\end{equation}
\end{lemma}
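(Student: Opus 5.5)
The plan is to reduce both parts to the total-variation bound \eqref{eq:totTv} of Lemma~\ref{lem:nonstationary-return-to-stationarity} and the variational characterization of total variation. We apply that lemma with start time \(s=0\) and delay \(r=b_T\) (or \(r=b_T-A\), which changes nothing at the level of constants). Let \(\Gamma=\sup_{\theta\in\Theta}\E_\theta[Z_\calH(0)]<\infty\), the assumed integrable local prehistory. Then \eqref{eq:totTv} gives, uniformly over \(\theta\in\Theta\supseteq\Theta_0\),
\[
d_{\TV}\left(\calL_\theta\{N_\theta^\calH|_{(b_T,\infty)}\},\calL_\theta\{N_\theta^{\rm stat}|_{(b_T,\infty)}\}\right)\le Ce^{-cb_T}(1+\Gamma).
\]
Inserting \(b_T=A+L\log T\) gives \(e^{-cb_T}=e^{-cA}T^{-cL}\). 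So, given \(c_0>0\), we choose \(L=c_0/c\), where \(c\) is the uniform rate constant of the lemma; this \(c\) is independent of \(\theta\) and \(\calH\). We then absorb \(Ce^{-cA}(1+\Gamma)\) into a new constant \(C\). This yields the uniform bound \(CT^{-c_0}\).

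Next comes the data-processing step. For a measurable \(\psi_T\), the pushforward of two laws under \(\psi_T\) has total variation no larger than that of the original laws. Hence, for any measurable \(E_T\),
\[
\left|\Prob_\theta\{\psi_T(N^\calH_\theta|_{(b_T,\infty)})\in E_T\}-\Prob_\theta\{\psi_T(N^{\rm stat}_\theta|_{(b_T,\infty)})\in E_T\}\right|\le CT^{-c_0}
\]
for each \(\theta\in\Theta_0\). Taking the supremum over \(\theta\) on both sides, and using \(\sup_\theta(a_\theta+b)\le\sup_\theta a_\theta+b\), gives (i). For (ii), we use the dual form \(|\E_\mu f-\E_\nu f|\le 2\|f\|_\infty d_{\TV}(\mu,\nu)\), valid with the convention \(d_{\TV}=\sup_E|\mu(E)-\nu(E)|\). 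We apply it to the pushforward laws under \(\psi_T\) and then apply the same bound.

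The only delicate point is bookkeeping rather than mathematics. The lemma's constants \(C,c\) must not depend on \(s\), \(r\) or \(\theta\), and the history must be independent of the future embedding noise; both are part of the hypotheses carried over from Lemma~\ref{lem:nonstationary-return-to-stationarity}. We must also check that the path restricted to \((b_T,\infty)\) is a measurable function of the restriction to \((r,\infty)\) with \(r\le b_T\). This is immediate: restriction is measurable, so a further data-processing step covers any choice \(r\le b_T\).
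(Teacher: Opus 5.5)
Your proposal is correct and follows the paper's own argument: apply \eqref{eq:totTv} with $s=0$ and $r=b_T$, and choose $L$ with $cL\ge c_0$ so that $e^{-cb_T}\le e^{-cA}T^{-c_0}$. Then push forward through $\psi_T$ via the data-processing inequality for (i), and use the bounded-test-function characterization of total variation for (ii). Your explicit absorption of $\sup_{\theta}\E_\theta[Z_\calH(0)]$ into $C$ and your remark that $r=b_T-A$ works equally well are sound bookkeeping that the paper leaves implicit.
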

\begin{proof}
    The proof of (i) is obvious by equation \eqref{eq:totTv}. For any $c_0>0$  pick $s=0$, $r=b_T$, and $L$ so large that $cL>c_0$. Finally, use the standard data processing inequality saying that 
    \[d_{\TV}\left(\calL\psi_T\left(N_\theta^{\rm stat}|_{(b_T,\infty)}\right),\calL \psi_T\left(N_\theta^{\calH}|_{(b_T,\infty)}\right) \right)\leq d_{\TV}\left(\calL\left(N_\theta^{\rm stat}|_{(b_T,\infty)}\right),\calL \left(N_\theta^{\calH}|_{(b_T,\infty)}\right)\right).\]

    (ii) Easily follows from the characterisation of total variation as the expected difference of bounded test functions and the aforementioned data processing inequality.
\end{proof}

\subsection{Proof of Proposition~\ref{prop:log-burn-in} (logarithmic burn-in)}\label{app:proof:prop:log-burn-in}
\begin{proof}
Lemma~\ref{lem:nonstationary-return-to-stationarity} gives exponential total-variation convergence of the nonstationary law to the stationary law, uniformly over \(\Theta_0\); Lemma~\ref{lem:finite-expected-local-history-nonstationary} verifies the finite-expected-local-count hypothesis for any history launched from a finite earlier time (in particular an empty start); and Lemma~\ref{lem:tv-transfer} converts the resulting total-variation gap on the burnt-in window into the stated transfer of every high-probability bound and fixed-parameter limit law. Taking \(b_T=A+L\log T\) with \(L\) large enough yields the displayed conclusions over \([b_T,T]\), with normalization by \(T-b_T\).
\end{proof}

\subsection{Proof of Theorem~\ref{thm:local-concentration} (local-window Bernstein coupling inequality)}\label{app:proof:thm:local-concentration}
We now prove Theorem \ref{thm:local-concentration} by combining the previously proven propositions and lemmata.

\begin{proof}
$\Xi$ is compact and $\xi\mapsto Z_\xi(N)$ is Lipschitz for every locally finite configuration, therefore, the supremum over $\Xi$ agrees with the supremum over a fixed countable dense subset; the displayed suprema are therefore measurable. We prove the upper tail and apply the same argument to $-Z_\xi$ for the lower tail, and thus combine via a union bound. The following blocking argument is similar to that of \citet[Proof of Proposition~3]{hansen2015}.

Fix the target polynomial exponent $c>0$ from the theorem and set $M=c+d+20.$
Let $c_{\rm coup}$ be the exponent in Lemma~\ref{lem:nonlinearcoupling}. Choose $L_0>A$ so large that
\begin{equation}\label{eq:L0-choice}
c_{\rm coup}L_0\ge M+2,
\end{equation}
and set
\[
x_T=L_0\log T,
\qquad
n_T=\left\lfloor \frac{T}{2x_T}\right\rfloor .
\]
For block indices $b=0,\ldots,n_T-1$, define
\[
B_b^{(0)}=[2bx_T,(2b+1)x_T),
\qquad
B_b^{(1)}=[(2b+1)x_T,(2b+2)x_T).
\]
For every $t\in B_b^{(e)}$, $e\in \{0,1\}$ the random variable $Z_\xi\circ G_t$ depends only on the observed point configuration in
\[I_b^{(e)}=[\inf B_b^{(e)}-A,\sup B_b^{(e)}).\]
As should be clear, $I_b^{(e)}$ has length at most $x_T+A$, and consecutive intervals of the same parity are separated by a gap at least $x_T-A$.

By stationarity, 
\begin{align}\label{eq:bernBlock}
&\Prob_{\theta^\star}\left(\sup_{\xi \in \Xi}\int_0^T\{Z_\xi\circ G_t-\E_{\theta^\star}Z_\xi\}\,\dd t>u\right)\\\label{eq:BernBlock2}
&\quad \leq 2\Prob_{\theta^\star}\left(\sup_{\xi \in \Xi}\sum_{b=0}^{n_T-1}\int_{2bx_T}^{(2b+1)x_T}\{Z_\xi\circ G_t-\E_{\theta^\star}Z_\xi\}\,\dd t>\frac u3\right)\\ \label{eq:Bernblock3}
&\qquad+\Prob_{\theta^\star}\left(\sup_{\xi \in \Xi}\int_{2n_Tx_T}^T\{Z_\xi\circ G_t-\E_{\theta^\star}Z_\xi\}\,\dd t>\frac u3\right),
\end{align}
and we now remove the dependence on the parity since, without loss of generality, we only consider even blocks by stationarity.

We first bound the remainder term. Define the event,
\[\Omega_{\rm slide}=\left\{\sup_{0\le t\le T}\mathfrak C_t\le K_1\log T\right\}.\]
By Lemma~\ref{lem:count-truncation}, there exists a $K_1>0$ so large such that
\begin{equation}\label{eq:slide-event-local}
\sup_{\theta^\star\in\Theta_0}
\Prob_{\theta^\star}(\Omega_{\rm slide}^c)
\le CT^{-M-3}.
\end{equation}
Then, pathwise on $\Omega_{\rm slide}$ the remainder satisfies
\begin{align}\label{eq:pathwiseRemainderBound}
    \sup_{\xi \in \Xi}\frac{1}{T}\int_{2n_Tx_T}^T\{ Z_\xi \circ G_t-\E_{\theta^\star}Z_\xi\}\,\dd t\leq \frac{Cx_T\log^{m}(T)}{T}\leq \frac{C\log^{m+1}(T)}{T},
\end{align}
since  $C$ and $m$ are selected uniformly over $\Xi$.

We now bound the main sum term. First, we rename $B_b=B_b^{(0)}$, and $I_b=I_b^{(0)}$. Without loss of generality we can ignore the odd blocks since we have removed their explicit influence via a union bound in the display \eqref{eq:BernBlock2} and \eqref{eq:Bernblock3}.

Then, by Lemma~\ref{lem:nonlinearcoupling} we introduce the sequence of independent Hawkes processes, so that 
\begin{multline*}
2\Prob_{\theta^\star}\left(\sup_{\xi \in \Xi}\sum_{b=0}^{n_T-1}\int_{2bx_T}^{(2b+1)x_T}\{Z_\xi\circ G_t-\E_{\theta^\star}Z_\xi\}\,\dd t>\frac{u}{3}\right)\\
\leq 2\Prob_{\theta^\star}\left(\sup_{\xi \in \Xi}\sum_{b=0}^{n_T-1}\int_{2bx_T}^{(2b+1)x_T}\{\tilde Z_\xi\circ G_t-\E_{\theta^\star}Z_\xi\}\,\dd t>\frac{u}{3}\right)+\delta_T
\end{multline*}
where each $\tilde Z$ is independent, and $\delta_T$ is the coupling error. In more detail, as per Lemma \ref{lem:nonlinearcoupling}, let
\[\mathbb P_{\theta^\star}^{\rm dep}=\calL_{\theta^\star}\left( N_\theta^{(0)}|_{I_0},\ldots,N_\theta^{(n_T-1)}|_{I_{n_T-1}}\right), \qquad \mathbb P_{\theta^\star}^{\rm ind}=\bigotimes_{b=0}^{n_T-1}\calL_{\theta^\star}\left(N_\theta^{(b)}|_{I_b}\right),\]
which is the law of independent stationary restrictions
$N^{(b)}_\theta|_{I_b}$, $b=0,\ldots,n_T-1$. Since
$n_T(1+x_T)\leq CT$ for all large $T$,
\begin{align}
\delta_T&:=\sup_{\theta^\star\in\Theta_0}d_{\rm TV}\big(\mathbb P_{\theta^\star}^{\rm dep},\mathbb P_{\theta^\star}^{\rm ind}\big) \leq C n_Te^{-c_{\rm coup}(x_T-A)}\leq C T e^{c_{\rm coup}A}T^{-c_{\rm coup}L_0}\leq C T^{-M-1},
\label{eq:local-coupling-event}
\end{align}
where the last inequality uses \eqref{eq:L0-choice}.

We now fix $\xi\in \Xi$. Let $N^{(b)}_\theta$ be the independent stationary copies used, and set
\[F_{b,\xi}=\int_{2bx_T}^{(2b+1)x_T}\{\tilde Z_\xi\circ G_t-\E_{\theta^\star}Z_\xi\}\,\dd t.\]
By stationarity, and integrability, $F_{b,\xi}$ are centred so that $\E_{\theta^\star}[F_{b,\xi}]=0.$

All probabilities in the independent-block argument below are taken under
$\mathbb P_{\theta^\star}^{\rm ind}$. Under this product law, for fixed $\xi$, the variables $F_{b,\xi}$, $b=0,\ldots,n_T-1$, are
independent and centred, because each $N^{(b)}$ is a stationary copy under
$\Prob_{\theta^\star}$. Define
\[
R_b=\sup_{t\in B_b}N^{(b)}([t-A,t)),
\qquad
\Omega_{\rm blk}=\bigcap_{b<n_T}\{R_b\le R_0\log(T)\}.
\]
The event \(\Omega_{\rm blk}\) is the analogue of Hansen et al.'s logarithmic local-count truncation event $\Omega_q$.
Moreover, the interval $|I_b|\leq x_T+A\leq C\log T$.

Next, we place a deterministic grid of mesh $A/2$ on $I_b$; it contains at most $C\log T$ grid intervals and every sliding window $[t-A,t)$, with $t\in B_b$, intersects at most three of these grid intervals. Therefore, $R_b>r$ implies that some grid interval has more than $r/3$ points. The exponential moment \eqref{eq:count-mgf}, with $L=A/2$, therefore gives constants $C_R,a_R>0$ such that
\begin{equation}\label{eq:block-count-tail-new}
\sup_{\theta^\star\in\Theta_0}
\Prob_{\theta^\star}\left(R_b>r\right)
\leq C_R\log(T)\, e^{-a_Rr},
\qquad r\ge0 .
\end{equation}
Choose $R_0$ so large that $a_RR_0\geq M+4$. Then, for all large $T$,
\begin{align}
\sup_{\theta^\star\in\Theta_0}
\Prob_{\theta^\star}\left((\Omega_{\rm blk})^c\right)
&\le n_T C_R\frac{\log(T)}{T^{a_RR_0}} \leq C T^{1-a_RR_0}\leq C T^{-M-3}.
\label{eq:block-count-event-new}
\end{align}
On the event $\left\{R_b\leq R_0\log(T)\right\}$, the envelope \eqref{eq:local-envelope} and the fixed-window moment \eqref{eq:count-mgf} imply, uniformly in $\xi$,
\begin{align}\label{eq:block-bound-new}
\abs{F_{b,\xi}}\1\left\{R_b\leq R_0\log(T)\right\}\leq Bx_T\{1+\left(R_0\log(T)\right)^m\}+x_T\sup_{\theta\in\Theta}\sup_{\xi\in\Xi}\E_{\theta^\star}\abs{Z_\xi}\leq B_1\log^{m+1}(T)=:b_T,
\end{align}
for some universal constant $B_1<\infty$. Moreover,
\begin{equation}\label{eq:block-var-new}
\Var_\theta\left(F_{b,\xi}\1\{R_b\leq R_0\log(T)\}\right)\leq B_2\log^{2m+2}(T).
\end{equation}
Next, we consider the tail term. Since $\E\left[F_{b,\xi}\right]=0$, truncation creates only the centering error
\[\abs{\E_{\theta^\star}\left[F_{b,\xi}\1\{R_b\le R_0\log(T)\}\right]}
=\abs{\E_{\theta^\star}\left[F_{b,\xi}\1\{R_b>R_0\log(T)\}\right]}.\]
Then, by the tail-integral identity, for any $m>0$,
\[\E_{\theta^\star}\left[R_b^m\1\{R_b>R_0\log(T)\}\right]
=\left(R_0\log(T)\right)^m\Prob_\theta\left(R_b>R_0\log(T)\right)+m\int_{R_0\log(T)}^{\infty}u^{m-1}\Prob_\theta(R_b>u)\,\dd u.\]
Hence, by \eqref{eq:block-count-tail-new},
\begin{align}
\E_{\theta^\star}\left[R_b^m\1\{R_b>R_0\log(T)\}\right]
&\leq C_R\log(T)
\left[
\left(R_0\log(T)\right)^m e^{-a_RR_0\log(T)}
+
m\int_{R_0\log(T)}^{\infty}u^{m-1}e^{-a_Ru}\,\dd u
\right] \notag\\
&\leq
C\log(T)\,(1+\left(R_0\log(T)\right)^m)e^{-a_RR_0\log(T)}.
\label{eq:block-tail-integral-poly}
\end{align}
The same bound is trivial for $m=0$. Therefore, uniformly over $\xi\in\Xi$, from equation \eqref{eq:local-envelope},
\begin{align}
\E\left[\abs{F_{b,\xi}}\1\{R_b>R_0\log(T)\}\right]
&\leq
C x_T
\E\big[
\{1+R_b^m\}\1\{R_b>R_0\log(T)\}
\big] \notag\\
&\le
C\log^{m+2}(T)\,e^{-a_RR_0\log T}
\le
C T^{-M-3},
\label{eq:centering-error-new}
\end{align}
where the last inequality follows from the choice of $R_0$, after potentially increasing $C$. Hence, 
\[\sup_{\xi \in \Xi}\sum_{b=0}^{n_T-1}\E_{\theta^\star}\left[\abs{F_{b,\xi}}\1\{R_b>R_0\log(T)\}\right]\leq CT^{-M-2}.\]

We now apply this truncation to the sum of independent Hawkes processes, and define 
\[\bar F_{b,\xi}=F_{b,\xi}\1\{R_b\leq R_0\log(T)\}-\E\left[F_{b,\xi}\1\{R_b\leq R_0\log(T)\}\right].\]
Therefore,
\begin{align}\label{eq:truncapplied0}
    \Prob_{\theta^\star}\left(\sum_{b=0}^{n_T-1}F_{b,\xi}>\frac u3\right)&\leq \Prob_{\theta^\star}\left(\left\{\sum_{b=0}^{n_T-1}F_{b,\xi}\1\{R_b\leq R_0\log(T)\}>\frac u3\right\} \cap \Omega_{\rm blk}\right)+CT^{-M-3}\\ \label{eq:truncapplied}
    &\leq \Prob_{\theta^\star}\left(\left\{\sum_{b=0}^{n_T-1}\bar F_{b,\xi}>\frac u6\right\} \cap \Omega_{\rm blk}\right)+CT^{-M-3}+\\
    &+\Prob_{\theta^\star}\left(\left\{\sup_{\xi \in \Xi}\sum_{b=0}^{n_T-1}\E_{\theta^\star}\left[\abs{F_{b,\xi}}\1\{R_b>R_0\log(T)\}\right]>\frac{u}{6}\right\}\cap \Omega_{\rm blk}\right)\\\label{eq:truncappliedend}
    &\leq \Prob_{\theta^\star}\left(\left\{\sum_{b=0}^{n_T-1}\bar F_{b,\xi}>\frac u6\right\} \cap \Omega_{\rm blk}\right)+CT^{-M-3}+\1\left\{CT^{-M-2}>\frac{u}{6}\right\}
\end{align}

The first term of equation \eqref{eq:truncapplied} can be bounded via Bernstein's inequality \citep[e.g.][Proposition 2.14]{wainwright2019} for
independent centred bounded block variables, as in \citet[Proposition~3]{hansen2015}, we also extend it to the supremum via a standard $\epsilon$-net argument.

Let $\calN_T$ be a $T^{-1/2}$-net of $\Xi$ with $\abs{\calN_T}\le C_\Xi T^{d/2}$. For a fixed $\eta\in\calN_T$, we apply Bernstein's inequality to the independent centred variables $\bar F_{\eta,b}$ on the event $\Omega_{\rm blk}$. Doing so yields,
\begin{equation}\label{eq:bernstein-block-explicit}
\Prob_{\theta^\star}\left(
\left\{\left|\sum_{b=0}^{n_T-1}\bar F_{b,\eta}\right|>\frac{u}{6}\right\}
\cap \Omega_{\rm blk}\right)
\leq
2\exp\left\{-\frac{u^2/36}{2\left(n_TB_2\log^{2m+2}(T)+2b_Tu/18\right)}\right\}.
\end{equation}
as $\abs{\bar F_{b,\eta}}\leq 2b_T$. Since $n_T\le T/(2L_0\log(T))$,
\begin{equation}\label{eq:variance-budget}
n_TB_2\log^{2m+2}(T)\leq V_0 T\log^{2m+1}(T)
\end{equation}
for some constant $V_0<\infty$. Next, choose an integer $q_0$ such that
\begin{equation}\label{eq:q0-choice}
2q_0-(2m+1)\ge1;
\end{equation}
for instance, $q_0=\lceil m+1\rceil$ suffices. Pick $u=K\sqrt T\log^{q_0}(T)$. Thus, for all sufficiently large $T$,
\[
2\left(n_TB_2\log^{2m+2}(T)+2b_Tu/18\right)\leq 4V_0T\log^{2m+1}(T).
\]
Combining this bound with \eqref{eq:q0-choice} yields
\[\frac{u^2}{2\left(n_TB_2\log^{2m+2}(T)+2b_Tu/3\right)}\geq\frac{K^2}{4V_0}\log^{2q_0-(2m+1)}T\geq\frac{K^2}{4V_0}\log(T).\]
Choose $K$ so large that
\begin{equation*}
\frac{K^2}{4V_0}\ge M+d/2+4.
\end{equation*}
Then \eqref{eq:bernstein-block-explicit} gives
\[\sup_{\theta^\star\in \Theta_0}\Prob_{\theta^\star}\left(\left\{\left|\sum_{b=0}^{n_T-1}\bar F_{b,\eta}\right|>K\sqrt T\log^{q_0}(T)\right\}\cap \Omega_{\rm blk}\right)\le 2T^{-M-d/2-4}.\]
A union bound over $\calN_T$ yields 
\begin{equation*}
\sup_{\theta^\star\in \Theta_0}\Prob_{\theta^\star}\left(\left\{\max_{\eta \in \calN_T}\left|\sum_{b=0}^{n_T-1}\bar F_{b,\eta}\right|>K\sqrt T\log^{q_0}(T)\right\}\cap \Omega_{\rm blk}\right)\leq CT^{-M-3},
\end{equation*}

On the event $\Omega_{\rm blk}$, if $\pi(\xi)\in\calN_T$ is nearest to $\xi$, then the pathwise Lipschitz envelope, equation \eqref{eq:local-lipschitz-envelope} gives
\begin{multline}\label{eq:discretization-path-new}
\sum_{b=0}^{n_T-1}\frac1T\int_{2bx_T}^{(2b+1)x_T}
\abs{\tilde Z_\xi\circ G_t-\tilde Z_{\pi(\xi)}\circ G_t}\1\{R_b\leq R_0\log(T)\}\diff t\\
\leq
\frac{B\{1+(R_0\log(T))^m\}}{\sqrt T}.
\end{multline}
With the expectation similarly satisfying
\begin{equation}\label{eq:discretization-mean-new}
\abs{\E_{\theta^\star}\left[(Z_\xi-Z_{\pi(\xi)})\1\{R_b\leq R_0\log(T)\}\right]}
\leq \frac{B\E_{\theta^\star}\left[(1+N[-A,0))^m\right]}{\sqrt T}
\leq CT^{-1/2},
\end{equation}
where the last bound follows the fact that $N([-A,0))$ has moments of all order. Therefore,
\begin{align}\label{eq:bernsteinbound1}
&\Prob_{\theta^\star}\left(\left\{\sup_{\xi \in\Xi}
\left|\sum_{b=0}^{n_T-1}\bar F_{b,\xi}\right|>K'\sqrt T\log^{q_0'}(T)\right\}
\cap \Omega_{\rm blk}\right)\\
&\leq \Prob_{\theta^\star}\left(\left\{\sup_{\eta \in \calN_T}
\frac 1T\left|\sum_{b=0}^{n_T-1}\bar F_{b,\eta}\right|>K'\frac{\log^{q_0'}(T)}{\sqrt T}-\Delta_T
\right\}\cap \Omega_{\rm blk}\right)\\ \label{eq:bernsteinboundend}
&\leq \Prob_{\theta^\star}\left(\left\{\max_{\eta \in \calN_T}
\frac 1T\left|\sum_{b=0}^{n_T-1}\bar F_{b,\eta}\right|>K\frac{\log^{q_0}(T)}{\sqrt T}
\right\}\cap \Omega_{\rm blk}\right)\leq CT^{-M-3}
\end{align}
where $K'$ and $q_0'$ are selected so that $\Delta_T$, which is the pathwise Lipschitz envelope from adding the right hand sides of equations \eqref{eq:discretization-path-new} and \eqref{eq:discretization-mean-new}, ensures that 
\[K'\frac{\log^{q_0'}}{\sqrt{T}}-\Delta_T\geq K\frac{\log^{q_0}(T)}{\sqrt{T}}.\]

We now combine all of the estimates to conclude the final bound. In particular,
\begin{align}\nonumber
    &\Prob_{\theta^\star}\left(\sup_{\xi \in \Xi}\frac 1T\int_0^T\{Z_\xi\circ G_t-\E_{\theta^\star}Z_\xi\}\,\dd t>K'\frac{\log^{q_0'}(T)}{\sqrt{T}}\right)\\\label{eq:finalbound1}
    &\leq 2\Prob_{\theta^\star}\left(\sup_{\xi \in \Xi}\frac 1T\sum_{b=0}^{n_T-1}\int_{2bx_T}^{(2b+1)x_T}\{\tilde Z_\xi\circ G_t-\E_{\theta^\star}Z_\xi\}\,\dd t>K'\frac{\log^{q_0'}(T)}{\sqrt{T}}\right)+CT^{-M-3}\\ \nonumber
    &\qquad+\1\left\{\frac{C\log^{m+1}(T)}{T}>K'\frac{\log^{q_0'}(T)}{\sqrt{T}}\right\}\\ \label{eq:finalbound3}
    &\leq 2\Prob_{\theta^\star}\left(\left\{\sup_{\xi \in \Xi}\frac{1}{T}\sum_{b=0}^{n_T-1}\bar F_{b,\xi} >K'\frac{\log^{q_0'}(T)}{\sqrt{T}}\right\}\cap \Omega_{\rm blk}\right)+CT^{-M-3}+\1\left\{CT^{-M-2}>\frac{K}{6}\sqrt{T}\log^{q_0}(T)\right\}\\ \label{eq:finalbound4}
    &\leq CT^{-M-3} \leq CT^{-c}
\end{align}
Inequality \eqref{eq:finalbound1} follows from equations \eqref{eq:bernBlock}, \eqref{eq:slide-event-local} and \eqref{eq:pathwiseRemainderBound}. Equation \eqref{eq:finalbound3} follows from the fact that the indicator is eventually always 0 and the definition of $F_{b,\xi}$, and equations \eqref{eq:truncapplied0} to \eqref{eq:truncappliedend}. Finally, equation \eqref{eq:finalbound4} follows from equations \eqref{eq:bernsteinbound1} to \eqref{eq:bernsteinboundend}.

The constants $C,M$ are uniform over $\Theta$ so that a supremum can be taken. Furthermore, the lower tail follows by applying the same argument to
$-Z_\xi$, and the proof concludes by union bounding over the two events.
\end{proof}

We conclude this section by proving a brief proposition about the population finite-window continuity input used in the generic GMM and likelihood-localisation arguments. The argument is standard and follows easily from the previously demonstrated probabilistic tools.

\begin{proposition}[Finite-window population continuity]\label{prop:finite-window-continuity}
Let $K\subset\Theta$ be compact, let $L<\infty$, and let $\Xi\subset\R^d$ be compact.  Suppose $Z_\xi$ depends only on the multivariate restriction $N_\theta|_{[-L,0]}$, satisfies a polynomial finite-window count envelope
\[
    |Z_\xi|\le C\{1+N_\theta([-L,0])^m\},
    \qquad \xi\in\Xi,
\]
and is Lipschitz in $\xi$ with a polynomial count envelope: for some $C',m'<\infty$,
\[
    |Z_\xi-Z_\eta|
    \le C'\{1+N([-L,0])^{m'}\}\norm{\xi-\eta}_2,
    \qquad \xi,\eta\in\Xi .
\]
Then $(\xi,\theta)\mapsto \E_\theta\left[Z_\xi\right]$ is continuous on $\Xi\times K$.
\end{proposition}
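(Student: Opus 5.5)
The plan is to split the increment into a $\xi$-part and a $\theta$-part and treat them separately. For $(\xi,\theta),(\xi',\theta')\in\Xi\times K$,
\[
\bigl|\E_\theta Z_\xi-\E_{\theta'}Z_{\xi'}\bigr|
\le \E_\theta\bigl|Z_\xi-Z_{\xi'}\bigr| + \bigl|\E_\theta Z_{\xi'}-\E_{\theta'}Z_{\xi'}\bigr| .
\]

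\emph{The $\xi$-part.} The Lipschitz envelope bounds the first term by
\[
C'\bigl\{1+\sup_{\vartheta\in\Theta}\E_\vartheta N([-L,0])^{m'}\bigr\}\norm{\xi-\xi'}_2 .
\]
The supremum is finite by Lemma~\ref{lem:fixed-window-count-mgf}. So $\xi\mapsto\E_\theta Z_\xi$ is Lipschitz, uniformly in $\theta$. It therefore suffices to show that $\theta\mapsto\E_\theta Z_\xi$ is continuous uniformly in $\xi$. Given the uniform $\xi$-Lipschitz bound and compactness of $\Xi$, continuity for each fixed $\xi$ in a finite net is enough.

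\emph{Truncation.} Fix $\xi$ and $R>0$. Set
\[
Z^R=Z_\xi\,\1\{N([-L,0])\le R\}.
\]
Using the envelope, Cauchy--Schwarz and Lemma~\ref{lem:fixed-window-count-mgf},
\[
\sup_{\vartheta}\E_\vartheta|Z_\xi-Z^R|\le C(1+R^m)e^{-\xi_L R/2}\cdot C ,
\]
which is small for $R$ large. Now $|Z^R|\le C(1+R^m)$. So for two processes $N^\theta$, $N^{\theta'}$ realised on a common space,
\[
|\E Z^R(N^\theta)-\E Z^R(N^{\theta'})|\le 2C(1+R^m)\,\Prob\bigl(N^\theta|_{[-L,0]}\ne N^{\theta'}|_{[-L,0]}\bigr).
\]
It remains to make this disagreement probability small as $\theta'\to\theta$.

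\emph{Reduction to an empty start.} Start both processes from the empty history at time $-m$ and drive them by the same Poisson embedding $\Pi$. By Lemma~\ref{lem:nonstationary-return-to-stationarity} with $r=m-L$ and $Z_\calH=0$, replacing each stationary law by its empty-start version costs at most $Ce^{-c(m-L)}$ in total variation on $[-L,0]$. This bound is uniform in $\theta$, so we fix $m$ large first.

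\emph{The key estimate: bounding the disagreement.} This is the main obstacle, because the two processes now have different parameters. As in the proof of Lemma~\ref{lem:nonstationary-return-to-stationarity}, let $D_i=|N^\theta_i-N^{\theta'}_i|$ on $(-m,0]$. Its compensator is $|\lambda_i^\theta(t;N^\theta)-\lambda_i^{\theta'}(t;N^{\theta'})|\,\dd t$. Split this as
\[
|\lambda_i^{\theta}(t;N^\theta)-\lambda_i^{\theta}(t;N^{\theta'})|
+|\lambda_i^{\theta}(t;N^{\theta'})-\lambda_i^{\theta'}(t;N^{\theta'})| .
\]
\begin{itemize}
\item The first piece is bounded by $\sum_j\int K_{\theta,ij}(t-u)D_j(\dd u)$, exactly as in \eqref{eq:lipschitzinequality}.
\item The second piece is a same-configuration parameter perturbation. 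The mean-value theorem applies, using the $C^3$ smoothness of $\nu$ and $\phi$ with polynomial growth of the $\theta$-derivatives (Assumption~\ref{ass:parameter}) and the $\theta$-Lipschitz kernels (Assumption~\ref{ass:kernels}). This bounds it by $C\norm{\theta-\theta'}_2\{1+\mathfrak C_t^{k}\}$ for some $k$, with $\mathfrak C_t$ computed for $N^{\theta'}$.
\end{itemize}
The count $N^{\theta'}$ is dominated by the linear envelope $\bar N$, which has uniformly bounded polynomial moments on $[-m-A,0]$ (Lemma~\ref{lem:fixed-window-count-mgf} and Lemma~\ref{lem:finite-expected-local-history-nonstationary}). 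Taking expectations, the vector $x=(\E D_i((-m,0]))_i$ satisfies $x\le\beta+G_\theta x$ with
\[
\beta\le C\,m\,\norm{\theta-\theta'}_2 .
\]
A Gronwall or Neumann-series step using \eqref{eq:tilted-resolvent-bound} then gives
\[
\Prob(\text{disagreement on }[-L,0])\le \mathbf 1^\top x\le C_m\norm{\theta-\theta'}_2 .
\]

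\emph{Conclusion.} Choose $R$ and then $m$ large enough that the truncation and burn-in errors are below $\varepsilon$. Then let $\theta'\to\theta$ so that $2C(1+R^m)\,C_m\norm{\theta-\theta'}_2<\varepsilon$. This gives continuity of $\theta\mapsto\E_\theta Z_\xi$, and with the $\xi$-part, joint continuity on $\Xi\times K$.
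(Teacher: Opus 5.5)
Your proof is correct. Its skeleton matches the paper's: the $\xi$-Lipschitz reduction, truncation at a count level, and replacement of the stationary laws by empty-start laws via Lemma~\ref{lem:nonstationary-return-to-stationarity}. The genuine difference is how you control the parameter dependence of the finite-start law.

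\emph{The paper's route.} The paper argues qualitatively. It writes the empty-start law on $[-B,0]$ through its likelihood density with respect to $D$ independent unit-rate Poisson processes. It notes that this density is pathwise continuous in the parameter, since there are finitely many points and the intensities are continuous. Scheff\'e's lemma then gives $d_{\TV}\to0$ as $\theta_n\to\theta$.

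\emph{Your route.} You couple the two empty-start processes, with different parameters, through a common Poisson embedding. You then rerun the disagreement-compensator argument of Lemma~\ref{lem:nonstationary-return-to-stationarity}, with the history term replaced by a same-configuration parameter-perturbation source term. That source term is controlled by the mean-value theorem on the convex set $K_\Theta$, Lemma~\ref{lem:derivative-envelope}, and the uniform count moments of the dominated empty-start process. The Neumann-series step then gives $d_{\TV}\le C_m\norm{\theta-\theta'}_2$.

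\emph{What each buys.} Your argument stays inside the embedding machinery the paper already builds and avoids the point-process density formula. It also yields an explicit modulus of continuity, by balancing $C_m\norm{\theta-\theta'}_2$ against the $e^{-cm}$ burn-in error and the truncation error. The cost is that it needs the derivative envelopes and the domination bound on moments of the empty-start process. The Scheff\'e argument needs only pointwise continuity of the intensity in the parameter, but gives no rate.

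Two cosmetic points. Lemma~\ref{lem:nonstationary-return-to-stationarity} controls the restriction to $(-L,\infty)$ rather than $[-L,0]$; this is harmless because deterministic times carry no points almost surely. You also use $m$ both for the envelope exponent and for the burn-in start time.
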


\begin{proof}
Write $\mathfrak C_0=N_\theta([-L,0])$. Fix $(\xi_n,\theta_n)\to(\xi,\theta)$ in $\Xi\times K$. By the triangle and Jensen's inequalities,
\begin{align*}
    \left|\E_{\theta_n}[Z_{\xi_n}]-\E_{\theta}[Z_{\xi}]\right|
    &\leq
    \E_{\theta_n}\!\left[\left|Z_{\xi_n}-Z_{\xi_n,M}\right|\right]
    +\left|\E_{\theta_n}[Z_{\xi_n,M}]-\E_{\theta}[Z_{\xi_n,M}]\right| \\
    &\quad
    +\E_\theta\!\left[|Z_{\xi_n,M}-Z_{\xi,M}|\right]
    +\E_{\theta}\!\left[|Z_{\xi,M}-Z_\xi|\right],
\end{align*}
where $Z_{\xi,M}=Z_\xi\1\{\mathfrak C_0\le M\}$.

We first bound $\left|\E_{\theta_n}[Z_{\xi_n,M}]-\E_{\theta}[Z_{\xi_n,M}]\right|$. To do so fix $B>L+A$ and let $\Prob_{\zeta,B}^{0,[-B,0]}$ be the law on $[-B,0]$ of the process obtained by starting at time $-B$ with empty pre-$-B$ history. Let $\Prob_{\zeta,B}^{0,[-L,0]}$ denote its restriction to $[-L,0]$, and write $\E_{\zeta,B}$ for expectation with respect to this finite-start law whenever the integrand depends only on $N_\zeta|_{[-L,0]}$.

By Lemmata \ref{lem:nonstationary-return-to-stationarity} and \ref{lem:finite-expected-local-history-nonstationary}, uniformly over $\zeta\in K$,
\[
    \Delta_B
    :=
    \sup_{\zeta\in K}
    d_{\TV}\!\left(
        \Prob_{\zeta}^{[-L,0]},
        \Prob_{\zeta,B}^{0,[-L,0]}
    \right)
    \longrightarrow 0
    \qquad\text{as }B\to\infty .
\]
Hence, since $Z_{\xi_n,M}$ is bounded by $C(1+M^m)$,
\begin{align*}
    &\left|\E_{\theta_n}[Z_{\xi_n,M}]-\E_{\theta}[Z_{\xi_n,M}]\right| \\
    &\leq
    \left|\E_{\theta_n}[Z_{\xi_n,M}]-\E_{\theta_n,B}[Z_{\xi_n,M}]\right|
    +\left|\E_{\theta_n,B}[Z_{\xi_n,M}]-\E_{\theta,B}[Z_{\xi_n,M}]\right| \\
    &\quad
    +\left|\E_{\theta,B}[Z_{\xi_n,M}]-\E_{\theta}[Z_{\xi_n,M}]\right| \\
    &\leq
    4C(1+M^m)\Delta_B
    +2C(1+M^m)
    d_{\TV}\!\left(
        \Prob_{\theta_n,B}^{0,[-L,0]},
        \Prob_{\theta,B}^{0,[-L,0]}
    \right).
\end{align*}

For fixed $B$, the finite-start laws are continuous in total variation as a function of the parameter. In short, this is due to Scheff\'e's lemma. In more detail, take $\Prob_0^B$, the law of $D$ independent unit-rate Poisson processes on $[-B,0]$, as the reference measure. For each $\zeta\in K$, the finite-start law on $[-B,0]$ has likelihood density
\[
L_B(\zeta;N)
=
\prod_{i=1}^D
\exp\left\{
\int_{-B}^0 \log\lambda_i^{B,0}(t;\zeta)\,\dd N_i(t)
-
\int_{-B}^0 \{\lambda_i^{B,0}(t;\zeta)-1\}\,\dd t
\right\}
\]
with respect to $\Prob_0^B$, where $\lambda_i^{B,0}(t;\zeta)$ is the predictable intensity functional computed from the empty pre-$(-B)$ history and the realized path $N|_{[-B,t)}$. Such a formula is standard \citep[e.g.][Section 7]{daley2003} or \cite{Leskel2024InformationDA} and references cited therein.

For $\Prob_0^B$-almost every path there are finitely many points on $[-B,0]$; by Assumptions~\ref{ass:parameter} and~\ref{ass:kernels}, the displayed likelihood is then continuous in $\zeta$. Since $\int L_B(\zeta;N)\,d\Prob_0^B(N)=1$ for every $\zeta$, Scheff\'e's lemma yields convergence in total variation on $[-B,0]$, and therefore also after restriction to $[-L,0]$:
\begin{equation}\label{eq:finite-start-parameter-tv}
    d_{\TV}\left(
        \Prob_{\theta_n,B}^{0,[-L,0]},
        \Prob_{\theta,B}^{0,[-L,0]}
    \right) \to 0 .
\end{equation}

We next bound the remaining terms. By the fact that $Z_\xi$ is Lipschitz in $\xi$ with a polynomial count envelope, so too is $Z_{\xi,M}$. Thus,
\[
\E_\theta[|Z_{\xi_n,M}-Z_{\xi,M}|]
\leq
C'\norm{\xi_n-\xi}_2
\E_\theta[1+\mathfrak C_0^{m'}]
=
O(\norm{\xi_n-\xi}_2)
\]
by the exponential fixed-window count moment \eqref{eq:count-mgf}.

For the two truncation terms, by H\"older's inequality,
\begin{align*}
    &\E_{\theta_n}\!\left[\left|Z_{\xi_n}-Z_{\xi_n,M}\right|\right]
    +\E_{\theta}\!\left[\left|Z_{\xi}-Z_{\xi,M}\right|\right] \\
    &\leq
    C\sqrt{
        \E_{\theta_n}[(1+\mathfrak C_0^m)^2]
        \Prob_{\theta_n}\left(\mathfrak C_0>M\right)
    }
    +
    C\sqrt{
        \E_{\theta}[(1+\mathfrak C_0^m)^2]
        \Prob_{\theta}\left(\mathfrak C_0>M\right)
    } .
\end{align*}
By \eqref{eq:count-mgf}, the moments of $\mathfrak C_0$ are uniformly bounded over $K$, and the tails $\Prob_\zeta(\mathfrak C_0>M)$ converge to zero uniformly over $\zeta\in K$. Hence, for every $\epsilon>0$, we can choose $M$ large enough such that
\[
\sup_{n\in\N}
\left\{
\E_{\theta_n}\!\left[\left|Z_{\xi_n}-Z_{\xi_n,M}\right|\right]
+
\E_{\theta}\!\left[\left|Z_{\xi}-Z_{\xi,M}\right|\right]
\right\}
<\epsilon .
\]
For this fixed $M$, choose $B$ sufficiently large that
\[
    4C(1+M^m)\Delta_B<\epsilon .
\]
Finally, with $M$ and $B$ fixed, \eqref{eq:finite-start-parameter-tv} and $\xi_n\to\xi$ imply that, for all sufficiently large $n$,
\[
    2C(1+M^m)
    d_{\TV}\!\left(
        \Prob_{\theta_n,B}^{0,[-L,0]},
        \Prob_{\theta,B}^{0,[-L,0]}
    \right)
    +
    \E_\theta[|Z_{\xi_n,M}-Z_{\xi,M}|]
    <\epsilon .
\]
Combining the preceding displays gives
\[
    \limsup_{n\to\infty}
    \left|\E_{\theta_n}[Z_{\xi_n}]-\E_\theta[Z_\xi]\right|
    \leq 3\epsilon .
\]
Since $\epsilon>0$ was arbitrary, the claim follows.
\end{proof}

\section{Common compact-window calculus and martingale tools}\label{app:common-tools}

The estimates in the generic estimating-map arguments and in the likelihood appendix use the same two elementary inputs: polynomial envelopes for local intensity derivatives, and a predictable-truncation version of the Dzhaparidze--van Zanten Bernstein inequality for local martingales \citep{dzhaparidze2001}. Under the assumptions given in Subsection \ref{sec:assumptions} we demonstrate further polynomial count envelopes of the intensity, and its log derivatives. Throughout Section \ref{app:gmm} we can refer back to the generic compact-window machinery rather than reproving the same martingale-net arguments.

For a multi-index $a$ in the coordinates of $\theta$ with $|a|\le3$, define the generic local kernel derivative convolution
\begin{equation}\label{eq:Yell-def}
Y_{ij}^{(a)}(t;\theta)
=
\int_{[t-A,t)}
\partial_\theta^a h_{\theta,ij}(t-s)\,\dd N_j(s),
\end{equation}
with the convention that inactive coordinates have derivative zero.  By compact support and Assumption~\ref{ass:kernels},
\begin{equation}\label{eq:Yell-bound}
\sup_{\theta\in K_\Theta}
\abs{Y_{ij}^{(a)}(t;\theta)}
\le C \mathfrak C_t,
\qquad |a|\le3 .
\end{equation}

\begin{lemma}[Polynomial derivative envelopes]\label{lem:derivative-envelope}
Under Assumptions~\ref{ass:parameter}--\ref{ass:kernels}, for each $r=0,1,2,3$ there are constants $C_r<\infty$ and $m_r\in\N$ such that, for all $i$, all $t$, and all $\theta\in K_\Theta$,
\begin{equation}\label{eq:deriv-envelope-log}
\norm{\nabla_\theta^r\log\lambda_i(t;\theta)}_2
\le
C_r\{1+\mathfrak C_t^{m_r}\},
\end{equation}
and
\begin{equation}\label{eq:deriv-envelope-lambda}
\norm{\nabla_\theta^r\lambda_i(t;\theta)}_2
\le
C_r\{1+\mathfrak C_t^{m_r}\}.
\end{equation}
\end{lemma}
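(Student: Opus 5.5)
The plan is to write the intensity as a composition $\lambda_i(t;\theta)=\phi_{\theta,i}(X_i(t;\theta))$. Here the linear predictor is
\[
X_i(t;\theta)=\nu_{\theta,i}+\sum_{j=1}^D Y_{ij}^{(0)}(t;\theta),
\]
and every $\theta$-derivative of $X_i$ is a finite sum of terms $\partial_\theta^a\nu_{\theta,i}+\sum_j Y_{ij}^{(a)}(t;\theta)$. By \eqref{eq:Yell-bound} and the uniform bounds on the derivatives of $\nu_{\theta,i}$ from Assumption~\ref{ass:parameter}, we get $|\partial_\theta^aX_i(t;\theta)|\le C(1+\mathfrak C_t)$ for $|a|\le3$, uniformly over $\theta\in K_\Theta$. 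The case $|a|=0$ gives in particular $|X_i(t;\theta)|\le C(1+\mathfrak C_t)$, which is how we will control the polynomial growth of the link derivatives evaluated at $X_i$.

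For \eqref{eq:deriv-envelope-lambda}, apply the multivariate Fa\`a di Bruno (chain) rule to $\theta\mapsto\phi_{\theta,i}(X_i(t;\theta))$. Here $\theta$ enters both directly and through $X_i$, so we treat the map as $\theta\mapsto\Phi(\theta,X_i(t;\theta))$ with $\Phi(\theta,x)=\phi_{\theta,i}(x)$. Each entry of $\nabla_\theta^r\lambda_i$ with $r\le3$ is then a finite sum, with combinatorial coefficients depending only on $r$ and $p$, of products of the following form:
\begin{itemize}
\item a mixed derivative $\partial_\theta^b\partial_x^k\Phi(\theta,X_i)$ with $|b|+k\le r$;
\item at most $k$ factors $\partial_\theta^{a}X_i$, each with $1\le|a|\le r$.
\end{itemize}
Assumption~\ref{ass:parameter} gives $|\partial_\theta^b\partial_x^k\Phi(\theta,x)|\le C(1+|x|^{s})$ on $K_\Theta\times\calX_i(K_\Theta)$ for some $s$. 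Since $X_i(t;\theta)\in\calX_i(K_\Theta)$ by definition, this factor is at most $C(1+\mathfrak C_t)^{s}$. Each $X$-derivative factor is at most $C(1+\mathfrak C_t)$. Multiplying, every term is bounded by $C(1+\mathfrak C_t)^{s+r}\le C'(1+\mathfrak C_t^{m_r})$ with $m_r=s+r$, using $(1+x)^m\le 2^{m-1}(1+x^m)$. Summing the finitely many terms and taking the maximum over $i\le D$ gives \eqref{eq:deriv-envelope-lambda}. The case $r=0$ follows from $|\phi(\theta,x)|\le C(1+|x|^{s})$.

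For \eqref{eq:deriv-envelope-log}, use Fa\`a di Bruno again, now for $\log$ composed with $\lambda_i$. Each entry of $\nabla^r_\theta\log\lambda_i$ for $r\ge1$ is a finite sum of terms $\lambda_i^{-k}\prod_{l=1}^k\partial_\theta^{a_l}\lambda_i$ with $\sum_l|a_l|=r$. The key input is the uniform lower bound $\lambda_i=\phi_{\theta,i}(X_i)\ge\underline\lambda>0$ from Assumption~\ref{ass:parameter}, which gives $\lambda_i^{-k}\le\underline\lambda^{-k}$. Combined with the bounds already proved, each term is at most $C(1+\mathfrak C_t^{m_3})^{3}$, which again is a polynomial envelope. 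For $r=0$ we need $|\log\lambda_i|\le|\log\underline\lambda|+\lambda_i\le C(1+\mathfrak C_t^{m_0})$.

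The main obstacle is purely bookkeeping. First, we must check that every link derivative is evaluated only at attainable predictor values, so the growth assumption applies. Second, we must confirm that the constants do not depend on $t$, $i$ or $\theta$. This holds because every bound used is uniform over $K_\Theta$ and there are finitely many indices. Structural-zero and inactive coordinates contribute zero derivatives by convention, so they cause no difficulty.
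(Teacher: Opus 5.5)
Your proposal is correct and follows essentially the same route as the paper. Both bound every $\theta$-derivative of the linear predictor $X_i$ by $C(1+\mathfrak C_t)$, apply the multivariate Fa\`a di Bruno formula to $\phi_{\theta,i}(X_i(t;\theta))$ using the polynomial growth of the link derivatives, and control $\log\lambda_i$ through the uniform lower bound $\underline\lambda$. Your explicit handling of the mixed $(\theta,x)$-dependence and of the $r=0$ case $|\log\lambda_i|\le|\log\underline\lambda|+\lambda_i$ is just a slightly more careful write-up of the same argument.
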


\begin{proof}
Write
\[
X_i(t;\theta)=\nu_{\theta,i}+
\sum_{j=1}^D\int_{[t-A,t)}h_{\theta,ij}(t-s)\,\dd N_j(s).
\]
For each multi-index $a$ with $|a|\le3$, Assumptions~\ref{ass:parameter} and~\ref{ass:kernels} give
\[
\abs{\partial_\theta^a X_i(t;\theta)}
\le C(1+\mathfrak C_t),
\qquad \theta\in K_\Theta,
\]
by the boundedness of the baseline derivatives and \eqref{eq:Yell-bound}.  The multivariate Fa\`a di Bruno formula \citep[e.g.][]{Hardy_2006} applied to
$\lambda_i(t;\theta)=\phi_{\theta,i}\{X_i(t;\theta)\}$ expresses each $\partial_\theta^a\lambda_i(t;\theta)$, $|a|\le3$, as a finite sum of products of link derivatives evaluated at $X_i(t;\theta)$, baseline derivatives, and the local convolutions $Y_{ij}^{(b)}(t;\theta)$.  The link derivatives have at most polynomial growth in the input coordinate and $|X_i(t;\theta)|\le C(1+\mathfrak C_t)$, so \eqref{eq:deriv-envelope-lambda} follows.

The positive-link assumption gives a uniform lower bound $\lambda_i(t;\theta)\ge\underline\lambda>0$ on $K_\Theta$, and the case $r=0$ above gives $\lambda_i(t;\theta)\le C(1+\mathfrak C_t^{m_0})$.  Derivatives of $\log\lambda_i$ up to order three are finite sums of products of derivatives of $\lambda_i$ divided by powers of $\lambda_i$; the denominator is controlled by $\underline\lambda$.  This proves \eqref{eq:deriv-envelope-log}.

\end{proof}

\begin{lemma}[Predictability, endpoints, and bracket bookkeeping]\label{lem:predictable-jumps}
Under Assumptions~\ref{ass:parameter}--\ref{ass:stability}, the following standard point-process facts hold.
\begin{enumerate}[label=(\roman*)]
\item The local count process $\mathfrak C_t=N([t-A,t))$ is predictable. More generally, for every deterministic continuous $f$ on $[0,A]$ and every component $j$,
\[Y_f(t)=\int_{[t-A,t)} f(t-s)\,\dd N_j(s)\]
is predictable, as a left-continuous adapted local-window functional under the half-open predictable convention \citep[e.g.][Def. 2.4.1, Prop. 2.4.1 and Lem. 2.4.1]{bjork_pointprocess_martingale}. 

\item Endpoint and no-coincidence convention. Assume that each component counting process \(N_k\) has a continuous predictable compensator \(\Lambda_k\), and that the marked point process has a simple ground process (equivalently, is completely simple). Then each \(N_k\) is quasi-left continuous, so \(\Delta N_k(\sigma)=0\) almost surely for every finite predictable stopping time \(\sigma\) \citep[Cor.~II.1.19]{jacodshiryaev2003}; see also \citet[Sec.~2]{Bandini2024}. In particular, deterministic times carry no points. If \(T\) is an event time and \(A>0\) is fixed, then \(T+A\) is predictable, being announced by \(T+A(1-1/n)\); hence no component jumps at \(T+A\). By local finiteness, this excludes exact \(A\)-lag pairs on compact intervals. Complete simplicity of the ground process excludes simultaneous jumps of distinct components \citep[e.g.]{schoenberg2006nonsimple}. Consequently, in stochastic-integral estimating equations, the history window \([t-A,t)\) may be replaced by \((t-A,t)\) almost surely.

\item By the standard jump formula for stochastic integrals with predictable integrands \citep[e.g.][Lem. 5.93]{vandervaart_stochint}, if $H_i$ is a predictable scalar integrand and $M_i^{\theta^\star}=N_i-\int_0^\cdot\lambda_i(s;\theta^\star)\diff s$, then
\begin{equation*}
\Delta\left(\int_0^\cdot H_i(t)\diff M_i^{\theta^\star}(t)\right)_s=H_i(s)\Delta M_i^{\theta^\star}(s)=H_i(s)\Delta N_i(s).
\end{equation*}
Consequently, for a finite sum $\sum_i\int H_i\diff M_i^{\theta^\star}$, each jump is $\sum_i H_i(s)\Delta N_i(s)$.

\item  For predictable scalar integrands $H_i$, the predictable variation is of the form
\begin{equation*}
\left\langle \sum_{i=1}^D\int_0^\cdot H_i(t)\diff M_i^{\theta^\star}(t) \right\rangle_T
=
\sum_{i=1}^D\int_0^T H_i(t)^2\lambda_i(t;\theta^\star)\diff t .
\end{equation*}
\citep[pg. 233][and references cited]{WOOD1999231}.
\end{enumerate}
\end{lemma}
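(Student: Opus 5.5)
The statement to prove is Lemma~\ref{lem:predictable-jumps}. It is a collection of standard facts, so I will verify the four items in turn, each time checking that our intensities meet the hypotheses of the cited general results.

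For (i), fix a deterministic continuous $f$ on $[0,A]$ and write
\[
Y_f(t)=\sum_{s\in N_j,\; t-A\le s<t} f(t-s).
\]
On an interval containing no point of $N_j$ and no time of the form $s+A$ with $s\in N_j$, the set of points in the window is constant. There $Y_f$ is continuous in $t$, by continuity of $f$.

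At a point time $s$, the half-open convention means $s$ enters the window only for $t>s$. So $Y_f$ is left-continuous at $s$.

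At an exit time $s+A$, the point $s$ is still counted at $t=s+A$, because $t-A=s$ lies in $[t-A,t)$. It is dropped for $t>s+A$. The left limit at $s+A$ includes $f(A)$, matching the value, so $Y_f$ is again left-continuous.

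Adaptedness is clear, since $Y_f(t)$ depends only on $N|_{[t-A,t)}$. A left-continuous adapted process is predictable. The case $f\equiv1$ gives $\mathfrak C_t$.

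For (ii), I first argue that each $N_k$ has a continuous compensator $\int_0^\cdot\lambda_k(s)\,\dd s$. Here $\lambda_k$ is locally bounded, because $\lambda_k\le C(1+\mathfrak C_t^{m_0})$ and $\mathfrak C_t$ is locally finite. Quasi-left continuity then follows from \citep[Cor.~II.1.19]{jacodshiryaev2003}.

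Next, for an event time $T$, the times $T+A(1-1/n)$ are stopping times strictly below $T+A$ that increase to it. So $T+A$ is announced, hence predictable, and $\Delta N_k(T+A)=0$ almost surely. Taking a countable union over the locally finitely many event times on compacts excludes exact $A$-lag pairs.

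Simultaneous jumps of distinct components are excluded by simplicity of the ground process. In our construction this holds because the Poisson embedding uses independent $\Pi_i$, which almost surely have no common atoms in time. Consequently the endpoint $t-A$ carries a point only on a null set for $\dd N$-integration, and $[t-A,t)$ may be replaced by $(t-A,t)$.

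For (iii), I apply the jump formula for stochastic integrals of predictable integrands \citep[Lem.~5.93]{vandervaart_stochint}. Since the compensator is continuous, $\Delta M_i^{\theta^\star}=\Delta N_i$. Summing over $i$ gives the stated jump.

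For (iv), I localize with $\sigma_n=\inf\{t:\mathfrak C_t>n\}$, so that the integrands are bounded on $[0,\sigma_n]$. The $M_i^{\theta^\star}$ are purely discontinuous with no common jumps, so the cross brackets vanish. The bracket of each $M_i$ is $\int\lambda_i\,\dd t$, again by continuity of the compensator. The formula for $\sum_i\int H_i\,\dd M_i$ then follows from the bilinearity of $\langle\cdot\rangle$ for stochastic integrals \citep{WOOD1999231}. Finally I let $n\to\infty$, using $\sigma_n\to\infty$ almost surely, which Lemma~\ref{lem:count-truncation} guarantees.

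The only step needing real care is (ii). There the point is to check that the no-coincidence properties actually hold for our embedded process, rather than merely citing them. Specifically, absolute continuity of the compensator must be checked, and the absence of common atoms across the $\Pi_i$.
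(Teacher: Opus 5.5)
Your proposal is correct and takes essentially the same route as the paper. The paper gives no separate proof and justifies each item with exactly the facts you use: left-continuous adapted processes are predictable, a continuous compensator gives quasi-left continuity via the Jacod--Shiryaev corollary, $T+A(1-1/n)$ announces $T+A$, and then the jump and bracket formulas. Your checks that $\int\lambda_k\,\dd t$ is continuous and that the independent $\Pi_i$ give a simple ground process usefully verify hypotheses the paper simply assumes.

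Two minor points. First, $\sigma_n\to\infty$ follows directly from local finiteness of $N$, so you do not need Lemma~\ref{lem:count-truncation}. Second, localizing by $\mathfrak C_t$ in (iv) only bounds integrands that have a local-count envelope. For a general predictable $H_i$, localize instead at $\inf\{t\ge0:\int_0^t H_i(s)^2\lambda_i(s;\theta^\star)\,\dd s>n\}$, assuming (as the statement implicitly must) that this integral is finite.
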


\begin{lemma}[Dzhaparidze--van Zanten Bernstein inequality; 
{\citealp[Theorem~3.3 and Corollary~3.4(iii)]{dzhaparidze2001}}]
\label{lem:dzv-bernstein}
Let $M$ be a cadlag locally square-integrable martingale with $M_0=0$ and
predictable quadratic variation $\langle M\rangle_T$. Suppose that all jumps
of $M$ on $[0,T]$ are bounded in absolute value by $b$. Then, for all $x,v>0$,
\[
\Prob\left(M_T\ge x,\ \langle M\rangle_T\le v\right)
\le
\exp\left\{-\frac{x^2}{2(v+bx/3)}\right\}.
\]
The same bound applies to $-M_T$, and hence
\[
\Prob\left(|M_T|\ge x,\ \langle M\rangle_T\le v\right)
\le
2\exp\left\{-\frac{x^2}{2(v+bx/3)}\right\}.
\]
\end{lemma}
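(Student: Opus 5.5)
The plan is the classical exponential-supermartingale argument behind Freedman's inequality, adapted to continuous time as in \citet{dzhaparidze2001}; the statement is quoted from that reference, so we only reconstruct its proof. Fix \(\lambda>0\) and set
\[
\psi_b(\lambda)=\frac{e^{\lambda b}-1-\lambda b}{b^2}.
\]
We would show that
\[
\mathcal E_t(\lambda)=\exp\bigl\{\lambda M_t-\psi_b(\lambda)\langle M\rangle_t\bigr\},\qquad 0\le t\le T,
\]
is a nonnegative supermartingale with \(\mathcal E_0=1\). Granting this, on the event \(\{M_T\ge x,\ \langle M\rangle_T\le v\}\) we have \(\mathcal E_T(\lambda)\ge\exp\{\lambda x-\psi_b(\lambda)v\}\). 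Markov's inequality then gives
\[
\Prob\left(M_T\ge x,\ \langle M\rangle_T\le v\right)\le\exp\{-\lambda x+\psi_b(\lambda)v\}.
\]

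For the supermartingale property we would apply It\^o's formula for semimartingales to \(\mathcal E_t(\lambda)\). Write \(M=M^c+M^d\) and use the decomposition
\[
\langle M\rangle=\langle M^c\rangle+\bigl(\textstyle\sum_{s\le\cdot}(\Delta M_s)^2\bigr)^{p},
\]
where the superscript \(p\) denotes the predictable compensator. The drift of \(\mathcal E\) is then \(\mathcal E_{-}\) times a predictable finite-variation part, which is the compensator of
\[
\tfrac{\lambda^2}{2}\,\dd\langle M^c\rangle+\textstyle\sum_s\bigl(e^{\lambda\Delta M_s}-1-\lambda\Delta M_s\bigr)-\psi_b(\lambda)\,\dd\langle M\rangle .
\]
For \(|y|\le b\) the map \(y\mapsto(e^{\lambda y}-1-\lambda y)/y^2\) is increasing, so \(e^{\lambda y}-1-\lambda y\le\psi_b(\lambda)y^2\). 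Together with \(\lambda^2/2\le\psi_b(\lambda)\), this shows the drift is nonpositive. Hence \(\mathcal E\) is a positive local supermartingale, and Fatou's lemma along a localizing sequence makes it a true supermartingale with \(\E\,\mathcal E_T\le1\).

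The optimization step uses the series bound
\[
\psi_b(\lambda)=\sum_{k\ge2}\frac{\lambda^k b^{k-2}}{k!}\le\frac{\lambda^2}{2}\sum_{k\ge0}\Bigl(\frac{\lambda b}{3}\Bigr)^k=\frac{\lambda^2}{2(1-\lambda b/3)},\qquad 0<\lambda<3/b,
\]
which follows from \(k!\ge2\cdot3^{k-2}\). Choosing \(\lambda=x/(v+bx/3)\) gives
\[
-\lambda x+\frac{\lambda^2v}{2(1-\lambda b/3)}=-\frac{x^2}{2(v+bx/3)},
\]
which is the stated exponent. Applying the same argument to the martingale \(-M\), which has the same bracket and the same jump bound, and taking a union bound gives the two-sided version with the factor \(2\).

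The main obstacle is the technical supermartingale step. It requires the correct identification of \(\langle M\rangle\) for a general locally square-integrable cadlag martingale, and justifying that the jump term is compensated by \(\psi_b(\lambda)\) times the jump part of \(\langle M\rangle\). Both need the bound \(|\Delta M|\le b\) on all of \([0,T]\), together with localization. In the paper's applications this is simpler: the martingales are \(\sum_i\int H_i\,\dd M_i^{\theta^\star}\), which are purely discontinuous with continuous compensators. By Lemma~\ref{lem:predictable-jumps}(iii)--(iv), their jumps are \(\sum_iH_i(s)\Delta N_i(s)\) and their bracket is \(\sum_i\int H_i^2\lambda_i\,\dd t\), so one can check the drift computation directly from the compensator identity.
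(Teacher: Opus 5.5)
The paper does not prove this lemma; it imports it from \citet{dzhaparidze2001}. Your reconstruction follows the standard Freedman--Bennett exponential-supermartingale route, and most of it is sound. The bound $e^{\lambda y}-1-\lambda y\le\psi_b(\lambda)y^2$ for $y\le b$, the comparison $\lambda^2/2\le\psi_b(\lambda)$, and the Fatou localization are all correct. So are the series estimate from $k!\ge2\cdot3^{k-2}$ and the choice $\lambda=x/(v+bx/3)$, which indeed has $\lambda b<3$; together they give exactly the $bx/3$ exponent. Your argument makes the lemma self-contained, whereas the citation supplies the full stated generality without having to handle the point below.

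The It\^o computation does not hold as written. You implicitly take $\Delta X=\lambda\Delta M$ for $X=\lambda M-\psi_b(\lambda)\langle M\rangle$. That assumes $\langle M\rangle$ is continuous, which is quasi-left continuity of $M$. For a general locally square-integrable martingale, such as a discrete-time martingale embedded in continuous time, $\langle M\rangle$ jumps at predictable times $S$, with $\Delta\langle M\rangle_S=\E[(\Delta M_S)^2\mid\mathcal F_{S-}]$. At such a time the jump of $\mathcal E$ is $\mathcal E_{S-}\{e^{\lambda\Delta M_S-u}-1\}$ with $u=\psi_b(\lambda)\Delta\langle M\rangle_S$, so your displayed compensator is not the drift.

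The repair is short. Since $u$ is $\mathcal F_{S-}$-measurable and $\E[\Delta M_S\mid\mathcal F_{S-}]=0$,
\[
\E[\Delta\mathcal E_S\mid\mathcal F_{S-}]
=\mathcal E_{S-}\bigl\{e^{-u}\E[e^{\lambda\Delta M_S}\mid\mathcal F_{S-}]-1\bigr\}
\le\mathcal E_{S-}\{e^{-u}(1+u)-1\}\le0 .
\]
At totally inaccessible jump times $\langle M\rangle$ does not jump, so your computation is unchanged there. Equivalently, you can write $\mathcal E=e^{\lambda M}e^{-\psi_b(\lambda)\langle M\rangle}$ and integrate by parts against the predictable finite-variation factor.

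In the paper's applications the issue never arises. The brackets $\sum_i\int H_i^2\lambda_i\,\dd t$ of Lemma~\ref{lem:predictable-jumps}(iv) are continuous, so there your argument is complete as written.
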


\begin{lemma}[Predictable truncation Dzhaparidze--van Zanten device]\label{lem:stopped-dzv}
Let $M_t=\sum_i\int_0^t H_{i,s}\,\dd M_{i,s}^0$ be a scalar martingale integral with respect to finitely many point-process martingales, where the $H_i$ are predictable. Let $H_i^{(R)}$ be predictable truncated integrands such that the martingale integral $M_t^{(R)}=\sum_i\int_0^t H_{i,s}^{(R)}\,\dd M_{i,s}^0$ has jumps bounded by $b$ almost surely. Furthermore, suppose that on an event $\calA_R$ one has $H_{i,s}^{(R)}=H_{i,s}$ for all $i$ and all $s\in[0,T]$. Then, for all $x,v>0$,
\begin{equation*}
\Prob\left(|M_T|>x\right)
\le
\Prob\left(\calA_R^c\right)
+
\Prob\left(\langle M\rangle_T>v\right)
+
2\exp\left\{-\frac{x^2}{2(v+bx/3)}\right\}.
\end{equation*}
\end{lemma}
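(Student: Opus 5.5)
The plan is to reduce to Lemma~\ref{lem:dzv-bernstein} applied to the truncated martingale \(M^{(R)}\), and then to compare \(M\) with \(M^{(R)}\) on \(\calA_R\). First, the truncated integral has jumps bounded by \(b\) by hypothesis. It is a locally square-integrable c\`adl\`ag martingale with \(M^{(R)}_0=0\), because its integrands are predictable and its jumps are bounded. So for all \(x,v>0\),
\[
\Prob\left(|M^{(R)}_T|\ge x,\ \langle M^{(R)}\rangle_T\le v\right)\le 2\exp\left\{-\frac{x^2}{2(v+bx/3)}\right\}.
\]

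Second, I would decompose the target event as
\[
\{|M_T|>x\}\subset \calA_R^c\ \cup\ \{\langle M\rangle_T>v\}\ \cup\ \bigl(\calA_R\cap\{|M_T|>x\}\cap\{\langle M\rangle_T\le v\}\bigr).
\]
On \(\calA_R\) the integrands coincide on \([0,T]\), so \(M_T=M^{(R)}_T\) pathwise. The same holds for the predictable variations, which by Lemma~\ref{lem:predictable-jumps}(iv) equal \(\sum_i\int_0^T H_{i,s}^2\lambda_i\,\dd s\) and hence depend only on the integrands on \([0,T]\). The last event is therefore contained in \(\{|M^{(R)}_T|\ge x,\ \langle M^{(R)}\rangle_T\le v\}\). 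A union bound then gives the claim.

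The one step needing care is the pathwise identification \(M_T=M^{(R)}_T\) on \(\calA_R\). Stochastic integrals are defined only up to null sets, not path by path, so this needs justification. Here it is easy because the integrators are point-process martingales of finite variation. The integrals can then be written as Lebesgue--Stieltjes integrals, \(\sum_i\bigl(\sum_{s\le T}H_{i,s}\Delta N_i(s)-\int_0^T H_{i,s}\lambda_i(s)\,\dd s\bigr)\). This representation is valid whenever \(\int_0^T |H_{i,s}|\lambda_i\,\dd s<\infty\), which holds locally, or after localisation by stopping times. With that representation the integrals agree on every path of \(\calA_R\). If \(M\) itself is only a local martingale, nothing more is needed, since the bound only involves \(M_T\) and \(\langle M\rangle_T\) as random variables. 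I expect no genuine obstacle beyond this bookkeeping.
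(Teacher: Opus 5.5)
Your proof is correct and follows the paper's argument: identify \(M_T=M^{(R)}_T\) on \(\calA_R\), split \(\{|M_T|>x\}\) along \(\calA_R^c\) and \(\{\langle M\rangle_T>v\}\), and apply Lemma~\ref{lem:dzv-bernstein} to \(M^{(R)}\). The paper applies it to \(M^{(R)}\) and \(-M^{(R)}\) separately, which gives the same two-sided bound. You also make explicit two points the paper's one-line inclusion uses tacitly: that \(\langle M\rangle_T=\langle M^{(R)}\rangle_T\) on \(\calA_R\), and that the two integrals agree pathwise through their Lebesgue--Stieltjes representation.
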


\begin{proof}
On $\calA_R$, $M_T=M_T^{(R)}$. Hence
\[
\{|M_T|>x\}
\subset
\calA_R^c
\cup
\{\langle M\rangle_T>v\}
\cup
\{|M_T^{(R)}|>x,\ \langle M^{(R)}\rangle_T\le v\}.
\]
Apply Lemma~\ref{lem:dzv-bernstein} to $M^{(R)}$ and to $-M^{(R)}$.
\end{proof}

\section{Generic compact-window estimating-map and GMM proofs}\label{app:gmm}

In this section we prove the main results of Section \ref{sec:results}. After demonstrating that our estimators are well defined, we demonstrate high probability uniform concentration results for the empirical risk function, and its Hessian, around their expectations in Proposition \ref{prop:generic-controls} which is done by repeated applications of Theorem \ref{thm:local-concentration} and Lemma \ref{lem:stopped-dzv}. From there pathwise Taylor-expansion arguments, and the martingale central limit theorem, yield the desired results. Before turning to two-step weighting, we also record the least-squares contrast consequence used in Section~\ref{sec:simulation}. The results for the two-step estimator then follow fairly easily from the deterministically weighted estimator. In particular, after proving the plug-in estimate of the covariance $\Omega_H$ is eventually uniformly invertible with high probability, in Lemma \ref{lem:plugin-inverse-polynomial}, the previous arguments follow easily.

\begin{proposition}[Existence and measurable selection of GMM minimizers]\label{prop:gmm-measurable-selection}
Under Assumptions~\ref{ass:parameter}, \ref{ass:kernels}, and~\ref{ass:weights}, and for any fixed deterministic symmetric positive-definite matrix $W$, define
\[
    Q_{T,W}^H(\vartheta)=\norm{\frac{1}{T}\Psi_T^H(\vartheta)}_W^2 .
\]
On the event $N([-A,T])<\infty$, the map $\vartheta\mapsto Q_{T,W}^H(\vartheta)$ is continuous on $\Theta$.  Hence the argmin set in \eqref{eq:gmm-estimator} is nonempty and compact, and it admits a measurable minimizer.
\end{proposition}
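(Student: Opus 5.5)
The plan is to show pathwise continuity of \(\vartheta\mapsto\Psi_T^H(\vartheta)\) on \(\Theta\) on the event \(\{N([-A,T])<\infty\}\). Existence, compactness of the argmin set, and measurable selection should then follow from standard results.

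\textbf{Step 1: continuity of the event sum.} Fix a path with \(N([-A,T])<\infty\), so that \(\sup_{0\le t\le T}\mathfrak C_t\le N([-A,T])=:n<\infty\). Write
\[
\Psi_T^H(\vartheta)=\sum_{i=1}^D\sum_{s\in N_i\cap[0,T]}H_{\cdot i}(s;\vartheta)\;-\;\int_0^T H(t;\vartheta)\lambda(t;\vartheta)\,\dd t .
\]
The first term is a finite sum. By the Lipschitz envelope \eqref{eq:H-lipschitz-envelope} with \(r=0\), each summand is Lipschitz in \(\vartheta\) with constant at most \(B_0'(1+n^{m_0'})\). Hence the event sum is continuous, indeed Lipschitz, in \(\vartheta\).

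\textbf{Step 2: continuity of the compensator integral.} For fixed \(t\), the map \(\vartheta\mapsto\lambda_i(t;\vartheta)\) is continuous. This is because \(\nu_{\vartheta,i}\) is continuous, and the linear predictor is a finite sum of values \(h_{\vartheta,ij}(t-s)\), which are continuous in \(\vartheta\) by Assumption~\ref{ass:kernels}. The outer link \(\phi_{\vartheta,i}\) is jointly continuous by Assumption~\ref{ass:parameter}. The product \(H(t;\vartheta)\lambda(t;\vartheta)\) is therefore continuous in \(\vartheta\) for each \(t\). For a dominating function, use \eqref{eq:H-envelope} together with \eqref{eq:deriv-envelope-lambda} at \(r=0\) (Lemma~\ref{lem:derivative-envelope}). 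Together they give
\[
\|H(t;\vartheta)\lambda(t;\vartheta)\|\le C(1+n^{m_0})(1+n^{m})
\]
uniformly in \(t\in[0,T]\) and \(\vartheta\in\Theta\subset K_\Theta\). This bound is integrable on \([0,T]\). Dominated convergence then yields continuity of the integral.

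A subtle point is joint measurability in \((t,\vartheta)\), which the integral needs to be well defined. I would obtain it from the fact that \(H\) is a composition of \(\mathsf H_\vartheta^{(0)}\) with \(G_tN\), and that \(\mathsf H^{(0)}\) is continuous in \(\vartheta\). This gives a Carathéodory function, which is jointly measurable.

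\textbf{Step 3: minimizers and measurable selection.} Since \(W\) is fixed, \(Q_{T,W}^H\) is a continuous function of \(\Psi_T^H\) and hence continuous on the compact set \(\Theta\). The argmin set is therefore nonempty, and it is closed, hence compact. The map \((\omega,\vartheta)\mapsto Q_{T,W}^H(\vartheta)(\omega)\) is measurable in \(\omega\) for each fixed \(\vartheta\), since it is built from stochastic and Lebesgue integrals of measurable functionals. Combined with continuity in \(\vartheta\), this makes it a Carathéodory function on the compact metric space \(\Theta\).

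The measurable maximum theorem (Aliprantis--Border, Theorem 18.19), or the Kuratowski--Ryll-Nardzewski selection theorem applied to the argmin correspondence, then gives a measurable minimizer. The argmin correspondence is compact-valued and weakly measurable. On the null complement of \(\{N([-A,T])<\infty\}\), set the minimizer equal to a fixed point of \(\Theta\).

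The main obstacle is bookkeeping rather than depth. It consists of verifying joint measurability in Step 2 and the weak measurability of the argmin correspondence in Step 3. For both, I would rely on reducing to a countable dense subset of \(\Theta\), which continuity allows.
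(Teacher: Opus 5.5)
Your proposal is correct and follows the paper's proof essentially step for step. The event sum is finite and therefore continuous (you even get Lipschitz continuity from \eqref{eq:H-lipschitz-envelope}), the compensator term is handled by dominated convergence with the polynomial local-count envelope, and measurability of the Carath\'eodory map $Q_{T,W}^H$ is reduced to a countable dense subset of $\Theta$ before a measurable-selection theorem is applied to the argmin correspondence. One small tidy-up: the integral at fixed $\vartheta$ needs measurability in $(t,\omega)$, not joint measurability in $(t,\vartheta)$, and this comes directly from predictability of $H$ and $\lambda$.
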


\begin{proof}[Proof of Proposition~\ref{prop:gmm-measurable-selection}]
On $N([-A,T])<\infty$, every shifted window contains finitely many points.  Assumption~\ref{ass:weights} gives pathwise continuity of $\vartheta\mapsto H(t;\vartheta)$ and the derivative-envelope bound gives a polynomial local-count dominating function, e.g.\ \eqref{eq:H-envelope}. Hence, by the dominated convergence theorem for the compensator term and by finiteness of the event sum for the $\dd N$ term, $\vartheta\mapsto\Psi_T^H(\vartheta)$ is continuous.  Therefore $Q_{T,W}^H$ is continuous on the compact set $\Theta$ and has a nonempty compact argmin set.

The map from the observed configuration to $Q_{T,W}^H(\cdot)$ is measurable as a map into $C(\Theta)$ with the supremum norm; this follows from joint measurability in the configuration and continuity in $\vartheta$, first on a countable dense subset of $\Theta$ and then by uniform continuity on compact sets. Thus the argmin correspondence is measurable. 
\end{proof}

\begin{proposition}[Uniform controls for compact-window estimating maps]\label{prop:generic-controls}
Suppose Assumptions~\ref{ass:parameter}--\ref{ass:stability} and
Assumption~\ref{ass:weights} hold, and let $W$ be a fixed deterministic
symmetric positive-definite $q\times q$ matrix. For every $c>0$ there are constants $C,K<\infty$ and an integer $q_0\ge1$ such that, for all sufficiently large $T$,
\begin{align}
\sup_{\theta^\star\in\Theta_0}
\Prob_{\theta^\star}
\left(
    \norm{\frac{1}{T}\Psi_T^H(\theta^\star)}_W
    >K\sqrt{\frac{\log(T)}{T}}
\right)
&\le CT^{-c},
\label{eq:generic-truth-bound}\\
\sup_{\theta^\star\in\Theta_0}
\Prob_{\theta^\star}
\left(
    \sup_{\vartheta\in\Theta}
    \norm{\frac{1}{T}\Psi_T^H(\vartheta)-g_H(\vartheta,\theta^\star)}_W
    >K\frac{\log^{q_0}(T)}{\sqrt T}
\right)
&\le CT^{-c}.
\label{eq:generic-ulln}
\end{align}
Moreover, for every fixed $r>0$ such that
\[
\Theta_0^{+r}:=\{\vartheta\in\R^p:\dist(\vartheta,\Theta_0)\le r\}
\subset K_\Theta,
\]
the local Jacobian obeys 
\begin{equation}\label{eq:generic-jacobian-control}
\sup_{\theta^\star\in\Theta_0}
\Prob_{\theta^\star}
\left(
    \sup_{\vartheta\in B(\theta^\star,r)}
    \norm{\partial_\vartheta\left\{\frac{1}{T}\Psi_T^H(\vartheta)\right\}+B_H(\vartheta,\theta^\star)}_{\op}
    >K\frac{\log^{q_0}(T)}{\sqrt T}
\right)
\le CT^{-c}.
\end{equation}
\end{proposition}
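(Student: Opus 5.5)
The plan is to use the decomposition \eqref{eq:generic-decomposition} and treat the martingale part and the drift part separately, componentwise in the $q$ rows.

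\emph{Step 1: the pointwise bound \eqref{eq:generic-truth-bound}.} At $\vartheta=\theta^\star$ the drift vanishes, so each row of $\Psi_T^H(\theta^\star)$ is a martingale $M_T=\sum_i\int_0^T H_{ai}(t;\theta^\star)\,\dd M_i^{\theta^\star}(t)$.
\begin{itemize}
\item By Lemma~\ref{lem:predictable-jumps}(iv), its bracket is $\int_0^T\sum_i H_{ai}^2\lambda_i\,\dd t$. This is a compact-window functional with a polynomial envelope, by \eqref{eq:H-envelope} and Lemma~\ref{lem:derivative-envelope}.
\item Theorem~\ref{thm:local-concentration}, applied with a singleton $\Xi$, gives $\langle M\rangle_T\le v:=CT$ outside an event of probability $CT^{-c}$.
\item For the jumps, truncate predictably: set $H^{(R)}=H\1\{\mathfrak C_t\le K_1\log T\}$. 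On $\calA_R=\{\sup_{t\le T}\mathfrak C_t\le K_1\log T\}$, Lemma~\ref{lem:count-truncation} gives $\Prob(\calA_R^c)\le CT^{-c}$. The jumps of $M^{(R)}$ are then bounded by $b=C\log^{m_0}T$, using Lemma~\ref{lem:predictable-jumps}(ii)--(iii) to exclude simultaneous jumps.
\item Lemma~\ref{lem:stopped-dzv} with $x=K\sqrt{T\log T}$ gives an exponent of order $K^2\log T/(C+bK\sqrt{\log T/T})$, which is at least $c\log T$ for large $K$.
\end{itemize}
Since $W$ is fixed, $\|\cdot\|_W$ is equivalent to $\|\cdot\|_2$, which gives \eqref{eq:generic-truth-bound} uniformly over $\Theta_0$.

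\emph{Step 2: uniformity in $\vartheta$, giving \eqref{eq:generic-ulln}.} Write
\[
T^{-1}\Psi_T^H(\vartheta)-g_H(\vartheta,\theta^\star)=T^{-1}\mathcal M_T(\vartheta)+\Bigl\{T^{-1}\!\int_0^T Z_\vartheta\circ G_t\,\dd t-\E_{\theta^\star}Z_\vartheta\Bigr\},
\qquad Z_\vartheta=\mathsf H^{(0)}_\vartheta\{\lambda(\theta^\star)-\lambda(\vartheta)\}.
\]
\begin{itemize}
\item The drift term is a compact-window functional indexed by $\xi=\vartheta\in\Theta$. It satisfies \eqref{eq:local-envelope}--\eqref{eq:local-lipschitz-envelope} by \eqref{eq:H-envelope}, \eqref{eq:H-lipschitz-envelope} and Lemma~\ref{lem:derivative-envelope}, using a mean-value bound in $\vartheta$ on the convex set $K_\Theta$. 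Theorem~\ref{thm:local-concentration} therefore bounds it directly, uniformly in $\vartheta$ at rate $\log^{q_0}T/\sqrt T$.
\item For the martingale term, take a $T^{-1}$-net $\calN_T$ of $\Theta$ with $|\calN_T|\le CT^{p}$ and apply Step~1 at each net point with $c$ replaced by $c+p$, followed by a union bound.
\item Between net points, on $\calA_R$ use \eqref{eq:H-lipschitz-envelope}:
\[
\bigl|\mathcal M_T(\vartheta)-\mathcal M_T(\eta)\bigr|\le C(1+\log^{m'}T)\|\vartheta-\eta\|\,\{N([0,T])+\textstyle\int_0^T\lambda_i\,\dd t\}.
\]
By \eqref{eq:total-count-truncation} this is $O(\log^{m'+1}T)$ after dividing by $T$ at mesh $T^{-1}$, hence negligible. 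Here $\lambda_i\le C(1+\log^{m_0}T)$ on $\calA_R$.
\end{itemize}

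\emph{Step 3: the Jacobian, giving \eqref{eq:generic-jacobian-control}.} Differentiate under the integral, which is justified pathwise by dominated convergence and the envelopes:
\[
\partial_\vartheta T^{-1}\Psi_T^H=T^{-1}\!\int \nabla H\,\dd M^{\theta^\star}+T^{-1}\!\int\bigl[\nabla H\{\lambda(\theta^\star)-\lambda(\vartheta)\}-H D_\vartheta\bigr]\dd t.
\]
Its population counterpart is $-B_H(\vartheta,\theta^\star)$. Interchanging $\partial_\vartheta$ and $\E_{\theta^\star}$ is licensed by the integrable polynomial envelopes (Lemma~\ref{lem:fixed-window-count-mgf}) and Proposition~\ref{prop:finite-window-continuity}. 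Each entry is then handled exactly as in Step~2: the $r=1$ envelopes give the martingale part, and Theorem~\ref{thm:local-concentration}, with Lipschitz envelopes from the $r=2$ bounds and third-order intensity derivatives, gives the drift part. The ball $B(\theta^\star,r)\subset K_\Theta$ ensures the envelopes apply.

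\emph{Main obstacle.} I expect the main obstacle to be the uniform martingale step, namely controlling a stochastic integral indexed by $\vartheta$ without chaining. The crude net-plus-pathwise-Lipschitz argument should suffice only because polynomial factors of $\log T$ are absorbed by $q_0$. The other delicate point is checking the constants uniformly in $\theta^\star\in\Theta_0$, which reduces to the uniform statements in Theorem~\ref{thm:local-concentration} and Lemma~\ref{lem:count-truncation}.
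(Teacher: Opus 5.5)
Your proposal is correct and follows essentially the paper's route. Both split \eqref{eq:generic-decomposition} into martingale and drift parts and use Theorem~\ref{thm:local-concentration} for the centred drift and the predictable bracket. Both then apply predictable truncation on the sliding-count event with Lemma~\ref{lem:stopped-dzv}, pass to all of $\Theta$ through a fixed-dimensional net and a pathwise Lipschitz extension, and repeat the scheme entrywise for the Jacobian.

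The differences are cosmetic. You prove the pointwise bound first and reuse it at the net points, and your $T^{-1}$-net gives a discretization error of $O(\log^{k}T/T)$ where the paper's $T^{-1/2}$-net gives $O(\log^{m'}T/\sqrt T)$. Where you index by a singleton or by $\vartheta$ only, the paper indexes the bracket and drift functionals by $(\vartheta,\theta_0)\in\Theta\times\Theta_0$, which makes the uniformity in $\theta^\star$ explicit.
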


 \begin{proof}[Proof of Proposition~\ref{prop:generic-controls}]
Write $m_T^H(\vartheta)=\frac{1}{T}\Psi_T^H(\vartheta)$.  By \eqref{eq:generic-decomposition}, $m_T^H(\vartheta)-g_H(\vartheta,\theta^\star)$ is the sum of a martingale average and a centred drift average. Specifically,
\[m_T^H(\vartheta)-g_H(\vartheta,\theta^\star)=\left(\frac 1T\sum_{i=1}^D\int_0^T H_{\cdot i}(t;\vartheta)\dd M_i^{\theta^\star}(t)\right)+\left(\frac 1T\int_0^T H(t;\vartheta)\{\lambda(t;\theta^\star)-\lambda(t;\vartheta)\}\dd t-g_H(\vartheta,\theta^\star)\right). \]

Throughout the proof, all vector and matrix statements are obtained coordinatewise and then converted to Euclidean or operator norms using the fixed dimensions $q$, $p$, and $D$. Moreover, the envelope assumptions are stable under the finite sums, products, and matrix multiplications that appear  below.  Thus, by Assumption~\ref{ass:weights} and Lemma~\ref{lem:derivative-envelope}, every indexed local-window functional that appears in drift terms, predictable variations, or as a derivative of such a term has both a polynomial local-count envelope and a polynomial local-count Lipschitz envelope in the sense of Theorem~\ref{thm:local-concentration}. In particular, multiplication by $\lambda_i(\cdot;\theta_0)$ is harmless because $\lambda_i(t;\theta_0)\le C(1+\mathfrak C_t)$ uniformly on $K_\Theta$. We prove this in full twice to demonstrate the basic argument, and omit it for brevity in the other cases.

Moreover, pathwise differentiability is justifiable as the $\dd N$ terms on $\{N([-A,T])<\infty\}$, are finite sums at event times; differentiation of the compensator terms is justified by the same polynomial envelopes and the uniform exponential moment \eqref{eq:count-mgf} .

First consider the drift term, where the shift-covariant local-window functional is
\[Z_{a,\vartheta,\theta_0}(N)
    =
    e_a^\top H(0;\vartheta)\{\lambda(0;\theta_0)-\lambda(0;\vartheta)\},
    \qquad a=1,\ldots,q,
\]
where $e_a$ is the $q$-vector with a one in the $a$th position and zeroes elsewhere. Moreover, $Z_{a,\vartheta,\theta_0}(N)$ has the required polynomial envelope and Lipschitz envelope, uniformly over $a=1,\ldots,q$ and $(\vartheta,\theta_0)\in\Theta\times\Theta_0$. To see this, use \eqref{eq:H-envelope} and the uniform bound $\lambda_i(t;\theta_0)\le C(1+\mathfrak C_t)$:
\begin{align*}
    \left|e_a^\top H(0;\vartheta)\{\lambda(0;\theta_0)-\lambda(0;\vartheta)\}\right|&\leq \norm{H(0;\vartheta)\lambda(0;\theta_0)}_2+ \norm{H(0;\vartheta)\lambda(0;\vartheta)}_2\\
    &\leq 2C(1+\mathfrak C_0^{m_0})(1+\mathfrak C_0)\leq B_0(1+\mathfrak C_0^{m_0+1})
\end{align*}
where $B_0$ and $m_0$ can be selected uniformly over $\Theta \times \Theta_0$. Moreover, 
\begin{align*}
    &\left|e_a^\top \left(H(0;\vartheta)\{\lambda(0;\theta_0)-\lambda(0;\vartheta)\}-H(0;\eta)\{\lambda(0;\theta_0)-\lambda(0;\eta)\}\right)\right|\\
    &\leq \norm{H(0;\vartheta)\{\lambda(0;\theta_0)-\lambda(0;\vartheta)\}-H(0;\eta)\{\lambda(0;\theta_0)-\lambda(0;\eta)\}}_2\\
    &\leq \norm{H(0;\vartheta)}_{\rm op}\norm{\lambda(0;\vartheta)-\lambda(0;\eta)}_2+\left(\norm{\lambda(0;\theta_0)}_2+\norm{\lambda(0;\eta)}_{2}\right)\norm{H(0;\vartheta)-H(0;\eta)}_{\rm op}\\
    &\leq B_{0,1}(1+\mathfrak C_0^{m_{0,1}})\norm{\vartheta-\eta}_2
\end{align*}
for some finite constants $B_{0,1}$ and $m_{0,1}$ which are independent of $\theta_0,\vartheta,\eta$. Hence, Theorem~\ref{thm:local-concentration}, applied coordinatewise, gives the required $K\log^{q_0}(T)/\sqrt T$ bound for the centred drift average uniformly over $\vartheta$ and $\theta^\star$.

Now consider the mean-zero martingale term. To do so, fix a coordinate $a\in\{1,\ldots,q\}$ and a parameter value $\vartheta$. The scalar martingale
\[
    M_{a,\vartheta}(T)
    =
    \sum_{i=1}^D\int_0^T H_{ai}(t;\vartheta)\,\dd M_i^{\theta^\star}(t)
\]
has the predictable bracket
\[
    V_{a,\vartheta}(T)
    =
    \sum_{i=1}^D\int_0^T H_{ai}(t;\vartheta)^2\lambda_i(t;\theta^\star)\,\dd t .
\]
Moreover, the bracket integrand
\[
    Z^V_{a,\vartheta,\theta_0}(N)
    =
    \sum_{i=1}^D H_{ai}(0;\vartheta)^2\lambda_i(0;\theta^\star)
\]
is an indexed local-window functional with polynomial envelope and polynomial
Lipschitz envelope over $\Theta\times\Theta_0$.  Indeed, by
\eqref{eq:H-envelope} and $\lambda_i(0;\theta^\star)\le C(1+\mathfrak C_0)$,
\[
    |Z^V_{a,\vartheta,\theta^\star}(N)|
    \le B(1+\mathfrak C_0^{m})
\]
uniformly in $(\vartheta,\theta^\star)$.  The product rule, the Lipschitz envelope
for $H$, and the same intensity bound give
\[
    |Z^V_{a,\vartheta,\theta_0}(N)-Z^V_{a,\eta,\theta_0}(N)|
    \le B(1+\mathfrak C_0^{m})\|\vartheta-\eta\|_2 .
\]
Theorem~\ref{thm:local-concentration} therefore gives uniform concentration of
$\frac{1}{T}V_{a,\vartheta}(T)$ around its expectation.  Stationarity and the
fixed-window moment bound yield
\[
    \sup_{\vartheta\in\Theta,\theta^\star\in\Theta_0}
    \E_{\theta^\star}\left[V_{a,\vartheta}(T)\right]
    \le CT .
\]
Consequently,
\begin{equation}\label{eq:generic-martingale-bracket-event}
    \sup_{\vartheta\in\Theta}V_{a,\vartheta}(T)\le C'T
\end{equation}
with probability at least $1-CT^{-c-p-2}$, uniformly in
$\theta^\star\in\Theta_0$, after potentially increasing constants.

We next prove that $\frac{1}{T}M_{a,\vartheta}(T)$ concentrates uniformly around zero.  Let
\[
    \calA_T=\left\{\sup_{0\le t\le T}\mathfrak C_t\le K_0\log(T),
    \quad \frac{1}{T}N([0,T])\le K_0\log(T)\right\},
\]
whose complement has probability at most $CT^{-c-p-2}$ by Lemma~\ref{lem:count-truncation}.  On $\calA_T$, the predictable truncation
$H_{ai}(t;\eta)\mathbf 1\{\mathfrak C_t\le K_0\log(T)\}$ agrees with the original integrand and has jumps bounded by $K\log^{q_0}(T)$.

Let $\calN_T$ be a deterministic $T^{-1/2}$-net of $\Theta$, so that $|\calN_T|\lesssim T^{p/2}$.  For each fixed $\eta\in\calN_T$, Lemma~\ref{lem:stopped-dzv}, together with \eqref{eq:generic-martingale-bracket-event}, gives
\begin{align*}
\Prob_{\theta^\star}\left(
    \left|\frac{1}{T}M_{a,\eta}(T)\right|
    > K\frac{\log^{q_0}(T)}{\sqrt T}
\right)&\leq \Prob_{\theta^\star}(\calA_T^c)+CT^{-c-p-2}+2\exp\left\{\frac{-K^2T\log^{2q_0}(T)}{2(CT+K'\sqrt{T}\log^{2q_0}(T))}\right\}\\
&\le CT^{-c-p-2}
\end{align*}
for $K$ and $q_0$ large enough.  A union bound over the fixed-dimensional net gives
\[
\Prob_{\theta^\star}\left(
\max_{\eta\in\calN_T}\left|\frac{1}{T}M_{a,\eta}(T)\right|
    > K\frac{\log^{q_0}(T)}{\sqrt T}
\right)
\leq |\calN_T| CT^{-c-p-2}\leq CT^{-c}.
\]

It remains to extend this net bound to all $\vartheta\in\Theta$, which we do by stochastic equicontinuity. On the event $\calA_T$, if $\eta\in\calN_T$ and $\norm{\vartheta-\eta}_2\le T^{-1/2}$, the pathwise Lipschitz envelope in Assumption~\ref{ass:weights} gives
\[
    \sup_{0\le t\le T}\max_{i=1}^D
    \abs{H_{ai}(t;\vartheta)-H_{ai}(t;\eta)}
    \le K'\frac{ \log^{m}(T)}{\sqrt T},
\]
then using that $\dd M_i^{\theta^\star}=\dd N_i-\lambda_i(t;\theta^\star)\dd t$ and $\lambda_i(t;\theta^\star)\le C(1+\mathfrak C_t)$ for some $C$ uniform over $\Theta_0$,
\[
\begin{aligned}
\abs{\frac{1}{T}\{M_{a,\vartheta}(T)-M_{a,\eta}(T)\}}
&\le K\frac{\log^{m'}(T)}{\sqrt T}
\left\{\frac 1TN([0,T])+\frac 1T\sum_{i=1}^D\int_0^T\lambda_i(t;\theta^\star)\dd t\right\} \le K'\frac{\log^{m'}(T)}{\sqrt T},
\end{aligned}
\]
for some $m'\geq m$. Therefore, using a standard netting argument
\begin{align}\label{eq:epsilonnetextension}
   \Prob_{\theta^\star}\left( \sup_{\vartheta\in\Theta}\abs{\frac 1T M_{a,\vartheta}(T)}
    > K\frac{\log^{q_0}(T)}{\sqrt T}\right)&\leq\Prob_{\theta^\star}(\mathcal A_T^c)+\\ &+\Prob_{\theta^\star}\left( \max_{\eta\in\calN_T}\abs{\frac 1T M_{a,\eta}(T)}
    +2K'\frac{\log^{m'}(T)}{\sqrt T}> K\frac{\log^{q_0}(T)}{\sqrt T}\right)\\ \nonumber
&\leq \1\left\{2K'\frac{\log^{m'}(T)}{\sqrt T}>K\frac{\log^{q_0}(T)}{2\sqrt T}\right\}+CT^{-c}
\end{align}
for all sufficiently large $T$ where we have selected $q_0>m$ and inflated $K$ slightly further. The proof of equation \eqref{eq:generic-ulln} concludes via a union bound over the (fixed) $q$ coordinates.

We now move to proving equation \eqref{eq:generic-truth-bound}, which follows the same route as the proof of equation \eqref{eq:generic-ulln}. At truth, $\vartheta=\theta^\star$, the drift term in \eqref{eq:generic-decomposition} vanishes. For each coordinate, the scalar martingale
\[
    M^0_a(T)
    =
    \sum_{i=1}^D\int_0^T H_{ai}(t;\theta^\star)\,\dd M_i^{\theta^\star}(t)
\]
has the predictable variation
\[
    \langle M^0_a\rangle_T
    =
    \sum_{i=1}^D\int_0^T H_{ai}(t;\theta^\star)^2\lambda_i(t;\theta^\star)\,\dd t,
\]
by Lemma~\ref{lem:predictable-jumps}. The same bracket concentration and predictable-truncation argument imply that with probability no less than $1-CT^{-c}$, $ \langle M^0_a\rangle_T\leq C'T$ for some constant $C$ which can be taken to be uniform over $\Theta_0$. Thus, using the same truncation event $\calA_T$ and Lemma~\ref{lem:stopped-dzv} implies that 
\begin{align*}
    \Prob_{\theta^\star}\left(|M_a^0(T)|>K\sqrt{T\log(T)}\right)\leq CT^{-c}+2\exp\left\{-\frac{K^2T\log(T)}{2(C'T+K_0'K\sqrt{T}\log^{q_0+1/2}T/3)}\right\}
\end{align*}
which can be made less than $2CT^{-c}$ for all $T$ sufficiently large for sufficiently large (fixed) $K$. A union bound over $a=1,\ldots,q$ then completes the proof of equation \eqref{eq:generic-truth-bound}.

To prove equation \eqref{eq:generic-jacobian-control}, we first fix $r>0$ such that $\Theta_0^{+r}\subset K_\Theta$ and differentiate \eqref{eq:generic-decomposition} on the local ball.  The preceding pathwise and dominated-differentiation justification gives, for each $\vartheta\in B(\theta^\star,r)$,
\begin{equation}\label{eq:generic-jacobian-decomposition}
\begin{aligned}
\partial_\vartheta m_T^H(\vartheta)
&=
\frac 1T\sum_{i=1}^D\int_0^T \partial_\vartheta H_{\cdot i}(t;\vartheta)\,\dd M_i^{\theta^\star}(t) \\
&\quad+
\frac 1T\int_0^T
\partial_\vartheta H(t;\vartheta)
\{\lambda(t;\theta^\star)-\lambda(t;\vartheta)\}\,\dd t
-
\frac 1T\int_0^T H(t;\vartheta)D_\theta(t;\vartheta)\,\dd t .
\end{aligned}
\end{equation}
The expectation of the two drift averages is $-B_H(\vartheta,\theta^\star)$ by \eqref{eq:BH-two-parameter}.  We now control the three terms in \eqref{eq:generic-jacobian-decomposition} entry by entry, uniformly over the local ball.  

Fix an entry $(a,b)\in\{1,\ldots,q\}\times\{1,\ldots,p\}$.  The two centred drift integrands are the local-window functionals
\[
Z^{(1)}_{ab,\vartheta,\theta^\star}(N)
=
\sum_{i=1}^D
\partial_{\vartheta_b}H_{ai}(0;\vartheta)
\{\lambda_i(0;\theta^\star)-\lambda_i(0;\vartheta)\},
\]
and
\[
Z^{(2)}_{ab,\vartheta}(N)
=
\sum_{i=1}^D
H_{ai}(0;\vartheta)\partial_{\vartheta_b}\lambda_i(0;\vartheta).
\]
The index set $\Theta_0^{+r}\times\Theta_0$ is compact and finite dimensional. Verification of the polynomial envelope is immediate from Assumption~\ref{ass:weights}, Lemma~\ref{lem:derivative-envelope}, and the lower bound on the intensities. The Lipschitz envelopes follow from \eqref{eq:H-lipschitz-envelope}, the derivative envelopes for $\lambda$, and the product rule; multiplication by $\lambda_i(\cdot;\theta_0)$ is harmless since $\lambda_i(t;\theta_0)\le C(1+\mathfrak C_t)$. Explicitly,
\begin{align*}
    &\left|\sum_{i=1}^D
H_{ai}(0;\vartheta)\partial_{\vartheta_b}\lambda_i(0;\vartheta)-\sum_{i=1}^D
H_{ai}(0;\eta)\partial_{\vartheta_b}\lambda_i(0;\eta)\right|\\
&\leq \sum_{i=1}^D |H_{ai}(0;\vartheta)|\left|\partial_{\vartheta_b}\lambda_i(0;\eta)-\partial_{\vartheta_b}\lambda_i(0;\vartheta)\right|+|H_{ai}(0;\vartheta)-H_{ai}(0;\eta)|\left|\partial_{\eta_b}\lambda_i(0;\eta)\right|\\
&\leq D B(1+\mathfrak C_0^{m_0})\norm{\eta-\vartheta}_2
\end{align*}
where the final inequality follows from the uniform second derivative envelope, the mean value theorem, and the uniform logarithmic envelopes on $\left|\partial_{\eta_b}\lambda_i(0;\eta)\right|$ and $|H_{ai}(0;\vartheta)|$. For $Z^{(1)}_{ab,\vartheta,\theta_0}(N)$
\begin{align*}
    \left|Z^{(1)}_{ab,\vartheta,\theta_0}(N)-Z^{(1)}_{ab,\eta,\theta_0}(N)\right|&\leq \sum_{i=1}^D\left(\lambda_i(0;\theta_0)+\lambda_i(0;\eta)\right)\left|\partial_{\vartheta_b}H_{ai}(0;\vartheta)-\partial_{\vartheta_b}H_{ai}(0;\eta)\right|+\\
    &\quad+\left|\partial_{\vartheta_b}H_{ai}(0;\vartheta)\right|\left|\lambda_i(0;\vartheta)-\lambda_i(0;\eta)\right|\leq B(1+\mathfrak C_0^{m})\norm{\vartheta-\eta}_2
\end{align*}
with $B,m$ being uniform over the parameter space. Therefore, Theorem~\ref{thm:local-concentration} gives, with probability at least $1-CT^{-c}$ after a union bound over the fixed entries,
\begin{equation}\label{eq:generic-jacobian-drift-control}
\sup_{\vartheta\in B(\theta^\star,r)}
\left|
\frac{1}{T}\int_0^T
\{Z^{(1)}_{ab,\vartheta,\theta^\star}\circ G_t
-Z^{(2)}_{ab,\vartheta}\circ G_t\}\,\dd t
-
\E_{\theta^\star}
\{Z^{(1)}_{ab,\vartheta,\theta^\star}-Z^{(2)}_{ab,\vartheta}\}
\right|
\le K\frac{\log^{q_0}(T)}{\sqrt T}
\end{equation}
for all sufficiently large $T$.  The expectation in \eqref{eq:generic-jacobian-drift-control} is the $(a,b)$ entry of $-B_H(\vartheta,\theta^\star)$.

It remains to control the martingale derivative term.  For fixed $(a,b,\vartheta)$ set
\[
M_{ab,\vartheta}^{J}(T)
=
\sum_{i=1}^D\int_0^T
\partial_{\vartheta_b}H_{ai}(t;\vartheta)\,\dd M_i^{\theta^\star}(t).
\]
Its predictable variation is
\begin{equation*}
V_{ab,\vartheta}^{J}(T)
=
\sum_{i=1}^D\int_0^T
\{\partial_{\vartheta_b}H_{ai}(t;\vartheta)\}^2
\lambda_i(t;\theta^\star)\,\dd t .
\end{equation*}
The predictable variation's integrand
\[
Z^{J,V}_{ab,\vartheta,\theta^\star}(N)
=
\sum_{i=1}^D
\{\partial_{\vartheta_b}H_{ai}(0;\vartheta)\}^2
\lambda_i(0;\theta^\star)
\]
has the same polynomial envelope and Lipschitz properties, which should be clear by repeating the argument in the proof of equation \eqref{eq:generic-ulln}. Theorem~\ref{thm:local-concentration}, stationarity and the exponential moment \eqref{eq:count-mgf} imply that, with probability at least $1-CT^{-c-p-2}$,
\begin{equation}\label{eq:generic-jacobian-bracket-event}
\sup_{\vartheta\in B(\theta^\star,r)}V_{ab,\vartheta}^{J}(T)
\le C_0T
\end{equation}
uniformly in $\theta^\star\in\Theta_0$, after increasing constants and taking $T$ sufficiently large. This follows from the same argument used to bound the martingale term in equation \eqref{eq:generic-ulln}.

Intersect the event that \eqref{eq:generic-jacobian-bracket-event} holds on with the sliding-window event
\[
    \calA_T=\left\{\sup_{0\le t\le T}\mathfrak C_t\le K_0\log(T),
    \quad \frac{1}{T}N([0,T])\le K_0\log(T)\right\}.
\]
On $\calA_T$, the predictable truncation
\[
\partial_{\vartheta_b}H_{ai}(t;\vartheta)\1\{\mathfrak C_t\le K_0\log(T)\}
\]
agrees with the original integrand and its jumps are bounded by $K\log^{q_0}(T)$.  Let $\calN_T(\theta^\star)$ be a deterministic $T^{-1/2}$-net of $B(\theta^\star,r)$ with cardinality at most $CT^{p/2}$.  The net $\calN_T(\theta^\star)$ is chosen deterministically after fixing $\theta^\star$.  The probability bound below is uniform in $\theta^\star$, and the subsequent union bound uses only the cardinality $\abs{\calN_T(\theta^\star)}\le CT^{p/2}$. For each fixed $\eta\in\calN_T(\theta^\star)$, Lemma~\ref{lem:stopped-dzv} with \eqref{eq:generic-jacobian-bracket-event} gives
\[
\Prob_{\theta^\star}
\left(
\abs{M_{ab,\eta}^{J}(T)}>K\sqrt{T}\log^{q_0}(T)
\right)
\le CT^{-c-p-2}
\]
for a larger $K$.  A union bound over the net $\calN_T(\theta^\star)$ which is of size at most $CT^{p/2}$ gives
\begin{equation}\label{eq:generic-jacobian-net-bound}
\max_{\eta\in\calN_T(\theta^\star)}
\abs{\frac{1}{T}M_{ab,\eta}^{J}(T)}
\le K\frac{\log^{q_0}(T)}{\sqrt T}
\end{equation}
with probability at least $1-CT^{-c}$, uniformly in $\theta^\star$.

The net bound extends to the full local ball by the second-derivative envelope.  If $\norm{\vartheta-\eta}_2\le T^{-1/2}$, then on $\calA_T$,
\[
\sup_{0\le t\le T}\max_{i=1}^D
\abs{\partial_{\vartheta_b}H_{ai}(t;\vartheta)-\partial_{\vartheta_b}H_{ai}(t;\eta)}
\le K\frac{\log^{m}(T)}{\sqrt{T}}.
\]
Using $\dd M_i^{\theta^\star}=\dd N_i-\lambda_i(t;\theta^\star)\dd t$ and $\lambda_i(t;\theta^\star)\le C(1+\mathfrak C_t)$, the difference between the martingale averages at $\vartheta$ and $\eta$ is at most $K\log^{q_0}(T)/\sqrt T$. To be precise,
\begin{align}\label{eq:MGLipschitzBound}
    \frac{1}{T}\left|M_{ab,\eta}^J-M_{ab,\vartheta}^J\right|\leq K\frac{\log^{m}(T)}{\sqrt{T}}\left\{\frac{1}{T}N([0,T])+\frac{1}{T}\sum_{i=1}^D\int_0^T\lambda_i(t;\theta^\star)\,\dd t\right\}\leq K\frac{\log^{m'}(T)}{\sqrt{T}}
\end{align}
Hence, \eqref{eq:generic-jacobian-net-bound} holds with the maximum replaced by the supremum over $B(\theta^\star,r)$, see e.g.\ equation \eqref{eq:epsilonnetextension}, for $q_0$ sufficiently large. We conclude the proof through a union bound over the fixed entries $(a,b)$ and \eqref{eq:generic-jacobian-drift-control} concludes the proof of \eqref{eq:generic-jacobian-control}.
\end{proof}

Prior to proving Theorem \ref{thm:generic-rate}, we need to prove one last local curvature lemma.

\begin{lemma}[Deterministic local moment curvature]\label{lem:generic-local-curvature}
Under Assumptions~\ref{ass:parameter}--\ref{ass:stability} and Assumptions~\ref{ass:weights} and~\ref{ass:moment-identification}, for every fixed deterministic symmetric positive-definite matrix $W$, there are constants $r_H>0$ and $b_H>0$, with $\Theta_0^{+r_H}\subset K_\Theta$, such that
\begin{equation*}
    \norm{g_H(\theta^\star+h,\theta^\star)}_W
    \ge b_H\norm{h}_2
\end{equation*}
for every $\theta^\star\in\Theta_0$ and every $h$ with $\norm{h}_2\le r_H$ and $\theta^\star+h\in\Theta$, where $g_H$ is defined in equation \eqref{eq:gH-def}.
\end{lemma}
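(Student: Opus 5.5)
The plan is a uniform first-order Taylor expansion of the population drift $g_H(\cdot,\theta^\star)$ around its zero at $\theta^\star$, combined with the rank margin \eqref{eq:H-rank}.

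\emph{Step 1: smoothness.} Since $g_H(\theta^\star,\theta^\star)=0$ and $\partial_\theta g_H(\theta,\theta^\star)=-B_H(\theta,\theta^\star)$ by \eqref{eq:BH-two-parameter}, I will write, for $\theta^\star+h\in\Theta$ with $\norm{h}_2\le r$,
\[
g_H(\theta^\star+h,\theta^\star)=-A_H(\theta^\star)h-\int_0^1\{B_H(\theta^\star+uh,\theta^\star)-B_H(\theta^\star,\theta^\star)\}h\,\dd u .
\]
This needs $\vartheta\mapsto g_H(\vartheta,\theta^\star)$ to be $C^1$ on $\Theta_0^{+r}$, with derivative obtained by differentiating under $\E_{\theta^\star}$. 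I will justify this by dominated differentiation. The integrands $\partial_\vartheta H(0;\vartheta)\{\lambda(0;\theta^\star)-\lambda(0;\vartheta)\}$ and $H(0;\vartheta)D_\theta(0;\vartheta)$ have polynomial local-count envelopes by Assumption~\ref{ass:weights} and Lemma~\ref{lem:derivative-envelope}, and polynomial moments of $\mathfrak C_0$ are uniformly bounded by Lemma~\ref{lem:fixed-window-count-mgf}. I will choose $r$ so that $\Theta_0^{+r}\subset K_\Theta$; this is possible because $\Theta_0\Subset\Theta^\circ$ and $\Theta\subset K_\Theta$.

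\emph{Step 2: uniform equicontinuity of $B_H$.} This is the main obstacle, because the modulus must be uniform in $\theta^\star$, which enters through the law $\Prob_{\theta^\star}$ and not only through the integrand. I will apply Proposition~\ref{prop:finite-window-continuity} with index $\xi=\vartheta\in\Theta_0^{+r}$, law parameter ranging over the compact set $\Theta_0$, and $Z_\xi$ equal to the entries of the two Jacobian integrands. Their polynomial envelopes and polynomial Lipschitz envelopes in $\vartheta$ were already checked in the proof of Proposition~\ref{prop:generic-controls}. The integrands also depend on $\theta^\star$ through $\lambda(0;\theta^\star)$. I will handle this by enlarging the index to $\xi=(\vartheta,\theta')$ and then setting $\theta'=\theta^\star$; $\lambda(0;\cdot)$ has a Lipschitz envelope by Lemma~\ref{lem:derivative-envelope}. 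Proposition~\ref{prop:finite-window-continuity} then gives joint continuity of $(\vartheta,\theta^\star)\mapsto B_H(\vartheta,\theta^\star)$ on the compact set $\Theta_0^{+r}\times\Theta_0$, hence uniform continuity there. So there is a modulus $\omega(\delta)\to0$ with
\[
\sup_{\theta^\star\in\Theta_0}\ \sup_{\norm{h}_2\le\delta}\norm{B_H(\theta^\star+h,\theta^\star)-A_H(\theta^\star)}_{\op}\le\omega(\delta).
\]

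\emph{Step 3: conclusion.} By \eqref{eq:H-rank}, $\norm{A_H(\theta^\star)h}_2\ge a_0\norm{h}_2$ with $a_0^2=\inf_{\Theta_0}\lambda_{\min}(A_H^\top A_H)>0$. Hence $\norm{g_H(\theta^\star+h,\theta^\star)}_2\ge\{a_0-\omega(\norm{h}_2)\}\norm{h}_2$. I will pick $r_H\le r$ with $\omega(r_H)\le a_0/2$. Using $\norm{x}_W\ge\lambda_{\min}(W)^{1/2}\norm{x}_2$, the claim follows with $b_H=\lambda_{\min}(W)^{1/2}a_0/2$.
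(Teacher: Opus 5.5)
Your proof is correct. It shares the paper's skeleton: continuity of $B_H$ via Proposition~\ref{prop:finite-window-continuity}, the rank margin \eqref{eq:H-rank} on the diagonal, and a Taylor expansion in the first argument of $g_H$. It closes the argument differently, though.

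\emph{The paper's route.} The paper controls the remainder at second order. It writes $g_H(\theta+h,\theta)=-A_H(\theta)h+R(h,\theta)$ with $\norm{R(h,\theta)}_W\le C\norm{h}_2^2$ uniformly over $\Theta_0$, on the grounds that $g_H(\cdot,\theta)\in C^2$, and then shrinks $r$. The near-diagonal bound $\lambda_{\min}\{B_H^\top WB_H\}\ge\kappa_{H,W}/2$, which it derives from continuity, is not what finishes that proof. Indeed, pointwise nonsingularity along the segment would not by itself control the averaged Jacobian $\int_0^1B_H(\theta+uh,\theta)\,\dd u$.

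\emph{Your route.} You use the integral form of the first-order expansion and compare the averaged Jacobian with $A_H(\theta^\star)$ through a uniform modulus. This is the right way to turn continuity of $B_H$ into the curvature bound. It needs only first-order differentiability of $g_H(\cdot,\theta^\star)$ and a modulus for its derivative. The paper's route instead needs a second-derivative bound uniform in $\theta^\star$, which it asserts rather than spells out; in exchange, it yields an explicit quadratic remainder and an explicit $r_H$.

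\emph{A simplification.} The uniformity in $\theta^\star$ that you flag as the main obstacle does not require continuity of the law in $\theta^\star$. Both $B_H(\theta^\star+h,\theta^\star)$ and $A_H(\theta^\star)=B_H(\theta^\star,\theta^\star)$ are expectations under the same $\Prob_{\theta^\star}$. So the polynomial Lipschitz envelopes, together with $\sup_{\theta\in\Theta}\E_\theta[\mathfrak C_0^m]<\infty$ from Lemma~\ref{lem:fixed-window-count-mgf}, give $\norm{B_H(\theta^\star+h,\theta^\star)-A_H(\theta^\star)}_{\op}\le L\norm{h}_2$ directly. This is the bound the paper itself uses in Lemma~\ref{lem:generic-stochastic-differentiability}. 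With it you can drop the index-enlargement device and Proposition~\ref{prop:finite-window-continuity}, and your argument becomes quantitative with a linear modulus.
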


\begin{proof}[Proof of Lemma~\ref{lem:generic-local-curvature}]
Assumption~\ref{ass:weights} and Lemma~\ref{lem:derivative-envelope} ensure that the integrand of $B_H(\vartheta,\theta^\star)$, which is
\[H(0;\vartheta)D_\theta(0;\vartheta)-\dot{H}(0;\vartheta)\left\{\lambda(0;\theta^\star)-\lambda(0;\vartheta)\right\},\]
satisfies the assumptions of Proposition~\ref{prop:finite-window-continuity}. In turn, this proposition implies the continuity of $(\vartheta,\theta)\mapsto B_H(\vartheta,\theta)$ on a neighbourhood of the diagonal in $\Theta\times\Theta_0$. Because $B_H(\theta,\theta)=A_H(\theta)$, as per definitions \eqref{eq:AH-def} and \eqref{eq:BH-two-parameter}, and since $W$ is fixed and positive definite, \eqref{eq:H-rank} implies
\[
    \inf_{\theta\in\Theta_0}
    \lambda_{\min}\{A_H(\theta)^\top W A_H(\theta)\}
    \ge
    \lambda_{\min}(W)\kappa_H
    =:\kappa_{H,W}>0 .
\]
Moreover, as $(\vartheta,\theta)\mapsto B_H(\vartheta,\theta)$ is continuous, there is
$r>0$ so small that
\[
    \lambda_{\min}\left\{
    B_H(\vartheta,\theta)^\top W B_H(\vartheta,\theta)
    \right\}
    \ge \frac{\kappa_{H,W}}{2}
\]
whenever $\theta\in\Theta_0$ and $\norm{\vartheta-\theta}_2\le r$, with $\Theta_0^{+r}\subset K_\Theta$. Taylor expanding the first argument of $g_H$ at $\theta$ gives
\[
    g_H(\theta+h,\theta)
    =
    -A_H(\theta)h+R(h,\theta),
    \qquad
    \norm{R(h,\theta)}_W\leq C\norm{h}_2^2,
\]
uniformly over $\theta\in\Theta_0$ as $g_H(\cdot,\theta)\in C^2(\Theta)$. Decreasing $r$ once more, if necessary, gives
\[
    \norm{g_H(\theta+h,\theta)}_W
    \geq
    \frac{\norm{h}_2}{2}\sqrt{\lambda_{\min}\left\{A_H(\theta)^\top W A_H(\theta)\right\}},
\]
which proves the claim with $r_H=r$ and $b_H=\frac{\sqrt{\kappa_{H,W}}}{2\sqrt{2}}$.
\end{proof}

\subsection{Proof of Theorem~\ref{thm:generic-rate} (GMM high-probability rate)}\label{app:proof:thm:generic-rate}
We now have the necessary tools to prove the first main result, Theorem \ref{thm:generic-rate}.

\begin{proof}[Proof of Theorem~\ref{thm:generic-rate}]
Fix $\epsilon>0$.  By \eqref{eq:H-global-separation} and the positive
definiteness of the fixed matrix $W$, the corresponding $W$-norm
separation at distance $\epsilon$ is positive:
\[
    \delta_{H,W,\epsilon}
    :=
    \inf_{\theta^\star\in\Theta_0}
    \inf_{\substack{\vartheta\in\Theta:\\
        \norm{\vartheta-\theta^\star}_2\ge\epsilon}}
    \norm{g_H(\vartheta,\theta^\star)}_W
    >0 .
\]
Indeed,
\[
    \norm{g_H(\vartheta,\theta^\star)}_W
    \ge
    \lambda_{\min}(W)^{1/2}
    \norm{g_H(\vartheta,\theta^\star)}_2 ,
\]
and the last Euclidean norm is uniformly separated from zero by
\eqref{eq:H-global-separation}.

The uniform law of large numbers \eqref{eq:generic-ulln} implies that, for all
sufficiently large $T$,
\[
\sup_{\theta^\star\in\Theta_0}
\Prob_{\theta^\star}
\left(
    \sup_{\vartheta\in\Theta}
    \norm{\frac 1T\Psi_T^H(\vartheta)-g_H(\vartheta,\theta^\star)}_W
    >\delta_{H,W,\epsilon}/4
\right)
\le CT^{-c}.
\]
On the complementary event, since
$g_H(\theta^\star,\theta^\star)=0$, we have
\[
    \norm{\frac 1T\Psi_T^H(\theta^\star)}_W
    \le \delta_{H,W,\epsilon}/4 .
\]
Moreover, for every
$\vartheta\in\Theta$ with
$\norm{\vartheta-\theta^\star}_2\ge\epsilon$,
\[
    \norm{\frac 1T\Psi_T^H(\vartheta)}_W
    \ge
    \norm{g_H(\vartheta,\theta^\star)}_W
    -
    \norm{\frac 1T\Psi_T^H(\vartheta)-g_H(\vartheta,\theta^\star)}_W
    \ge
    3\delta_{H,W,\epsilon}/4 .
\]
Thus no minimizer of
$Q_{T,W}^H(\vartheta)=\norm{\frac{1}{T}\Psi_T^H(\vartheta)}_W^2$
can lie outside $B(\theta^\star,\epsilon)$ on this event. Consequently,
\begin{equation}\label{eq:consistencygeneralH}
    \sup_{\theta^\star\in\Theta_0}
    \Prob_{\theta^\star}\left(\norm{\hat\theta_H-\theta^\star}_2>\epsilon\right)
    \le CT^{-c}
\end{equation}
for all sufficiently large $T$.

For the following let $r_H$ and $b_H$ be as in Lemma~\ref{lem:generic-local-curvature}. We next intersect the high-probability events from equations \eqref{eq:generic-truth-bound}, \eqref{eq:generic-jacobian-control} and \eqref{eq:consistencygeneralH}. Explicitly, with $\epsilon = r_H/2$,
\begin{align*}
\sup_{\theta^\star\in\Theta_0}&
\Prob_{\theta^\star}
\left(
    \norm{\frac 1T\Psi_T^H(\theta^\star)}_W
    \leq K_0\sqrt{\frac{\log(T)}{T}}
\right)
\geq 1- C_0T^{-c_0}\\
\sup_{\theta^\star\in\Theta_0}&
\Prob_{\theta^\star}
\left(
    \sup_{\vartheta\in B(\theta^\star,r_H)}
    \norm{\partial_\vartheta\left(\frac 1T\Psi_T^H(\vartheta)\right)+B_H(\vartheta,\theta^\star)}_{\op}
    \leq K_1\frac{\log^{q_0}(T)}{\sqrt T}
\right)
\geq 1-C_1T^{-c_1}\\
     \sup_{\theta^\star\in\Theta_0}&
    \Prob_{\theta^\star}\left(\norm{\hat\theta_H-\theta^\star}_2\leq \epsilon\right)
    \geq 1- C_2T^{-c_2}.
\end{align*}
We next compare any $\vartheta=\theta^\star+h$ with $\norm{h}_2\le r_H/2$ to the truth.  By the fundamental theorem of calculus
\begin{align*}
    \frac{1}{T}\Psi_T^H(\theta^\star+h)-\frac{1}{T}\Psi_T^H(\theta^\star)&=\int_0^1 \partial_\theta \left(\frac{1}{T}\Psi_T^H(\theta^\star+sh)\right)h\,\dd s\\
    &=g_H(\theta^\star+h,\theta^\star)+\int_0^1 \left\{\partial_\theta \left(\frac{1}{T}\Psi_T^H(\theta^\star+sh)\right)+B_H(\theta^\star+sh,\theta^\star)\right\}h\,\dd s\\
    &=g_H(\theta^\star+h,\theta^\star)+R_T(h)
\end{align*}
To control the remaining integral
\begin{align*}
    \norm{R_T(h)}_W\leq \int_0^1 \norm{\left\{\partial_\theta \left(\frac{1}{T}\Psi_T^H(\theta^\star+sh)\right)+B_H(\theta^\star+sh,\theta^\star)\right\}}_{\rm op}\norm{h}_2\, \dd s \leq K_1\frac{\log^{q_0}(T)}{\sqrt{T}}\norm{h}_2.
\end{align*}
Thus,
\[
    \frac{1}{T}\Psi_T^H(\theta^\star+h)
    =
     \frac{1}{T}\Psi_T^H(\theta^\star)
    +g_H(\theta^\star+h,\theta^\star)
    +R_T(h),
    \qquad
    \norm{R_T(h)}_W\leq K_1\frac{\log^{q_0}(T)}{\sqrt{T}}\norm{h}_2 .
\]
For all large $T$, $K_1\frac{\log^{q_0}(T)}{\sqrt{T}}\leq b_H/4$.  Therefore, by Lemma~\ref{lem:generic-local-curvature},
\[
    \norm{ \frac{1}{T}\Psi_T^H(\theta^\star+h)}_W
    \ge
    \frac{3b_H}{4}\norm{h}_2-K_0\sqrt{\frac{\log(T)}{T}} .
\]
If $\norm{h}_2>\frac{4K_0}{b_H}\sqrt{\frac{\log(T)}{T}}$, the right-hand side is strictly greater than $2K_0\sqrt{\frac{\log(T)}{T}}$, for all sufficiently large $T$, whereas
\[
    \norm{ \frac{1}{T}\Psi_T^H(\theta^\star)}_W\leq K_0\sqrt{\frac{\log(T)}{T}} .
\]
Thus no minimizer inside $B(\theta^\star,r_H/2)$ can lie outside a ball of radius $K\sqrt{\frac{\log(T)}{T}}$. 

This concludes the proof under a stationary start. Under a non-stationary start, that satisfies the finite pre-history condition, the result can easily be transferred to the non-stationary start via Lemma \ref{lem:tv-transfer}.

In particular, let $\psi_T:N_\theta^\eta|_{(b_T,\infty)}\mapsto \norm{\hat \theta_H-\theta^\star}_2$ and $E_T=\left\{\omega \ : \ \psi_T(\omega)>K\sqrt{\frac{\log(T)}{T}} \right\}$. Thus the proof concludes. 
\end{proof}

In order to demonstrate the central limit theorem, we first demonstrate that the estimating equation $\calL_{\theta^\star}\left\{T^{-1/2}\Psi_T^H(\theta^\star)\right\}$ is asymptotically normal, which we can leverage in a classical Taylor expansion argument.

\begin{proposition}[Martingale CLT for the estimating map]\label{prop:generic-map-clt}
Suppose Assumptions~\ref{ass:parameter}--\ref{ass:stability} and Assumption~\ref{ass:weights} hold.  Then
\begin{equation}
\label{eq:uniform-general-clt}
\sup_{\theta^\star \in\Theta_0}
 d_{\mathrm{BL}}
\left(
\mathcal L_{\theta^\star}\{T^{-1/2}\Psi_T^H(\theta^\star)\},
N_q(0,\Omega_H(\theta^\star))
\right)
\to0 .
\end{equation}
Here $d_{\mathrm{BL}}$ is the bounded-Lipschitz metric on probability laws on $\R^q$.
\end{proposition}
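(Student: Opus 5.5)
At \(\theta^\star\) the drift term in \eqref{eq:generic-decomposition} vanishes. Hence \(T^{-1/2}\Psi_T^H(\theta^\star)=T^{-1/2}\sum_i\int_0^T H_{\cdot i}(t;\theta^\star)\,\dd M_i^{\theta^\star}(t)\) is a vector martingale evaluated at time \(T\), and the plan is a uniform martingale central limit theorem in the Lindeberg/Rebolledo form. By the Cram\'er--Wold device, and because \(q\) is fixed, it suffices to treat the scalar martingales \(M^u_T=T^{-1/2}\sum_i\int_0^T u^\top H_{\cdot i}(t;\theta^\star)\,\dd M_i^{\theta^\star}(t)\) for unit vectors \(u\). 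Uniformity in \(\theta^\star\) will be handled by a subsequence argument. Suppose \eqref{eq:uniform-general-clt} fails. Then there are \(\epsilon>0\), \(T_n\to\infty\) and \(\theta_n^\star\in\Theta_0\) with bounded-Lipschitz distance at least \(\epsilon\). By compactness we may take \(\theta_n^\star\to\bar\theta\in\Theta_0\). It then suffices to show that along this triangular array the law of \(T_n^{-1/2}\Psi_{T_n}^H(\theta_n^\star)\) converges to \(N_q(0,\Omega_H(\bar\theta))\), and that \(\Omega_H(\theta_n^\star)\to\Omega_H(\bar\theta)\). The latter follows from Proposition~\ref{prop:finite-window-continuity} applied to the integrand \(H(0;\theta)\Lambda(0;\theta)H(0;\theta)^\top\), which has a polynomial envelope and a Lipschitz envelope by \eqref{eq:H-envelope}, \eqref{eq:H-lipschitz-envelope} and Lemma~\ref{lem:derivative-envelope}. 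Since \(N_q(0,\cdot)\) is continuous in bounded-Lipschitz distance, this gives the contradiction.

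\emph{Bracket convergence.} By Lemma~\ref{lem:predictable-jumps}(iv), \(\langle M^u\rangle_T=u^\top\hat\Omega_{H,T}(\theta^\star)u\), with \(\hat\Omega_{H,T}\) as in \eqref{eq:OmegaHhat-def}. Its integrand \(Z_\theta=u^\top H(0;\theta)\Lambda(0;\theta)H(0;\theta)^\top u\) is a local-window functional. Taking the index \(\xi=(u,\theta)\) on a compact set, Theorem~\ref{thm:local-concentration} gives
\[
\sup_{\theta^\star\in\Theta_0}\Prob_{\theta^\star}\bigl(|\langle M^u\rangle_T-u^\top\Omega_H(\theta^\star)u|>K\log^{q_0}(T)T^{-1/2}\bigr)\le CT^{-c}.
\]
The bound is uniform, so along the sequence \(\theta_n^\star\) the bracket converges in probability to \(u^\top\Omega_H(\bar\theta)u\).

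\emph{Lindeberg/jump condition.} By Lemma~\ref{lem:predictable-jumps}(ii),(iii), the jumps of \(M^u\) are \(T^{-1/2}u^\top H_{\cdot i}(s;\theta^\star)\Delta N_i(s)\) and at most one component jumps at a time. On the event of Lemma~\ref{lem:count-truncation} they are therefore bounded by \(T^{-1/2}B_0\{1+(K\log T)^{m_0}\}\to0\). This bound is uniform in \(\theta^\star\), and the complement of the event has probability \(O(T^{-c})\). Hence \(\sup_{s\le T}|\Delta M^u_s|\to0\) in probability uniformly, and the second moments of the jumps are controlled by \eqref{eq:count-mgf}. From here we would use one of two routes. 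The first applies Rebolledo's CLT \citep{rebolledo1980} (or \citet[Thm.~VIII.3.22]{jacodshiryaev2003}) to the triangular array: we time-change to \(s\in[0,1]\) via \(t=sT_n\), and the bracket at each \(s\) converges to \(s\,u^\top\Omega_H(\bar\theta)u\) by the same concentration applied on \([0,sT]\). The second, which gives an explicit rate, is a quantitative martingale CLT in bounded-Lipschitz distance, with the error bounded by the bracket deviation plus the jump size.

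\emph{Main obstacle.} The delicate step is that uniformity over \(\theta^\star\) is a statement about a triangular array of laws, while standard martingale CLTs are stated under one fixed measure. The subsequence reduction handles this, but only if the bracket and jump conditions hold uniformly; this is exactly what the \(\sup_{\theta^\star}\) forms of Theorem~\ref{thm:local-concentration} and Lemma~\ref{lem:count-truncation} supply. A secondary obstacle is that the bracket must converge at intermediate times \(sT\), which requires those bounds on shorter horizons. By stationarity this holds for \(s\) bounded away from zero, and near \(s=0\) the bracket is bounded by \(Cs\) with high probability, so small times contribute negligibly.
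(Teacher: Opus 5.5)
Your proposal is correct and follows essentially the same route as the paper: a contradiction along a subsequence $\theta_n\to\theta_\infty$ using compactness of $\Theta_0$, Cram\'er--Wold, bracket convergence from Theorem~\ref{thm:local-concentration} with the parameter index placed on the diagonal, continuity of $\Omega_H$ from Proposition~\ref{prop:finite-window-continuity}, and the Rebolledo/Jacod--Shiryaev martingale CLT. The differences are minor. You obtain the Lindeberg condition from the count-truncation event of Lemma~\ref{lem:count-truncation}: the jumps are uniformly $O(T^{-1/2}\log^{m_0}T)$ there, so the truncated jump compensator vanishes. The paper instead uses the third-moment bound $x^2\1\{|x|>\delta\}\le|x|^3/\delta$ together with stationarity and the count moments. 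You also check bracket convergence at intermediate times $sT$, which the paper leaves implicit by verifying only the terminal bracket.
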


\begin{proof}[Proof of Proposition~\ref{prop:generic-map-clt}]
Suppose that \eqref{eq:uniform-general-clt} fails.  Then there are $\epsilon>0$, $T_n\to\infty$, and $\theta_n\in\Theta_0$ such that
\[
 d_{\mathrm{BL}}
\left(
\mathcal L_{\theta_n}\{T_n^{-1/2}\Psi_{T_n}^H(\theta_n)\},
N_q(0,\Omega_H(\theta_n))
\right)>
\epsilon .
\]
By compactness, pass to a subsequence, still denoted $\theta_n$, such that $\theta_n\to\theta_\infty\in\Theta_0$.

Fix $a\in\R^q$.  Under $\Prob_{\theta_n}$,
\[
X_n^a
=
 a^\top T_n^{-1/2}\Psi_{T_n}^H(\theta_n)
=
\sum_{i=1}^D\int_0^{T_n}
T_n^{-1/2}a^\top H_{\cdot i}(t;\theta_n)
\,\dd M_i^{\theta_n}(t)
\]
is a scalar martingale endpoint, the following argument follows Lemma 3.13 of \cite{clinet2017}. The predictable variation of the scalar martingale is
\[
V_n^a
=
\frac1{T_n}\sum_{i=1}^D\int_0^{T_n}
\{a^\top H_{\cdot i}(t;\theta_n)\}^2\lambda_i(t;\theta_n)\,\dd t .
\]
The integrand is a shift-covariant compact-window functional with a polynomial local-count envelope and a polynomial Lipschitz envelope, by Assumption~\ref{ass:weights} and Lemma~\ref{lem:derivative-envelope}.  Theorem~\ref{thm:local-concentration}, applied with the parameter index fixed at $\theta_n$, gives
\[
V_n^a-a^\top\Omega_H(\theta_n)a\to0
\qquad\text{in }\Prob_{\theta_n}\text{-probability}.
\]
Proposition~\ref{prop:finite-window-continuity} gives $\Omega_H(\theta_n)\to\Omega_H(\theta_\infty)$, and hence
\[
V_n^a\to a^\top\Omega_H(\theta_\infty)a
\qquad\text{in }\Prob_{\theta_n}\text{-probability}.
\]

The Lindeberg condition follows from the jump formula in Lemma~\ref{lem:predictable-jumps}.  For every $\delta>0$, using $x^2\1\{|x|>\delta\}\le |x|^3/\delta$,
\[
\begin{aligned}
&\E_{\theta_n}\left[
\sum_{0<t\le T_n}
(\Delta X_n^a(t))^2
\1\{|\Delta X_n^a(t)|>\delta\}
\right] \\
&\qquad\le
\frac1{\delta T_n^{3/2}}
\sum_{i=1}^D\int_0^{T_n}
\E_{\theta_n}\left[
|a^\top H_{\cdot i}(t;\theta_n)|^3\lambda_i(t;\theta_n)
\right]\dd t
\le
\frac{C_a}{\sqrt{T_n}},
\end{aligned}
\]
where the last bound uses stationarity, the polynomial envelopes, and the fixed-window exponential moment bound.  The martingale central limit theorem of Rebolledo, in the form of \citet[Theorem~VIII.3.24]{jacodshiryaev2003}, yields
\[
 a^\top T_n^{-1/2}\Psi_{T_n}^H(\theta_n)
 \Rightarrow
 N\{0,a^\top\Omega_H(\theta_\infty)a\} .
\]
By Cram\'er--Wold,
\[
 T_n^{-1/2}\Psi_{T_n}^H(\theta_n)
 \Rightarrow
 N_q(0,\Omega_H(\theta_\infty)).
\]
Also $N_q(0,\Omega_H(\theta_n))\Rightarrow N_q(0,\Omega_H(\theta_\infty))$.  Since $d_{\mathrm{BL}}$ metrizes weak convergence on $\R^q$, the displayed subsequential convergence contradicts the choice of $(T_n,\theta_n)$.  Therefore \eqref{eq:uniform-general-clt} holds.
\end{proof}

\subsection{Proof of Theorem~\ref{thm:generic-an} (GMM asymptotic normality)}\label{app:proof:thm:generic-an}

We now prove Theorem~\ref{thm:generic-an} via a standard Taylor expansion argument, after the estimate is contained within the interior of $\Theta_0$.
\begin{proof}[Proof of Theorem~\ref{thm:generic-an}]
 By Theorem~\ref{thm:generic-rate},
\[
    \hat\theta_H-\theta^\star=O_{\Prob_{\theta^\star}}\left(\sqrt{\frac{\log(T)}{T}}\right).
\]
Since $\Theta_0\Subset\Theta^\circ$, there is $\rho>0$ such that $B(\theta,\rho)\subset\Theta^\circ$ for every $\theta\in\Theta_0$.  The preceding rate implies
\[
\Prob_{\theta^\star}\{\hat\theta_H\in B(\theta^\star,\rho)\}\to1,
\]
so $\hat\theta_H$ is an interior minimizer with probability tending to one.  On this event the first-order condition is
\[
    \partial_\vartheta \left(\frac{1}{T^2} \Psi_T^H(\hat\theta_H)^\top W \Psi_T^H(\hat\theta_H)\right)=0.
\]
Moreover, optimality of the estimator $\hat \theta_H$ yields that
\[
    \left\|\frac{1}{T}\Psi_T^H(\hat\theta_H)\right\|_W
    \le
    \left\|\frac{1}{T}\Psi_T^H(\theta^\star)\right\|_W .
\]
By Proposition~\ref{prop:generic-map-clt}, $\frac{1}{T}\Psi_T^H(\theta^\star)=O_{\Prob_{\theta^\star}}(T^{-1/2})$, and the fixed positive-definite matrix $W$ has deterministic eigenvalue bounds.  Hence
\begin{equation}\label{eq:gmm-minimizer-moment-small}
    \frac{1}{T}\Psi_T^H(\hat\theta_H)=O_{\Prob_{\theta^\star}}(T^{-1/2}).
\end{equation}

Lemma~\ref{lem:generic-stochastic-differentiability}, with $h_T=\hat\theta_H-\theta^\star$, gives
\[
    \frac 1T \Psi_T^H(\hat\theta_H)
    =
    \frac 1T \Psi_T^H(\theta^\star)-A_H(\theta^\star)(\hat\theta_H-\theta^\star)+o_{\Prob_{\theta^\star}}(T^{-1/2})
\]
and
\[
    \partial_\vartheta \frac 1T \Psi_T^H(\hat\theta_H)
    =
    -A_H(\theta^\star)+o_{\Prob_{\theta^\star}}(1).
\]
Substituting these two expansions into the first-order condition and using \eqref{eq:gmm-minimizer-moment-small} yields
\[
    A_H^\top W A_H\sqrt T(\hat\theta_H-\theta^\star)
    =\frac{1}{\sqrt{T}}A_H^\top W \Psi_T^H(\theta^\star)+o_{\Prob_{\theta^\star}}(1),
\]
where $A_H$ is evaluated at $\theta^\star$.  Since $A_H^\top W A_H$ is nonsingular by Assumption~\ref{ass:moment-identification}, Proposition~\ref{prop:generic-map-clt} and Slutsky's theorem give \eqref{eq:generic-sandwich}; the oracle optimally weighted formula follows by setting $W=\Omega_H(\theta^\star)^{-1}$.

Transfer of the central limit theorem to non-stationary start follows from equation \eqref{eq:BurninDistributionalLimit}, and using that $\sqrt{T-b_T}\sim \sqrt{T}.$

\end{proof}

\subsection{Least-squares contrast minimizer}\label{app:ls-contrast-estimator}

In this section of the supplemental material we apply our theoretical results to the least squares estimator introduced in example \ref{ex:weight-families-body}. We do so since, like the maximum likelihood estimate, it is a member of our derivative weight class via its first order condition, i.e.\ that it is the zero of a derivative estimator. To ensure the maximum of the equation \eqref{eq:ls-contrast} lies inside $\Theta_0$ we must first localize $\hat \theta_{\rm LS}$ to be contained within $\Theta_0^\circ$ with high probability.

\begin{proposition}[Least-squares contrast minimizer]\label{prop:supp-ls-contrast-estimator}
Let \(\hat\theta_{\rm LS}\) be a measurable global minimizer of the least-squares contrast \(\gamma_T\) in \eqref{eq:ls-contrast} over \(\Theta\), and define
\[
    A_{\rm LS}(\theta)=\E_\theta\{D_\theta(0;\theta)^\top D_\theta(0;\theta)\},
    \qquad
    \Omega_{\rm LS}(\theta)=\E_\theta\{D_\theta(0;\theta)^\top\Lambda(0;\theta)D_\theta(0;\theta)\}.
\]
Under Assumptions~\ref{ass:parameter}--\ref{ass:identifiability}, for every \(c>0\) there are \(C,K<\infty\) such that, for all sufficiently large \(T\),
\[
    \sup_{\theta^\star\in\Theta_0}
    \Prob_{\theta^\star}\left(
    \norm{\hat\theta_{\rm LS}-\theta^\star}_2>K\sqrt{\frac{\log(T)}{T}}
    \right)
    \le CT^{-c}.
\]
For each fixed \(\theta^\star\in\Theta_0\),
\[
    \sqrt T(\hat\theta_{\rm LS}-\theta^\star)
    \Rightarrow
    N_p\{0,A_{\rm LS}(\theta^\star)^{-1}\Omega_{\rm LS}(\theta^\star)A_{\rm LS}(\theta^\star)^{-1}\}.
\]
\end{proposition}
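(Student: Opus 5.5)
The plan has three steps: a global M-estimation step for the contrast $\gamma_T$ itself, then localization via the derivative weight, then reuse of the GMM machinery with $H=D_\theta^\top$. This route avoids imposing the global separation condition \eqref{eq:H-global-separation} on the derivative moment.

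\emph{Population contrast and uniform law.} Write the contrast using $\dd N_i=\lambda_i(t;\theta^\star)\dd t+\dd M_i^{\theta^\star}$:
\[
\gamma_T(\vartheta)=\frac1T\int_0^T\Bigl\{\|\lambda(t;\vartheta)-\lambda(t;\theta^\star)\|_2^2-\|\lambda(t;\theta^\star)\|_2^2\Bigr\}\dd t-\frac2T\sum_i\int_0^T\lambda_i(t;\vartheta)\,\dd M_i^{\theta^\star}(t).
\]
The population limit is therefore
\[
\bar\gamma(\vartheta,\theta^\star)=\E_{\theta^\star}\|\lambda(0;\vartheta)-\lambda(0;\theta^\star)\|_2^2-\E_{\theta^\star}\|\lambda(0;\theta^\star)\|_2^2 .
\]
The drift integrand has a polynomial envelope and a polynomial Lipschitz envelope by Lemma~\ref{lem:derivative-envelope}. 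Hence Theorem~\ref{thm:local-concentration} gives uniform concentration over $\Theta\times\Theta_0$ at rate $\log^{q_0}(T)/\sqrt T$. The martingale term is handled exactly as in the proof of \eqref{eq:generic-ulln}, with $H=\lambda(\cdot;\vartheta)^\top$ treated as a scalar weight: bracket concentration, Lemma~\ref{lem:stopped-dzv} on the truncation event of Lemma~\ref{lem:count-truncation}, a $T^{-1/2}$-net, and pathwise Lipschitz extension.

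\emph{Separation.} Identifiability \ref{ass:identifiability} gives $\E_{\theta^\star}\|\lambda(0;\vartheta)-\lambda(0;\theta^\star)\|^2>0$ for $\vartheta\neq\theta^\star$. Stationarity turns equality of intensities at time $0$ in $L^2$ into equality for a.e.\ $t$. Continuity in $(\vartheta,\theta^\star)$ follows from Proposition~\ref{prop:finite-window-continuity}, and compactness then yields the uniform margin
\[
\inf_{\theta^\star\in\Theta_0}\ \inf_{\|\vartheta-\theta^\star\|\ge\epsilon}\Bigl\{\bar\gamma(\vartheta,\theta^\star)-\bar\gamma(\theta^\star,\theta^\star)\Bigr\}>0 .
\]
Combining this margin with the uniform law shows that $\|\hat\theta_{\rm LS}-\theta^\star\|\le\epsilon$ with probability at least $1-CT^{-c}$, uniformly over $\Theta_0$.

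\emph{Localization and rate.} Take $\epsilon$ smaller than the distance from $\Theta_0$ to $\partial\Theta$. On the event above, $\hat\theta_{\rm LS}$ is an interior minimizer, so
\[
\nabla\gamma_T(\hat\theta_{\rm LS})=-\frac2T\,\Psi_T^{D_\theta^\top}(\hat\theta_{\rm LS})=0 .
\]
The derivative weight $D_\theta^\top$ satisfies Assumption~\ref{ass:weights} by Lemma~\ref{lem:derivative-envelope}; this uses third derivatives for the $r=2$ Lipschitz envelope. Its local rank margin is automatic because $A_{D_\theta^\top}=A_{\rm LS}\succeq\underline\lambda I$, as noted in Section~\ref{sec:scope}.

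We then rerun the local half of the proof of Theorem~\ref{thm:generic-rate}. This requires two inputs:
\begin{itemize}
\item Proposition~\ref{prop:generic-controls} with $H=D_\theta^\top$, applied only through \eqref{eq:generic-truth-bound} and \eqref{eq:generic-jacobian-control}. Neither of these uses \ref{ass:moment-identification}, only \ref{ass:weights} and the rank condition.
\item Lemma~\ref{lem:generic-local-curvature}. Its proof uses only the rank condition, the continuity of $B_H$, and a Taylor expansion, so it holds locally here.
\end{itemize}
With $h=\hat\theta_{\rm LS}-\theta^\star$, the expansion gives
\[
0=T^{-1}\Psi_T^{D_\theta^\top}(\theta^\star)+g(\theta^\star+h,\theta^\star)+R_T(h),\qquad \|R_T(h)\|_2\le \frac{K\log^{q_0}(T)}{\sqrt T}\|h\|_2 .
\]
Combined with curvature $\|g(\theta^\star+h,\theta^\star)\|_2\ge b\|h\|_2$ and the truth bound $\|T^{-1}\Psi_T^{D_\theta^\top}(\theta^\star)\|_2\le K_0\sqrt{\log T/T}$, this forces $\|h\|_2\le K\sqrt{\log T/T}$.

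\emph{Asymptotic normality.} The CLT follows as in the proof of Theorem~\ref{thm:generic-an} in the just-identified case $q=p$. By Lemma~\ref{lem:generic-stochastic-differentiability},
\[
0=T^{-1/2}\Psi_T^{D_\theta^\top}(\theta^\star)-A_{\rm LS}\sqrt T\,h+o_P(1),
\]
and Proposition~\ref{prop:generic-map-clt} gives a $N(0,\Omega_{\rm LS})$ limit for the first term. This yields the sandwich covariance $A_{\rm LS}^{-1}\Omega_{\rm LS}A_{\rm LS}^{-1}$.

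\emph{Main obstacle.} The delicate step is the uniform separation of the contrast. It requires passing from path-level identifiability to a strictly positive stationary expectation $\E_{\theta^\star}\|\lambda(0;\vartheta)-\lambda(0;\theta^\star)\|^2$, and then making the margin uniform over $\theta^\star\in\Theta_0$. The second part rests on the joint continuity supplied by Proposition~\ref{prop:finite-window-continuity}.
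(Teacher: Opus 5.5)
Your proposal is correct and follows the paper's proof essentially step for step. Consistency comes from a uniform law for $\gamma_T$ together with identifiability-driven separation of $\E_{\theta^\star}\|\lambda(0;\vartheta)-\lambda(0;\theta^\star)\|_2^2$; the rate comes from the interior first-order condition $\Psi_T^{D_\theta^\top}(\hat\theta_{\rm LS})=0$ combined with the rank-only local curvature of Lemma~\ref{lem:generic-local-curvature} and the truth and Jacobian bounds of Proposition~\ref{prop:generic-controls}; and the sandwich limit comes from Lemma~\ref{lem:generic-stochastic-differentiability} with Proposition~\ref{prop:generic-map-clt}. The only bookkeeping to add is that the consistency radius $\epsilon$ should also be taken below the curvature radius of Lemma~\ref{lem:generic-local-curvature}, a point the paper likewise leaves implicit.
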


\begin{proof}
Set \(H_D(t;\theta)=D_\theta(t;\theta)^\top\) and \(m_T^D(\theta)=T^{-1}\Psi_T^{H_D}(\theta)\). By Lemma~\ref{lem:derivative-envelope}, \(H_D\) satisfies the envelope requirements of Assumption~\ref{ass:weights}. Let \(\underline\lambda>0\) be the uniform lower bound on the intensities over \(K_\Theta\). Then
\[
    A_{\rm LS}(\theta)
    =\E_\theta\{D_\theta(0;\theta)^\top D_\theta(0;\theta)\}
    \succeq \underline\lambda\,I(\theta),
\]
so \(A_{\rm LS}\) is uniformly nonsingular on \(\Theta_0\) by Assumption~\ref{ass:identifiability}. Similarly,
\[
    \Omega_{\rm LS}(\theta)
    =\E_\theta\{D_\theta(0;\theta)^\top\Lambda(0;\theta)D_\theta(0;\theta)\}
    \succeq \underline\lambda\,A_{\rm LS}(\theta)
\]
is uniformly nonsingular on \(\Theta_0\). On \(N([-A,T])<\infty\), the map \(\theta\mapsto\gamma_T(\theta)\) is continuous on compact \(\Theta\), by the same finite-event and dominated-convergence argument as Proposition~\ref{prop:gmm-measurable-selection}; hence a measurable global minimizer exists.

Define the population contrast
\[
    \Gamma(\vartheta,\theta^\star)
    =
    \E_{\theta^\star}
    \sum_{i=1}^D
    \{\lambda_i(0;\vartheta)^2-2\lambda_i(0;\vartheta)\lambda_i(0;\theta^\star)\}.
\]
The same compact-window concentration, martingale-bracket, and net argument used in Proposition~\ref{prop:generic-controls} gives, for every \(c>0\), constants \(C,K<\infty\) and \(q_0\ge1\) such that
\[
    \sup_{\theta^\star\in\Theta_0}
    \Prob_{\theta^\star}
    \left(
    \sup_{\vartheta\in\Theta}
    \left|\gamma_T(\vartheta)-\Gamma(\vartheta,\theta^\star)\right|
    >K\frac{\log^{q_0}T}{\sqrt T}
    \right)
    \le CT^{-c}.
\]
Indeed, writing \(\dd N_i(t)=\lambda_i(t;\theta^\star)\dd t+\dd M_i^{\theta^\star}(t)\), the deterministic average has integrand \(\lambda_i(0;\vartheta)^2-2\lambda_i(0;\vartheta)\lambda_i(0;\theta^\star)\), while the martingale average has integrand \(\lambda_i(0;\vartheta)\). Both are compact-window functionals with polynomial local-count and Lipschitz envelopes uniformly over \((\vartheta,\theta^\star)\in\Theta\times\Theta_0\).

Moreover,
\[
    \Gamma(\vartheta,\theta^\star)-\Gamma(\theta^\star,\theta^\star)
    =
    \E_{\theta^\star}\bigl\|\lambda(0;\vartheta)-\lambda(0;\theta^\star)\bigr\|_2^2 .
\]
If this quantity is zero, stationarity gives equality of the two intensity paths for Lebesgue-a.e. time under \(\Prob_{\theta^\star}\), and Assumption~\ref{ass:identifiability} gives \(\vartheta=\theta^\star\). Proposition~\ref{prop:finite-window-continuity} gives continuity of \((\vartheta,\theta^\star)\mapsto\Gamma(\vartheta,\theta^\star)\), so compactness yields, for every \(\epsilon>0\),
\[
    \delta_\epsilon
    :=
    \inf_{\theta^\star\in\Theta_0}
    \inf_{\norm{\vartheta-\theta^\star}_2\ge\epsilon}
    \{\Gamma(\vartheta,\theta^\star)-\Gamma(\theta^\star,\theta^\star)\}
    >0.
\]
Combining this separation with the preceding uniform law gives
\[
    \sup_{\theta^\star\in\Theta_0}
    \Prob_{\theta^\star}\left(
    \norm{\hat\theta_{\rm LS}-\theta^\star}_2>\epsilon
    \right)
    \le CT^{-c}
\]
for all sufficiently large \(T\).

It remains to sharpen consistency to the stated logarithmic rate. Since \(\Theta_0\Subset\Theta^\circ\), the preceding consistency places \(\hat\theta_{\rm LS}\) in an interior neighborhood of \(\theta^\star\) with probability at least \(1-CT^{-c}\). On this event the first-order condition holds, and \(m_T^D(\hat\theta_{\rm LS})=0\), because \(\nabla_\theta\gamma_T(\theta)=-(2/T)\Psi_T^{H_D}(\theta)\). The local-curvature argument used in Lemma~\ref{lem:generic-local-curvature}, applied here to \(H_D\), gives constants \(r_D,b_D>0\) such that
\[
    \|g_{H_D}(\theta^\star+h,\theta^\star)\|_2
    \ge b_D\|h\|_2
\]
whenever \(\theta^\star\in\Theta_0\), \(\|h\|_2\le r_D\), and \(\theta^\star+h\in\Theta\). This local statement uses only the rank bound just verified, not global derivative-moment separation.

On the high-probability event from Proposition~\ref{prop:generic-controls}, the Taylor expansion
\[
    m_T^D(\theta^\star+h)
    =
    m_T^D(\theta^\star)+g_{H_D}(\theta^\star+h,\theta^\star)+R_T(h)
\]
has \(\|R_T(h)\|_2\le K\log^{q_0}(T)T^{-1/2}\|h\|_2\), and for large \(T\) this last coefficient is at most \(b_D/2\). Since \(m_T^D(\hat\theta_{\rm LS})=0\) and \(\|m_T^D(\theta^\star)\|_2\le K\sqrt{\log(T)/T}\), we obtain
\[
    \|\hat\theta_{\rm LS}-\theta^\star\|_2
    \le K\sqrt{\frac{\log(T)}{T}},
\]
uniformly with polynomially high probability.

For the limit distribution, fix \(\theta^\star\in\Theta_0\). The rate just proved and Lemma~\ref{lem:generic-stochastic-differentiability}, again with \(H=H_D\), yield
\[
    0
    =
    m_T^D(\hat\theta_{\rm LS})
    =
    m_T^D(\theta^\star)
    -
    A_{\rm LS}(\theta^\star)(\hat\theta_{\rm LS}-\theta^\star)
    +
    o_{\Prob_{\theta^\star}}(T^{-1/2}).
\]
Hence
\[
    \sqrt T(\hat\theta_{\rm LS}-\theta^\star)
    =
    A_{\rm LS}(\theta^\star)^{-1}T^{-1/2}\Psi_T^{H_D}(\theta^\star)
    +
    o_{\Prob_{\theta^\star}}(1).
\]
Proposition~\ref{prop:generic-map-clt} gives \(T^{-1/2}\Psi_T^{H_D}(\theta^\star)\Rightarrow N_p\{0,\Omega_{\rm LS}(\theta^\star)\}\), which proves the asserted sandwich limit.
\end{proof}

The remainder of this section of the supplement aims to transfer the main results to the two-step estimator. It does so by demonstrating that the plug-in optimal weight matrix concentrates uniformly around the population optimal weight matrix, and then applying standard M-estimation arguments. For later use, define the two-parameter population covariance
\begin{equation*}
\bar\Omega_H(\vartheta,\theta^\star)
=
\E_{\theta^\star}\{H(0;\vartheta)\Lambda(0;\vartheta)H(0;\vartheta)^\top\}.
\end{equation*}
Thus $\Omega_H(\theta)=\bar\Omega_H(\theta,\theta)$.

\begin{lemma}[Polynomial-tail well-conditioning of the plug-in covariance]
\label{lem:plugin-inverse-polynomial}
Suppose Assumptions~\ref{ass:parameter}--\ref{ass:stability} and
Assumptions~\ref{ass:weights}--~\ref{ass:moment-identification} hold.

Let $\tilde\theta_T$ be a preliminary estimator satisfying the same
polynomial-tail logarithmic rate as in Theorem~\ref{thm:generic-rate}; that is,
for every $c>0$ there are $C,K<\infty$ such that, for all
large $T$,
\[
    \sup_{\theta^\star\in\Theta_0}
    \Prob_{\theta^\star}\left(
    \norm{\tilde\theta_T-\theta^\star}_2
    >
    K\sqrt{\frac{\log(T)}{T}}
    \right)
    \le CT^{-c}.
\]
Then, for every $c>0$, there are $C<\infty$ and deterministic constants
$0<\omega_-<\omega_+<\infty$ such that, for all sufficiently large $T$,
\begin{equation}\label{eq:plugin-invert}
\sup_{\theta^\star\in\Theta_0} \Prob_{\theta^\star}\left(\lambda_{\min}\{\hat\Omega_{H,T}(\tilde\theta_T)\}<\omega_-\  \vee \lambda_{\max}\{\hat\Omega_{H,T}(\tilde\theta_T)\}>\omega_+\right) \le CT^{-c}.
\end{equation}
Equivalently, on an event of probability at least $1-CT^{-c}$,
\[
    \omega_-I_q
    \preceq
    \hat\Omega_{H,T}(\tilde\theta_T)
    \preceq
    \omega_+ I_q
\]
and hence
\[
    \omega_+^{-1}I_q
    \preceq
    \hat W_T
    \preceq
    \omega_-^{-1}I_q,
    \qquad
    \hat W_T:=\hat\Omega_{H,T}(\tilde\theta_T)^{-1}.
\]
\end{lemma}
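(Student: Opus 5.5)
The argument splits the error $\hat\Omega_{H,T}(\tilde\theta_T)-\Omega_H(\theta^\star)$ into a uniform stochastic deviation and a deterministic continuity term, and then uses the eigenvalue margin $\omega_0$ from Assumption~\ref{ass:moment-identification}.

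\emph{Step 1 (uniform concentration).} Write $\hat\Omega_{H,T}(\vartheta)-\bar\Omega_H(\vartheta,\theta^\star)$ entrywise. Entry $(a,b)$ is the centred time average of the compact-window functional
\[
Z_{ab,\vartheta}=\sum_{i=1}^D H_{ai}(0;\vartheta)\lambda_i(0;\vartheta)H_{bi}(0;\vartheta),
\]
indexed by $\vartheta\in\Theta$. Assumption~\ref{ass:weights} and Lemma~\ref{lem:derivative-envelope} give the polynomial envelope~\eqref{eq:local-envelope}. The product rule, \eqref{eq:H-lipschitz-envelope}, and the mean value theorem applied to $\lambda_i(0;\cdot)$ give the Lipschitz envelope~\eqref{eq:local-lipschitz-envelope}. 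Theorem~\ref{thm:local-concentration} then yields
\[
\sup_{\vartheta\in\Theta}\norm{\hat\Omega_{H,T}(\vartheta)-\bar\Omega_H(\vartheta,\theta^\star)}_{\op}\le K\log^{q_0}(T)/\sqrt T
\]
with probability at least $1-CT^{-c}$, uniformly over $\theta^\star\in\Theta_0$, after a union bound over the $q^2$ entries.

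\emph{Step 2 (population continuity).} Proposition~\ref{prop:finite-window-continuity} applies to the same integrand, now viewed as a functional indexed by $\vartheta$ and taken in expectation under $\Prob_{\theta^\star}$. It shows that $(\vartheta,\theta^\star)\mapsto\bar\Omega_H(\vartheta,\theta^\star)$ is continuous on the compact set $\Theta\times\Theta_0$, hence uniformly continuous there. Since $\bar\Omega_H(\theta^\star,\theta^\star)=\Omega_H(\theta^\star)$, for every $\eta>0$ there is $\delta>0$ such that
\[
\norm{\bar\Omega_H(\vartheta,\theta^\star)-\Omega_H(\theta^\star)}_{\op}\le\eta \quad\text{whenever } \norm{\vartheta-\theta^\star}_2\le\delta,\ \theta^\star\in\Theta_0.
\]

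\emph{Step 3 (eigenvalue bounds).} Intersect the event of Step~1 with the rate event for $\tilde\theta_T$. On this intersection, for large $T$, we have $K\sqrt{\log T/T}\le\delta$. This gives
\[
\norm{\hat\Omega_{H,T}(\tilde\theta_T)-\Omega_H(\theta^\star)}_{\op}\le\eta+K\log^{q_0}(T)/\sqrt T\le 2\eta.
\]
Take $\eta=\omega_0/4$ and let $\bar\omega=\sup_{\theta\in\Theta_0}\lambda_{\max}\{\Omega_H(\theta)\}$, which is finite by continuity and compactness. Weyl's inequality then gives the two-sided bound with $\omega_-=\omega_0/2$ and $\omega_+=\bar\omega+\omega_0/2$. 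In particular $\hat\Omega_{H,T}(\tilde\theta_T)$ is invertible, and inverting yields the stated bounds on $\hat W_T$. The total exceptional probability is at most $CT^{-c}$, uniformly in $\theta^\star$.

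\emph{Main obstacle.} The delicate step is Step~1: the index $\tilde\theta_T$ is random, so a fixed-$\vartheta$ bound is not enough. This is exactly why the bound must hold uniformly over $\vartheta$, which in turn requires the Lipschitz envelope. Verifying that envelope uniformly over $\Theta$, rather than only on $\Theta_0$, relies on Lemma~\ref{lem:derivative-envelope} holding on all of $K_\Theta$.
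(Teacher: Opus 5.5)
Your proposal is correct and follows the paper's proof step for step. The structure is the same: uniform entrywise concentration of $\hat\Omega_{H,T}(\vartheta)$ around $\bar\Omega_H(\vartheta,\theta^\star)$ via Theorem~\ref{thm:local-concentration}, then a continuity bound for $\bar\Omega_H(\cdot,\theta^\star)$ at $\theta^\star$ that is uniform in $\theta^\star$, then Weyl's inequality with exactly $\omega_-=\omega_0/2$ and $\omega_+=\bar\omega+\omega_0/2$.

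The only variation is in Step~2. The paper gets a uniform local Lipschitz bound $\|\bar\Omega_H(\vartheta,\theta^\star)-\Omega_H(\theta^\star)\|_{\op}\le L_\Omega\|\vartheta-\theta^\star\|_2$ directly from the polynomial derivative envelope of $H\Lambda H^\top$, the mean-value theorem and the uniform count moments; since the law stays fixed at $\theta^\star$, no continuity in the law is needed. You instead invoke the heavier joint continuity of Proposition~\ref{prop:finite-window-continuity}. Your qualitative modulus suffices for this lemma, but the Lipschitz form is what the paper later reuses to get the explicit rate in the almost-sure two-step argument.
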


\begin{proof}
First, for each entry of the matrix
$H(0;\vartheta)\Lambda(0;\vartheta)H(0;\vartheta)^\top$, Assumption~\ref{ass:weights}
and Lemma~\ref{lem:derivative-envelope} give a polynomial local-count envelope, uniformly over
$\vartheta\in\Theta$, in particular, there exist uniform $B,m>0$ such that 
\[\left|\left(H(0;\vartheta)\Lambda(0;\vartheta)H(0;\vartheta)^\top\right)_{ij}\right|\leq C(1+\mathfrak C_0^m)\]
for each $ij$ pair. This follows from the fact that $p,q$ and $D$ are fixed so the operator norm bounds for $H$ imply entrywise bounds. This holds true also for the uniform polynomial local-count Lipschitz envelope. Therefore, entrywise concentration from Theorem~\ref{thm:local-concentration}, which reads for all $c>0$ there exists a $K>0$, $C>0$ and $q_0>0$ so that 
\[\sup_{\theta^\star \in \Theta_0}\Prob_{\theta^\star}\left(\sup_{\vartheta \in \Theta}\abs{\hat\Omega_{H,T}(\vartheta)_{ij}
-
\bar\Omega_H(\vartheta,\theta^\star)_{ij}}>K\frac{\log^{q_0}(T)}{\sqrt{T}}\right)\leq CT^{-c}\]
and the fixed dimension converts the
entrywise bound to the operator norm, for some new constant $K$ proving
\begin{equation}\label{eq:Omega-plugin-uniform}
\sup_{\theta^\star \in \Theta_0}\Prob_{\theta^\star}\left(\sup_{\vartheta \in \Theta}\norm{\hat\Omega_{H,T}(\vartheta)
-
\bar\Omega_H(\vartheta,\theta^\star)}_{\op}>K\frac{\log^{q_0}(T)}{\sqrt{T}}\right)\leq CT^{-c}
\end{equation}

Next, note that the population plug-in covariance is locally Lipschitz in its
first argument, uniformly over the true parameter.  More precisely, there are
$r_\Omega>0$ and $L_\Omega<\infty$ such that, whenever
$\theta^\star\in\Theta_0$ and
$\|\vartheta-\theta^\star\|_2\le r_\Omega$,
\[
    \|\bar\Omega_H(\vartheta,\theta^\star)
      -\Omega_H(\theta^\star)\|_{\op}
    \le
    L_\Omega\|\vartheta-\theta^\star\|_2 .
\]
This can be seen by considering 
\[
    F(\vartheta,N)
    =
    H(0;\vartheta)\Lambda(0;\vartheta)H(0;\vartheta)^\top ,
\]
the derivative $\partial_\vartheta F(\vartheta,N)$ is a finite sum of terms
of the form
\[
    \partial_\vartheta H(0;\vartheta)\Lambda(0;\vartheta)H(0;\vartheta)^\top,
    \qquad
    H(0;\vartheta)\partial_\vartheta\Lambda(0;\vartheta)H(0;\vartheta)^\top,
\]
and their transposes.  By Assumption~\ref{ass:weights} and
Lemma~\ref{lem:derivative-envelope}, these derivatives have a polynomial
local-count envelope uniformly on a fixed neighbourhood of $\Theta_0$.
The fixed-window moment bound then gives an integrable uniform envelope under
$\Prob_{\theta^\star}$, uniformly in $\theta^\star\in\Theta_0$.  The mean-value
theorem therefore gives the displayed Lipschitz bound.

Let
\[
    a_T=\sqrt{\frac{\log(T)}{T}},
    \qquad
    b_T=\frac{\log^{q_0}(T)}{\sqrt T},
\]
where $q_0$ is the exponent in equation \eqref{eq:Omega-plugin-uniform}. Next, define the events
\[
    \mathcal E_{\Omega,T}
    =
    \left\{
    \sup_{\vartheta\in\Theta}
    \|\hat\Omega_{H,T}(\vartheta)
      -\bar\Omega_H(\vartheta,\theta^\star)\|_{\op}
    \le K_\Omega b_T
    \right\}
\]
and
\[
    \mathcal E_{\theta,T}
    =
    \left\{
    \|\tilde\theta_T-\theta^\star\|_2
    \le K_\theta a_T
    \right\}.
\]
By equation \eqref{eq:Omega-plugin-uniform} and by the assumed polynomial-tail
rate of $\tilde\theta_T$, after increasing constants,
\[
    \sup_{\theta^\star\in\Theta_0}
    \Prob_{\theta^\star}\{(\mathcal E_{\Omega,T}\cap\mathcal E_{\theta,T})^c\}
    \le CT^{-c}.
\]
For all sufficiently large $T$, $K_\theta a_T\le r_\Omega$.  On
$\mathcal E_{\Omega,T}\cap\mathcal E_{\theta,T}$,
\[
\begin{aligned}
&\|\hat\Omega_{H,T}(\tilde\theta_T)
      -\Omega_H(\theta^\star)\|_{\op}  \\
&\qquad\le
\|\hat\Omega_{H,T}(\tilde\theta_T)
      -\bar\Omega_H(\tilde\theta_T,\theta^\star)\|_{\op}
+
\|\bar\Omega_H(\tilde\theta_T,\theta^\star)
      -\Omega_H(\theta^\star)\|_{\op}  \\
&\qquad\le
K_\Omega b_T+L_\Omega K_\theta a_T .
\end{aligned}
\]
Since $a_T\to0$ and $b_T\to0$, the last display is at most
$\omega_0/2$ for all sufficiently large $T$.  Weyl's eigenvalue
inequality gives
\[
    \lambda_{\min}\{\hat\Omega_{H,T}(\tilde\theta_T)\}
    \ge
    \lambda_{\min}\{\Omega_H(\theta^\star)\}
    -
    \|\hat\Omega_{H,T}(\tilde\theta_T)
      -\Omega_H(\theta^\star)\|_{\op}
    \geq\omega_0/2.
\]
Thus $\hat\Omega_{H,T}(\tilde\theta_T)$ is invertible on this event.

It remains only to obtain the upper eigenvalue bound. By continuity of
$\Omega_H(\theta)$ on the compact set $\Theta_0$,
\[
    \bar\omega
    :=
    \sup_{\theta\in\Theta_0}\lambda_{\max}\{\Omega_H(\theta)\}<\infty .
\]
Again by Weyl's inequality, on the same event,
\[
    \lambda_{\max}\{\hat\Omega_{H,T}(\tilde\theta_T)\}
    \le
    \bar\omega+\omega_0/2
    =\vcentcolon\omega_+<\infty .
\]
Therefore
\[
    \frac{\omega_0}{2}I_q
    \preceq
    \hat\Omega_{H,T}(\tilde\theta_T)
    \preceq
    \omega_+ I_q.
\]
Inverting reverses the Loewner order and gives
\[
    \omega_+^{-1}I_q
    \preceq
    \hat W_T
    \preceq
    2\omega_0^{-1}I_q .
\]
With $\omega_-=\omega_0/2$, the bounds $\lambda_{\min}\{\hat\Omega_{H,T}(\tilde\theta_T)\}\ge\omega_-$ and $\lambda_{\max}\{\hat\Omega_{H,T}(\tilde\theta_T)\}\le\omega_+$ are exactly the complement of the event in \eqref{eq:plugin-invert}, which proves the claim.
\end{proof}

\begin{lemma}[Local stochastic differentiability of compact-window GMM]\label{lem:generic-stochastic-differentiability}
Fix $\theta^\star\in\Theta_0$.  Under Assumptions~\ref{ass:parameter}--\ref{ass:stability} and Assumption~\ref{ass:weights}, if $h_T=O_{\Prob_{\theta^\star}}(\sqrt{\log(T)/T})$ and $\theta^\star+h_T\in\Theta$ with probability tending to one, then
\begin{align}
    \frac 1T \Psi_T^H(\theta^\star+h_T)
    &=\frac 1T \Psi_T^H(\theta^\star)-A_H(\theta^\star)h_T+o_{\Prob_{\theta^\star}}(T^{-1/2}),
    \label{eq:generic-stoch-diff-map}\\
    \partial_\vartheta \frac 1T \Psi_T^H(\theta^\star+h_T)
    &=-A_H(\theta^\star)+o_{\Prob_{\theta^\star}}(1).
    \label{eq:generic-stoch-diff-jac}
\end{align}
\end{lemma}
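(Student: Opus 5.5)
The plan is a pathwise mean-value expansion on a shrinking ball around \(\theta^\star\), fed by the Jacobian control \eqref{eq:generic-jacobian-control} of Proposition~\ref{prop:generic-controls}, together with continuity of the population Jacobian \(B_H\).

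Fix \(\epsilon>0\) and choose \(M\) so that \(\Prob_{\theta^\star}(\norm{h_T}_2>M\sqrt{\log(T)/T})<\epsilon\) for all large \(T\). Set \(r_T=M\sqrt{\log(T)/T}\), which lies below the fixed radius \(r\) of Proposition~\ref{prop:generic-controls} once \(T\) is large. Since \(\Theta_0\Subset\Theta^\circ\), the segment \(\{\theta^\star+sh_T:s\in[0,1]\}\) lies in \(B(\theta^\star,r)\subset K_\Theta\) on this event. There \(\vartheta\mapsto T^{-1}\Psi_T^H(\vartheta)\) is pathwise \(C^1\), by the differentiation justification in the proof of Proposition~\ref{prop:generic-controls}. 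The fundamental theorem of calculus then gives
\[
\frac1T\Psi_T^H(\theta^\star+h_T)-\frac1T\Psi_T^H(\theta^\star)=\int_0^1\partial_\vartheta\Big\{\frac1T\Psi_T^H(\theta^\star+sh_T)\Big\}\,\dd s\;h_T .
\]

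I split the integrand into three pieces:
\[
\partial_\vartheta\tfrac1T\Psi_T^H+B_H(\cdot,\theta^\star),\qquad -\{B_H(\cdot,\theta^\star)-B_H(\theta^\star,\theta^\star)\},\qquad -A_H(\theta^\star),
\]
where the last uses \(B_H(\theta^\star,\theta^\star)=A_H(\theta^\star)\).

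\begin{itemize}
\item The first piece is bounded in operator norm, uniformly on \(B(\theta^\star,r)\), by \(K\log^{q_0}(T)/\sqrt T\) with probability \(1-CT^{-c}\), by \eqref{eq:generic-jacobian-control}.
\item For the second piece, \(B_H(\cdot,\theta^\star)\) is continuous near the diagonal by Proposition~\ref{prop:finite-window-continuity}, as used in Lemma~\ref{lem:generic-local-curvature}. I would upgrade this to Lipschitz by differentiating \(g_H\) twice under the expectation. Second derivatives of the drift integrand have polynomial local-count envelopes by \eqref{eq:H-envelope} (with \(r=2\)) and Lemma~\ref{lem:derivative-envelope}, and these are integrable by Lemma~\ref{lem:fixed-window-count-mgf}. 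The mean-value theorem then bounds the second piece by \(L\norm{sh_T}_2\le Lr_T\).
\end{itemize}

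Multiplying by \(h_T\), the remainder is at most
\[
\big(K\log^{q_0}(T)T^{-1/2}+Lr_T\big)\,r_T=O\big(\log^{q_0+1/2}(T)/T\big)=o(T^{-1/2})
\]
on an event of probability at least \(1-\epsilon-CT^{-c}\). Since \(\epsilon\) is arbitrary, this proves \eqref{eq:generic-stoch-diff-map}.

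For \eqref{eq:generic-stoch-diff-jac}, the same decomposition at the single point \(\vartheta=\theta^\star+h_T\) gives the bound
\[
\norm{\partial_\vartheta\tfrac1T\Psi_T^H(\theta^\star+h_T)+A_H(\theta^\star)}_{\op}\le K\log^{q_0}(T)/\sqrt T+Lr_T\to0
\]
on the same event. This is the claimed \(o_{\Prob_{\theta^\star}}(1)\). In fact, continuity of \(B_H\) alone would suffice for this part.

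The main obstacle is the Lipschitz (rather than merely continuous) control of \(\vartheta\mapsto B_H(\vartheta,\theta^\star)\). The expectation is under the fixed law \(\Prob_{\theta^\star}\), so only the integrand depends on \(\vartheta\) and dominated differentiation applies directly. I still need to confirm that the envelopes for \(\nabla_\theta^2H\) and \(\nabla_\theta^2\lambda\) are uniform over \(K_\Theta\), which Assumption~\ref{ass:weights} and Lemma~\ref{lem:derivative-envelope} provide. If only continuity were available, the remainder would be \(o(1)\cdot r_T\), which is not \(o(T^{-1/2})\) because of the \(\sqrt{\log T}\) factor. That is why the Lipschitz bound, giving \(r_T^2=O(\log T/T)\), is essential.
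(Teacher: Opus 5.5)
Your proposal is correct and follows essentially the same route as the paper: a fundamental-theorem-of-calculus expansion along the segment, the uniform Jacobian control \eqref{eq:generic-jacobian-control} for the stochastic part, and a Lipschitz (not merely continuous) bound on $\vartheta\mapsto B_H(\vartheta,\theta^\star)$ under the fixed law $\Prob_{\theta^\star}$, so that the deterministic remainder is $O(\norm{h_T}_2^2)=o(T^{-1/2})$. The only cosmetic difference is how you get that Lipschitz bound: you differentiate $g_H$ twice under the expectation, whereas the paper bounds $B_H(\vartheta,\theta^\star)-A_H(\theta^\star)$ directly from its two-term formula, using the Lipschitz envelopes of $H$ and the mean-value theorem for $D_\theta$ and $\lambda$.
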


\begin{proof}[Proof of Lemma~\ref{lem:generic-stochastic-differentiability}]
Choose $r>0$ such that $B(\theta^\star,r)\subset K_\Theta$.  Since
$h_T=o_{\Prob_{\theta^\star}}(1)$, the whole segment
$\{\theta^\star+s h_T:0\le s\le1\}$ lies in $B(\theta^\star,r)$ with
probability tending to one.  By Proposition~\ref{prop:generic-controls}, for
some integer $q_0$,
\[
    \sup_{\vartheta\in B(\theta^\star,r)}
    \norm{\partial_\vartheta\frac 1T \Psi_T^H(\vartheta)+B_H(\vartheta,\theta^\star)}_{\op}
    =O_{\Prob_{\theta^\star}}\left(\frac{\log^{q_0}T}{\sqrt T}\right).
\]
Continuity of $B_H(\vartheta,\theta^\star)$ at $\vartheta=\theta^\star$, together with $B_H(\theta^\star,\theta^\star)=A_H(\theta^\star)$, gives
\[
    B_H(\theta^\star+h_T,\theta^\star)=A_H(\theta^\star)+o_{\Prob_{\theta^\star}}(1),
\]
which proves \eqref{eq:generic-stoch-diff-jac}.

For \eqref{eq:generic-stoch-diff-map}, use the fundamental theorem of calculus:
\[
\begin{aligned}
   \frac 1T \Psi_T^H(\theta^\star+h_T)-\frac 1T \Psi_T^H(\theta^\star)
    &=-\int_0^1 B_H(\theta^\star+s h_T,\theta^\star)h_T\,\dd s \\
    &\quad+
    \int_0^1\left\{\partial_\vartheta \frac 1T \Psi_T^H(\theta^\star+s h_T)+B_H(\theta^\star+s h_T,\theta^\star)\right\}h_T\,\dd s .
\end{aligned}
\]
The second integral is
\[
    O_{\Prob_{\theta^\star}}\left(\frac{\log^{q_0}T}{\sqrt T}\right)
    O_{\Prob_{\theta^\star}}\left(\sqrt{\frac{\log(T)}{T}}\right)
    =o_{\Prob_{\theta^\star}}(T^{-1/2}).
\]
It remains to replace $B_H(\theta^\star+s h_T,\theta^\star)$ by $A_H(\theta^\star)$.  The exact two-parameter derivative is
\begin{equation*}
\begin{aligned}
B_H(\vartheta,\theta^\star)
&=
\E_{\theta^\star}\{H(0;\vartheta)D_\theta(0;\vartheta)\}  \\
&\quad-
\E_{\theta^\star}\left[
\partial_\vartheta H(0;\vartheta)
\{\lambda(0;\theta^\star)-\lambda(0;\vartheta)\}
\right],
\end{aligned}
\end{equation*}
where the second line is interpreted as the corresponding contraction of the derivative tensor of $H$ with the intensity difference.  Hence, for $\vartheta=\theta^\star+s h_T$,
\begin{align*}
\norm{B_H(\vartheta,\theta^\star)-A_H(\theta^\star)}_{\op} &\le
\norm{\E_{\theta^\star}\{H(0;\vartheta)D_\theta(0;\vartheta)-H(0;\theta^\star)D_\theta(0;\theta^\star)\}}_{\op}\\
&+
\norm{\E_{\theta^\star}\left[
\partial_\vartheta H(0;\vartheta)
\{\lambda(0;\theta^\star)-\lambda(0;\vartheta)\}
\right]}_{\op} .
\end{align*}
The first term is $\mathcal O(\norm{\vartheta-\theta^\star}_2)$ by the Lipschitz envelopes for $H$, Lemma \ref{lem:derivative-envelope} for the envelope of $D_\theta$, the mean-value theorem, and the fixed-window moment bound. The second term is also $\mathcal O(\norm{\vartheta-\theta^\star}_2)$, because $\nabla_\vartheta H$ has a polynomial envelope by assumption \ref{ass:weights}, and 
\[\norm{\lambda(0;\vartheta)-\lambda(0;\theta^\star)}_2\leq C(1+\mathfrak C_0^m)\norm{\vartheta-\theta^\star}_2\] 
by the polynomial local-count envelope of $\nabla \lambda(0;\theta)$ and the mean-value theorem. Therefore,
\[
    \sup_{0\le s\le1}
    \norm{B_H(\theta^\star+s h_T,\theta^\star)-A_H(\theta^\star)}_{\op}
    \le C\norm{h_T}_2
\]
with the displayed deterministic bound holding on the event $\theta^\star+h_T\in\Theta$.  Consequently,
\[
    \norm{\int_0^1\{B_H(\theta^\star+s h_T,\theta^\star)-A_H(\theta^\star)\}h_T\,\dd s}_2
    \le C\norm{h_T}_2^2
    =O_{\Prob_{\theta^\star}}\left(\frac{\log(T)}{T}\right)
    =o_{\Prob_{\theta^\star}}(T^{-1/2}).
\]
Combining the preceding displays proves \eqref{eq:generic-stoch-diff-map}.
\end{proof}

\subsection{Proof of Corollary~\ref{cor:two-step-gmm} (feasible optimal GMM)}\label{app:proof:cor:two-step-gmm}
The necessary regularity conditions have now been obtained to prove the asymptotic results for the two-step estimator. We first use Lemma \ref{lem:plugin-inverse-polynomial} for local invertibility which immediately implies the proof of Theorem \ref{thm:generic-rate} holds. The central limit theorem follows from the same argument as Theorem \ref{thm:generic-an}.

\begin{proof}[Proof of Corollary~\ref{cor:two-step-gmm}]
By Lemma~\ref{lem:plugin-inverse-polynomial}, there are finite positive constants, $\omega_-,\omega_+$ such that 
\[
    \omega_+^{-1}I_q
    \preceq
    \hat W_T
    \preceq
    2\omega_0^{-1}I_q,
    \qquad
    \hat W_T:=\hat\Omega_{H,T}(\tilde\theta_T)^{-1}.
\]
On this event, the consistency and local-rate proof of
Theorem~\ref{thm:generic-rate} applies with $\hat W_T$ in place of the
fixed weighting matrix. The lower bound on the smallest eigenvalue follows from $\omega_+^{-1}I_q\preceq\hat W_T$ and the full-column-rank condition on $A_H$.  Moreover, $\|x\|_{\hat W_T}\ge \omega_+^{-1/2}\|x\|_2$ on this event, so the global separation in Assumption~\ref{ass:moment-identification} transfers to the random weighted norm. In particular, this implies
\begin{align*}
   \sup_{\theta^\star\in \Theta_0}\Prob_{\theta^\star}\left(\norm{\hat\theta_H^{(2)}-\theta^\star}_2
   >K\sqrt{\frac{\log(T)}{T}}\right)\leq  CT^{-c}
\end{align*}
by equation \eqref{eq:plugin-invert}. The proof of Theorem~\ref{thm:generic-rate} can be applied exactly to the first summand above, using the lower bounding constant $\omega_+^{-1}$ in place of $\lambda_{\min}(W)$. Therefore, equation \eqref{eq:pluginHPrate} holds under a stationary or non-stationary start (after burning in the data).

To prove equation \eqref{eq:plugin-CLT}, first note that $\Theta_0\Subset\Theta^\circ$ so that the second-step estimator is an interior minimizer with probability tending to one with first-order condition
\[
    \partial_\vartheta\left( \frac{1}{T^2}\Psi_T^H(\hat\theta_H^{(2)})^\top
    \hat W_T
    \Psi_T^H(\hat\theta_H^{(2)})\right)=0\]
Lemma~\ref{lem:generic-stochastic-differentiability} yields
\[
    \frac{1}{T}\Psi_T^H(\hat\theta_H^{(2)})
    =
     \frac{1}{T}\Psi_T^H(\theta^\star)
    -
    A_H(\hat\theta_H^{(2)}-\theta^\star)
    +
    o_{\Prob_{\theta^\star}}(T^{-1/2}),
\]
and
\[
    \partial_\vartheta \left( \frac{1}{T}\Psi_T^H(\hat\theta_H^{(2)})\right)
    =
    -A_H+o_{\Prob_{\theta^\star}}(1).
\]

Moreover, on the same event, optimality gives
\[
     \frac{1}{T^2}\Psi_T^H\left(\hat\theta_H^{(2)}\right)^\top
    \hat W_T
  \Psi_T^H\left(\hat\theta_H^{(2)}\right)
    \leq
     \frac{1}{T^2}\Psi_T^H(\theta^\star)^\top
    \hat W_T
     \Psi_T^H(\theta^\star).
\]
Since  $\omega_+^{-1}I_q\preceq\hat W_T\preceq 2\omega_0^{-1}I_q$ and, by
Proposition~\ref{prop:generic-map-clt},
$\frac{1}{T}\Psi_T^H(\theta^\star)
=O_{\Prob_{\theta^\star}}(T^{-1/2})$, it follows that
\[
    \frac{1}{T}\Psi_T^H\left(\hat\theta_H^{(2)}\right)
    =
    O_{\Prob_{\theta^\star}}(T^{-1/2}).
\]

Substituting the previous displays into the first-order condition yields
\[
    A_H^\top\hat W_TA_H
    \sqrt T(\hat\theta_H^{(2)}-\theta^\star)
    =
    A_H^\top\hat W_T
   \frac{1}{\sqrt{T}}\Psi_T^H(\theta^\star)
    +
    o_{\Prob_{\theta^\star}}(1).
\]

If $\hat W_T\to\Omega_H^{-1}(\theta^\star)$, Slutsky's theorem, and non-singularity of $A_H^\top\Omega_H^{-1}A_H$ gives the displayed asymptotic linear expansion.  The limiting normal distribution follows from Proposition~\ref{prop:generic-map-clt}. Moreover, in the case of a non-stationary start that satisfies the local integrability assumptions, the justification at the end of the proof of Theorem \ref{thm:generic-an} via the same total variation bound implies the same limit after a suitable burn-in period and rescaling.

It remains to show $\hat W_T\xrightarrow[]{p} \Omega_H(\theta^\star)^{-1}$.
By \eqref{eq:Omega-plugin-uniform},
\[\sup_{\vartheta\in\Theta}\norm{\hat\Omega_{H,T}(\vartheta)-\bar\Omega_H(\vartheta,\theta^\star)}_{\op}=o_{\Prob_{\theta^\star}}(1).\]
Since $\tilde\theta_T\xrightarrow[]{p}\theta^\star$ and $\vartheta\mapsto\bar\Omega_H(\vartheta,\theta^\star)$ is continuous at $\theta^\star$,
\[\bar\Omega_H(\tilde\theta_T,\theta^\star)\xrightarrow[]{p}\bar\Omega_H(\theta^\star,\theta^\star)=\Omega_H(\theta^\star).\]
Therefore,
\[\hat\Omega_{H,T}(\tilde\theta_T)\xrightarrow[]{p}\Omega_H(\theta^\star).\]
By the positive-definiteness of $\Omega_H(\theta^\star)$ and the continuous
mapping theorem,
\[\hat W_T\xrightarrow[]{p}\Omega_H(\theta^\star)^{-1}.\]

\end{proof}

\subsection{Proof of Corollary~\ref{cor:almost-sure-rate}}
In order to demonstrate the almost sure convergence results of Corollary~\ref{cor:almost-sure-rate} we must first prove a lemma about the almost sure regularity of the estimating equations. In doing so, we can ensure the estimating equations are regular for all sufficiently large $T$ almost surely. This must be done since if one was to na\"ively apply the first Borel-Cantelli lemma to equation \eqref{eq:generic-rate} then the result would only hold almost surely for all large $T_n$ in a \emph{countable} sequence of times $\{T_n\}_{n\in \N}$. After proving the regularity lemma, we can apply the same pathwise arguments as used Theorem \ref{thm:generic-rate} and Corollary \ref{cor:two-step-gmm}.

\begin{lemma}[Pathwise interpolation over unit horizon intervals]
\label{lem:pathwise-unit-interpolation}
Fix $\theta^\star\in\Theta_0$ and let
\[
    m_T(\vartheta)=\frac{1}{T}\Psi_T^H(\vartheta).
\]
Furthermore, for $n\geq2$ define the event
\[
    \mathcal C_n
    =
    \left\{
    \sup_{0\le t\le n+1}\mathfrak C_t\le L\log(n)
    \right\}.
\]
Under
Assumptions~\ref{ass:parameter}--\ref{ass:stability} and
Assumption~\ref{ass:weights}, there exist constants
$C<\infty$ and $q_\star<\infty$ such that the following holds.

On $\mathcal C_n$, for all sufficiently large $n$,
\begin{align}
\sup_{T\in[n,n+1]}\sup_{\vartheta\in\Theta}
\left\|m_T(\vartheta)-m_n(\vartheta)\right\|_2
&\le
C\frac{\log^{q_\star}(n)}{n},
\label{eq:pathwise-interpolation-map}\\
\sup_{T\in[n,n+1]}
\sup_{\vartheta\in B(\theta^\star,r)}
\left\|
\partial_\vartheta m_T(\vartheta)
-
\partial_\vartheta m_n(\vartheta)
\right\|_{\op}
&\le
C\frac{\log^{q_\star}(n)}{n}
\label{eq:pathwise-interpolation-jac}\\
\sup_{T\in[n,n+1]}\sup_{\vartheta\in\Theta}
\left\|
\hat\Omega_{H,T}(\vartheta)
-
\hat\Omega_{H,n}(\vartheta)
\right\|_{\op}
&\le
C\frac{\log^{q_\star}(n)}{n} 
\label{eq:pathwise-interpolation-cov}
\end{align}
for every fixed $r>0$ such that $B(\theta^\star,r)\subset K_\Theta$.  Moreover, there exists a sufficiently large fixed $L$, such that the event $\mathcal C_n$ holds eventually almost surely under $\Prob_{\theta^\star}$.

\end{lemma}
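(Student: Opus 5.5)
The plan is to split $m_T-m_n$ into a normalization effect and an increment effect, and to bound both pathwise on $\mathcal C_n$ using only envelopes; the almost-sure claim is then a separate Borel--Cantelli step. For $T\in[n,n+1]$,
\[
m_T(\vartheta)-m_n(\vartheta)=\Big(\frac1T-\frac1n\Big)\Psi_n^H(\vartheta)+\frac1T\sum_{i=1}^D\int_n^T H_{\cdot i}(t;\vartheta)\{\dd N_i(t)-\lambda_i(t;\vartheta)\,\dd t\}.
\]
On $\mathcal C_n$, Assumption~\ref{ass:weights} gives $\|H(t;\vartheta)\|_{\op}\le B_0\{1+(L\log n)^{m_0}\}$ for all $t\le n+1$ and all $\vartheta\in\Theta$. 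Lemma~\ref{lem:derivative-envelope} gives $\lambda_i(t;\vartheta)\le C\{1+(L\log n)^{m}\}$ on the same range. Partitioning $[0,n+1]$ into intervals of length $A$ and using the window bound, we get $N([0,n+1])\le Cn\log n$ and $N([n,n+1])\le C\log n$.

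The first term is then bounded by $n^{-2}\sup_\vartheta\|\Psi_n^H(\vartheta)\|$, and $\|\Psi_n^H\|\le C\log^{m_0}(n)\{N([0,n])+\int_0^n\sum_i\lambda_i\,\dd t\}\le Cn\log^{q}(n)$. This gives $\log^{q}(n)/n$. The second term is at most $n^{-1}$ times an envelope of order $\log^{m_0}(n)\{N([n,n+1])+\log^{m}(n)\}$, which is again $\log^{q_\star}(n)/n$. This proves \eqref{eq:pathwise-interpolation-map}.

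The same decomposition handles the other two bounds.
\begin{enumerate}
\item For \eqref{eq:pathwise-interpolation-jac}, differentiate pathwise as in \eqref{eq:generic-jacobian-decomposition}. The integrands $\partial_\vartheta H\{\dd N-\lambda\,\dd t\}$ and $H D_\theta\,\dd t$ have polynomial local-count envelopes on $B(\theta^\star,r)\subset K_\Theta$, by Assumption~\ref{ass:weights} with $r=1$ and Lemma~\ref{lem:derivative-envelope}.
\item For \eqref{eq:pathwise-interpolation-cov}, the integrand $H\Lambda H^\top$ has a polynomial envelope and there is no $\dd N$ part, so the same two-term split applies.
\end{enumerate}
All constants are deterministic and uniform in $\vartheta$, which is why the suprema can be taken inside.

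For the almost-sure statement, apply Lemma~\ref{lem:count-truncation} with horizon $n+1$ and exponent $c=2$. This yields $\Prob_{\theta^\star}(\sup_{t\le n+1}\mathfrak C_t>K\log(n+1))\le Cn^{-2}$. Choose $L>K$ so that $K\log(n+1)\le L\log n$ for large $n$. The probabilities are then summable, and the first Borel--Cantelli lemma gives that $\mathcal C_n$ holds eventually almost surely.

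The main obstacle is bookkeeping rather than any new idea. The Jacobian bound needs the second-order envelope to hold uniformly on the whole local ball, and the total-count bound $N([0,n])\lesssim n\log n$ must be derived on $\mathcal C_n$ itself rather than on a separate event. Both come directly from the window-count bound, so everything stays deterministic on $\mathcal C_n$.
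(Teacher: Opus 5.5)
Your proposal is correct and follows the paper's proof essentially step for step. Both use the same split $m_T-m_n=(1/T-1/n)\Psi_n^H+T^{-1}(\Psi_T^H-\Psi_n^H)$, with polynomial envelopes and the count bounds $N((n,n+1])\lesssim\log n$, $N([0,n])\lesssim n\log n$ obtained on $\mathcal C_n$ itself, repeated for the Jacobian and for $\hat\Omega_{H,T}$, and then finish with Lemma~\ref{lem:count-truncation} and the first Borel--Cantelli lemma. Your explicit choice of $L$ to absorb $K\log(n+1)\le L\log n$ is a small piece of bookkeeping that the paper leaves implicit.
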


\begin{proof}
We first prove the interpolation bounds on the deterministic event
$\mathcal C_n$.  By Assumption~\ref{ass:weights},
Lemma~\ref{lem:derivative-envelope}, and the product rule, the integrands
appearing in $\Psi_T^H(\vartheta)$ and in
$\partial_\vartheta\Psi_T^H(\vartheta)$ have polynomial local-count
envelopes.  Hence, there are deterministic constants $C,m<\infty$ such that,
for $j=0,1$,
\[
    \sup_{\vartheta\in\Theta}
    \left\|\partial_\vartheta^j H(t;\vartheta)\right\|_{\op}
    +
    \sup_{\vartheta\in\Theta}
    \left\|\partial_\vartheta^j \lambda(t;\vartheta)\right\|_{\op}
    +
    \sup_{\vartheta\in\Theta}
    \left\|
    \partial_\vartheta^j
    \{H(t;\vartheta)\lambda(t;\vartheta)\}
    \right\|_{\op}
    \le
    C\{1+\mathfrak C_t^m\}.
\]
The same polynomial envelope bound holds for the integrand $H(t;\vartheta)\Lambda(t;\vartheta)H(t;\vartheta)^\top$ for the plug-in covariance, after increasing $C$ and $m$ as justified in the proof of Lemma \ref{lem:plugin-inverse-polynomial}.

On the event $\mathcal C_n$,
\[
    \sup_{0\le t\le n+1}\mathfrak C_t\le L\log(n) .
\]
Since $A>0$ is fixed, the interval $(n,n+1]$ can be covered by a
deterministic number of windows of length at most $A$ where each endpoint carries no events almost surely.  Therefore,
\[
    N((n,n+1])
    \le
    C\sup_{0\le t\le n+1}\mathfrak C_t
    \le
    C\log(n)
\]
on $\mathcal C_n$.  Similarly, $[-A,n+1]$ can be covered by $O(n)$
windows of length at most $A$, so that $N([-A,n+1])\le Cn\log(n)$
on $\mathcal C_n$.

Consequently, for $j=0,1$,
\[
\begin{aligned}
&\sup_{T\in[n,n+1]}\sup_{\vartheta}
\left\|
\partial_\vartheta^j\Psi_T^H(\vartheta)
-
\partial_\vartheta^j\Psi_n^H(\vartheta)
\right\|  \\
&\qquad\le
C\sup_{T\in[n,n+1]}
\left[
    \int_n^T \{1+\mathfrak C_t^m\}\,\dd N(t)
    +
    \int_n^T \{1+\mathfrak C_t^m\}\,\dd t
\right]  \\
&\qquad\le
C\log^{m+1}(n).
\end{aligned}
\]
Also,
\[
    \sup_{\vartheta\in\Theta}
    \left\|
    \partial_\vartheta^j\Psi_n^H(\vartheta)
    \right\|
    \le
    Cn\log^{m+1}(n),
    \qquad j=0,1.
\]
For $T\in[n,n+1]$, we thus obtain
\[
    m_T(\vartheta)-m_n(\vartheta)
    =
    \frac1T
    \{\Psi_T^H(\vartheta)-\Psi_n^H(\vartheta)\}
    +
    \left(\frac1T-\frac 1n\right)\Psi_n^H(\vartheta).
\]
Therefore,
\[
\begin{aligned}
\sup_{T\in[n,n+1]}\sup_{\vartheta\in\Theta}
\norm{m_T(\vartheta)-m_n(\vartheta)}_2
&\le
C\frac{\log^{m+1}(n)}{n}
+
C\frac{n\log^{m+1}(n)}{n^2} \\
&\le
C\frac{\log^{m+1}(n)}{n}.
\end{aligned}
\]
This proves \eqref{eq:pathwise-interpolation-map}, after renaming the exponent
as $q_\star$.  The proof of \eqref{eq:pathwise-interpolation-jac} is
identical, with $\Psi_T^H$ replaced by
$\partial_\vartheta\Psi_T^H$ since both functions satisfy the same polynomial local count envelopes. 

For the covariance interpolation, let
\[
    F(t;\vartheta)
    =
    H(t;\vartheta)\Lambda(t;\vartheta)H(t;\vartheta)^\top .
\]
The polynomial local-count envelope gives
\[\sup_{T\in[n,n+1]}\sup_{\vartheta\in\Theta}\norm{\int_n^T F(t;\vartheta)\,\dd t }_{\op} \leq C\log^{q_\star}(n)\]
and
\[\sup_{\vartheta\in\Theta}\norm{\int_0^n F(t;\vartheta)\,\dd t}_{\op}\leq Cn\log^{q_\star}(n) .
\]
Then since
\[
    \hat\Omega_{H,T}(\vartheta)
    =
    \frac{1}{T}\int_0^T F(t;\vartheta)\,\dd t,
\]
the same normalization argument gives
\[
\sup_{T\in[n,n+1]}\sup_{\vartheta\in\Theta}
\left\|
\hat\Omega_{H,T}(\vartheta)-\hat\Omega_{H,n}(\vartheta)
\right\|_{\op}
\le
C\frac{\log^{q_\star}(n)}{n}.
\]
so that equation \eqref{eq:pathwise-interpolation-cov} holds on $\calC_n$. It remains to prove eventual occurrence of $\mathcal C_n$.  By
Lemma~\ref{lem:count-truncation}, for every $c>0$, after taking $L$ large enough,
\[
    \Prob_{\theta^\star}(\mathcal C_n^c)
    \le
    Cn^{-c}
\]
for all sufficiently large $n$.  Choose $c>2$.  Then
\[
    \sum_{n=2}^\infty \Prob_{\theta^\star}(\mathcal C_n^c)<\infty.
\]
The first Borel--Cantelli lemma implies that $\mathcal C_n^c$ occurs only finitely often with probability 1.  Hence $\mathcal C_n$ eventually holds almost surely. 
\end{proof}

We now apply the previous lemmas to prove Corollary \ref{cor:almost-sure-rate}.

\begin{proof}
Choose $c>2$.  By Proposition~\ref{prop:generic-controls},
Theorem~\ref{thm:generic-rate}, and the first Borel--Cantelli lemma, there
exist deterministic constants $K_0,K_1,K_2,q_0<\infty$ and an event
$\mathcal E$ with $\Prob_{\theta^\star}(\mathcal E)=1$ such that, on
$\mathcal E$, all sufficiently large integers $n$ satisfy
\begin{align}
    \norm{m_n(\theta^\star)}_W
    &\le
    K_0\sqrt{\frac{\log(n)}{n}},
    \label{eq:as-integer-truth}\\
    \sup_{\vartheta\in\Theta}
    \norm{m_n(\vartheta)-g_H(\vartheta,\theta^\star)}_W
    &\le
    K_1\frac{\log^{q_0}(n)}{\sqrt n},
    \label{eq:as-integer-ulln}\\
    \sup_{\vartheta\in B(\theta^\star,r)}
    \norm{
    \partial_\vartheta m_n(\vartheta)
    +
    B_H(\vartheta,\theta^\star)}_{\op}
    &\le
    K_2\frac{\log^{q_0}(n)}{\sqrt n}
    \label{eq:as-integer-jac}
\end{align}
for every fixed $r>0$ such that
$B(\theta^\star,r)\subset K_\Theta$.

Intersect $\mathcal E$ with the almost-sure event from
Lemma~\ref{lem:pathwise-unit-interpolation}.  On this intersection, all
statements below hold eventually with probability 1.

First extend the integer controls to all $T\in[n,n+1]$ using Lemma~\ref{lem:pathwise-unit-interpolation} and
\eqref{eq:as-integer-truth}--\eqref{eq:as-integer-jac} imply that, for all
large $n$,
\begin{align}
    \sup_{T\in[n,n+1]}
    \norm{m_T(\theta^\star)}_W
    &\le
    K_0'\sqrt{\frac{\log(n)}{n}},
    \label{eq:as-real-truth}\\
    \sup_{T\in[n,n+1]}\sup_{\vartheta\in\Theta}
    \norm{m_T(\vartheta)-g_H(\vartheta,\theta^\star)}_W
    &\le
    K_1'\frac{\log^{q_0}(n)}{\sqrt n},
    \label{eq:as-real-ulln}\\
    \sup_{T\in[n,n+1]}
    \sup_{\vartheta\in B(\theta^\star,r)}
    \norm{\partial_\vartheta m_T(\vartheta)+B_H(\vartheta,\theta^\star)}_{\op}
    &\le
    K_2'\frac{\log^{q_0}(n)}{\sqrt n}.
    \label{eq:as-real-jac}
\end{align}

The same deterministic ball argument from Theorem~\ref{thm:generic-rate}, applied uniformly over $T\in[n,n+1]$ gives that no global minimizer of $\vartheta\mapsto \norm{m_T(\vartheta)}_W^2$ can
lie outside $B(\theta^\star,r_H/2)$, for all sufficiently large $T$, where $r_H$ and $b_H$ are the deterministic constants from
Lemma~\ref{lem:generic-local-curvature}.  

Now fix $T\in[n,n+1]$ and take
$h$ such that $\norm{h}_2\leq r_H/2$ and
$\theta^\star+h\in\Theta$.  By the fundamental theorem of calculus,
\[
\begin{aligned}
    m_T(\theta^\star+h)-m_T(\theta^\star)
    &=
    \int_0^1
    \partial_\vartheta m_T(\theta^\star+sh)h\,\dd s =
    g_H(\theta^\star+h,\theta^\star)
    +
    R_T(h),
\end{aligned}
\]
where
\[R_T(h)
    =
    \int_0^1
    \left\{
    \partial_\vartheta m_T(\theta^\star+sh)
    +
    B_H(\theta^\star+sh,\theta^\star)
    \right\}h\,\dd s .\]
By \eqref{eq:as-real-jac},
\[\norm{R_T(h)}_W\leq K_3\frac{\log^{q_0}(n)}{\sqrt n}\|h\|_2\le \frac{b_H}{4}\norm{h}_2\]

for all sufficiently large $n$.
Therefore, using Lemma~\ref{lem:generic-local-curvature} and equation
\eqref{eq:as-real-truth},
\[
\begin{aligned}
    \norm{m_T(\theta^\star+h)}_W
    &\ge
    \norm{g_H(\theta^\star+h,\theta^\star)}_W
    -
    \norm{R_T(h)}_W
    -
    \norm{m_T(\theta^\star)}_W \\
    &\ge
    \frac{3b_H}{4}\norm{h}_2
    -
    K_0'\sqrt{\frac{\log(n)}{n}} .
\end{aligned}
\]
At the truth,
\[ \norm{m_T(\theta^\star)}_W \leq K_0'\sqrt{\frac{\log(n)}{n}}.\]
Hence, if
\[\norm{h}_2>\frac{4K_0'}{b_H}\sqrt{\frac{\log(n)}{n}},\]
then
\[\norm{m_T(\theta^\star+h)}_W>\norm{m_T(\theta^\star)}_W.\]
Such a point cannot be a minimizer.  Since every minimizer has already been
localized inside $B(\theta^\star,r_H/2)$, it follows that, for all large
$n$ and all $T\in[n,n+1]$,
\[
    \norm{\hat\theta_{H,T}-\theta^\star}_2
    \le
    K\sqrt{\frac{\log(n)}{n}}.
\]
Thus, after increasing $K$, we obtain
\[
    \|\hat\theta_{H,T}-\theta^\star\|_2
    \le
    K\sqrt{\frac{\log(T)}{T}}
\]
for all sufficiently large real $T$, almost surely.  This proves equation \eqref{eq:as-rate-gmm}.

We now move on to proving equation \eqref{eq:as-rate-two-step}. By Lemma~\ref{lem:plugin-inverse-polynomial}, choosing a summable polynomial tail exponent and applying the first Borel--Cantelli lemma gives
\[\sup_{\vartheta\in\Theta}\norm{ \hat\Omega_{H,n}(\vartheta) -\bar\Omega_H(\vartheta,\theta^\star)}_{\op}\leq K_\Omega\frac{\log^{q_0}(n)}{\sqrt n}\]
for all sufficiently large integers $n$, almost surely.  By
Lemma~\ref{lem:pathwise-unit-interpolation}, this extends to
\[\sup_{T\in[n,n+1]}\sup_{\vartheta\in\Theta}\norm{ \hat\Omega_{H,T}(\vartheta) -\bar\Omega_H(\vartheta,\theta^\star)}_{\op} \leq K_\Omega'\frac{\log^{q_0}n}{\sqrt n}\]
for all sufficiently large $n$, almost surely.

The assumed almost-sure preliminary rate and the local Lipschitz continuity of
$\vartheta\mapsto\bar\Omega_H(\vartheta,\theta^\star)$ yield
\[\sup_{T\in[n,n+1]}\norm{\bar\Omega_H(\tilde\theta_T,\theta^\star)
    -\Omega_H(\theta^\star)}_{\op}\leq C\sqrt{\frac{\log(n)}{n}}\implies \sup_{T\in[n,n+1]}\norm{\hat\Omega_{H,T}(\tilde\theta_T)- \Omega_H(\theta^\star)}_{\op}\longrightarrow 0\]
almost surely.

Since
\[
    \inf_{\theta\in\Theta_0}
    \lambda_{\min}\{\Omega_H(\theta)\}
    =
    \omega_0>0,\qquad
    \sup_{\theta\in\Theta_0}
    \lambda_{\max}\{\Omega_H(\theta)\}<\infty
\]
by compactness and continuity, Weyl's inequality implies that, eventually
almost surely and uniformly over $T\in[n,n+1]$,
\[
    \frac{\omega_0}{2}I_q
    \preceq
    \hat\Omega_{H,T}(\tilde\theta_T)
    \preceq
    \omega_+ I_q
\]
for some deterministic finite $\omega_+<\infty$.  Hence the inverse exists
eventually almost surely, and
\[
    \omega_+^{-1}I_q
    \preceq
    \hat W_T
    \preceq
    2\omega_0^{-1}I_q,
    \qquad
    \hat W_T
    =
    \hat\Omega_{H,T}(\tilde\theta_T)^{-1}.
\]

On this eventual event, the proof of equation \eqref{eq:as-rate-gmm} applies with the deterministic norm-equivalence constants $\omega_+^{-1}$ and $2\omega_0^{-1}$ in place of the fixed matrix $W$.  
Therefore, for some fixed $K<\infty$,
\[
    \norm{\hat\theta_{H,T}^{(2)}-\theta^\star}_2
    \le
    K\sqrt{\frac{\log T}{T}}
\]
for all sufficiently large $T>0$, almost surely.

Finally, under slight modifications and an additional interpolation lemma, the previous arguments hold under a non-stationary start after a suitable burn in. In particular, consider the event
\[\calC_{n}^{\rm burn}=\left\{\sup_{L\log(n)\leq t\leq n+1}\mathfrak C_t\leq L_1\log(n)\right\}.\]
By Lemma \ref{lem:tv-transfer}
\begin{align}\label{eq:burninevent1}
    \Prob_\theta^\calH\left(\left(\calC_{n}^{\rm burn}\right)^c\right)&\leq Cn^{-c}+ \Prob_\theta^{\rm stat}\left(\left(\calC_{n}^{\rm burn}\right)^c\right)\\ \label{eq:burninevent2}
    &\leq Cn^{-c}+\Prob_\theta^{\rm stat}\left(\sup_{0\leq t\leq n+1}\mathfrak C_t> L_1\log(n)\right)\leq 2Cn^{-c}.
\end{align}
By picking $L,L_1$ such that $c=2$ then the first Borel-Cantelli lemma applies so that $\Prob_\theta^\calH\left(\calC_{n}^{\rm burn}\text{ all large $n$}\right)=1$. Applying the same pathwise logic ensures \ref{lem:pathwise-unit-interpolation} holds on $\calC_n^{\rm burn}$ with probability 1 for all large $n$, both conditionally and unconditionally on $\calH$. Similarly the same logic ensures that the same logic holds on the lower burn-in region, in particular, the following lemma holds

\begin{lemma}[Pathwise interpolation for burn-in statistics]
\label{lem:pathwise-burnin-interpolation}
Let $b_T=A+L\log(T)$ where $L$ is selected such that equations \eqref{eq:burninevent1} and \eqref{eq:burninevent2} hold for the specific choice of $L_1$. Let $\tau_T=T-b_T$ and define
\[
    m_{b,T}(\vartheta)
    =
    \frac{1}{\tau_T}
    \left\{
    \Psi_T^H(\vartheta)-\Psi_{b_T}^H(\vartheta)
    \right\}.
\]
Similarly, define the burn-in plug-in covariance by
\[
    \hat\Omega_{H,b,T}(\vartheta)
    =
    \frac{1}{\tau_T}
    \int_{b_T}^{T}
    H(t;\vartheta)\Lambda(t;\vartheta)H(t;\vartheta)^\top\,\dd t .
\]
Let $\omega_n=\sup_{T\in[n,n+1]}|b_T-b_n|$. Then, under Assumptions~\ref{ass:parameter}--\ref{ass:stability} and
Assumption~\ref{ass:weights}, there exist constants $C<\infty$ and
$q_\star<\infty$ such that, on $\mathcal C_n^{\rm burn}$, for all sufficiently large
$n$,
\begin{align}
\sup_{T\in[n,n+1]}\sup_{\vartheta\in\Theta}
\left\|
m_{b,T}(\vartheta)-m_{b,n}(\vartheta)
\right\|_2
&\le
C
\frac{(1+\omega_n)\log^{q_\star}(n)}{n},
\label{eq:pathwise-burnin-map}
\\
\sup_{T\in[n,n+1]}
\sup_{\vartheta\in B(\theta^\star,r)}
\left\|
\partial_\vartheta m_{b,T}(\vartheta)
-
\partial_\vartheta m_{b,n}(\vartheta)
\right\|_{\op}
&\le
C
\frac{(1+\omega_n)\log^{q_\star}(n)}{n},
\label{eq:pathwise-burnin-jac}
\\
\sup_{T\in[n,n+1]}\sup_{\vartheta\in\Theta}
\left\|
\hat\Omega_{H,b,T}(\vartheta)
-
\hat\Omega_{H,b,n}(\vartheta)
\right\|_{\op}
&\le
C
\frac{(1+\omega_n)\log^{q_\star}(n)}{n}.
\label{eq:pathwise-burnin-cov}
\end{align}
\end{lemma}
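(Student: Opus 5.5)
The plan mirrors the proof of Lemma~\ref{lem:pathwise-unit-interpolation}, with two new features: the lower integration limit \(b_T\) now moves with \(T\), and the normalization is \(\tau_T\) rather than \(T\). For \(j=0,1\) write
\[
\partial_\vartheta^j\Psi_T^H-\partial_\vartheta^j\Psi_{b_T}^H
=\int_{b_T}^T(\cdot)
\]
as a stochastic integral over \((b_T,T]\) against \(\dd N-\lambda(t;\vartheta)\,\dd t\). For \(T\in[n,n+1]\), the difference between the \(T\) and \(n\) versions of this integral splits into two boundary pieces:
\begin{enumerate}[label=(\roman*)]
\item the upper-end piece \(\int_{n}^{T}\);
\item the lower-end piece \(\int_{b_n}^{b_T}\), taken with sign.
\end{enumerate}
Piece (i) is over an interval of length at most one. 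Piece (ii) is over an interval of length \(|b_T-b_n|\le\omega_n\).

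On \(\mathcal C_n^{\rm burn}\) we have \(\sup_{L\log n\le t\le n+1}\mathfrak C_t\le L_1\log n\). This window contains \([b_n\wedge b_T,\,n+1]\) once \(A+L\log T\ge L\log n\), which holds since \(A>0\) and \(T\ge n\). I will cover each boundary interval by at most \(C(1+\omega_n)\) windows of length \(A\), whose endpoints carry no events a.s.\ by Lemma~\ref{lem:predictable-jumps}(ii). This gives an event count of at most \(C(1+\omega_n)\log n\) on each piece.

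The polynomial local-count envelopes (Assumption~\ref{ass:weights}, Lemma~\ref{lem:derivative-envelope}, and the product rule, exactly as in Lemma~\ref{lem:pathwise-unit-interpolation}) bound the integrands by \(C\{1+(L_1\log n)^m\}\). Together with the event counts, the two boundary pieces are \(O\bigl((1+\omega_n)\log^{m+1}n\bigr)\), uniformly in \(\vartheta\in\Theta\) for \(j=0\) and in \(\vartheta\in B(\theta^\star,r)\) for \(j=1\). Similarly, the bulk integral over \((b_n,n]\) is \(O(n\log^{m+1}n)\), since that interval has length at most \(n\).

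Next I handle the normalization, using the split
\[
m_{b,T}-m_{b,n}
=\tau_T^{-1}\{\text{boundary pieces}\}
+(\tau_T^{-1}-\tau_n^{-1})\{\Psi_n^H-\Psi_{b_n}^H\}.
\]
Since \(|\tau_T-\tau_n|\le 1+\omega_n\) and \(\tau_n\ge n/2\) for large \(n\), we get \(|\tau_T^{-1}-\tau_n^{-1}|\le C(1+\omega_n)/n^2\). Multiplying by the bulk bound \(Cn\log^{m+1}n\) gives \(C(1+\omega_n)\log^{m+1}n/n\). The boundary term divided by \(\tau_T\ge n/2\) gives the same order. This proves \eqref{eq:pathwise-burnin-map} and \eqref{eq:pathwise-burnin-jac} with \(q_\star=m+1\). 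The covariance bound \eqref{eq:pathwise-burnin-cov} is identical with \(F=H\Lambda H^\top\), and is easier because only the \(\dd t\) part is present.

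The main point needing care is that the lower limit \(b_T\) stays inside the region controlled by \(\mathcal C_n^{\rm burn}\). I should not use \(\sup_{0\le t}\mathfrak C_t\), because that fails before burn-in. Note also that \(\omega_n=L\log((n+1)/n)\le L/n\), so \(\omega_n\) is in fact bounded and the factor \((1+\omega_n)\) is harmless. I would remark on this but keep the general form as stated.
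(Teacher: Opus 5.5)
Your proposal is correct and follows the paper's proof essentially verbatim. Both use the same three pieces:
\begin{itemize}
\item an upper-end piece over $[n,T]$;
\item a lower-end piece over $[b_n,b_T]$ of size $C(1+\omega_n)\log^{q_\star}(n)$;
\item a bulk bound $C\tau_n\log^{q_\star}(n)$ over $[b_n,n]$.
\end{itemize}
Both then apply the normalization estimate $|\tau_T^{-1}-\tau_n^{-1}|\le C(1+\omega_n)/n^2$ and treat the covariance identically through $F=H\Lambda H^\top$.

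Your explicit checks that $\mathcal C_n^{\mathrm{burn}}$ controls the counts on $[b_n\wedge b_T,n+1]$ and that $\omega_n\le L/n$ are details the paper leaves implicit. Your unnormalized upper-end bound $O(\log^{m+1}n)$ is also the right one; the paper's display for that piece carries an extra factor $1/n$ that is not needed.
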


\begin{proof}
    First we prove equations \eqref{eq:pathwise-burnin-map} and \eqref{eq:pathwise-burnin-jac}. To do so, by the same polynomial argument as in Lemma \ref{lem:pathwise-unit-interpolation} for $j=0,1$
    \begin{equation}\label{eq:upperinterp}
    \sup_{T\in[n,n+1]}\sup_{\vartheta\in\Theta}
\left\|\partial_\theta^j \Psi_T^H(\vartheta)-\partial_\theta^j\Psi_n^H(\vartheta)\right\|_2\le
C\frac{\log^{q_\star}(n)}{n},\end{equation}
and 
\begin{equation}\label{eq:lowerinter}\sup_{T\in[n,n+1]}\sup_{\vartheta\in\Theta}
\left\|\partial_\theta^j \Psi_{b_T}^H(\vartheta)-\partial_\theta^j\Psi_{b_n}^H(\vartheta)\right\|_2\le
C(1+\omega_n)\log^{q_\star}(n).\end{equation}
Moreover, 
\begin{equation}\label{eq:OverallInterp}\sup_{\theta\in \Theta}\norm{\partial_\theta^j \Psi_{n}^H(\vartheta)-\partial_\theta^j\Psi_{b_n}^H(\vartheta)}\leq C\tau_n\log^{q_\star}(n).
\end{equation}
where equation \eqref{eq:OverallInterp} holds since the left hand side integrates over $[b_n,n]$. Then combining that 
\[\left|\frac{1}{\tau_T}-\frac{1}{\tau_n}\right|\leq C\frac{1+\omega_n}{n^2}\]
with equations \eqref{eq:upperinterp}, \eqref{eq:lowerinter} and \eqref{eq:OverallInterp} implies equations \eqref{eq:pathwise-burnin-map} and \eqref{eq:pathwise-burnin-jac}.

To prove equation \eqref{eq:pathwise-burnin-cov} set $F(t;\theta)=H(t;\theta)\Lambda(t;\theta)H(t;\theta)^\top$. Then the same polynomial count argument gives
\[\norm{\int_n^TF(t;\theta)\dd t-\int_{b_n}^{b_T}F(t;\theta)\dd t}_{\op}\leq C(1+\omega_n)\log^{q_\star}(n)\]
on $\calC_n^{\rm burn}$ for all $n$ sufficiently large almost surely. As the right hand side of the preceding display is independent of $\theta$ and $T$ equation \eqref{eq:pathwise-burnin-cov} follows using that $\tau_T\asymp n$.
\end{proof}

By Lemma \ref{lem:tv-transfer}, the results of Proposition~\ref{prop:generic-controls} hold after a finite burn in of length $A+L\log(n)$, for $L>0$ selected as in Lemma \ref{lem:pathwise-burnin-interpolation}. Therefore,  equations \eqref{eq:as-integer-truth}-\eqref{eq:as-integer-ulln}-\eqref{eq:as-integer-jac} hold with $\partial_\theta^j m_n(\theta)$ replaced with $\partial_\theta^j m_{n,b}(\theta)$ for $j=0,1$. Lemma \ref{lem:pathwise-burnin-interpolation}, and the same argument proving equation \eqref{eq:as-rate-gmm} holds in this case.

Similarly, for the two-step estimator, applying Lemma \ref{lem:tv-transfer} to Lemma \ref{lem:plugin-inverse-polynomial} and then Lemma \ref{lem:pathwise-burnin-interpolation} ensures that the burnt in two step covariance estimator exists and for all large $n$ almost surely. The remainder follows the same argument as for the proof of equation \eqref{eq:as-rate-two-step} under a stationary start. 

Therefore, equations \eqref{eq:as-rate-gmm} and \eqref{eq:as-rate-two-step} hold under any integrable finite start to the process.
\end{proof}

\section{Likelihood localization for the MLE upper bound}\label{app:mle-upper}

This appendix proves Theorem~\ref{thm:main-rate} and Corollary~\ref{cor:mle-asymptotic-normality}. The local score, local curvature, and score CLT inputs are consequences of the generic compact-window estimating-map theory applied to the score weight $H_{\rm score}=D_\theta^\top\Lambda^{-1}$, as referenced in equations \eqref{eq:likelihoodweights} and \eqref{eq:D-Lambda-def}. However, this does not imply that the maximum likelihood estimator agrees with the zero of the score, since our optimisation objective only considers a bounded parameter set. Therefore, the likelihood-specific work is the global localization step: a uniform likelihood law and population KL separation show that the global maximizer is, with high probability, inside the local ball where the score equation and generic curvature expansion apply. In particular, this section of the supplement ensures that the maximum likelihood estimate agrees with the zero of the score in the constrained problem where $\Theta$ is compact.

First, write
\[
    s_i(t;\vartheta)=\nabla_\vartheta\log\lambda_i(t;\vartheta),
    \qquad
    H_i^\ell(t;\vartheta)=\nabla_\vartheta^2\log\lambda_i(t;\vartheta).
\]
For $\vartheta\in\Theta$ and $\theta^\star\in\Theta_0$, define the population criteria
\begin{equation*}
\mathbb L(\vartheta,\theta^\star)
=
\E_{\theta^\star}\left[
\sum_{i=1}^D
\{\lambda_i(0;\theta^\star)\log\lambda_i(0;\vartheta)-\lambda_i(0;\vartheta)\}
\right]
\end{equation*}
and
\begin{equation*}
J(\vartheta,\theta^\star)
=
\E_{\theta^\star}\left[
\sum_{i=1}^D
\left\{
\frac{\nabla\lambda_i(0;\vartheta)\nabla\lambda_i(0;\vartheta)^\top}{\lambda_i(0;\vartheta)}
-
\{\lambda_i(0;\theta^\star)-\lambda_i(0;\vartheta)\}H_i^\ell(0;\vartheta)
\right\}
\right].
\end{equation*}
Then $J(\theta,\theta)=I(\theta)$.

\begin{proposition}[Uniform likelihood law]\label{prop:uniform-likelihood-law}
Under Assumptions~\ref{ass:parameter}--\ref{ass:stability}, for every $c>0$ there are constants $C,K<\infty$ and an integer $q_0\ge1$ such that, for all sufficiently large $T$,
\begin{equation}\label{eq:uniform-likelihood-law}
\sup_{\theta^\star\in\Theta_0}
\Prob_{\theta^\star}\left(
\sup_{\vartheta\in\Theta}
\left|\frac{1}{T}\ell_T(\vartheta)-\mathbb L(\vartheta,\theta^\star)\right|
>
K\frac{\log^{q_0}T}{\sqrt T}
\right)
\le CT^{-c} .
\end{equation}
\end{proposition}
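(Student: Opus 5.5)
The plan is to decompose $T^{-1}\ell_T(\vartheta)$ into a martingale part and a stationary drift part, exactly as in the proof of Proposition~\ref{prop:generic-controls}. Writing $\dd N_i=\lambda_i(t;\theta^\star)\dd t+\dd M_i^{\theta^\star}$ gives
\[
\frac1T\ell_T(\vartheta)-\mathbb L(\vartheta,\theta^\star)
=\frac1T\sum_{i=1}^D\int_0^T\log\lambda_i(t;\vartheta)\,\dd M_i^{\theta^\star}(t)
+\left(\frac1T\int_0^T Z_{\vartheta,\theta^\star}\circ G_t\,\dd t-\E_{\theta^\star}Z_{\vartheta,\theta^\star}\right),
\]
with $Z_{\vartheta,\theta^\star}(x)=\sum_i\{\lambda_i(0;\theta^\star)\log\lambda_i(0;\vartheta)-\lambda_i(0;\vartheta)\}$ evaluated on the local configuration. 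By stationarity the mean of the drift integrand is $\mathbb L(\vartheta,\theta^\star)$.

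For the drift term, I would verify the envelopes of Theorem~\ref{thm:local-concentration} with index $\xi=(\vartheta,\theta^\star)\in\Theta\times\Theta_0$. Lemma~\ref{lem:derivative-envelope} with $r=0,1$ gives $|\log\lambda_i(t;\vartheta)|$, $\lambda_i(t;\vartheta)$ and their $\vartheta$-gradients bounded by $C\{1+\mathfrak C_t^{m}\}$ uniformly on $K_\Theta$. Here $\log\lambda_i$ is controlled from below by $\underline\lambda$ and from above by the polynomial bound on $\lambda_i$. The mean value theorem then gives the Lipschitz envelope in $\vartheta$. For the $\theta^\star$ argument, $\lambda_i(0;\theta^\star)$ is Lipschitz with the same polynomial envelope, so the products have envelopes $B\{1+C(x)^{2m}\}$ and Theorem~\ref{thm:local-concentration} applies directly. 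Note that convexity of $K_\Theta$ is needed for the mean value step.

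For the martingale term, I would repeat the argument of \eqref{eq:generic-ulln} with the scalar predictable integrand $\log\lambda_i(t;\vartheta)$. Its bracket integrand $\sum_i\log^2\lambda_i(0;\vartheta)\lambda_i(0;\theta^\star)$ has a polynomial envelope, so Theorem~\ref{thm:local-concentration} and stationarity give $\sup_\vartheta\langle M_\vartheta\rangle_T\le C'T$ with probability $1-CT^{-c-p-2}$. On the event $\calA_T$ of Lemma~\ref{lem:count-truncation}, the truncated integrand $\log\lambda_i(t;\eta)\1\{\mathfrak C_t\le K_0\log T\}$ has jumps bounded by $K\log^{m}T$. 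Lemma~\ref{lem:stopped-dzv} then gives, for each point $\eta$ of a $T^{-1/2}$-net of size $CT^{p/2}$, a tail bound of order $T^{-c-p-2}$ at level $K\sqrt T\log^{q_0}T$, and a union bound over the net follows. The extension to all $\vartheta$ uses the pathwise Lipschitz bound
\[
|M_\vartheta-M_\eta|\le K\tfrac{\log^m T}{\sqrt T}\{N([0,T])+\textstyle\int_0^T\lambda(t;\theta^\star)\dd t\},
\]
which is $\le K'\sqrt T\log^{m'}T$ on $\calA_T$, as in \eqref{eq:epsilonnetextension}. Combining the two terms and enlarging $q_0$ gives \eqref{eq:uniform-likelihood-law}.

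I expect the only delicate point to be uniformity: the net must be deterministic, and the constants must not depend on $\theta^\star$. Both are inherited, since the envelopes in Lemma~\ref{lem:derivative-envelope} hold on all of $K_\Theta$ and Theorem~\ref{thm:local-concentration} is already uniform over $\Theta_0$. Otherwise the argument is routine bookkeeping.
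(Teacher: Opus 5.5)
Your proposal is correct and follows essentially the same route as the paper. The paper also splits via Doob--Meyer into a stationary drift average controlled by Theorem~\ref{thm:local-concentration}, with envelopes from Lemma~\ref{lem:derivative-envelope}, and a martingale average controlled by bracket concentration, Lemma~\ref{lem:stopped-dzv} at each point of a deterministic $T^{-1/2}$-net, and a pathwise mean-value Lipschitz extension on the sliding-window and total-count truncation events. Your device of indexing the drift functional by $(\vartheta,\theta^\star)\in\Theta\times\Theta_0$, so that Theorem~\ref{thm:local-concentration} applies to functionals that depend on the truth, is the same one the paper uses in Proposition~\ref{prop:generic-controls}.
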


\begin{proof}
Under $\Prob_{\theta^\star}$, decompose the centred likelihood using the Doob-Meyer decomposition so that
\[\frac{1}{T}\ell_T(\vartheta)-\mathbb L(\vartheta,\theta^\star)=M_T(\vartheta)+D_T(\vartheta),\]
where
\[
    M_T(\vartheta)
    =\frac1T\sum_{i=1}^D\int_0^T\log\lambda_i(t;\vartheta)\,\dd M_i^{\theta^\star}(t)
\]
is a mean zero martingale and
\[
\begin{aligned}
    D_T(\vartheta)
    &=\frac1T\int_0^T
    \sum_{i=1}^D
    \{\lambda_i(t;\theta^\star)\log\left(\lambda_i(t;\vartheta)\right)-\lambda_i(t;\vartheta)\}\,\dd t
    -\mathbb L(\vartheta,\theta^\star)
\end{aligned}
\]
is the centred drift term. By Lemma~\ref{lem:derivative-envelope}, the drift integrand and its derivative in $\vartheta$ are compact-window functionals with polynomial local-count envelopes, and the assumptions on the conditional intensity. Theorem~\ref{thm:local-concentration}, applied to a fixed-dimensional parameter index, gives
\begin{equation}\label{eq:likelihood-drift-ulln}
    \sup_{\vartheta\in\Theta}|D_T(\vartheta)|
    \le K\frac{\log^{q_0}(T)}{\sqrt T}
\end{equation}
with probability at least $1-CT^{-c}$, uniformly in $\theta^\star$.

It remains to control the martingale term uniformly.  Let $\Omega_{\rm slide}$ be the event from Lemma~\ref{lem:count-truncation}. On this event, Lemma~\ref{lem:derivative-envelope} gives
\[
    \sup_{t\in[0,T],\vartheta\in\Theta,i}|\log\lambda_i(t;\vartheta)|
    \le C\log^{m_1}(T),
\]
and Lemma~\ref{lem:predictable-jumps} gives jump sizes of $ M_T(\vartheta)$ bounded by $C\frac{\log^{m_1}(T)}{T}$. The predictable variation of $M_T(\vartheta)$ is
\[
    \langle M_T(\vartheta)\rangle =\frac{1}{T^2}\sum_{i=1}^D\int_0^T\{\log\lambda_i(t;\vartheta)\}^2\lambda_i(t;\theta^\star)\,\dd t.
\]
The integrand of $ \langle M_T(\vartheta)\rangle$ is again a compact-window functional with polynomial envelope, so Theorem~\ref{thm:local-concentration} and the fixed-window moment bound imply that, uniformly over $\vartheta$, this bracket is at most $C\log^{m_2}(T)/T$ with probability at least $1-CT^{-c}$.  Applying the Lemma~\ref{lem:stopped-dzv} for each fixed $\vartheta$ gives
\[\Prob_{\theta^\star}\left(|M_T(\vartheta)|>K\frac{\log^{q_1}(T)}{\sqrt T}\right)\leq C T^{-c-p-5}\]
for $K,q_1$ and $T$ all sufficiently large.

Next, fix a $T^{-1/2}$-net $\calN_T$ of $\Theta$, with $|\calN_T|\le C T^{p/2}$, and union bound the preceding display over $\calN_T$.  To pass from the net to all of $\Theta$, write $\dd M_i^{\theta^\star}=\dd N_i-\lambda_i(t;\theta^\star)\dd t$ and use the mean-value theorem:
\[
\begin{aligned}
    |M_T(\vartheta)-M_T(\eta)|
    &\le
    \norm{\vartheta-\eta}_2
    \sup_{\zeta\in\Theta}
    \frac1T\sum_{i=1}^D\int_0^T
    \norm{\nabla_\zeta\log\lambda_i(t;\zeta)}_2\,\dd N_i(t) \\
    &\quad+
    \norm{\vartheta-\eta}_2
    \sup_{\zeta\in\Theta}
    \frac1T\sum_{i=1}^D\int_0^T
    \norm{\nabla_\zeta\log\lambda_i(t;\zeta)}_2
    \lambda_i(t;\theta^\star)\,\dd t .
\end{aligned}
\]
 On $\Omega_{\rm slide}$ and the total-count event \eqref{eq:total-count-truncation}, Lemma~\ref{lem:derivative-envelope} implies that 
\[|M_T(\vartheta)-M_T(\eta)|\leq K\frac{\log^m(T)}{\sqrt{T}}\]
in a similar fashion to equation \eqref{eq:MGLipschitzBound}. Thus,
\begin{equation}\label{eq:likelihood-martingale-ulln}
    \sup_{\vartheta\in\Theta}|M_T(\vartheta)|
    \le K\frac{\log^{q_2}T}{\sqrt T}
\end{equation}
with polynomially high probability, uniformly in $\theta^\star$, by the same logic as equation \eqref{eq:epsilonnetextension}. Combining \eqref{eq:likelihood-drift-ulln} and \eqref{eq:likelihood-martingale-ulln} proves \eqref{eq:uniform-likelihood-law}.
\end{proof}

\begin{proposition}[Score-weight controls and likelihood localization]\label{prop:mle-localization-inputs}
Under Assumptions~\ref{ass:parameter}--\ref{ass:identifiability}, the following hold for all sufficiently large $T$
\begin{enumerate}[label=(\roman*)]
\item For every $c>0$, there exists positive constants $C,K<\infty$
\[
\sup_{\theta^\star\in\Theta_0}
\Prob_{\theta^\star}\left(
\norm{\frac{1}{T}\nabla\ell_T(\theta^\star)}_2
>
K\sqrt{\frac{\log(T)}{T}}
\right)
\le CT^{-c}.
\]
\item There exist $r_0>0$, $C>0$ and $\kappa>0$ such that, for every $c>0$,
\[
\sup_{\theta^\star\in\Theta_0}
\Prob_{\theta^\star}\left(
\inf_{\vartheta\in B(\theta^\star,r_0)}
\lambda_{\min}\left\{-\frac{1}{T}\nabla^2\ell_T(\vartheta)\right\}
<\kappa/4
\right)
\le CT^{-c}.
\]
\item For every $\epsilon>0$ and every $c>0$, there exists a finite $C>0$ such that
\[
\sup_{\theta^\star\in\Theta_0}
\Prob_{\theta^\star}\left(
\norm{\hat\theta_T-\theta^\star}_2>\epsilon
\right)
\le CT^{-c}
\]
for all sufficiently large $T$.
\end{enumerate}
\end{proposition}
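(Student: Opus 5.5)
Parts (i) and (ii) will come from Proposition~\ref{prop:generic-controls} applied to the score weight $H_{\rm score}=D_\theta^\top\Lambda^{-1}$, and part (iii) from Proposition~\ref{prop:uniform-likelihood-law} combined with Kullback--Leibler separation. First I will check that $H_{\rm score}$ satisfies Assumption~\ref{ass:weights}. Its $r$-th derivatives are finite sums of products of derivatives of $\lambda_i$ up to order three, divided by powers of $\lambda_i\ge\underline\lambda$. So Lemma~\ref{lem:derivative-envelope} gives polynomial local-count envelopes, and the mean-value theorem with the third-order envelopes gives the Lipschitz envelopes. Shift covariance is inherited from the compact-window form of \eqref{eq:model}. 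Since $\Psi_T^{H_{\rm score}}=\nabla\ell_T$, part (i) is then exactly \eqref{eq:generic-truth-bound} with $W=I_p$.

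For (ii), I note that $-\nabla^2\ell_T(\vartheta)/T=-\partial_\vartheta\{T^{-1}\Psi_T^{H_{\rm score}}(\vartheta)\}$. By \eqref{eq:generic-jacobian-control}, on the local ball this differs from $B_{H_{\rm score}}(\vartheta,\theta^\star)=J(\vartheta,\theta^\star)$ by at most $K\log^{q_0}(T)/\sqrt T$ with probability $1-CT^{-c}$. Next, $J(\theta,\theta)=I(\theta)$, and $I$ is uniformly nonsingular on $\Theta_0$ by Assumption~\ref{ass:identifiability}; set $\kappa=\inf_{\Theta_0}\lambda_{\min}I$. Proposition~\ref{prop:finite-window-continuity} gives continuity of $(\vartheta,\theta^\star)\mapsto J$, and compactness makes this uniform. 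Together these give $r_0$ with $\lambda_{\min}J(\vartheta,\theta^\star)\ge\kappa/2$ for $\|\vartheta-\theta^\star\|\le r_0$. Weyl's inequality then yields the $\kappa/4$ bound once $T$ is large.

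For (iii), the main point is the population separation
\[
\delta_\epsilon=\inf_{\theta^\star\in\Theta_0}\inf_{\|\vartheta-\theta^\star\|\ge\epsilon}\{\mathbb L(\theta^\star,\theta^\star)-\mathbb L(\vartheta,\theta^\star)\}>0 .
\]
I will write the difference as $\E_{\theta^\star}\sum_i\lambda_i^\star\,\{u_i-1-\log u_i\}$ with $u_i=\lambda_i(0;\vartheta)/\lambda_i(0;\theta^\star)$. This is nonnegative, and it vanishes only if $\lambda(0;\vartheta)=\lambda(0;\theta^\star)$ almost surely. By stationarity that forces equality of the intensity paths for a.e.\ $t$, so $\vartheta=\theta^\star$ by Assumption~\ref{ass:identifiability}. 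Continuity of $\mathbb L$ on $\Theta\times\Theta_0$, from Proposition~\ref{prop:finite-window-continuity} with the log-envelope of Lemma~\ref{lem:derivative-envelope}, and compactness of $\{\|\vartheta-\theta^\star\|\ge\epsilon\}$ then give $\delta_\epsilon>0$. Finally I will take the event of Proposition~\ref{prop:uniform-likelihood-law} with deviation less than $\delta_\epsilon/3$. On it, any $\vartheta$ at distance at least $\epsilon$ satisfies $T^{-1}\ell_T(\vartheta)<T^{-1}\ell_T(\theta^\star)$, so the global maximizer lies in $B(\theta^\star,\epsilon)$.

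I expect the main obstacle to be the envelope verification for the score weight's second derivative, which involves third derivatives of $\lambda$, together with the uniform continuity of $J$ near the diagonal. Both should reduce to the bookkeeping already done in Lemmas~\ref{lem:derivative-envelope} and~\ref{lem:generic-local-curvature}.
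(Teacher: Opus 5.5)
Your proposal is correct and follows the paper's proof essentially step for step: you verify admissibility of $H_{\rm score}$, read (i) off \eqref{eq:generic-truth-bound}, get (ii) from \eqref{eq:generic-jacobian-control} together with $B_{H_{\rm score}}=J$, $J(\theta,\theta)=I(\theta)$ and continuity of $J$ near the diagonal, and get (iii) from the Kullback--Leibler form of $\mathbb L(\theta^\star,\theta^\star)-\mathbb L(\vartheta,\theta^\star)$, compactness, and Proposition~\ref{prop:uniform-likelihood-law}. One small overstatement, which the paper shares, is that the third-order bounds of Lemma~\ref{lem:derivative-envelope} cannot give an $r=2$ Lipschitz envelope for $H_{\rm score}$, since that would need fourth derivatives of $\log\lambda_i$; nothing breaks, however, because the proof of Proposition~\ref{prop:generic-controls} only uses Lipschitz envelopes for $r\le1$, and those do follow from the growth envelopes up to $r=2$.
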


\begin{proof}
For the score weight $S(t;\vartheta)=D_\theta(t;\vartheta)^\top\Lambda(t;\vartheta)^{-1}$, Lemma~\ref{lem:derivative-envelope} and the lower bound on intensities show admissibility in the sense of Assumption~\ref{ass:weights}. Moreover,
\[
    \Psi_T^S(\vartheta)=\nabla\ell_T(\vartheta),
    \qquad
    \partial_\vartheta\left\{\frac{1}{T}\Psi_T^S(\vartheta)\right\}=\frac{1}{T}\nabla^2\ell_T(\vartheta),
\]
and the corresponding population Jacobian satisfies $B_S(\vartheta,\theta^\star)=J(\vartheta,\theta^\star)$. Part (i) follows directly from Proposition~\ref{prop:generic-controls}, specifically, equation \eqref{eq:generic-truth-bound}.

For part (ii), Proposition~\ref{prop:generic-controls}, specifically equation \eqref{eq:generic-jacobian-control} gives, uniformly over a fixed local ball,
\[
\sup_{\vartheta\in B(\theta^\star,r)}
\left\|
\frac{1}{T}\nabla^2\ell_T(\vartheta)+J(\vartheta,\theta^\star)
\right\|_{\op}
=
O_{\Prob_{\theta^\star}}\left(\frac{\log^{q_0}T}{\sqrt T}\right),
\]
with polynomially high probability uniformly over $\theta^\star\in\Theta_0$.  The map $(\vartheta,\theta)\mapsto J(\vartheta,\theta)$ is continuous by Proposition~\ref{prop:finite-window-continuity} and the derivative envelopes.  Since $J(\theta,\theta)=I(\theta)$ and the Fisher information is uniformly nonsingular on $\Theta_0$, there is a $\kappa>0$ such that $I(\theta)\succeq \kappa I_p$ on $\Theta_0$.  Compactness gives $r_0>0$ such that $J(\vartheta,\theta)\succeq(\kappa/2)I_p$ whenever $\theta\in\Theta_0$ and $\|\vartheta-\theta\|_2\le r_0$.  The stochastic bound then implies part (ii) .

For part (iii), Proposition~\ref{prop:uniform-likelihood-law} gives a uniform law of large numbers for the likelihood.  The contrast is uniquely maximized at $\theta^\star$, since
\[
\mathbb L(\vartheta,\theta^\star)-\mathbb L(\theta^\star,\theta^\star)
=
-\sum_{i=1}^D\E_{\theta^\star}\left[
\lambda_i(0;\theta^\star)
\left\{
\frac{\lambda_i(0;\vartheta)}{\lambda_i(0;\theta^\star)}-1
-
\log\left(\frac{\lambda_i(0;\vartheta)}{\lambda_i(0;\theta^\star)}
\right)\right\}
\right].
\]
The expectation is non-negative and only vanishes when $\vartheta=\theta^\star$ by Assumption~\ref{ass:identifiability}.  Proposition~\ref{prop:finite-window-continuity} gives continuity of $\mathbb L$, and compactness gives the uniform separation
\[
\inf_{\theta^\star\in\Theta_0}
\inf_{\|\vartheta-\theta^\star\|_2\ge\epsilon}
\{\mathbb L(\theta^\star,\theta^\star)-\mathbb L(\vartheta,\theta^\star)\}>0 .
\]
The uniform likelihood law then implies consistency with polynomially high probability, by exactly the same argument preceding equation \eqref{eq:consistencygeneralH}, which yields exactly (iii).
\end{proof}

\subsection{Proof of Theorem~\ref{thm:main-rate} (MLE high-probability rate)}\label{app:proof:thm:main-rate}

\begin{proof}[Proof of Theorem~\ref{thm:main-rate}]
For any $c>0$ let $K_0$ be selected large enough so that  Proposition~\ref{prop:mle-localization-inputs}(i) applies with $K\mapsto K_0$. Therefore, for all sufficiently large $T$, $8 K_0\sqrt{\frac{\log(T)}{\kappa^2T}}<r_0/2$. Next, intersect the high-probability events from Proposition~\ref{prop:mle-localization-inputs} on which
\[
    \norm{\frac{1}{T}\nabla\ell_T(\theta^\star)}_2\le K_0\sqrt{\frac{\log(T)}{T}},
    \qquad
    \inf_{\vartheta\in B(\theta^\star,r_0)}
    \lambda_{\min}\left\{-\frac{1}{T}\nabla^2\ell_T(\vartheta)\right\}\ge\kappa/4,
\]
and $\hat\theta_T\in B(\theta^\star,r_0/2)$.  For any $h\in \R^p$ with $\norm{h}_2\le r_0/2$, Taylor's formula gives
\[
\frac{1}{T}\{\ell_T(\theta^\star+h)-\ell_T(\theta^\star)\}
\le
    K_0\sqrt{\frac{\log(T)}{T}}\norm{h}_2-\frac{\kappa}{8}\norm{h}_2^2 .
\]
Therefore every $h$ satisfying $\norm{h}_2>8 K_0\sqrt{\frac{\log(T)}{\kappa^2T}}$ and $\norm{h}_2\leq r_0/2$ has likelihood strictly smaller than that of $\theta^\star$.  Since the global maximizer is already localized inside $B(\theta^\star,r_0/2)$,
\[
    \norm{\hat\theta_T-\theta^\star}_2
    \le
    \frac{8K_0}{\kappa}\sqrt{\frac{\log(T)}{T}}
\]
on the same event.  This proves the theorem under a stationary start after enlarging constants.  The bound transfers to a non-stationary start satisfying the finite pre-history condition by the same burn-in argument given at the end of the proof of Theorem~\ref{thm:generic-rate}.
\end{proof}

\subsection{Proof of Corollary~\ref{cor:mle-asymptotic-normality} (MLE asymptotic normality)}\label{app:proof:cor:mle-asymptotic-normality}

\begin{proof}[Proof of Corollary~\ref{cor:mle-asymptotic-normality}]
Let
\[
S(t;\vartheta)=D_\theta(t;\vartheta)^\top\Lambda(t;\vartheta)^{-1}
\]
be the score weight.  Lemma~\ref{lem:derivative-envelope} and the uniform lower bound on intensities show that $S$ satisfies Assumption~\ref{ass:weights}.  Since $\Psi_T^S(\theta^\star)=\nabla\ell_T(\theta^\star)$ and $\Omega_S(\theta^\star)=I(\theta^\star)$, Proposition~\ref{prop:generic-map-clt} gives
\begin{equation}\label{eq:mle-score-clt-supp}
T^{-1/2}\nabla\ell_T(\theta^\star)
\Rightarrow
N_p\{0,I(\theta^\star)\} .
\end{equation}

By Theorem~\ref{thm:main-rate}, $\hat\theta_T-\theta^\star=o_{\Prob_{\theta^\star}}(1)$, and since $\Theta_0\Subset\Theta^\circ$, the maximizer is an interior point with probability tending to one.  Thus $\nabla\ell_T(\hat\theta_T)=0$ with probability tending to one.  Let $\rho_T=K\log(T)/\sqrt T$. Theorem \ref{thm:main-rate} gives $\|\hat\theta_T-\theta^\star\|_2\le\rho_T$ with probability tending to one.  On the ball $B(\theta^\star,\rho_T)$, Proposition~\ref{prop:generic-controls} applied to the score weight gives
\[
\sup_{\vartheta\in B(\theta^\star,\rho_T)}
\left\|\frac{1}{T}\nabla^2\ell_T(\vartheta)+J(\vartheta,\theta^\star)\right\|_{\op}
=o_{\Prob_{\theta^\star}}(1).
\]
The continuity of $(\vartheta,\theta)\mapsto J(\vartheta,\theta)$, proved through Proposition~\ref{prop:finite-window-continuity}, and $J(\theta^\star,\theta^\star)=I(\theta^\star)$, imply
\begin{equation}\label{eq:mle-hessian-local-convergence-supp}
\sup_{\vartheta\in B(\theta^\star,\rho_T)}
\left\|\frac{1}{T}\nabla^2\ell_T(\vartheta)+I(\theta^\star)\right\|_{\op}
=o_{\Prob_{\theta^\star}}(1).
\end{equation}

A Taylor expansion of the score around $\theta^\star$ gives, on the event that $\hat\theta_T$ is interior,
\[0=T^{-1/2}\nabla\ell_T(\theta^\star)+\left[\int_0^1\frac1T\nabla^2\ell_T\{\theta^\star+s(\hat\theta_T-\theta^\star)\}\,\dd s\right]\sqrt T(\hat\theta_T-\theta^\star).\]
The whole segment lies in $B(\theta^\star,\rho_T)$ with probability tending
to one, and \eqref{eq:mle-hessian-local-convergence-supp} gives
\[\int_0^1\frac1T\nabla^2\ell_T\{\theta^\star+s(\hat\theta_T-\theta^\star)\}\,\dd s=-I(\theta^\star)+o_{\Prob_{\theta^\star}}(1).\]
Since $I(\theta^\star)$ is nonsingular, inversion and \eqref{eq:mle-score-clt-supp} yields
\[\sqrt T(\hat\theta_T-\theta^\star)=I(\theta^\star)^{-1}T^{-1/2}\nabla\ell_T(\theta^\star)+o_{\Prob_{\theta^\star}}(1),\]
and the displayed normal limit follows by Slutsky's theorem and the same burn-in argument as given at the end of the proof of Theorem \ref{thm:generic-an}.
\end{proof}

\section{Likelihood lower bound}\label{app:lan-lower}

We conclude the general theoretical discussion of the supplement by demonstrating the $T^{-1/2}$ rate cannot be uniformly improved by any (measurable) estimator. To do so, we first prove a technical lemma about the Kullback-Leibler (KL) divergence between two ``well-behaved'' conditional intensity functions, conditional on a locally finite history. Using this result, we are able to apply Le Cam's two-point lemma \citep[e.g.][Chapter 15]{wainwright2019} and thus prove Theorem \ref{thm:lower-bound}. 

\begin{lemma}[Conditional KL on the likelihood interval]\label{lem:conditional-kl}
Let $K\Subset\Thetao$.  For a locally finite history $\calH$ on $[-A,0)$, let $\Prob_{\theta,\calH}^{[0,T]}$ be the law on $[0,T]$ of the multivariate point process whose predictable intensity, given $\calH$ and its own past $\omega|_{[0,t)}$, is the general compact-memory intensity
\begin{align*}
\lambda_{i}^{\calH,\omega}(t;\theta)
&=
\phi_{\theta,i}\left(
\nu_{\theta,i}
+
\sum_{j=1}^D
\int_{[t-A,t)}
h_{\theta,ij}(t-s)\,d(\calH_j+\omega_j)(s)
\right),
\qquad 0\le t\le T .
\end{align*}
Here $\calH_j$ is understood as a measure on $[-A,0)$ and $\omega_j$ as a measure on $[0,T]$.  This family is a regular conditional version of the stationary future law: if $\calH=N|_{[-A,0)}$ under $\Prob_\theta$, then
\[
\Prob_\theta\big(N|_{[0,T]}\in\cdot\mid \calH\big)=\Prob_{\theta,\calH}^{[0,T]}(\cdot)
\qquad \Prob_\theta\text{-a.s.}
\]
There is a constant $C_K<\infty$ such that, for all $\theta,\theta'\in K$ and all $T\ge1$, if $\calH=N|_{[-A,0)}$ has law $\Prob_\theta^{[-A,0)}$, then
\begin{equation*}
\E_{\theta}\left[
\KL\left(\Prob_{\theta,\calH}^{[0,T]}\,\middle\|\,\Prob_{\theta',\calH}^{[0,T]}\right)
\right]
\le
C_K T\norm{\theta-\theta'}_2^2.
\end{equation*}
\end{lemma}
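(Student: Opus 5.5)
We propose to compute the conditional KL divergence explicitly through the point-process likelihood ratio, and then bound it by a second-order expansion using the polynomial envelopes of Lemma~\ref{lem:derivative-envelope} and the exponential moments of Lemma~\ref{lem:fixed-window-count-mgf}.

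First, given $\calH$, both conditional laws are absolutely continuous with respect to $D$ independent unit-rate Poisson processes on $[0,T]$. This is the same density representation used in the proof of Proposition~\ref{prop:finite-window-continuity}, now with the history $\calH$ in place of the empty start. Taking the log-ratio and its expectation under $\Prob_{\theta,\calH}^{[0,T]}$, and using the compensator identity, gives
\[
\KL\left(\Prob_{\theta,\calH}^{[0,T]}\,\middle\|\,\Prob_{\theta',\calH}^{[0,T]}\right)
=\E_{\theta,\calH}\left[\sum_{i=1}^D\int_0^T \lambda_i\,\psi\!\left(\frac{\lambda_i'}{\lambda_i}\right)\dd t\right],
\]
where $\psi(x)=x-1-\log x\ge0$, $\lambda_i=\lambda_i^{\calH,\omega}(t;\theta)$ and $\lambda_i'=\lambda_i^{\calH,\omega}(t;\theta')$. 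The stochastic-integral term has mean zero provided $\int_0^T|\log\lambda_i'-\log\lambda_i|\lambda_i\,\dd t$ is integrable, which the envelopes below guarantee. Averaging over $\calH\sim\Prob_\theta^{[-A,0)}$, the regular-conditional-version identity recombines the conditional laws into the stationary law $\Prob_\theta$ on $[-A,T]$. Hence the left side of the claim equals
\[
\E_\theta\left[\sum_i\int_0^T\lambda_i(t;\theta)\,\psi\{\lambda_i(t;\theta')/\lambda_i(t;\theta)\}\,\dd t\right].
\]
To justify the regular-conditional-version identity itself, we would note that conditionally on $\calH=N|_{[-A,0)}$ the stationary future solves the same thinning equation, and that by compact memory only the window $[-A,0)$ of the past enters.

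Next comes a pointwise bound. Since $\lambda_i,\lambda_i'\ge\underline\lambda>0$, we have $\psi(x)\le C_\delta(x-1)^2$ on compacts, but this needs care when $x$ is large. Instead we use $\lambda\psi(\lambda'/\lambda)=\lambda'-\lambda-\lambda\log(\lambda'/\lambda)\le(\lambda'-\lambda)^2/\min(\lambda,\lambda')\le(\lambda'-\lambda)^2/\underline\lambda$. This follows from $\log y\ge1-1/y$, which gives $\lambda\psi\le(\lambda'-\lambda)(1-\lambda/\lambda')=(\lambda'-\lambda)^2/\lambda'$. By the mean-value theorem along the segment in $K_\Theta$ (convexity of $K_\Theta\supset K$) and the envelope \eqref{eq:deriv-envelope-lambda},
\[
|\lambda_i(t;\theta')-\lambda_i(t;\theta)|\le C(1+\mathfrak C_t^{m_1})\|\theta-\theta'\|_2 .
\]
Therefore the integrand is at most $C(1+\mathfrak C_t^{2m_1})\|\theta-\theta'\|_2^2$.

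Finally, stationarity together with Lemma~\ref{lem:fixed-window-count-mgf} gives $\sup_{\theta\in K}\E_\theta[\mathfrak C_0^{2m_1}]<\infty$, and integrating over $[0,T]$ yields $C_K T\|\theta-\theta'\|_2^2$. We expect the main obstacle to be the rigorous first step: checking the density formula and the martingale property of the log-ratio integral with a nonempty random history, so that the conditional KL is indeed given by the displayed compensator expression. We would handle this by localizing with the stopping times at which $\mathfrak C_t$ first exceeds $n$, and then passing to the limit by monotone convergence, which is available since the integrand is nonnegative. The bounded-below intensities keep the log terms controlled throughout.
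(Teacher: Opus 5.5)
Your proposal is correct and follows the paper's proof essentially step for step. Both arguments use the compensator form of the conditional KL, the pointwise bound $\lambda\log(\lambda/\lambda')-\lambda+\lambda'\le(\lambda-\lambda')^2/\lambda'\le\underline\lambda^{-1}(\lambda-\lambda')^2$, the mean-value theorem on the convex set $K_\Theta$ with the derivative envelope, the tower property to recover the stationary law on $[-A,T]$, and the uniform fixed-window moment bound; the only difference is that you sketch justifications (localization, the thinning argument for the conditional version) for the KL formula and the conditional-law identity, where the paper simply cites Daley--Vere-Jones and Kallenberg.
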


\subsection{Proof of Lemma~\ref{lem:conditional-kl} (Conditional KL on the likelihood interval)}\label{app:proof:lem:conditional-kl}

\begin{proof}
The existence of a regular conditional version follows from the compactly supported finite-memory property and the existence of regular conditional distributions on the standard Borel configuration spaces; see \citet[Lemma 1.16 (i)]{kallenberg2017}.

For a fixed history $\calH$, the standard likelihood and KL formula for point processes with strictly positive predictable intensities gives, conditional on $\calH$,
\[
\KL\!\left(\Prob_{\theta,\calH}^{[0,T]}\middle\|\Prob_{\theta',\calH}^{[0,T]}\right)
=
\E_{\theta,\calH}\left[
\sum_{i=1}^D\int_0^T
\left\{
\lambda_{i}^{\calH,N}(t;\theta)
\log\left(\frac{\lambda_{i}^{\calH,N}(t;\theta)}{\lambda_{i}^{\calH,N}(t;\theta')}\right)
-\lambda_{i}^{\calH,N}(t;\theta)
+\lambda_{i}^{\calH,N}(t;\theta')
\right\}\,\dd t\right],
\]
see \citet[Vol.~I, Secs.~7.2--7.3]{daley2003}. Let $\underline\lambda>0$ be the uniform lower bound on the intensities over $K_\Theta$.  Then
\[
x\log(x/y)-x+y \le \underline\lambda^{-1}(x-y)^2,
\qquad x,y\ge\underline\lambda .
\]
By the mean-value theorem on $K_\Theta$ and Lemma~\ref{lem:derivative-envelope}, for the concatenated path under $\Prob_{\theta,\calH}^{[0,T]}$,
\[
\big|\lambda_{i}^{\calH,N}(t;\theta)
-\lambda_{i}^{\calH,N}(t;\theta')\big|
\le
C_K\norm{\theta-\theta'}_2\{1+N([t-A,t))^m\}.
\]
Averaging first over $\calH\sim \Prob_\theta^{[-A,0)}$ and then over $\Prob_{\theta,\calH}^{[0,T]}$ is the same as averaging the concatenated path under the stationary law $\Prob_\theta$ on $[-A,T]$ by the tower property.  Stationarity and the fixed-window exponential count moment \eqref{eq:count-mgf} therefore give
\[
\E_\theta\left[\KL\!\left(\Prob_{\theta,\calH}^{[0,T]}\middle\|\Prob_{\theta',\calH}^{[0,T]}\right)\right]
\leq C_K\norm{\theta-\theta'}_2^2
\E_\theta\left[
\int_0^T \{1+N([t-A,t))^{2m}\}\,\dd t\right]
\le C_K T \norm{\theta-\theta'}_2^2,
\]
which is as required.
\end{proof}

\subsection{Proof of Theorem~\ref{thm:lower-bound}}

We are now ready to prove the rate-level minimax lower bound.

\begin{proof}
Since $B(\theta_0,r)\subset\Theta_0$, choose
\[
K:=\overline{B(\theta_0,r/2)}\subset\Theta_0,\qquad K\Subset\Theta^\circ,
\]
and define the empty pre-sample event
\[
E_0:=\{N|_{[-A,0)}=\emptyset\}.
\]
By the finite-memory property, and the uniform fixed-window moment bounds, there exists $p_0>0$ such that
\[
\inf_{\theta\in K}\Prob_\theta(E_0)\ge p_0.
\]
Indeed, conditional on the past before $-A$, the probability of no accepted point in $[-A,0)$ is $e^{-\Lambda^0_\theta}$, where $\Lambda^0_\theta=\int_{-A}^0\lambda(t;\theta)dt$ is the conditional total hazard with no points accepted in $[-A,0)$. Polynomial local-count envelopes and compact memory give
\[\Lambda^0_\theta\le C_E\{1+N([-2A,-A))^{m_0}\}\]
for some $m_0<\infty$, uniformly over $\theta\in K$.  Hence, using Jensen's inequality and the fixed-window moment bound,
\[
\Prob_\theta(E_0)=\E_\theta e^{-\Lambda^0_\theta}
\ge e^{-\E_\theta\Lambda^0_\theta}
\ge p_0>0
\]
uniformly over $\theta\in K$.

Let $C_K$ be the constant in Lemma~\ref{lem:conditional-kl} for this compact set $K$, and set $C_\varnothing=C_K/p_0$.  Choose $a_0>0$ so small that
\[
    a_0\le r/4,
    \qquad
    4C_\varnothing a_0^2\le \frac12 .
\]
Fix $0<a\le a_0$ and a unit vector $v\in\mathbb R^p$, and define
\[
\theta_1=\theta_0+\frac{2a}{\sqrt T}v .
\]
Then $\theta_1\in K$ for every $T\ge1$, since $\|\theta_1-\theta_0\|_2\le2a\le r/2$.

Define the conditional future laws from the empty history by
\[
Q_0:=\Prob_{\theta_0,\varnothing}^{[0,T]},
\qquad
Q_1:=\Prob_{\theta_1,\varnothing}^{[0,T]}.
\]
Since $E_0$ is the atom $\{\calH=\varnothing\}$ of the history distribution and has probability at least $p_0$ under $\Prob_{\theta_0}$, Lemma~\ref{lem:conditional-kl} gives
\[
p_0\KL(Q_0\|Q_1)
\le
\E_{\theta_0}\left[
\KL\left(
\Prob_{\theta_0,\calH}^{[0,T]}
\middle\|
\Prob_{\theta_1,\calH}^{[0,T]}
\right)
\right]
\le
C_KT\|\theta_1-\theta_0\|_2^2 .
\]
Therefore
\[
\KL(Q_0\|Q_1)
\le
4C_\varnothing a^2
\le \frac12 .
\]

Let $\tilde\theta_T$ be any estimator measurable with respect to $N|_{[-A,T]}$.  On $E_0$, the history is fixed and empty, so the estimator is a measurable function of the future path only.  Set $s_T=a/\sqrt T$.  Since $\|\theta_1-\theta_0\|_2=2s_T$, the two-point testing reduction \citep[e.g.][Chapter 15]{wainwright2019} gives
\[
\max_{\ell=0,1}
Q_\ell\left(
\|\tilde\theta_T(\varnothing,N)-\theta_\ell\|_2\ge s_T
\right)
\ge
\frac12\{1-d_{\rm TV}(Q_0,Q_1)\}.
\]
By Pinsker's inequality,
\[
d_{\rm TV}(Q_0,Q_1)
\le
\sqrt{\frac12\KL(Q_0\|Q_1)}
\le
\frac12,
\]
and hence the preceding maximum is at least $1/4$.

For $\ell=0,1$, conditioning on the empty pre-sample event gives
\[
\Prob_{\theta_\ell}\left(
\|\tilde\theta_T-\theta_\ell\|_2\ge s_T
\right)
\ge
\Prob_{\theta_\ell}(E_0)
Q_\ell\left(
\|\tilde\theta_T(\varnothing,N)-\theta_\ell\|_2\ge s_T
\right).
\]
Since $\theta_0,\theta_1\in K$, $\Prob_{\theta_\ell}(E_0)\ge p_0$.  Thus
\[
\sup_{\theta\in\Theta_0}
\Prob_\theta\left(
\|\tilde\theta_T-\theta\|_2\ge \frac{a}{\sqrt T}
\right)
\ge
\frac{p_0}{4}.
\]
Taking the infimum over all estimators proves \eqref{eq:lower-bound} with $\epsilon_0=p_0/4$ and, for instance, $T_0=1$.
\end{proof}

\section{Identifiability and moment-identification examples}
\label{app:trunc-exp-info}

In this section of the supplement we verify that the examples given in Section \ref{sec:examples} satisfy our standing assumptions.

\begin{proposition}[Identifiability and information for positive-active truncated exponentials]
\label{prop:trunc-exp-ident-info}
Consider the active scalar truncated-exponential family of Example~\ref{ex:trunc-exp}, with each active amplitude restricted to a compact subinterval of $(0,\infty)$. Then the model is identifiable in the sense of
Assumption~\ref{ass:identifiability}. Moreover, for every
$\theta$ in this active compact interior model, the Fisher information matrix
$I(\theta)$ is positive definite. Consequently, for every compact
$\Theta_0\Subset\Thetao$ contained in this model,
\[
\inf_{\theta\in\Theta_0}\lambda_{\min}\{I(\theta)\}>0 .
\]
\end{proposition}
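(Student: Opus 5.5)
The proof reduces both identifiability and Fisher nonsingularity to a single pathwise statement: a linear combination of the intensity or its derivatives that vanishes along almost every path forces all of its coefficients to vanish. The tool is the endpoint-signature argument of Example~\ref{ex:trunc-exp}, applied on events of positive probability whose window configurations are explicit.

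\emph{Identifiability.} Suppose $\lambda_i(t;\theta)=\lambda_i(t;\theta')$ for all $i$ and Lebesgue-a.e.\ $t$, $\Prob_\theta$-a.s. I will proceed in three steps.
\begin{enumerate}
\item \emph{Baselines.} By the argument in the proof of Theorem~\ref{thm:lower-bound}, the event that $[t-A,t)$ contains no points has positive probability. On it $\lambda_i(t;\theta)=\mu_i$, so $\mu_i=\mu_i'$.
\item \emph{Single-event windows.} The event that $[t-A,t)$ contains exactly one point, of component $j$ at age $u\in(0,A)$, also has positive probability. It is a fixed-count event, and its probability is bounded below by a conditional Poisson thinning computation given the earlier past, as in that same proof. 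Moreover, the age $u$ has a density charging all of $(0,A)$, because the conditional intensity is bounded below by $\underline\lambda>0$. On this event $\lambda_i=\mu_i+\alpha_{ij}c_{ij}(\beta_{ij})e^{-\beta_{ij}u}$. Equality for a.e.\ $u\in(0,A)$ gives $\alpha_{ij}c_{ij}(\beta_{ij})e^{-\beta_{ij}u}=\alpha'_{ij}c_{ij}(\beta'_{ij})e^{-\beta'_{ij}u}$.
\item \emph{Decay and amplitude.} Two exponentials with positive coefficients that agree on an interval have equal rates, so $\beta_{ij}=\beta'_{ij}$. Positivity of $c_{ij}$ then gives $\alpha_{ij}=\alpha'_{ij}$.
\end{enumerate}
This is the entry/exit ratio argument phrased as a.e.\ agreement. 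Inactive coordinates were removed, so only active pairs carry parameters.

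\emph{Fisher information.} Since $\lambda_i\ge\underline\lambda$, it suffices to show that $v^\top\nabla_\theta\lambda_i(0;\theta)=0$ a.s.\ for all $i$ forces $v=0$. On the empty-window event, $\nabla\lambda_i$ reduces to the baseline coordinate, which kills $v_{\mu_i}$. On the single-event window for the pair $(j,u)$, the derivative equals
\[
v_{\alpha_{ij}}c_{ij}(\beta_{ij})e^{-\beta_{ij}u}+v_{\beta_{ij}}\alpha_{ij}\bigl\{c_{ij}'(\beta_{ij})-u\,c_{ij}(\beta_{ij})\bigr\}e^{-\beta_{ij}u}.
\]
This must vanish for a.e.\ $u\in(0,A)$. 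The functions $e^{-\beta u}$ and $ue^{-\beta u}$ are linearly independent, so $\alpha_{ij}>0$ gives $v_{\beta_{ij}}=0$, and then $c_{ij}>0$ gives $v_{\alpha_{ij}}=0$. Hence $I(\theta)\succ0$ pointwise.

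\emph{Uniform bound.} To pass to $\inf_{\Theta_0}\lambda_{\min}\{I(\theta)\}>0$, I will use continuity of $\theta\mapsto I(\theta)$. This follows from Proposition~\ref{prop:finite-window-continuity}, whose hypotheses hold because of the envelopes in Lemma~\ref{lem:derivative-envelope}. Compactness of $\Theta_0$ then finishes.

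\emph{Main obstacle.} The delicate step is rigorously justifying that the single-event window events have positive probability with a nondegenerate age distribution under the stationary law. I plan to condition on $\calF_{t-2A}$ and write the probability of exactly one accepted point in $[t-A,t)$ as an explicit integral of the conditional intensity times survival factors. Strict positivity and boundedness of intensities then show this integral has a density bounded below on $(0,A)$.
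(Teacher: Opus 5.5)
Your proof is correct, but it follows a different route from the paper's.

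The paper argues pathwise. On the full-probability regularity event of Lemma~\ref{lem:predictable-jumps}(ii), a combination of truncated-exponential convolutions that vanishes for Lebesgue-a.e.\ $t$ is $C^1$ between entry and exit times, so its one-sided jumps must vanish. The entry jump at a source event $s$ and the exit jump at $s+A$ give the signatures $\alpha_{ij}c_{ij}(\beta_{ij})$ and $\alpha_{ij}c_{ij}(\beta_{ij})e^{-\beta_{ij}A}$, whose ratio fixes $\beta_{ij}$. For the Fisher information, the same two jumps give a $2\times2$ system with determinant $-\alpha_{ij}Ac_{ij}(\beta_{ij})^2e^{-\beta_{ij}A}\neq0$, and the baselines are recovered last.

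You instead work at a fixed time with the empty-window and single-point-window events, which is the device the paper uses only for Proposition~\ref{prop:one-point-age-gmm}. You recover $\mu_i$ first and then match the whole age profile of the kernel on $(0,A)$. Linear independence of $e^{-\beta u}$ and $ue^{-\beta u}$ replaces the endpoint determinant.

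The trade-off is as follows.
\begin{itemize}
\item The paper's argument needs no probability lower bounds beyond $N_j$ having events. However, it relies on a nonzero exit signature, i.e.\ on the kernel being discontinuous at the truncation point $A$.
\item Yours uses only the interior shape of the kernel, so it would also cover kernels tapering continuously to zero at $A$. The price is the full-support age law that you correctly flag as the main obstacle.
\end{itemize}

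That obstacle does go through. Condition on the past before $t-A$. The likelihood formula gives the configuration ``exactly one component-$j$ point at $s\in[t-A,t)$, no others'' a Lebesgue density in $s$ equal to $\lambda_j(s)\exp\{-\sum_i\int_{t-A}^t\lambda_i(r)\,\dd r\}$, with the intensities computed along that configuration. This is strictly positive, because $\lambda_j\ge\mu_j>0$ and the integrated intensity is finite given the history count on $[t-2A,t-A)$.

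Two small steps deserve to be stated explicitly.
\begin{itemize}
\item Fubini and stationarity turn ``equal for a.e.\ $t$, a.s.'' into ``equal at $t=0$, a.s.'', which is what your window events actually use.
\item Passing from $v^\top I(\theta)v=0$ to $v^\top\nabla_\theta\lambda_i(0;\theta)=0$ a.s.\ uses finiteness of $\lambda_i(0;\theta)$, not its lower bound $\underline\lambda$.
\end{itemize}
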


\begin{proof}

The proof identifies parameters by matching the jump sizes of the intensities $\lambda_i(\cdot;\theta)$, so we begin by recording why those jumps are well defined and why an intensity difference that vanishes for Lebesgue-almost every $t$ must in fact vanish at every time, jumps included.

Each intensity, and each of its parameter derivatives used below, is an affine function of finite-memory convolutions of the form $Y_f(t)=\int_{[t-A,t)} f(t-s)\,\dd N_j(s)$ with $f$ a smooth kernel, as in Lemma~\ref{lem:predictable-jumps}(i). As $t$ increases, the events contributing to $Y_f(t)$ change only when an event of $N_j$ at some time $s$ enters the window $[t-A,t)$, at $t=s$, or leaves it, at $t=s+A$; call these the entry and exit times. Between two consecutive entry/exit times $Y_f$ is a fixed finite sum of smooth terms $f(t-s_\ell)$, hence $C^1$, and its only discontinuities are the one-sided jumps at those times.

We work on the full-probability event of Lemma~\ref{lem:predictable-jumps}(ii): on every compact interval, distinct components never jump simultaneously, no event falls on a prescribed deterministic time, and no two events are separated by exactly $A$. On this event all entry and exit times are distinct and isolated, so any finite linear combination of such convolutions is $C^1$ on each open interval between consecutive entry/exit times. Being continuous there, it is identically zero on each such interval as soon as it is zero for Lebesgue-almost every $t$; and since its one-sided limits at every entry or exit time are then limits of zero, each entry and exit jump vanishes as well. These vanishing jumps are the identities exploited below.

First we prove identifiability. Suppose that, under $\Prob_\theta$,
\[
\lambda_i(t;\theta)=\lambda_i(t;\theta')
\quad\text{for all }i\text{ and for Lebesgue-a.e. }t,
\quad \Prob_\theta\text{-a.s.}
\]
We also intersect the preceding path-regularity event with the event on which
this equality of intensities holds. Fix a receiving component $i$ and a
source component $j$. Since the stationary intensity of component $j$ is
positive, $N_j$ has infinitely many jumps on the real line almost surely.
Let $s$ be a jump time of $N_j$.

By the no-simultaneous-jump and no-exact-$A$-separation properties, no other
event enters a predictable window at time $s$, and no event exits a
predictable window at time $s$. Therefore the only right discontinuity of
$\lambda_i(\cdot;\theta)-\lambda_i(\cdot;\theta')$ at $s$ caused by this
source event is its entry into the $j$-convolution. Since the intensity
difference is zero on the continuity intervals immediately before and after
$s$, this right jump must vanish. Thus
\[
\alpha_{ij}c_{ij}(\beta_{ij})
-
\alpha'_{ij}c_{ij}(\beta'_{ij})
=
0 .
\]
Similarly, at time $s+A$, the same source event is the only event exiting
the $j$-convolution, and no other event enters or exits at that time. Hence
the right jump at $s+A$ must also vanish, which gives
\[
\alpha_{ij}c_{ij}(\beta_{ij})e^{-\beta_{ij}A}
-
\alpha'_{ij}c_{ij}(\beta'_{ij})e^{-\beta'_{ij}A}
=
0 .
\]
Both factors $\alpha_{ij}c_{ij}(\beta_{ij})$ and
$\alpha'_{ij}c_{ij}(\beta'_{ij})$ are nonzero by the active-amplitude restriction. Dividing the second
display by the first yields
\[
e^{-\beta_{ij}A}=e^{-\beta'_{ij}A},
\]
and hence $\beta_{ij}=\beta'_{ij}$. Substituting back into the first display
gives $\alpha_{ij}=\alpha'_{ij}$. Since $i$ and $j$ were arbitrary, all
active amplitude and decay coordinates agree. The equality of intensities then
reduces to $\mu_i=\mu_i'$ for every $i$. Thus $\theta=\theta'$, proving
identifiability.

We now prove positive definiteness of $I(\theta)$. Fix
$\theta$ in the active compact interior model. It is enough to show that,
for every $v\in\mathbb R^p$,
\[
v^\top I(\theta)v=0
\quad\Longrightarrow\quad
v=0 .
\]
For each receiving component $i$, define the directional derivative of the
intensity
\[
D_i^v(t)
=
v_{\mu_i}
+
\sum_{j=1}^D
v_{\alpha_{ij}}
\int_{[t-A,t)}
k_{ij}(t-s;\beta_{ij})\,\dd N_j(s)
+
\sum_{j=1}^D
\alpha_{ij}v_{\beta_{ij}}
\int_{[t-A,t)}
\partial_\beta k_{ij}(t-s;\beta_{ij})\,\dd N_j(s).
\]
Then $D_i^v(t)=v^\top\nabla_\theta\lambda_i(t;\theta)$. Hence, if
$v^\top I(\theta)v=0$, then
\[
0
=
\E_\theta\left[
\sum_{i=1}^D
\frac{\{D_i^v(0)\}^2}{\lambda_i(0;\theta)}
\right].
\]
Since $\lambda_i(0;\theta)>0$, it follows that
\[
D_i^v(0)=0
\qquad\text{almost surely for every }i .
\]
By stationarity, $D_i^v(t)\stackrel{d}{=}D_i^v(0)$ for
every fixed $t$. Hence, for every integer $m$,
\[\E_\theta\left[\int_m^{m+1}\1\{D_i^v(t)\ne0\}\diff t\right]=\int_m^{m+1}\Prob_\theta\{D_i^v(t)\ne0\}\diff t=0.
\]
Thus, after taking a countable intersection over $m\in\mathbb Z$,
\[
D_i^v(t)=0
\quad\text{for Lebesgue-a.e. }t\in\mathbb R,
\qquad \Prob_\theta\text{-a.s.}
\]

We now use the path regularity of the compactly supported convolutions. On every
compact interval, the discontinuity times of $D_i^v$ are contained in the
finite set of event times and event times plus $A$. Between these times
$D_i^v$ is continuously differentiable. Therefore, since $D_i^v$ vanishes
for Lebesgue-a.e. time, it must vanish identically on each open continuity
interval. At any entry or exit time $r$, left-continuity gives
$D_i^v(r)=0$, while the right-hand continuity interval gives
$D_i^v(r+)=0$. Hence every right jump
\[
\Delta_+D_i^v(r):=D_i^v(r+)-D_i^v(r)
\]
vanishes.

Fix $i,j$, and let $s$ be a jump time of $N_j$. By the same
path-regularity properties used in the identifiability argument, the only
right jump of $D_i^v$ at $s$ caused by this source event is its entry into
the $j$-convolution. Therefore
\[
v_{\alpha_{ij}}k_{ij}(0+;\beta_{ij})
+
\alpha_{ij}v_{\beta_{ij}}
\partial_\beta k_{ij}(0+;\beta_{ij})
=
0 .
\]
At $s+A$, the only right jump caused by the same source event is its exit
from the $j$-convolution. The exit discontinuity is determined by the
left limit of the convolution contribution as the lag approaches $A$, not by
the chosen pointwise value $k(A;\beta)$. This right jump is the negative of
\[
v_{\alpha_{ij}}k_{ij}(A-;\beta_{ij})
+
\alpha_{ij}v_{\beta_{ij}}
\partial_\beta k_{ij}(A-;\beta_{ij}),
\]
and it must vanish. Hence
\[
v_{\alpha_{ij}}k_{ij}(A-;\beta_{ij})
+
\alpha_{ij}v_{\beta_{ij}}
\partial_\beta k_{ij}(A-;\beta_{ij})
=
0 .
\]
The two equations may be written as
\[
\begin{pmatrix}
k_{ij}(0+;\beta_{ij}) &
\alpha_{ij}\partial_\beta k_{ij}(0+;\beta_{ij})\\
k_{ij}(A-;\beta_{ij}) &
\alpha_{ij}\partial_\beta k_{ij}(A-;\beta_{ij})
\end{pmatrix}
\begin{pmatrix}
v_{\alpha_{ij}}\\
v_{\beta_{ij}}
\end{pmatrix}
=
0 .
\]
It remains only to check that this $2\times2$ matrix is nonsingular. Since
\[
k_{ij}(0+;\beta)=c_{ij}(\beta),
\qquad
k_{ij}(A-;\beta)=c_{ij}(\beta)e^{-\beta A},
\]
and
\[
\partial_\beta k_{ij}(0+;\beta)=c'_{ij}(\beta),
\qquad
\partial_\beta k_{ij}(A-;\beta)
=
\{c'_{ij}(\beta)-Ac_{ij}(\beta)\}e^{-\beta A},
\]
its determinant is
\[
\alpha_{ij}
\left[
k_{ij}(0+;\beta_{ij})
\partial_\beta k_{ij}(A-;\beta_{ij})
-
k_{ij}(A-;\beta_{ij})
\partial_\beta k_{ij}(0+;\beta_{ij})
\right]
=
-\alpha_{ij}A c_{ij}(\beta_{ij})^2e^{-\beta_{ij}A}.
\]
This is nonzero because the active amplitude $\alpha_{ij}$ is bounded away from zero, $A>0$, and
$c_{ij}(\beta_{ij})>0$. Hence
\[
v_{\alpha_{ij}}=v_{\beta_{ij}}=0 .
\]
Since $i,j$ were arbitrary, all amplitude and decay coordinates of $v$
vanish. The identity $D_i^v(t)=0$ then reduces to
\[
D_i^v(t)=v_{\mu_i}=0
\]
for every $i$. Therefore $v=0$, proving that $I(\theta)$ is positive
definite.

Finally, $\theta\mapsto I(\theta)$ is continuous on compact interior
parameter sets by Proposition~\ref{prop:finite-window-continuity}, applied to
the local-window integrand in \eqref{eq:fisher}, together with the derivative
envelope Lemma~\ref{lem:derivative-envelope}. Since
\[
(\theta,u)\mapsto u^\top I(\theta)u
\]
is continuous and strictly positive on the compact set
$\Theta_0\times S^{p-1}$, it has a positive minimum there. Equivalently,
\[
\inf_{\theta\in\Theta_0}\lambda_{\min}\{I(\theta)\}>0 .
\]
\end{proof}

\begin{proposition}[Verification for the positive-active smooth-ReLU truncated-exponential example]
\label{prop:nl-smooth-relu-trunc-exp}
The model in Example~\ref{ex:nl-smooth-relu-trunc-exp}, formulated on compact
positive active-amplitude coordinate sets satisfying the displayed spectral-radius
condition, satisfies Assumptions~\ref{ass:parameter}--\ref{ass:stability}.
Moreover, it is identifiable in the sense of Assumption~\ref{ass:identifiability},
and its Fisher information matrix is positive definite at every active
interior parameter value.  Consequently, for every compact
$\Theta_0\Subset\Theta^\circ$ contained in this active model,
\[
    \inf_{\theta\in\Theta_0}
    \lambda_{\min}\{I(\theta)\}>0 .
\]
\end{proposition}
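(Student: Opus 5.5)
The plan is to reduce the statement to Proposition~\ref{prop:trunc-exp-ident-info}. The link $\Phi_i$ is known, smooth and strictly increasing, so equality of intensities is the same as equality of the latent inputs $X_i$, and the Fisher quadratic form factors through $\Phi_i'(X_i)$.

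\textbf{Step 1: Assumptions~\ref{ass:parameter}--\ref{ass:stability}.} Write $\sigma(y)=1/(1+e^{-y})$. Then $\Phi_i'(x)=a_i\,\sigma(b_i(x-c_i))\in(0,a_i)$. Every higher derivative of $\Phi_i$ is bounded and $C^\infty$, so the polynomial-growth requirement holds trivially. The bound $\Phi_i\ge\epsilon_i>0$ gives positivity on all of $\R$.

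The maps $\theta\mapsto\nu_i$ are coordinate projections, so their derivatives are bounded. The normalized kernel $\beta e^{-\beta u}/(1-e^{-\beta A})$ is $C^\infty$ in $(u,\beta)$ on $[0,A]\times[\underline\beta,\overline\beta]$ with $\underline\beta>0$. Its derivatives are therefore bounded and Lipschitz, which gives Assumption~\ref{ass:kernels}.

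For stability, take $L_{\theta,i}=a_i$. The kernel integrates to one, so $G^{\rm abs}_{ij}=a_i|\gamma_{ij}|$. The displayed spectral-radius condition, imposed uniformly over the compact coordinate set, is exactly Assumption~\ref{ass:stability}.

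\textbf{Step 2: identifiability.} Suppose $\lambda_i(t;\theta)=\lambda_i(t;\theta')$ for a.e.\ $t$, a.s. Since $\Phi_i$ is injective, $X_i(t;\theta)=X_i(t;\theta')$ for a.e.\ $t$. Each $X_i$ is affine in finite-memory convolutions $Y_f$. I will rerun the path-regularity argument from the proof of Proposition~\ref{prop:trunc-exp-ident-info}: on the event of Lemma~\ref{lem:predictable-jumps}(ii), the difference is $C^1$ between entry and exit times, so it vanishes identically, including its jumps.

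\begin{itemize}
\item The entry jump at a source time $s$ gives $\gamma_{ij}\beta_{ij}/(1-e^{-\beta_{ij}A})=\gamma'_{ij}\beta'_{ij}/(1-e^{-\beta'_{ij}A})$.
\item The exit jump at $s+A$ gives the same identity multiplied by $e^{-\beta A}$ on each side.
\item Dividing the two identities (legitimate since $\gamma_{ij}>0$) gives $\beta_{ij}=\beta'_{ij}$, and then $\gamma_{ij}=\gamma'_{ij}$.
\end{itemize}

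With the kernels matched, $\nu_i=\nu_i'$ follows. This is the truncated-exponential case with $c(\beta)=\beta/(1-e^{-\beta A})$ and $\alpha=\gamma$.

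\textbf{Step 3: Fisher information.} We have $\nabla_\theta\lambda_i=\Phi_i'(X_i)\nabla_\theta X_i$. Suppose $v^\top I(\theta)v=0$. Then $\Phi_i'(X_i(0))\,v^\top\nabla X_i(0)=0$ a.s.

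In the positive-active model $X_i\ge\underline\nu_i$, so $\Phi_i'(X_i)\ge a_i\sigma(b_i(\underline\nu_i-c_i))>0$. Strictly, $\Phi_i'>0$ everywhere already suffices. Hence $v^\top\nabla X_i(0)=0$ a.s.

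By stationarity this holds for a.e.\ $t$. The $2\times2$ jump-matrix argument of Proposition~\ref{prop:trunc-exp-ident-info} then applies verbatim with $c=c_{ij}$. Its determinant is $-\gamma_{ij}A\,c(\beta_{ij})^2e^{-\beta_{ij}A}\neq0$, and $c(\beta)>0$. This gives $v=0$.

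The uniform bound $\inf_{\Theta_0}\lambda_{\min}I>0$ follows from continuity of $I$ (Proposition~\ref{prop:finite-window-continuity} with Lemma~\ref{lem:derivative-envelope}) and compactness of $\Theta_0\times S^{p-1}$.

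\textbf{Main obstacle.} The main point to check is that the jump bookkeeping survives the nonlinearity. I will apply the entry/exit argument to $X_i$ rather than to $\lambda_i$, which requires first transferring a.e.\ equality through $\Phi_i^{-1}$ (or through the nonvanishing factor $\Phi_i'$). Because the link is known and contains no free parameters, no link coordinates enter $v$, and the reduction is clean.
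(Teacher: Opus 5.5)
Your proposal is correct and follows essentially the same route as the paper's proof. You verify the link and kernel regularity with $G^{\mathrm{abs}}(\theta)_{ij}=a_i\gamma_{ij}$ from the normalized filters, pass from intensities to the latent inputs $X_i$ via the known strictly increasing link (and via the nonvanishing factor $\Phi_i'(X_i)$ for the Fisher form), and run the entry/exit jump argument with $c(\beta)=\beta/(1-e^{-\beta A})$, concluding by continuity of $I$ and compactness. Your remark that pointwise positivity of $\Phi_i'$ already suffices is a harmless simplification; the paper instead uses the uniform lower bound on $\{x\ge\underline\nu_i\}$.
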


\begin{proof}
For active edges, write
\[
    d_{ij}(\beta)=\frac{\beta}{1-e^{-\beta A}},
    \qquad
    k_{ij}(u;\beta)=d_{ij}(\beta)e^{-\beta u}\mathbf 1\{0\le u\le A\}.
\]
On every compact interval $B_{ij}\Subset(0,\infty)$, the normalising map
$\beta\mapsto d_{ij}(\beta)$ is $C^\infty$, strictly positive, and has
uniformly bounded derivatives of all orders.  Hence
\[
    h_{\theta,ij}(u)=\gamma_{ij}k_{ij}(u;\beta_{ij})
\]
is compactly supported, nonnegative, strictly positive on $[0,A]$ for active
edges, and has uniformly bounded parameter derivatives up to order four on
the active compact parameter set.  Inactive edges are fixed at
$h_{\theta,ij}\equiv0$ and are not part of the active coordinate system.

The link
\[
    \Phi_i(x)
    =
    \epsilon_i+
    \frac{a_i}{b_i}\log\{1+\exp(b_i(x-c_i))\}
\]
is $C^\infty$, positive, non-affine, and globally Lipschitz.  Its first
derivative is
\[
    \Phi_i'(x)
    =
    \frac{a_i}{1+\exp\{-b_i(x-c_i)\}},
\]
so
\[
    0<
    \frac{a_i}{1+\exp\{-b_i(\underline\nu_i-c_i)\}}
    \le
    \Phi_i'(x)
    \le a_i,
    \qquad x\ge\underline\nu_i .
\]
All higher derivatives of $\Phi_i$ are bounded globally.  Since
$\nu_i\in[\underline\nu_i,\bar\nu_i]\Subset(0,\infty)$, the baseline link
values $\Phi_i(\nu_i)$ are uniformly bounded above and below.  Thus the
positivity, smoothness, and link-regularity requirements in
Assumption~\ref{ass:parameter} hold, with $\phi_{\theta,i}=\Phi_i$.

Let
\[Y_i(t;\theta)=\sum_{j:(i,j)\in\mathcal E}\int_{[t-A,t)}h_{\theta,ij}(t-s)\,\dd N_j(s).\]
Because $Y_i(t;\theta)\ge0$ and $\Phi_i$ is $a_i$-Lipschitz,
\[\lambda_i(t;\theta)=\Phi_i\{\nu_i+Y_i(t;\theta)\}\le\Phi_i(\nu_i)+a_iY_i(t;\theta).\]
Thus the nonlinear intensity is dominated by the linear Hawkes
intensity with baseline $\eta_{\theta,i}=\Phi_i(\nu_i)$ and excitation
kernels $a_i h_{\theta,ij}$.  Since the truncated-exponential filters are
normalized,
\[
    \int_0^A h_{\theta,ij}(u)\,\dd u
    =
    \gamma_{ij}\mathbf 1\{(i,j)\in\mathcal E\}.
\]
The corresponding Lipschitz stability matrix is therefore
\[
    G(\theta)_{ij}
    =
    a_i\gamma_{ij}\mathbf 1\{(i,j)\in\mathcal E\}.
\]
The assumed uniform bound
\[
    \sup_{\theta\in\Theta}\rho\{G(\theta)\}\le1-\varepsilon
\]
gives the required nonlinear Lipschitz stability, stationary Poisson
embedding, and fixed-window exponential moment bounds.

The compact-window derivative structure is explicit.  For the latent inputs, with $X_i(t;\theta)=\nu_i+Y_i(t;\theta)$
\begin{align*}
    \partial_{\nu_r}X_i(t;\theta)&=\mathbf 1\{r=i\},\\
\partial_{\gamma_{ij}}X_i(t;\theta)&=\int_{[t-A,t)}k_{ij}(t-s;\beta_{ij})\,\dd N_j(s),\qquad (i,j)\in\mathcal E,\\
\partial_{\beta_{ij}}X_i(t;\theta)&=\gamma_{ij}\int_{[t-A,t)}\partial_\beta k_{ij}(t-s;\beta_{ij})\,\dd N_j(s),\qquad (i,j)\in\mathcal E.
\end{align*}
Higher parameter derivatives are finite sums of the same form with
$\partial_\beta^r k_{ij}$, $r\le4$.  Since these kernel derivatives are
uniformly bounded on $B_{ij}\times[0,A]$, all derivatives of $X_i$ up to
order four are bounded by a constant times $1+\mathfrak C_t$.  Combining this with
the bounded derivatives of $\Phi_i$ and the chain rule shows that the
intensity, score, Hessian, and higher derivatives are compact-window
functionals with polynomial local-count envelopes.  This verifies the
regularity requirements used by the likelihood and estimating-equation
arguments.

It remains to prove identifiability and positive definiteness of Fisher
information.  First suppose that, under $\Prob_\theta$,
\[
    \lambda_i(t;\theta)=\lambda_i(t;\theta')
    \quad\text{for all }i\text{ and Lebesgue-a.e. }t,
    \quad \Prob_\theta\text{-a.s.}
\]
Because each $\Phi_i$ is known and strictly increasing, this implies
\[
    X_i(t;\theta)=X_i(t;\theta')
    \quad\text{for all }i\text{ and Lebesgue-a.e. }t,
    \quad \Prob_\theta\text{-a.s.}
\]
Work on the full-probability path-regularity event from
Lemma~\ref{lem:predictable-jumps}: there are no simultaneous jumps, no jumps
at deterministic times, and no two event times are separated by exactly
$A$.  On this event, finite linear combinations of the truncated-exponential
convolutions are continuously differentiable between event times and event
times shifted by $A$.  Hence, if such a combination is zero for
Lebesgue-a.e. time, then its one-sided entry and exit jumps must vanish.

Fix an active edge $(i,j)\in\mathcal E$.  Since component $j$ has
intensity bounded below by a positive constant, $N_j$ has infinitely many jumps almost
surely on the real line and let $s$ be such a jump time of $N_j$. At time $s$,
the event enters the predictable memory window.  No other event enters or
exits at the same time on the regularity event. Therefore, the right jump of
$X_i(\cdot;\theta)-X_i(\cdot;\theta')$ at $s$ gives
\[\gamma_{ij}d_{ij}(\beta_{ij})-\gamma'_{ij}d_{ij}(\beta'_{ij})=0.\]
At time $s+A$, the same event exits the memory window.  Again, no other
event enters or exits at that time. Hence,
\[\gamma_{ij}d_{ij}(\beta_{ij})e^{-\beta_{ij}A}-\gamma'_{ij}d_{ij}(\beta'_{ij})e^{-\beta'_{ij}A}=0.\]
The first display has a strictly positive common value.  Dividing the second
display by the first gives
\[
    e^{-\beta_{ij}A}=e^{-\beta'_{ij}A},
\]
and hence $\beta_{ij}=\beta'_{ij}$.  Substitution into the first display
then gives $\gamma_{ij}=\gamma'_{ij}$.  Since the active edge was arbitrary,
all active excitation parameters agree.  The equality
$X_i(t;\theta)=X_i(t;\theta')$ then reduces to
\[
    \nu_i=\nu_i',
    \qquad i=1,\ldots,D.
\]
Thus $\theta=\theta'$, proving identifiability on the active coordinate
space.

We now prove positive definiteness of Fisher information.  Fix an active
interior parameter $\theta$, and suppose $v\in\mathbb R^p$ satisfies
\[
    v^\top I(\theta)v=0 .
\]
For each component $i$, define the directional derivative of the latent
input by
\[\dot X_i^v(t)=v_{\nu_i}+\sum_{j:(i,j)\in\mathcal E}v_{\gamma_{ij}}\int_{[t-A,t)}k_{ij}(t-s;\beta_{ij})\,\dd N_j(s) +\sum_{j:(i,j)\in\mathcal E}\gamma_{ij}v_{\beta_{ij}}\int_{[t-A,t)}\partial_\beta k_{ij}(t-s;\beta_{ij})\,\dd N_j(s).\]
Then
\[v^\top\nabla_\theta\lambda_i(t;\theta)=\Phi_i'\{X_i(t;\theta)\}\dot X_i^v(t).\]
Therefore,
\[0=v^\top I(\theta)v=\E_\theta\left[\sum_{i=1}^D\frac{\Phi_i'\{X_i(0;\theta)\}^2\{\dot X_i^v(0)\}^2}{\lambda_i(0;\theta)}\right].\]
Since $\lambda_i(0;\theta)>0$ and $\Phi_i'\{X_i(0;\theta)\}$ is bounded away from zero on the relevant state space, it follows that
\[\dot X_i^v(0)=0\qquad \Prob_\theta\text{-a.s. for every }i.\]
By stationarity and Fubini's theorem,
\[\dot X_i^v(t)=0\quad\text{for Lebesgue-a.e. }t\in\mathbb R,\qquad \Prob_\theta\text{-a.s. for every }i.\]
On the same path-regularity event used above, this implies that every
one-sided entry and exit jump of $\dot X_i^v$ vanishes.

Fix an active edge $(i,j)\in\mathcal E$, and let $s$ be a jump time of
$N_j$.  The vanishing entry jump at $s$ gives
\[v_{\gamma_{ij}}k_{ij}(0+;\beta_{ij})+\gamma_{ij}v_{\beta_{ij}}\partial_\beta k_{ij}(0+;\beta_{ij})=0.\]
The vanishing exit jump at $s+A$ gives
\[ v_{\gamma_{ij}}k_{ij}(A-;\beta_{ij}) + \gamma_{ij}v_{\beta_{ij}} \partial_\beta k_{ij}(A-;\beta_{ij})=0.\]
Because
\[ k_{ij}(0+;\beta)=d_{ij}(\beta), \qquad k_{ij}(A-;\beta)=d_{ij}(\beta)e^{-\beta A},\]
and
\[
    \partial_\beta k_{ij}(0+;\beta)=d_{ij}'(\beta),
    \qquad
    \partial_\beta k_{ij}(A-;\beta)
    =
    \{d_{ij}'(\beta)-Ad_{ij}(\beta)\}e^{-\beta A},
\]
the two equations can be written as
\[
    \begin{pmatrix}
    d_{ij}(\beta_{ij})
    &
    \gamma_{ij}d_{ij}'(\beta_{ij})
    \\
    d_{ij}(\beta_{ij})e^{-\beta_{ij}A}
    &
    \gamma_{ij}\{d_{ij}'(\beta_{ij})-Ad_{ij}(\beta_{ij})\}
    e^{-\beta_{ij}A}
    \end{pmatrix}
    \begin{pmatrix}
    v_{\gamma_{ij}}\\
    v_{\beta_{ij}}
    \end{pmatrix}
    =
    0 .
\]
The determinant of this matrix is
\[
    -\gamma_{ij}A d_{ij}(\beta_{ij})^2 e^{-\beta_{ij}A},
\]
which is nonzero because $\gamma_{ij}>0$, $A>0$, and
$d_{ij}(\beta_{ij})>0$. Hence, $v_{\gamma_{ij}}=v_{\beta_{ij}}=0$.
Since $(i,j)\in\mathcal E$ was arbitrary, all active excitation-coordinate
components of $v$ vanish.  The identity $\dot X_i^v(t)=0$ then reduces to
\[
    v_{\nu_i}=0,
    \qquad i=1,\ldots,D.
\]
Thus $v=0$.  Hence $I(\theta)$ is positive definite at every active
interior parameter value.

Finally, $\theta\mapsto I(\theta)$ is continuous on compact active interior
parameter sets.  Indeed, the Fisher integrand is a compact-window functional
with a polynomial local-count envelope, because the kernels and the
smooth-rectifier derivatives have the bounds verified above.  Proposition
\ref{prop:finite-window-continuity} therefore gives continuity.  Since
\[
    (\theta,u)\mapsto u^\top I(\theta)u
\]
is continuous and strictly positive on the compact set
$\Theta_0\times S^{p-1}$, it has a positive minimum there.  Equivalently,
\[
    \inf_{\theta\in\Theta_0}
    \lambda_{\min}\{I(\theta)\}>0 .
\]
\end{proof}

\subsection{A fully identified non-score GMM example}
\label{app:one-point-age-gmm}

We finally demonstrate a non-score GMM example in this subsection of the appendix.

\begin{proposition}[One-point age moments identify the univariate truncated-exponential model]
\label{prop:one-point-age-gmm}
Consider the univariate Hawkes model
\[
    \lambda(t;\theta)=\mu+\alpha X_\beta(t),
    \qquad
    X_\beta(t)=\int_{[t-A,t)} k_\beta(t-s)\,\dd N(s),
\]
where
\[
    k_\beta(u)=c(\beta)e^{-\beta u}\mathbf 1\{0\le u\le A\},
    \qquad
    \theta=(\mu,\alpha,\beta),
\]
where $c\in C^4([\underline\beta,\bar\beta])$ is strictly positive.
Let $\Theta_0$ be a compact interior parameter set with
$\mu>0$, $\alpha>0$, $\beta\in[\underline\beta,\bar\beta]\Subset(0,\infty)$,
and uniform subcriticality.  Let
\[
    \mathfrak C_t=N([t-A,t)),
    \qquad
    U_t=\int_{[t-A,t)}(t-s)\,\dd N(s),
\]
and define
\[
    H(t)=
    \begin{pmatrix}
        \mathbf 1\{\mathfrak C_t=0\}\\
        \mathbf 1\{\mathfrak C_t=1\}\\
        U_t\mathbf 1\{\mathfrak C_t=1\}
    \end{pmatrix}.
\]
Then $H$ is an admissible compact-window non-score weight.  Moreover, for
every $\theta^\star\in\Theta_0$, the population drift
\[
    g_H(\theta,\theta^\star)
    =
    \E_{\theta^\star}
    \left[
        H(0)\{\lambda(0;\theta^\star)-\lambda(0;\theta)\}
    \right]
\]
has the unique zero $\theta=\theta^\star$.  The uniform separation and local
rank conditions in Assumption~\ref{ass:moment-identification} hold.  
Finally,
\[\inf_{\theta\in\Theta_0}\lambda_{\min}\{\Omega_H(\theta)\}>0.\]
\end{proposition}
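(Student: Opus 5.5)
Admissibility is immediate. $H$ does not depend on $\theta$, so $\nabla_\theta^r H=0$ for $r=1,2$ and the Lipschitz envelopes are trivial. Moreover $\|H(t)\|\le 1+A$ and $U_t\le A\,\mathfrak C_t$, and $H$ is a shift-covariant functional of $G_tN$. The drift splits into three coordinates, one per row of $H$:
\[
g_H(\theta,\theta^\star)=\E_{\theta^\star}\bigl[H(0)\{(\mu^\star-\mu)+\alpha^\star X_{\beta^\star}(0)-\alpha X_\beta(0)\}\bigr].
\]
On $\{\mathfrak C_0=0\}$ we have $X_\beta(0)=0$, so the first coordinate equals $p_0(\mu^\star-\mu)$ with $p_0=\Prob_{\theta^\star}(\mathfrak C_0=0)>0$. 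Positivity of $p_0$ follows from the argument already used in the proof of Theorem~\ref{thm:lower-bound}. Hence a zero forces $\mu=\mu^\star$.

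On $\{\mathfrak C_0=1\}$ the window contains a single point, at age $V:=U_0\in(0,A)$. There $X_\beta(0)=c(\beta)e^{-\beta V}$. Let $\pi$ denote the sub-probability law of $V$ on this event. The remaining two coordinates become
\[
\int\{\alpha^\star c(\beta^\star)e^{-\beta^\star v}-\alpha c(\beta)e^{-\beta v}\}(1,v)^\top\,\pi(\dd v).
\]
I would first show that $\pi$ has a density bounded below on $(0,A)$. Condition on the past before $-A$ and use the Poisson embedding. The probability of exactly one accepted point in $[-A,0)$, located at $-v$, is given by the Janossy density. That density is bounded below by a positive quantity, uniformly in $v$, because intensities are bounded above by polynomial count envelopes and below by $\underline\lambda$, together with the exponential moments of Lemma~\ref{lem:fixed-window-count-mgf}.

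With $\mu=\mu^\star$ fixed, set $f(v)=\alpha^\star c(\beta^\star)e^{-\beta^\star v}-\alpha c(\beta)e^{-\beta v}$. A difference of two positive exponentials has at most one sign change. If $\int f\,\dd\pi=\int vf\,\dd\pi=0$ with $f\not\equiv0$, then $f$ changes sign exactly once, at some $v_0$. But then $\int (v-v_0)f\,\dd\pi=0$ while the integrand has constant sign and $\pi$ charges $(0,A)$ with positive density, a contradiction. So $f\equiv0$ $\pi$-a.e., and hence on an interval. This gives $\beta=\beta^\star$ and $\alpha c(\beta)=\alpha^\star c(\beta^\star)$, so $\alpha=\alpha^\star$. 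This Chebyshev-system step is the conceptual core of the proof.

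For uniform separation \eqref{eq:H-global-separation}, I would argue by continuity and compactness. Continuity of $(\theta,\theta^\star)\mapsto g_H$ holds because $g_H$ is affine in $(\mu,\alpha c(\beta)e^{-\beta v})$, with coefficients that are $\theta^\star$-expectations continuous by Proposition~\ref{prop:finite-window-continuity}. Combined with the unique zero, the infimum over the compact set $\{\|\theta-\theta^\star\|\ge\epsilon\}\times\Theta_0$ is positive.

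For the rank condition, $A_H(\theta)=\E_\theta[H(0)(1,X_\beta,\alpha\partial_\beta X_\beta)]$. Its first row is $(p_0,0,0)$. The lower $2\times2$ block is the Gram-type matrix $\int (1,v)^\top(c e^{-\beta v},\ \alpha(c'-vc)e^{-\beta v})\,\pi(\dd v)$. Suppose this block annihilates $(a,b)$. Then $\phi(v)=e^{-\beta v}\{ac+b\alpha c'-b\alpha c v\}$ is orthogonal to both $1$ and $v$ under $\pi$. But $\phi$ is an exponential times an affine function, so it has at most one sign change, and the same argument as above forces $\phi\equiv0$, hence $a=b=0$. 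Uniformity over $\Theta_0$ then follows by continuity and compactness.

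Finally, $\Omega_H=\E[H H^\top\lambda]\succeq\underline\lambda\,\E[HH^\top]$. The matrix $\E[HH^\top]$ is block diagonal, with entries $p_0$ and $\int(1,v)^\top(1,v)\,\pi(\dd v)$. The latter is nonsingular because $V$ is nondegenerate under $\pi$, so $\Omega_H$ is nonsingular, uniformly by continuity on $\Theta_0$.

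The main obstacle is the positive-density lower bound for $\pi$, uniformly in $\theta^\star$. The route is the Janossy/embedding argument, conditioning on a bounded past count via a truncation event.
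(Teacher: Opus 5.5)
Your proof is correct and follows the paper's overall structure: admissibility, the empty-window coordinate fixing $\mu$, continuity plus compactness for separation, the first row $(p_0,0,0)$ of $A_H$, and the block structure behind $\Omega_H$. The two core steps, however, rest on a different key lemma.

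\textbf{Identification.} The paper divides the two age equations to get $m(\beta)=m(\beta^\star)$, where $m=L_1/L_0$ and $L_r(\beta)=\int u^re^{-\beta u}\,\nu(\dd u)$, with $\nu$ being your $\pi$. It then uses $m'(\beta)=-\Var_{\nu_\beta}(U)<0$ under the exponentially tilted law.

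\textbf{Rank.} The paper computes $\det A_H(\theta)=p_0(\theta)\alpha\{M_1^2-M_0M_2\}$ explicitly and applies strict Cauchy--Schwarz.

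You instead use a single sign-change (Chebyshev-system) fact for both steps. You apply it once to a difference of exponentials and once to an exponential times an affine function. Your version is more unified and would extend more readily to richer age libraries or kernel families. The paper's version yields closed forms, such as the explicit rank margin, that could be made quantitative.

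\textbf{Your "main obstacle".} You need neither a density lower bound for $\pi$ nor any uniformity in $\theta^\star$. Your sign-change step only uses that $\pi$ is not a point mass: if $(v-v_0)f(v)=0$ $\pi$-a.e.\ and $f$ vanishes only at $v_0$, then $\pi$ is concentrated at $v_0$. This is exactly the nondegeneracy the paper's variance and Cauchy--Schwarz steps require. All uniformity over $\Theta_0$ comes from continuity (Proposition~\ref{prop:finite-window-continuity}) and compactness, which you already invoke. Your Janossy bound is valid but more than is needed.

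\textbf{A minor slip.} In the rank and $\Omega_H$ steps, $\pi$ should be the law under $\Prob_\theta$ rather than $\Prob_{\theta^\star}$. This is harmless, since nondegeneracy holds at every parameter.
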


\begin{proof}
The weight is predictable, shift-covariant, bounded by $1+A$, and depends
only on $N|_{[t-A,t)}$.  Since it is independent of $\theta$, the
smoothness and envelope requirements in Assumption~\ref{ass:weights} are
immediate.

Fix $\theta^\star=(\mu^\star,\alpha^\star,\beta^\star)$.  Write
\[
    p_0=\Prob_{\theta^\star}(\mathfrak C_0=0),
    \qquad
    \nu(B)=\Prob_{\theta^\star}(\mathfrak C_0=1,\ U_0\in B),
    \qquad B\subset(0,A).
\]
By the Poisson embedding, positivity of the baseline intensity, and compact
memory, $p_0>0$, and $\nu$ gives positive mass to every nonempty open
subinterval of $(0,A)$.  Indeed, conditional on the history before $-A$,
the probability of no accepted Poisson marks in $[-A,0)$ is positive on
large-count truncation events, and those events have positive stationary
probability.  Similarly, for any nonempty open age interval $I\subset(0,A)$,
the event that exactly one accepted mark occurs in the corresponding time
interval $-I$ and no other accepted mark occurs in $[-A,0)$ has positive
probability.

Suppose $g_H(\theta,\theta^\star)=0$.  On $\{\mathfrak C_0=0\}$, both convolution
terms vanish, so the first coordinate gives
\[
    0=p_0(\mu^\star-\mu),
\]
and hence $\mu=\mu^\star$.  On $\{\mathfrak C_0=1\}$, the unique point has age
$U_0$, so
\[
    \lambda(0;\theta)=\mu+\alpha k_\beta(U_0).
\]
The second and third coordinates of $g_H=0$ therefore give
\[
    \int_0^A
    \{\alpha^\star k_{\beta^\star}(u)-\alpha k_\beta(u)\}\,\nu(\dd u)=0,
\]
and
\[
    \int_0^A
    u\{\alpha^\star k_{\beta^\star}(u)-\alpha k_\beta(u)\}\,\nu(\dd u)=0.
\]
Define
\[
    L_r(\beta)=\int_0^A u^r e^{-\beta u}\,\nu(\dd u),
    \qquad r=0,1.
\]
Since $k_\beta(u)=c(\beta)e^{-\beta u}$, the two equations imply
\[
    \alpha c(\beta)L_0(\beta)
    =
    \alpha^\star c(\beta^\star)L_0(\beta^\star),
    \qquad
    \alpha c(\beta)L_1(\beta)
    =
    \alpha^\star c(\beta^\star)L_1(\beta^\star).
\]
Dividing yields
\[
    m(\beta)=m(\beta^\star),
    \qquad
    m(\beta)=\frac{L_1(\beta)}{L_0(\beta)}.
\]
But
\[
    m'(\beta)
    =
    -\operatorname{Var}_{\nu_\beta}(U)<0,
    \qquad
    \nu_\beta(\dd u)=\frac{e^{-\beta u}\nu(\dd u)}{L_0(\beta)},
\]
because $\nu$ is not concentrated at one point.  Hence
$\beta=\beta^\star$, and then the first equation gives
$\alpha=\alpha^\star$.  Thus $g_H(\theta,\theta^\star)=0$ implies
$\theta=\theta^\star$.  Continuity of $g_H$ and compactness give the
uniform separation condition.

It remains to check local rank.  Under $\Prob_\theta$, let
\[
    \nu_\theta(B)=\Prob_\theta(\mathfrak C_0=1,\ U_0\in B),
    \qquad
    p_0(\theta)=\Prob_\theta(\mathfrak C_0=0).
\]
The derivative of the intensity is
\[
    D_\theta(0;\theta)
    =
    \partial_\theta\lambda(0;\theta)
    =
    \bigl(1,\ X_\beta(0),\ \alpha\,\partial_\beta X_\beta(0)\bigr).
\]
Thus
\[
    A_H(\theta)=E_\theta\{H(0)D_\theta(0;\theta)\}.
\]
The first row is $(p_0(\theta),0,0)$.  The lower $2\times2$ block in the
$(\alpha,\beta)$-columns is
\[
    \begin{pmatrix}
        M_0 & \alpha \dot M_0\\
        M_1 & \alpha \dot M_1
    \end{pmatrix},
\]
where
\[
    M_r=\int_0^A u^r k_\beta(u)\,\nu_\theta(\dd u),
    \qquad
    \dot M_r=\int_0^A u^r\partial_\beta k_\beta(u)\,\nu_\theta(\dd u).
\]
Since
\[
    \partial_\beta k_\beta(u)
    =
    k_\beta(u)\left\{\frac{c'(\beta)}{c(\beta)}-u\right\},
\]
we have
\[
    \dot M_r=\frac{c'(\beta)}{c(\beta)}M_r-M_{r+1}.
\]
Therefore
\[
    \det A_H(\theta)
    =
    p_0(\theta)\alpha\{M_1^2-M_0M_2\}.
\]
The bracketed term is strictly negative by the Cauchy Schwarz inequality, because the
probability measure proportional to $k_\beta(u)\nu_\theta(\dd u)$ is not
concentrated at a single point.  Hence $A_H(\theta)$ is nonsingular.
Continuity and compactness give the uniform local rank condition.

Next, $\Omega_H(\theta)=E_\theta\{H(0)\lambda(0;\theta)H(0)^\top\}$ is
nonsingular.  If $a^\top H(0)=0$ $\Prob_\theta$-a.s. on the support weighted by
$\lambda(0;\theta)>0$, then $\{\mathfrak C_0=0\}$ gives $a_1=0$, while
$\{\mathfrak C_0=1\}$ gives $a_2+a_3U_0=0$ $\nu_\theta$-a.e.  Since
$\nu_\theta$ is not concentrated at one point, $a_2=a_3=0$.

To conclude, the map $\theta\mapsto\Omega_H(\theta)$ is continuous by
Proposition~\ref{prop:finite-window-continuity}, since
$H(0)\lambda(0;\theta)H(0)^\top$ has a polynomial finite-window envelope.
Since $\Theta_0$ is compact and $\Omega_H(\theta)$ is positive definite
for every $\theta\in\Theta_0$, compactness gives
\[\inf_{\theta\in\Theta_0}\lambda_{\min}\{\Omega_H(\theta)\}>0.\]
\end{proof}

\section{Simulation implementation details}\label{app:simulation-implementation}

This appendix records the implementation details omitted from the main simulation section.  Paths are generated using the linear Hawkes cluster representation \citep{hawkesoakes1974}.  Immigrants are generated from time $-B_T-A$ to $T$, offspring are generated recursively using the truncated-exponential offspring density, and the retained observation is $N|_{[-A,T]}$.  In the reported design $B_T=450$, so the pre-sample interval $[-A,0)$ is available for likelihood and moment evaluation while the effect of truncating clusters born before the burn-in time is negligible relative to the Monte Carlo scale.

All criteria are optimized over the same compact parameter set, with positive baselines, positive excitation amplitudes, $\beta\in[0.25,4.50]$, and $\rho(\alpha)<0.98$.  The likelihood is fit by bounded L-BFGS-B.  The least-squares contrast \eqref{eq:ls-contrast}, reported as method \(J\), and the overidentified GMM criterion are optimized after a smooth transformation of the compact box using BFGS with a Nelder--Mead fallback.  The overidentified moment integrals are evaluated by adaptive Gauss--Legendre quadrature, split at event times and at event times plus $A$.  The plug-in covariance matrix is symmetrized and row-scaled before inversion, with a relative ridge $10^{-8}$ used only if the scaled matrix is numerically singular or too ill-conditioned.

Population Godambe matrices are estimated independently from $128$ long simulations with $T_{\rm pop}=250000$ and $250000$ random time-evaluation points per run.  These population matrices are used only for the horizontal target lines in Figure~\ref{fig:bivar-overall-efficiency} and the oracle diagnostic intervals in Appendix~\ref{app:simulation-diagnostics}.

\section{Additional simulation diagnostics}\label{app:simulation-diagnostics}

This appendix reports two supplementary diagnostics for the bivariate simulation study in Section~\ref{sec:simulation}. For each method $m\in\{{\rm M},{\rm J},{\rm O}\}$ and coordinate $k$, we form the population-target asymptotic normal interval
\[
    \hat\theta_{m,k}\pm 1.96\sqrt{\frac{(V_m)_{kk}}{T}},
\]
where \(V_{\rm M}=I(\theta_0)^{-1}\), \(V_{\rm J}=A_{\rm LS}(\theta_0)^{-1}\Omega_{\rm LS}(\theta_0)A_{\rm LS}(\theta_0)^{-1}\) is the local least-squares sandwich covariance, and \(V_{\rm O}\) is the optimally weighted overidentified GMM covariance. These intervals use the population covariance targets, not the sample plug-in covariance estimates; their purpose is to diagnose the asymptotic normal approximation and the Godambe width ordering. With $2000$ replications, the Monte Carlo standard error of a single-coordinate $0.95$ coverage estimate is about $0.005$, and the plotted average over coordinates is correspondingly more stable.

Figure~\ref{fig:bivar-ci-diagnostics} reports empirical marginal coverage and mean CI width for $T\ge1000$. Coverage is computed as the fraction of replication--coordinate pairs for which the marginal interval contains the true coordinate. The coverage panel shows that all three methods remain close to the nominal $0.95$ target. The width panel shows the expected \(T^{-1/2}\) shrinkage, with the least-squares intervals visibly wider than the MLE intervals, while the overidentified GMM intervals are almost indistinguishable from MLE.

\begin{figure}[t]
\centering
\includegraphics[width=\textwidth]{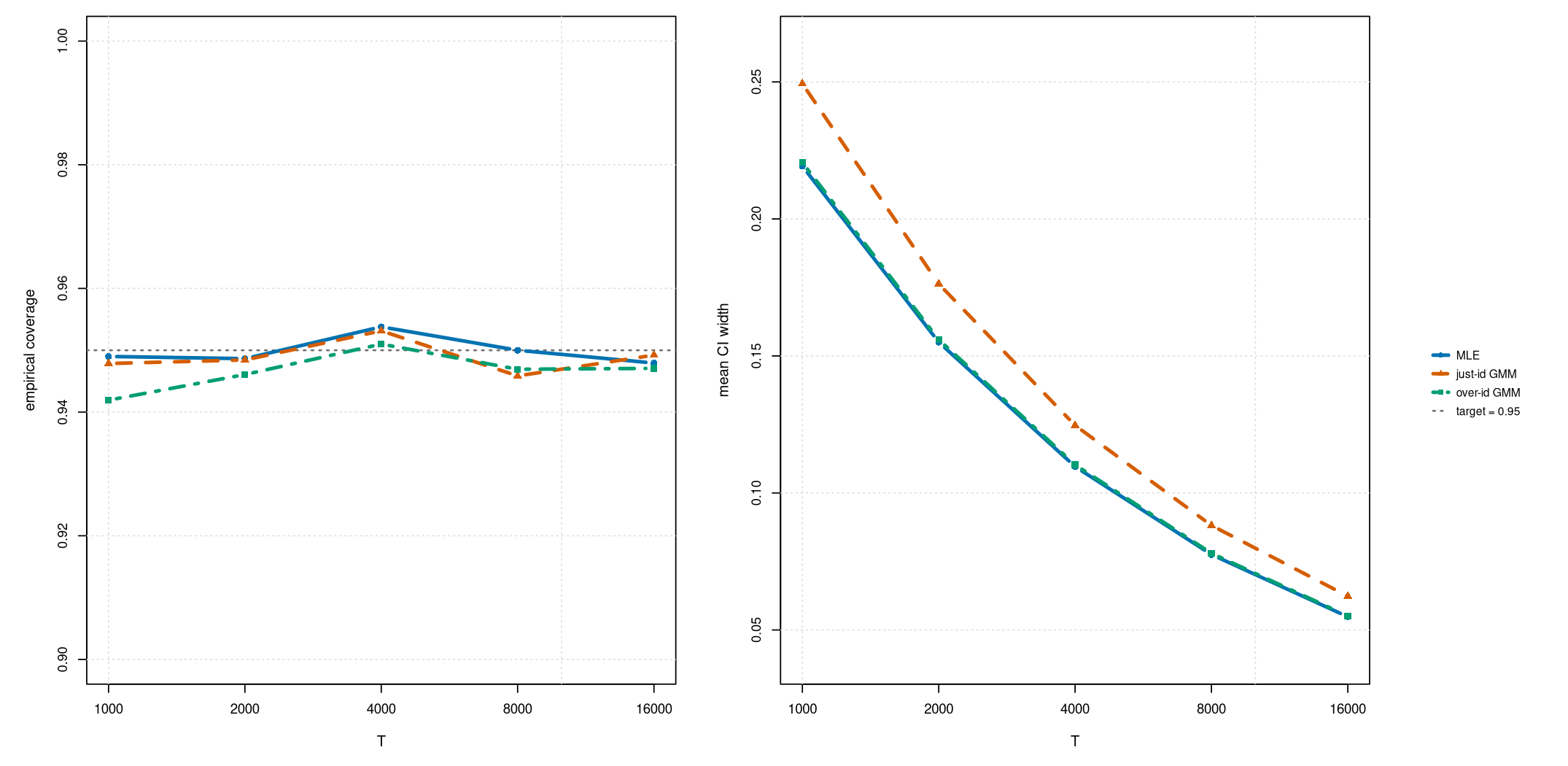}
\caption{Asymptotic normal confidence-interval diagnostics. Left: empirical marginal coverage of nominal $95\%$ intervals, averaged over coordinates and replications. Right: mean asymptotic CI width.}
\label{fig:bivar-ci-diagnostics}
\end{figure}

Figure~\ref{fig:bivar-eigen-inflation} compares the full covariance matrices through the ordered eigenvalues of
\[
    I(\theta_0)^{1/2}V_m I(\theta_0)^{1/2},
    \qquad m\in\{{\rm J},{\rm O}\}.
\]
The MLE benchmark is one in every eigendirection. The least-squares sandwich covariance is inflated in several directions, with the largest eigenvalue around \(1.55\), whereas the overidentified covariance remains close to one throughout the spectrum. This confirms that the efficiency ordering is not an artifact of marginal standard errors alone.

\begin{figure}[t]
\centering
\includegraphics[width=0.78\textwidth]{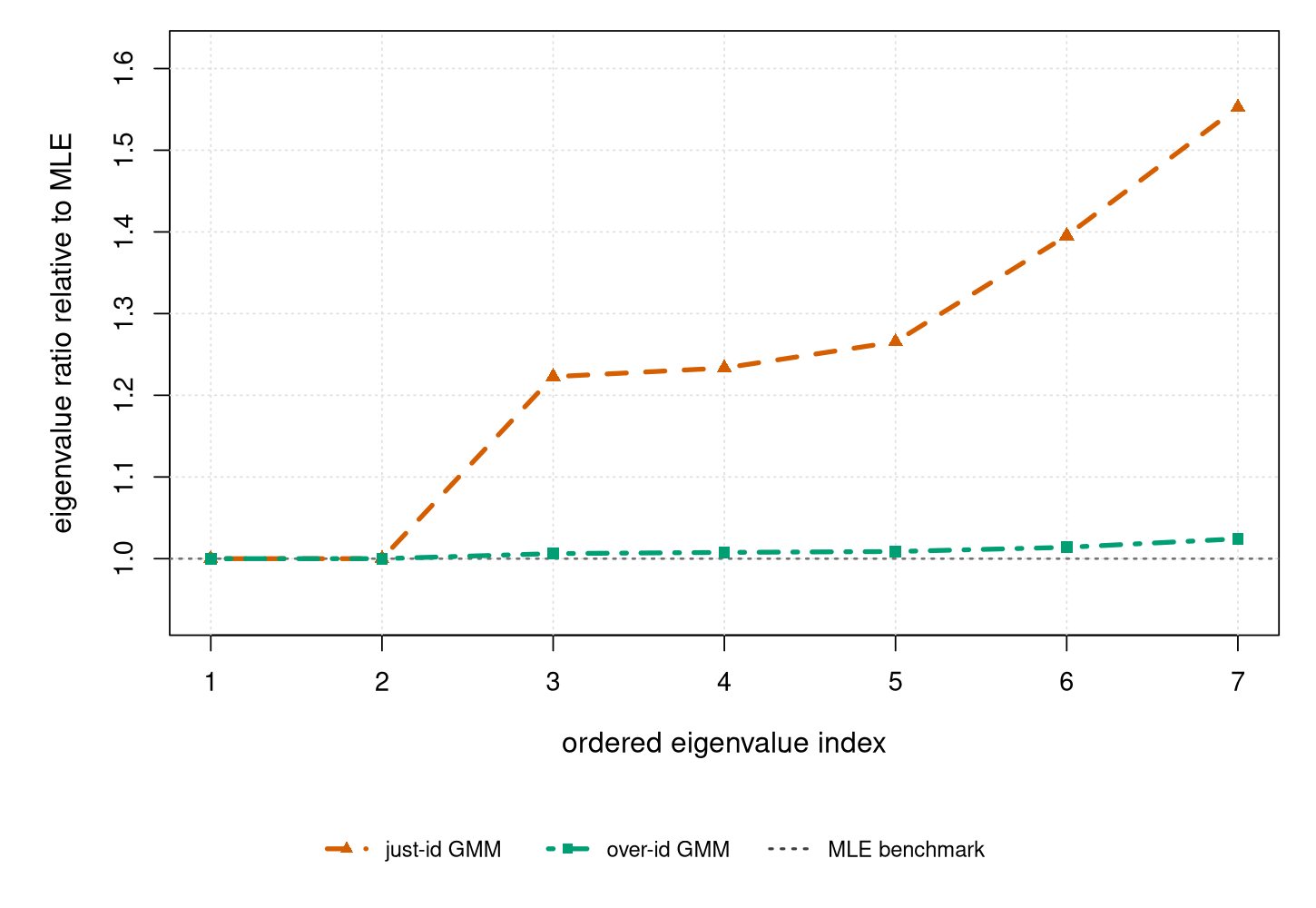}
\caption{Population covariance eigenvalue inflation relative to the MLE. The plotted values are the ordered eigenvalues of \(I(\theta_0)^{1/2}V_m I(\theta_0)^{1/2}\) for the least-squares and overidentified GMM covariance targets.}
\label{fig:bivar-eigen-inflation}
\end{figure}

Together, these diagnostics support the same conclusion as the main simulation figures: all three procedures exhibit the expected asymptotic calibration, while the efficiency ordering is governed by the Godambe covariance. The overidentified moment library nearly recovers the MLE benchmark in this design, whereas the least-squares derivative target retains a non-negligible efficiency loss.


\end{document}